\newtheorem{theorem}{Theorem}[section]
\newtheorem{claim}{}[theorem]
\newtheorem{lemma}[theorem]{Lemma}
\newtheorem{corollary}[theorem]{Corollary}
\newtheorem{conjecture}[theorem]{Conjecture}
\theoremstyle{definition}
\newtheorem{definition}[theorem]{Definition}
\newcommand{\bF}{\mathbb F}
\newcommand{\bR}{\mathbb R}
\newcommand{\bC}{\mathbb C}
\newcommand{\bN}{\mathbb N}
\newcommand{\bQ}{\mathbb Q}
\newcommand{\cL}{\mathcal{L}}
\newcommand{\cP}{\mathcal{P}}
\newcommand{\erdos}{Erd\H{o}s}
\newcommand{\es}{\varnothing}
\newcommand{\sse}{\subseteq}
\DeclareMathOperator{\cl}{cl}
\DeclareMathOperator{\PG}{PG}
\DeclareMathOperator{\GF}{GF}
\DeclareMathOperator{\codim}{codim}
\newcommand{\floor}[1]{\left\lfloor #1 \right\rfloor}
\newcommand{\ceil}[1]{\left\lceil #1 \right\rceil}
\newcommand{\del}{ \backslash  }
\newcommand{\con}{/}
\newcommand{\wh}{\widehat}
\numberwithin{subcase}{case}
\numberwithin{subsubcase}{subcase}
\newenvironment{subproof}[1][\proofname]{%
  \begin{proof}[Subproof:]%
}{%
  \end{proof}%
}
\newcommand{\ls}{\otimes}
\newcommand{\gbinom}[3]{{{#1} \brack {#2}}_{#3}}
\begin{document}

\title[Geometric Erd\H{o}s-Hajnal]{Rainbow triangles and the Erd\H{o}s-Hajnal problem in projective geometries}
\author[Chun]{Carolyn Chun}
\address{University of Maryland, USA}
\email{cchun@umd.edu}
\author[Douthitt]{James Dylan Douthitt}
\address{Syracuse University, USA}
\email{jdouth5@lsu.edu}
\author[Ge]{Wayne Ge}
\address{Louisiana State University, USA}
\email{yge4@lsu.edu}
\author[Huynh]{Tony Huynh}
\address{IBS Discrete Mathematics Group (DIMAG), South Korea}
\email{tony.@ibs.re.kr}
\author[Kroeker]{Matthew E. Kroeker}
\address{Technische Universität Bergakademie Freiberg, Germany}
\email{mekroeker@uwaterloo.ca}
\author[Nelson]{Peter Nelson}
\address{University of Waterloo, Canada}
\email{apnelson@uwaterloo.ca}


\subjclass{05B35}
\keywords{matroids, graphs, colourings}
\date{\today}
\begin{abstract}
	We formulate a geometric version of the \erdos-Hajnal conjecture that applies
  to finite projective geometries rather than graphs, in both its usual `induced' form and the
  multicoloured form. The multicoloured conjecture states, roughly,
  that a colouring $c$ of the points of $\PG(n-1,q)$ containing no copy of 
  a fixed colouring $c_0$ of $\PG(k-1,q)$ for small $k$ must contain a subspace
  of dimension polynomial in $n$ that avoids some colour. 
  
  If $(k,q) = (2,2)$, then $c_0$ is a colouring of a three-element `triangle', 
  and there are three essentially different cases, all of which we resolve. 
  We derive both the cases where $c_0$ assigns the same colour to two different elements 
  from a recent breakthrough result in additive combinatorics
  due to Kelley and Meka. We handle the case
  that $c_0$ is a `rainbow' colouring
  by proving that rainbow-triangle-free colourings of projective geometries
  are exactly those that admit a certain decomposition into two-coloured pieces.
  This is closely analogous to a theorem of Gallai on rainbow-triangle-free coloured complete graphs. 
    
  We also show that existing structure theorems resolve certain two-coloured cases 
  where $(k,q) = (2,3)$, and $(k,q) = (3,2)$. 
\end{abstract}

\maketitle
\sloppy
\section{Introduction}

The \erdos-Hajnal conjecture~[\ref{eh77}] states that for every graph $H$,
there is a constant $\beta = \beta(H) > 0$ such that, if $G$ is a graph on $n$ vertices
with no induced $H$-subgraph, then $G$ has a clique or stable set of size at least $n^\beta$. 
This problem is known for some particular graphs $H$ (notably the $5$-cycle) as well as 
when $H$ belongs to one of a few infinite classes such as the cographs,
but is wide open in general. See [\ref{chudnovsky}] for a survey. 

\erdos\ and Hajnal~[\ref{eh89}] in fact stated a multicoloured generalization of the same conjecture. 
To state this, we introduce some terminology. 
A \emph{colouring} of a finite set $X$ is any function $c$ whose domain is $X$, 
and, for $s \in \bN$, an \emph{$s$-colouring} is a function $c : X \to \{1, \dotsc, s\}$. 
Given a colouring $c$, we write $[c]$ for the range of $c$ (the set of colours used by $c$), 
and $|c|$ for the number of colours used by $c$.


For a graph $G$, a \emph{colouring of $G$} means a colouring of $E(G)$. 
The conjecture is stated in terms of colourings of complete graphs `containing' other colourings. 
If $c$ and $d$ are colourings of complete graphs $H$ and $G$ respectively,
then we say that $d$ \emph{contains} $c$ if there is an injection $\psi : V(H) \to V(G)$
such that $c(xy) = d(\psi(x)\psi(y))$ for all distinct $x,y \in V(H)$. 
A colouring not containing $c$ is \emph{$c$-free.} 

\begin{conjecture}[Multicoloured \erdos-Hajnal conjecture for graphs]\label{ehmcgraph}
  
  For all $1 \le s_0 \le s$ and $k \ge 1$, and every $s_0$-colouring $c_0$ of $K_k$, 
  there exist $\delta,C > 0$ such that, for all $n \in \bN$ 
  and every $c_0$-free $s$-colouring $c$ of $K_n$,
  there is a colour $i \in \{1, \dotsc, s\}$
  and a set $X$ of at least $C n^\delta$ vertices of $K_n$
  such that $c(e) \ne i$ for every edge $e$ with both ends in $X$.
\end{conjecture}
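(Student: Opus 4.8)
The plan is to attack Conjecture~\ref{ehmcgraph} through the ``structured versus pseudorandom'' dichotomy underlying most known approaches to \erdos-Hajnal. A natural preliminary is to cut the number of colours down to two: the case $s=s_0=2$ is precisely the classical \erdos-Hajnal conjecture (colour $1$ playing the role of edges, colour $2$ of non-edges, a colour-avoiding set being a stable set or a clique), so one would like to deduce the general statement from that case by induction on $k$. Given a $c_0$-free $s$-colouring $c$ of $K_n$, delete a vertex $v$ of $K_k$ to obtain a pattern $\hat c_0$ on $K_{k-1}$; if $c$ is also $\hat c_0$-free we are done by induction, and otherwise we fix a copy of $\hat c_0$ on a $(k-1)$-set $S$ and use that every vertex outside $S$ is forbidden one of the $s^{k-1}$ possible colour-profiles to $S$, aiming to extract from this either a large set on which some colour is globally absent or an honest reduction of the palette. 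I expect, however, that making this reduction terminate with a \emph{polynomial} exponent already requires the main ideas below, rather than being cheap preprocessing.

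The core is the following dichotomy, applied recursively. Fix a threshold $\eps = \eps(k,s) > 0$. Either (i) there is a colour $i$ and a set $W$ with $|W| \ge n^{1-o(1)}$ (ideally $|W| \ge \eps n$) on which colour $i$ has density below some $f(\eps,k,s)$, or (ii) no such $W$ exists, in which case $c$ is uniform enough that a uniformly random injection $V(K_k) \hookrightarrow V(K_n)$ realizes $c_0$ with positive probability, contradicting $c_0$-freeness. In case~(i) one must upgrade ``colour $i$ rare on $W$'' to ``colour $i$ absent on a large subset of $W$'': the standard mechanism is to produce a \emph{pure pair} for colour $i$---disjoint sets $A,B \sse W$ with $|A|,|B|$ large and no colour-$i$ edge between them---and to iterate pure pairs, maintaining a reservoir on one side as in the graph case, until a colour-$i$-avoiding set has been assembled. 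Keeping track of the set sizes through the recursion then produces the constants $C,\delta$ in the statement.

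The main obstacle, and the reason Conjecture~\ref{ehmcgraph} is open even for $s=2$, is the \emph{quantitative} strength of this scheme. The naive supersaturation argument in case~(ii) only forbids colour-$i$ density below roughly $\eps^{\binom{k}{2}}$, and extracting a colour-$i$-avoiding set from a set that is merely sparse in colour $i$ (rather than colour-$i$-free) via Tur\'{a}n/Ramsey costs a factor that, iterated, yields a colour-avoiding set of size only about $e^{c\sqrt{\log n}}$---the \erdos-Hajnal bound, improved only slightly by recent work---rather than the required $n^\delta$. Polynomial-size pure pairs, which is what the recursion actually needs, are at present available only for restricted patterns (roughly, those for which the relevant auxiliary graphs are forests) together with a few sporadic cases such as the five-cycle handled by ad hoc arguments. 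A complete proof therefore needs a genuinely new idea; the realistic intermediate target, which this plan would actually carry out, is to pin down the families of colourings $c_0$ for which the pure-pair machinery does give polynomial bounds and to treat them uniformly, mirroring the corresponding body of results in the two-colour theory.
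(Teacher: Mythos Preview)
The statement you have been asked to prove is Conjecture~\ref{ehmcgraph}, the multicoloured \erdos-Hajnal conjecture for graphs. This is an open problem, and the paper does not claim or provide a proof of it; it is stated purely as motivation for the geometric analogue that the paper actually studies. There is therefore no ``paper's own proof'' to compare your proposal against.

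Your write-up is not a proof either, and to your credit you say so explicitly: you observe that the structured/pseudorandom dichotomy with naive supersaturation and Tur\'{a}n-type cleanup only yields a homogeneous set of size roughly $e^{c\sqrt{\log n}}$, not $n^\delta$, and that a genuine proof ``needs a genuinely new idea''. So what you have written is a survey of the standard approach and its known shortfall, together with a reasonable partial target (identify the patterns $c_0$ for which polynomial pure pairs are available). That is a sensible research plan, but it should not be labelled a proof proposal for the conjecture itself. If the task is to supply a proof of Conjecture~\ref{ehmcgraph}, the honest answer is that none is known; if the task is to explain why the paper offers no proof, the answer is simply that the statement is a conjecture, not a theorem.
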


The set $X$ is referred to as \emph{homogeneous}. 
The case where $s_0 = s = 2$ is the classical \erdos-Hajnal 
conjecture. 

We make an analogous conjecture about colourings of projective geometries.
In this context, we write $G \cong \PG(n-1,q)$ to denote the projective 
space associated with some generic $n$-dimensional vector space $V$ over $\GF(q)$; 
the points of $G$ are the one-dimensional subspaces of $V$. 
Going forward, we often identify this object with the corresponding simple rank-$n$ 
$\GF(q)$-representable matroid on $\tfrac{q^n-1}{q-1}$ elements. 

A \emph{colouring} of $G$ is a colouring of the $\tfrac{q^n-1}{q-1}$ points of $G$. 
If $c$ and $d$ are colourings of $H \cong \PG(m-1,q)$ and $G \cong \PG(n-1,q)$
respectively, then we say that $d$ \emph{contains} $c$ if there is an injective
homomorphism $\psi : H \to G$ such that $c(x) = d(\psi(x))$ for every point $x$ of $H$. 
If $d$ does not contain $c$, then $d$ is \emph{$c$-free}. 
We are primarily interested in the binary case, where one can think of $n$-dimensional binary projective 
space more concretely as the set of nonzero vectors in $\bF_2^n$, and homomorphisms as linear maps. 

Given $s \in \bN$, and an $s$-colouring $c$ of $G \cong \PG(n-1,2)$, we say that 
a subspace $V$ of $G$ is \emph{$(c,s)$-homogeneous}
if there is some $i \in \{1, \dotsc, s\}$ 
for which $i \notin c(V)$. That is, the colouring $c$ avoids some colour on $V$.

\begin{conjecture}[Multicoloured geometric \erdos-Hajnal conjecture]\label{multicolouredeh}
  For every prime power $q$, all $k, s_0, s \in \bN$ with $s_0 \le s$, and every $s_0$-colouring $c_0$ of $\PG(k-1,q)$, 
  there exists $\delta,C > 0$ such that, 
  for all $n \in \bN$ and every $c_0$-free $s$-colouring $c$ of $G \cong \PG(n-1,q)$, 
  there is a $(c,s)$-homogeneous subspace of $G$ with dimension at least $C n^\delta$.
\end{conjecture}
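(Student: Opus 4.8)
The plan is to set up a general framework paralleling the graph case and then verify Conjecture~\ref{multicolouredeh} for as wide a class of base colourings $c_0$ as the available additive-combinatorial and structural tools permit; I do not expect a complete resolution, since the structural obstructions here are direct analogues of those that keep the graph conjecture (Conjecture~\ref{ehmcgraph}) open. The first reduction is on the number of base colours: if $\pi$ is a partition of the palette $\{1,\dots,s\}$ that refines the partition of $\PG(k-1,q)$ into the colour classes of $c_0$, then composing a $c_0$-free colouring $c$ of $G$ with $\pi$ yields a $(\pi\circ c_0)$-free colouring, and a homogeneous subspace for $\pi\circ c$ is one for $c$. Iterating, it suffices to treat $s_0=2$; and by passing to $\PG$-substructures of $c_0$ one may further assume $c_0$ is minimal. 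The genuinely different shapes that remain are: (A) one colour class of $c_0$ fails to span $\PG(k-1,q)$ (so $c_0$ is ``almost monochromatic''), and (B) both colour classes span. When $q=2$ there is also rainbow-type behaviour, witnessed already by the three-point line $\PG(1,2)$.

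For shape (A), $c_0$-freeness forces some colour class of $c$ to avoid a fixed $\GF(q)$-affine configuration in $\GF(q)^n$. Here one wants a \emph{power-saving} density/removal theorem for that configuration: for $(k,q)=(2,2)$ this is precisely the Kelley--Meka result, and it gives that the class is either contained in boundedly many cosets of a subspace of codimension $O(1)$ -- producing a homogeneous subspace of dimension $n-O(1)$ -- or sparse enough that a random subspace of dimension polynomial in $n$ misses it. The main obstacle in this step is quantitative: for general $(k,q)$ the relevant cap-set/corner-type problems are open with only quasi-polynomial bounds, which would yield a homogeneous subspace of dimension only polynomial in $\log n$; getting the power-saving needed for the conjecture is a well-known hard problem in additive combinatorics.

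For shape (B) and the rainbow case, the model is Gallai's theorem on rainbow-triangle-free edge-colourings of complete graphs, together with its projective analogue for the rainbow triangle developed in this paper: a $c_0$-free colouring should admit a decomposition, along a subspace, into pieces that each use fewer colours or are $c_0'$-free for a strictly smaller $c_0'$, glued by a quotient colouring of a lower-dimensional projective geometry. One then runs an induction on the pair (dimension of the forbidden pattern, number of colours used): either some piece is a subspace of dimension $\Omega(n)$ and we recurse inside it, or the quotient colouring lives on $\PG(m-1,q)$ with $m=\Omega(n)$ and we apply the inductive hypothesis there, pulling the resulting homogeneous subspace back through the gluing map.

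The hard part -- exactly as in the graph setting -- is twofold: proving that such a decomposition exists for every shape-(B) base colouring (the analogue of the ``prime'' case in a substitution decomposition), and keeping the exponent $\delta$ bounded away from $0$ through the recursion by bounding its depth in terms of $k$ and $s$ alone, since each level multiplies $\delta$ by a factor below $1$. Absent a structure theorem for prime shape-(B) colourings, the realistic outcome of this programme is Conjecture~\ref{multicolouredeh} for every $c_0$ built by iterated substitution from $\PG(1,q)$-colourings and from shape-(A) colourings covered by a power-saving density theorem -- in particular, the full $(k,q)=(2,2)$ case, and the two-coloured $(k,q)\in\{(2,3),(3,2)\}$ cases for which suitable matroid structure theorems are already known.
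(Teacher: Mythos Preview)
The statement you are addressing is a \emph{conjecture}, and the paper does not claim to prove it in general; it establishes only the special cases recorded in Theorems~\ref{mainnonrainbow}, \ref{mainrainbow}, \ref{mainchar3} and \ref{mainclaw}. Your proposal is, appropriately, a research programme rather than a proof, and its broad outline---Kelley--Meka-type density input for the ``almost monochromatic'' shapes, a Gallai-type structural decomposition for the rainbow shape, and existing matroid structure theorems for $(k,q)\in\{(2,3),(3,2)\}$---matches what the paper actually carries out for those special cases. In that sense your plan and the paper's approach coincide on the territory the paper covers.

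One step in your reduction is wrong, however, and it matters. You claim that by composing with a suitable partition $\pi$ of the palette one reduces to $s_0=2$. The implication ``$c$ is $c_0$-free $\Rightarrow$ $\pi\circ c$ is $(\pi\circ c_0)$-free'' fails in the direction you need: coarsening a colouring can \emph{create} copies of the coarsened pattern. Concretely, take $c_0$ to be the rainbow $3$-colouring of $\PG(1,2)$ and merge two of its colours; the resulting $c_0'$ has two points of one colour and one of the other, and a rainbow-triangle-free $c$ will typically contain many copies of $c_0'$. The only reduction of this type that the paper uses is the trivial one that $c_0$ may be assumed surjective (so $s_0=|c_0|$), not that $s_0=2$. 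The rainbow case genuinely cannot be folded into the two-colour case, which is exactly why the paper needs the separate structure theorem behind Theorem~\ref{mainrainbow}. Your later discussion of the rainbow case is fine on its own, but it is not a consequence of the $s_0=2$ analysis; the logical architecture of your programme should keep these as parallel, independent tracks.
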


\subsection*{Induced restrictions}

The classical \erdos-Hajnal conjecture on induced subgraphs is  
precisely the case of Conjecture~\ref{ehmcgraph} where both the colourings use only two colours.
Specializing Conjecture~\ref{multicolouredeh} in the same way gives the following conjecture, 
phrased in terms of \emph{induced restrictions}. 
In what follows, we slightly abuse notation by identifying a projective 
geometry $\PG(n-1,q)$ with its set of points. 
If $H$ and $X$ are sets of points in $\PG(k-1,q)$ and $\PG(n-1,q)$ respectively,
then we say that $X$ contains $H$ as an \emph{induced restriction}
if there is an injective homomorphism $\psi : \PG(k-1,q) \to \PG(n-1,q)$
such that, for each point $x$ of $\PG(k-1,q)$, 
we have $\psi(x) \in X$ if and only if $x \in H$.

\begin{conjecture}[Geometric \erdos-Hajnal conjecture]\label{inducedeh}
  For every prime power $q$ and all $k \in \bN$ and $H \subseteq \PG(k-1,q)$, there exists $\delta,C > 0$ such that, 
  for all $n \in \bN$ and $X \subseteq \PG(n-1,q)$ not containing $H$ as an induced restriction,
  there is a subspace $W$ of $\PG(n-1,q)$ such that $\dim(W) > C n^{\delta}$, 
  and either $W \subseteq X$ or $W \cap X = \varnothing$. 
\end{conjecture}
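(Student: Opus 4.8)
The plan is not to attack Conjecture~\ref{inducedeh} in full generality --- as with the classical \erdos-Hajnal conjecture there is no visible uniform argument --- but to establish it for the smallest pairs $(k,q)$ by a case analysis on $H$. Throughout, call $X\sse\PG(n-1,q)$ \emph{$H$-free} if it contains no induced $H$-restriction. Replacing $X$ by its complement $\wb X$ in $\PG(n-1,q)$ replaces $H$ by $\PG(k-1,q)\del H$ and leaves the conclusion --- a subspace $W$ with $W\sse X$ or $W\cap X=\es$ --- unchanged (the two outcomes merely swap); so it is enough to treat one member of each complementary pair $\{H,\PG(k-1,q)\del H\}$.

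Take $(k,q)=(2,2)$, so that $\PG(1,2)$ is a three-point line and, up to the symmetry above, $|H|\in\{2,3\}$. If $|H|=2$, then $H$-freeness is \emph{rigid}: a line with exactly two points in $X$ would be an induced copy of $H$, so no line has exactly two points in $X$; hence $X\cup\{0\}$ is closed under addition, hence a subspace $U$, and one of $U$ and a complementary subspace is a homogeneous subspace of dimension at least $\tfrac{n}{2}$. If $|H|=3$, i.e.\ $H=\PG(1,2)$, then $X$ is a \emph{cap} --- it contains no line --- so the outcome $W\sse X$ is vacuous beyond dimension $1$ and the task becomes that of finding a large subspace $W$ \emph{disjoint} from $X$. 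I would split on the density of $X$: if $|X|\le 2^{(1-\gamma)n}$ for a fixed $\gamma>0$, a first-moment estimate --- a uniformly random $d$-subspace misses $X$ once $2^d|X|<2^n$ --- produces such a $W$ with $\dim W\ge\gamma n-1$; and if $X$ is denser, one uses the structure theory of sum-free sets in $\bF_2^n$ (where the best quantitative bounds rest on the density-increment technology behind the recent Kelley--Meka theorem) to show that $X$ lies inside a bounded union of cosets of a low-codimension subspace, whence a large subspace --- in the extremal case a hyperplane --- is disjoint from $X$. In either case the output is a homogeneous subspace of dimension polynomial in $n$.

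For $(k,q)=(2,3)$ and $(k,q)=(3,2)$ the rigidity above fails for most $H$: a four-point line over $\GF(3)$, or a Fano subplane over $\GF(2)$, forces only that \emph{some} point be omitted, never that a prescribed point be present or absent. For the traces $H$ at which the conjecture is reachable, the plan is to feed the $H$-free hypothesis into an existing decomposition or extremal theorem for $\GF(3)$- or binary matroids omitting a fixed small restriction: such a theorem exhibits an $H$-free $X$ (or $\wb X$) as a gluing, along separations of bounded rank, of members of a short list of basic families, on each of which one can locate either a large flat contained in the set or a large flat avoiding it; because the separations cost only a bounded amount of rank per level of the decomposition tree, these local flats reassemble into a homogeneous subspace of dimension polynomial in $n$. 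The remaining traces for $(2,3)$ and $(3,2)$ --- in particular the ``cap-type'' ones, where $X$ or $\wb X$ avoids only a weak configuration (no four collinear points over $\GF(3)$, no Fano subplane over $\GF(2)$) and can still be exponentially large while containing no plane --- need structural input that is not presently available, and are left open.

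That last point is the real obstacle, and it is not special to $(2,3)$ and $(3,2)$: the argument is nowhere uniform in $(k,q)$. Every new projective plane introduces genuinely new forbidden traces; the cap-type cases for $q\ge 3$ run straight into the notoriously hard problem of bounding arcs and caps in $\PG(n-1,q)$; and for larger $k$ there is no structure theory at all for $\GF(q)$-matroids omitting an \emph{arbitrary} labelled subgeometry of $\PG(k-1,q)$. So this plan proves Conjecture~\ref{inducedeh} only for $(k,q)\in\{(2,2),(2,3),(3,2)\}$, and there only for the amenable traces; the first serious test of a general method would be $(k,q)=(4,2)$ or $(k,q)=(3,3)$.
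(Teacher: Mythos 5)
The statement you are attempting is a conjecture: the paper itself establishes it only in special cases (Theorem~\ref{mainnonrainbow} for $(k,q)=(2,2)$, Theorem~\ref{mainchar3} for $(k,q)=(2,3)$ with $|H|=2$, Theorem~\ref{mainclaw} for the claw and its complement), and your proposal likewise treats only special cases and explicitly leaves the rest open, so it is not a proof of Conjecture~\ref{inducedeh}. Within the cases you do attempt: your rigidity argument for $(k,q)=(2,2)$, $|H|=2$ is correct and elementary --- $H$-freeness forces $X\cup\{0\}$ to be closed under addition, hence a subspace, and either it or a complementary subspace is homogeneous of dimension about $n/2$ --- and this is in fact sharper ($\delta=1$) than what the paper records for this case, since the paper deduces the induced case from the multicoloured Theorem~\ref{mainnonrainbow} with $\delta=\tfrac17$ (your rigidity does not survive three or more colours, which is why the paper routes everything through Kelley--Meka). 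Your sketches for $(2,3)$ and $(3,2)$ are consistent in spirit with what the paper actually does (the exact `target' structure behind Theorem~\ref{targetiffline} for two-point sets in ternary lines, and the claw-free structure theorem of [\ref{nn}] used as Theorem~\ref{clawfreestructure}, with ranks reassembled additively across a lift-join decomposition), but as written they are plans rather than arguments and do not identify the needed structure theorems.

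The genuine gap is in your dense case for $H=\PG(1,2)$. You assert that a line-free $X$ with $|X|\ge 2^{(1-\gamma)n}$ ``lies inside a bounded union of cosets of a low-codimension subspace''; no such structure theorem is available at these densities (density $2^{-\gamma n}$ is far below the constant-density regime where stability results for sum-free sets in $\bF_2^n$ apply --- indeed a line-free set can fill half of a subspace of codimension $\gamma n$), and it is not what the Kelley--Meka technology provides. What is available, and what the paper uses, is the dichotomy of Theorem~\ref{hplem}: a subspace $V$ of codimension $O(\cL(\delta)^5\cL(\gamma)^2)$ on which either $X$ is $\delta$-sparse or $X+X$ has density greater than $1-\gamma$; combined with Bose--Burton (Theorem~\ref{bbthm}) this gives $\max(\alpha(X),\omega(X+X))\ge Cn^{1/7}$ (Theorem~\ref{complorsum}), and any subspace inside $X+X$ is automatically disjoint from a line-free $X$. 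This single dichotomy also makes your preliminary density split (the first-moment argument) unnecessary. If you replace the coset-structure claim by this argument, your $(2,2)$ cases are complete; everything else in Conjecture~\ref{inducedeh} remains open, as you acknowledge.
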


Finally, we state a strengthening of the last two conjectures, suggested by Jacob Fox, 
which asserts that in fact they hold for linear functions rather than polynomial. 
This may seem quite strong, but we know of no construction that refutes it. 

\begin{conjecture}
  Conjectures~\ref{multicolouredeh} and \ref{inducedeh} hold with $\delta = 1$. 
\end{conjecture}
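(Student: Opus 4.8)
The plan is to reduce everything to Conjecture~\ref{multicolouredeh}, since Conjecture~\ref{inducedeh} is its two-colour specialisation, and then to organise the argument around the shape of $c_0$, using the $(k,q)=(2,2)$ trichotomy as a template while tracking dimensions carefully --- the $\delta=1$ conclusion is genuinely stronger than the polynomial bounds that the Kelley--Meka input supplies directly.

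Suppose first that $c_0$ assigns a common colour $a$ to two of its points, the third receiving a colour $b$ (possibly $b=a$). Then $c_0$-freeness forbids any line of $G$ from having two points coloured $a$ and its remaining point coloured $b$, which constrains the colour classes $S_a,S_b$ of $c$ by a sumset-avoidance condition; this is exactly the regime in which the paper extracts a polynomial-sized homogeneous subspace from the Kelley--Meka bounds. To push this to $\delta=1$ I would split on the density $\alpha$ of $S_a$: when $\alpha=2^{-\Omega(n)}$, a greedy subspace-avoidance argument produces a subspace of linear dimension disjoint from $S_a$, hence avoiding colour $a$; when $\alpha=\Omega(1)$, a Bogolyubov-type argument produces a subspace of codimension $O_{c_0,s}(1)$ inside a bounded iterated sumset of $S_a$, which avoids colour $b$ and has dimension $n-O_{c_0,s}(1)$. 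The gap --- and the reason $\delta=1$ is not a theorem even here --- is the intermediate range of $\alpha$, where one would need a genuine structure statement confining $S_a$ to bounded-codimension affine structure rather than merely bounding its density; the Kelley--Meka density bound is not strong enough for this, so one would have to appeal to a structural refinement or route around the regime by working with a different colour class.

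If $c_0$ is rainbow --- which forces $s\ge 3$ --- I would invoke the Gallai-type decomposition theorem for rainbow-triangle-free colourings of projective geometries: $c$ is assembled from boundedly many two-coloured pieces, and since $s\ge 3$, any subspace contained in such a piece already avoids a colour. Iterating the decomposition reduces the problem, in at most $s$ rounds, to finding a large subspace inside a sub-geometry on which $c$ uses strictly fewer colours, until one lands inside a two-coloured region. To get $\delta=1$ rather than merely some $\delta>0$ one must verify that each round of the decomposition is essentially dimension-lossless --- or at worst costs a constant factor --- so that a subspace of dimension $\Omega(n)$ survives the boundedly many rounds.

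For a general forbidden colouring $c_0$ of $\PG(k-1,q)$ I have no complete proposal, and this is where I expect the main obstacle to lie: even the polynomial-exponent forms of Conjectures~\ref{multicolouredeh} and \ref{inducedeh} are open and appear to be at least as hard as the classical \erdos-Hajnal conjecture. The natural intermediate target, which would feed an induction on the number of colours exactly as above, is a general structure theorem: every $c_0$-free colouring of a projective geometry should decompose, in bounded depth and with only a constant-factor loss of dimension at each step, into colourings using fewer colours together with pieces of bounded-codimension affine structure to which the additive-combinatorial machinery applies. Establishing such a theorem --- a geometric analogue of the structure theory underlying Gallai--Ramsey theory, but for an arbitrary forbidden pattern --- is the crux.
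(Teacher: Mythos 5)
The statement you were asked to prove is not a theorem of the paper at all: it is an open conjecture (attributed to Jacob Fox), recorded without proof, and the paper says only that no refuting construction is known. So there is no proof to compare against, and your proposal --- as you yourself concede in the final paragraph --- does not supply one. What your sketch actually covers is the $(k,q)=(2,2)$ trichotomy, and there your account of the state of affairs matches the paper only partially: the rainbow case is indeed proved with $\delta=1$ (Theorem~\ref{mainrainbow}, via the structure theorem and Theorem~\ref{mainomega}; the paper does this in a single decomposition into two-coloured pieces forming a target, using Theorem~\ref{targetpaircolouring} and Lemma~\ref{targetomega}, rather than your iterated ``at most $s$ rounds''), whereas the non-rainbow triangle cases are only proved with $\delta=\tfrac17$ (Theorem~\ref{mainnonrainbow}, via Theorem~\ref{complorsum} and Kelley--Meka). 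Upgrading that to $\delta=1$ is exactly the paper's separate unproved conjecture that $\max(\alpha(X),\omega(X+X)) \ge Cn$, and for a general forbidden colouring $c_0$ of $\PG(k-1,q)$ even the polynomial version is open. So the ``genuine gap'' is the entire statement: neither you nor the paper proves it.

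One concrete technical flaw in the part you do sketch: in the dense regime you claim a Bogolyubov-type argument gives a bounded-codimension subspace inside ``a bounded iterated sumset of $S_a$'' which then avoids colour $b$. The $c_0$-freeness hypothesis only forbids colour $b$ on points of $S_a+S_a$ (a point that is the sum of \emph{two} $a$-coloured points); membership in a longer sumset such as $S_a+S_a+S_a+S_a$ gives no such conclusion. And for the two-fold sumset the claim fails: Green's niveau-set construction gives $A\subseteq \bF_2^n$ of density bounded away from $0$ for which $A+A$ contains no subspace of codimension $o(\sqrt{n})$, so constant density of $S_a$ does not yield a codimension-$O(1)$ subspace of $S_a+S_a$. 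Thus even your ``easy'' dense endpoint of the density split does not go through as stated, quite apart from the intermediate range you correctly flag as open.
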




\subsection*{Results} 
Our first few main results concern the binary case, where the object being excluded is
a colouring of the rank-two binary projective geometry $T \cong \PG(2-1,2)$, 
known in matroid parlance as a `triangle'.
This geometry $T$ has just three elements;
when thinking of a larger host geometry $G \cong \PG(n-1,2)$ 
as a set of vectors in $\bF_2^n \del \{0\}$, a copy of $T$ in $G$
is simply a triple of distinct vectors that sum to zero. 
The analogous problem for triangles in edge-coloured complete graphs is especially well-studied 
[\ref{asw}, \ref{fgp}, \ref{gallai}, \ref{gallaitrans}]. 

To prove Conjecture~\ref{multicolouredeh}, 
it suffices to consider the case where the colouring $c_0$ is surjective: 
that is, where $s_0 = |c_0|$. 
Thus, by symmetry, there are only three essentially different colourings $c_0$ of 
$T = \{x_1,x_2,x_3\}$: 
the $1$-colouring $c_0$ with $[c_0] = \{1\}$, 
the $2$-colouring $c_0$ where $c_0(x_1) = c_0(x_2) = 1$ and $c_0(x_3) = 2$, and 
the `rainbow' $3$-colouring with $c_0(x_i) = i$ for all $i$. 
Our first result handles the first two cases, for arbitrary $s \ge s_0$. 

\begin{theorem}\label{mainnonrainbow}
  Let $s_0 \in [2]$ and $s \ge s_0$, and $c_0$ be an $s_0$-colouring of $T$.
  Then Conjecture~\ref{multicolouredeh} holds for $c_0$ and $s$, with $\delta = \tfrac{1}{7}$. 
\end{theorem}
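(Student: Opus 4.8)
The plan is to reduce Theorem~\ref{mainnonrainbow} to an additive-combinatorial statement and then invoke the Kelley-Meka theorem. Identify $G \cong \PG(n-1,2)$ with $\bF_2^n \del \{0\}$, so that a copy of $T$ is a triple of distinct vectors summing to $0$ and a subspace of $G$ is the set of nonzero vectors of a linear subspace of $\bF_2^n$; let $A_1, \dotsc, A_s$ be the colour classes of a fixed $c_0$-free $s$-colouring $c$. We may assume $n$ is large relative to $s$, since for small $n$ a single point is a $(c,s)$-homogeneous subspace and $C$ can be taken small. Everything rests on two ingredients. The first is an elementary first-moment lemma: if a set $A$ has density less than $1/(2^d-1)$ inside a subspace $W_0$ of dimension $n_0$, then, since each nonzero vector of $W_0$ lies in exactly a $(2^d-1)/(2^{n_0}-1)$ fraction of the $\gbinom{n_0}{d}{2}$ subspaces of $W_0$ of dimension $d$, a uniformly random such subspace contains fewer than one point of $A$ in expectation, and hence some $d$-dimensional subspace of $W_0$ avoids $A$ entirely. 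The second is the Kelley-Meka theorem, in the $\bF_2^n$ form I would extract from their work: there is $c_1>0$ such that whenever $A,B,C \sse \bF_2^n$ each have size at least $2^n \exp(-c_1 n^{1/7})$ and no triple of distinct vectors $(a,b,c') \in A \times B \times C$ has $a+b+c' = 0$, then $A$, $B$ and $C$ are each, after deleting at most $2^n\exp(-c_1 n^{1/7})$ exceptional vectors, contained in a coset of a single common hyperplane $W$ of $\bF_2^n$, and these cosets are not all equal to $W$.

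Granting these, the two cases are short. Suppose first that $s_0=1$, so $c$ has no monochromatic triangle and each $A_i$ contains no distinct triple summing to $0$. Let $A_{i^*}$ be a densest colour class; then $|A_{i^*}| \ge (2^n-1)/s > 2^n\exp(-c_1 n^{1/7})$ for $n$ large, so the Kelley-Meka input with $A=B=C=A_{i^*}$ confines $A_{i^*}$, outside at most $2^n\exp(-c_1 n^{1/7})$ vectors, to one coset of a hyperplane $W$; that coset cannot be $W$ itself, so $A_{i^*} \cap W$ has size at most $2^n\exp(-c_1 n^{1/7})$, i.e.\ density at most $2\exp(-c_1 n^{1/7})$ inside $W$. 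Applying the first-moment lemma inside $W$ with $d$ of order $n^{1/7}$ (so that $2^d-1 < \tfrac12\exp(c_1 n^{1/7})$) yields a $d$-dimensional subspace of $W$ disjoint from $A_{i^*}$, which is therefore $(c,s)$-homogeneous. For $s_0=2$, take $c_0$ with $c_0(x_1)=c_0(x_2)=1$ and $c_0(x_3)=2$; then $c_0$-freeness says precisely that no distinct triple $(a,a',b) \in A_1 \times A_1 \times A_2$ satisfies $a+a'+b=0$. If $|A_1|$ or $|A_2|$ lies below the threshold $2^n\exp(-c_1 n^{1/7})$, the first-moment lemma applied to that class inside $G$ already yields a homogeneous subspace of the required dimension. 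Otherwise the Kelley-Meka input with $A=B=A_1$ and $C=A_2$ produces a common hyperplane $W$, and a brief inspection of the (at most four) coset configurations --- using the clause that the cosets are not all $W$, together with the fact that $a+a'+b=0$ forbids the configuration ``$A_2 \subseteq W$ but $A_1 \not\subseteq W$'' --- shows that at least one of $A_1,A_2$ meets $W$ in at most $2^n\exp(-c_1 n^{1/7})$ vectors; we then finish as before, avoiding that colour inside $W$. In every case we obtain a $(c,s)$-homogeneous subspace of dimension $\Omega_s(n^{1/7})$, establishing the theorem with $\delta=\tfrac17$, the constant $C$ absorbing the dependence on $s$ and on $c_1$.

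The step I expect to be the real obstacle is pinning the Kelley-Meka input down in exactly this shape. Their breakthrough is customarily stated for three-term progressions in $\bZ_N$, or in $\bF_p^n$ with $p$ odd; here one needs the $\bF_2^n$ analogue for the equation $x+y+z=0$ across three possibly different sets and, crucially, the \emph{stability} conclusion that near-extremal sets are essentially confined to a coset of an index-two subgroup, rather than a bare density bound, since over $\bF_2$ a hyperplane complement is a legitimate triangle-free set of density $\tfrac12$. Checking that the Kelley-Meka density-increment and spectral-sifting argument survives the degenerate solutions $x=y$, accommodates the translation invariance that makes hyperplane cosets the extremal configurations, and returns the structural statement with error term $\exp(-\Omega(n^{1/7}))$ --- in particular that the exponent really is $\tfrac17$ --- is where the substance lies. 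Once that is in hand, the reductions and first-moment counting above are routine.
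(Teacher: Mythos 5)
Your reduction stands or falls on the ``Kelley--Meka input'' you formulate, and that statement is not just hard to pin down --- it is false. Taking $A=B=C=S$, your claim asserts that every $S \sse \bF_2^n$ of density at least $\exp(-c_1 n^{1/7})$ with no three distinct elements summing to zero is, after deleting $2^n\exp(-c_1 n^{1/7})$ vectors, contained in a coset of a hyperplane. Here is a counterexample at constant density: let $\pi : \bF_2^n \to \bF_2^4$ be projection onto the first four coordinates, let $T = \{e_1,e_2,e_3,e_4,\, e_1+e_2+e_3+e_4\}$, and let $S = \pi^{-1}(T)$. Then $S$ has density $5/16$, and it has no solutions of $a+b+c=0$ at all (distinct images in $T$ would violate that $T$ is a cap; coinciding images would force the third image to be $0 \notin T$). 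But $T$ lies in no coset of a hyperplane of $\bF_2^4$ (a functional constant $0$ on $T$ is zero; constant $1$ forces $\ell = x_1+x_2+x_3+x_4$, which vanishes on the fifth point), so for every hyperplane $W$ of $\bF_2^n$ and either coset, $S$ has at least $2^{n-4}$ points outside it: either the defining functional factors through $\pi$ and a whole fibre of size $2^{n-4}$ is missed, or it does not and exactly half of $S$ is missed. So no stability theorem of the shape you want exists at density $\exp(-cn^{1/7})$, nor even at fixed positive density below $1/2$; what Kelley--Meka-type arguments give over $\bF_2$ is local, not global, structure. Consequently your case analysis of ``coset configurations'' in both the $s_0=1$ and $s_0=2$ cases has no valid starting point, and the density-threshold preprocessing does not save it.

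The input that is actually available (and is what the paper uses, via Hunter--Pohoata) is a dichotomy relative to a subspace of small codimension: for any $\delta,\gamma$ there is a subspace $V$ with $\codim(V) = O(\cL(\delta)^5\cL(\gamma)^2)$ such that either $\mu_V(A) < \delta$ or $\mu_V(A+A) > 1-\gamma$ (Theorem~\ref{hplem}). Taking $\delta = \gamma = 2^{-d}$ with $d \asymp n^{1/7}$ and converting ``small density'' into a large avoided subspace via Bose--Burton (Theorem~\ref{bbthm2}, playing the role of your first-moment lemma, which is fine as far as it goes), one gets $\max(\alpha(X), \omega(X+X)) \ge Cn^{1/7}$ for every $X \sse \bF_2^n$ (Theorem~\ref{complorsum}). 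Applied to the colour class $X$ of the repeated colour of $c_0$ --- with no density assumption and no case split on which class is dense --- this finishes the proof: a large subspace avoiding $X$ is homogeneous, while a large subspace inside $X+X$ must avoid the other colour of $c_0$, since a point of that colour in it would complete a copy of $c_0$. You should replace your structural claim with this dichotomy; the rest of your outline then collapses to essentially the paper's argument.
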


The proof of this theorem is via a reduction to a recent breakthrough 
result in additive combinatorics due to Kelley and Meka [\ref{km}], 
formulated by Hunter and Pohoata [\ref{hp}] in our setting.

Our next result handles the rainbow colouring of $T$, when $s_0 = s = 3$. 
\begin{theorem}\label{mainrainbow}
  Let $c_0$ be the rainbow $3$-colouring of $T$. 
  Then Conjecture~\ref{multicolouredeh} holds for $c_0$ and $s = 3$, 
  with $C = \tfrac{1}{3}$ and $\delta = 1$. 
  Moreover, the values of $\delta$ and $C$ are best-possible. 
\end{theorem}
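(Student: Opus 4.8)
The plan is to establish a Gallai-type structure theorem for rainbow-triangle-free colourings of binary projective geometries, and then deduce the quantitative statement from it. First I would prove the qualitative claim that if $c$ is a $3$-colouring of $G \cong \PG(n-1,2)$ containing no rainbow triangle, then $G$ admits a ``Gallai decomposition'': there is a proper subspace $W$ of $G$ (a hyperplane, or more generally a subspace) such that, writing $G$ as a union of cosets of $W$, the colour classes are controlled by the structure on the quotient $G/W$, which is itself $2$-coloured. The natural inductive skeleton is: pick any point $x$ with colour, say, $1$; look at the two other points on each line through $x$; the rainbow-free condition forces that for any line $\{x,y,z\}$ the pair $\{c(y),c(z)\}$ omits some colour, and tracking this across all lines through $x$ should segregate the geometry into pieces each missing a colour. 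The key geometric input that replaces the graph-theoretic argument of Gallai is that in $\PG(n-1,2)$ a point together with a hyperplane not containing it partitions the remaining points into pairs $\{y,z\}$ with $x+y+z=0$, so ``colourings of lines through $x$'' are exactly colourings of the hyperplane, and this recursion is clean.

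Once the structure theorem is in hand, the homogeneous-subspace bound follows by a counting/pigeonhole argument on the decomposition tree. At the bottom level each piece is genuinely $2$-coloured, so it is $(c,3)$-homogeneous automatically; the task is to show that \emph{some} piece — equivalently, some subspace arising in the decomposition — has dimension at least $\tfrac{1}{3}n$. I would argue that the decomposition expresses $G$ in terms of a $2$-coloured quotient $Q$ of dimension $d$ and fibres (cosets of the kernel subspace) of dimension $n-d$; in a $2$-coloured $\PG(d-1,2)$ one of the two colour classes, by a simple averaging over hyperplanes, contains a subspace of dimension $\ge d/3$ (this is where the constant $1/3$ enters: a hyperplane meets a colour class in at least a third of its expected share, iterated). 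Combining a large monochromatic subspace of the quotient with a whole fibre yields a subspace of $G$ that sees at most two colours and has dimension $\ge \max\{n-d,\, d/3\} \ge \tfrac{1}{3}n$, and the factor $C = \tfrac13$ comes out of making the two terms balance. The reverse direction — that $\delta=1$ and $C=\tfrac13$ cannot be improved — requires an explicit construction: a $3$-colouring of $\PG(n-1,2)$, rainbow-triangle-free, in which every $(c,3)$-homogeneous subspace has dimension at most $\tfrac{1}{3}n + O(1)$; the natural candidate is to take a chain of subspaces $V_1 \subset V_2 \subset G$ with $\dim V_1 = \dim(V_2/V_1) = \dim(G/V_2) = n/3$ and colour a point by ``the first layer in which it lies,'' then check rainbow-freeness and optimality of homogeneous subspaces directly.

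The main obstacle I expect is proving the structure theorem cleanly — specifically, showing that the local rainbow-free condition globally forces a \emph{subspace}-based (rather than merely partition-based) Gallai decomposition, and handling the base cases and the possibility that the decomposition degenerates (e.g.\ one colour used only once, or a colour class that is already a subspace). In the graph setting Gallai's theorem produces a partition into parts with a $2$-coloured quotient; here one must additionally verify that the parts and the quotient are linear objects, which is what makes the dimension bookkeeping work, and this is exactly the point where the additive/linear structure of $\PG(n-1,2)$ must be exploited rather than treated as a black box. A secondary technical point is the averaging step giving a monochromatic subspace of dimension $d/3$ in a $2$-coloured $\PG(d-1,2)$: one must be careful that iterating the hyperplane pigeonhole does not lose more than a constant factor, and that the recursion bottoms out with the stated constant rather than something weaker like $d/4$.
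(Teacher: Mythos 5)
There are two genuine gaps, and both occur at the quantitative stage rather than in the (sketched) structure theorem. First, your key averaging claim is false: it is \emph{not} true that every $2$-colouring of $\PG(d-1,2)$ has a monochromatic subspace of dimension at least $d/3$. A uniformly random $2$-colouring has no monochromatic subspace of dimension larger than roughly $\log_2 d$ (this is exactly the random bound the paper uses in its own lower-bound section), so no hyperplane-averaging argument can produce a polynomially large, let alone linearly large, monochromatic subspace in an arbitrary $2$-coloured geometry. The correct deduction must exploit the very special shape of the rainbow-triangle-free decomposition: in the paper the decomposition is a chain (a ``target'') of mutually skew $2$-coloured flats glued by lift-joins, and the crucial additivity property is that for a colour class $X$ of such a join one has $\omega_M(X) = \sum_i \omega_M(F_i \cap X)$. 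Summing this over the three colour-\emph{pairs} (not over hyperplanes) gives total $n$, so some pair of colours supports a flat of rank at least $n/3$. Your proposed bookkeeping also does not reach $1/3$ even granting the false claim: $\max\{\,n-d,\ d/3\,\}$ is minimized at $d = 3n/4$ where it equals $n/4$. Moreover, your proposed quotient shape (one $2$-coloured quotient with fibres) is not what the structure theorem delivers; the quotient object is a chain, and the pieces using the \emph{same} colour pair must be recombined across the whole chain to get the $n/3$ bound.

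Second, your optimality construction does not work. Colouring by ``first layer'' along a chain $V_1 \subset V_2 \subset G$ with $\dim V_1 = \dim(V_2/V_1) = \dim(G/V_2) = n/3$ makes each layer monochromatic, so $V_2$ itself is a subspace of dimension $2n/3$ using only colours $1$ and $2$; it avoids colour $3$ and thus is a huge $(c,3)$-homogeneous subspace. To certify that $C=\tfrac13$ is best possible one needs the bicoloured pieces themselves to contain only tiny monochromatic subspaces: the paper takes three mutually skew flats $G_1,G_2,G_3$ of dimension about $n/3$, colours each \emph{randomly} with a distinct pair of colours ($\{1,2\},\{1,3\},\{2,3\}$), and lift-joins them; then any subspace avoiding one colour is confined, up to an $O(\log n)$ error coming from the random colourings, to a single $G_i$. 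Finally, note that the structural half of your plan (the decomposer/Gallai step) is only a sketch; the hard content in the paper is the existence of a decomposer, proved by induction whose base case is a counting argument in projective planes together with lemmas showing decomposers survive contraction, and none of that is supplied by the ``lines through a point are a hyperplane'' observation alone.
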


The proof is via very different techniques. 
Following a route analogous to results due to Gallai [\ref{gallai}, \ref{gallaitrans}] 
and Fox, Grinshpun and Pach [\ref{fgp}],
we prove an exact structure theorem, 
Theorem~\ref{mainstructure}, which describes the $3$-colourings 
of $\PG(n-1,2)$ with no rainbow triangle, and derive Theorem~\ref{mainrainbow} 
as an easy consequence. 

Theorem~\ref{mainstructure} is a little too technical to state precisely here, 
but it roughly states that, if $c$ is a colouring of the points of a projective geometry
having no three colinear points of different colours, then $c$
can be constructed from two-coloured subspaces using a certain `join' operation. 
The following weakening will be enough to imply Theorem~\ref{mainrainbow}. 

\begin{theorem}\label{maintargetintro}
    Let $n \in \bN$, let $q$ be a prime power, 
    and $c$ be a colouring of the points of $G \cong \PG(n-1,q)$ so that $|c(L)| \le 2$
    for each two-dimensional subspace $L$ of $G$. 
    Then there are subspaces $\es = W_0 \subseteq W_1 \subseteq \dotsc \subseteq W_k = G$ of $G$
    such that $|c(W_{i+1} - W_i)| \le 2$ for all $0 \le i < k$. 
\end{theorem}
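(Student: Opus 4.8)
The plan is to build the filtration $W_0 \subseteq W_1 \subseteq \dotsc \subseteq W_k = G$ from the top down, repeatedly peeling off a hyperplane. The key claim is the following: if $c$ is a colouring of $G \cong \PG(n-1,q)$ with $|c(L)| \le 2$ for every line $L$, and $n \ge 1$, then there is a hyperplane $H$ of $G$ (or $H = \es$ when $n = 1$) such that $|c(G - H)| \le 2$. Granting this claim, we set $W_{k-1} := H$, observe that the restriction of $c$ to $H$ still has the property that every line uses at most two colours, and recurse on $H$; the recursion terminates since $\dim(H) = n - 1$, producing the desired chain with the block condition $|c(W_{i+1} - W_i)| \le 2$ holding at every step.

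To prove the claim, fix any point $p$ of $G$ and look at the colour $c(p)$. For each other colour $\alpha \in [c]$, consider the set $S_\alpha$ of points coloured $\alpha$. I want to find a hyperplane $H$ avoiding all but (at most) one colour class outside $H$. The idea is: take two points $x, y$ with $c(x) \ne c(y)$; every point $z$ on the line through $x$ and $y$ has $c(z) \in \{c(x), c(y)\}$ by the hypothesis. More usefully, suppose for contradiction that no hyperplane works, i.e.\ every hyperplane $H$ has $|c(G - H)| \ge 3$. I would try to derive a configuration of three points $x_1, x_2, x_3$ of pairwise distinct colours $1, 2, 3$ and a fourth point forcing a line with three colours. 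Concretely: if colours $1$ and $2$ both appear, pick $x_1$ of colour $1$ and $x_2$ of colour $2$; the line $\langle x_1, x_2\rangle$ is two-coloured, so its third point $x_{12}$ has colour $1$ or $2$. Now if colour $3$ also appears at some point $x_3$ not in $\langle x_1, x_2 \rangle$, then consider the plane $\Pi = \langle x_1, x_2, x_3\rangle$: within this plane the lines $\langle x_1, x_3\rangle$ and $\langle x_2, x_3\rangle$ are two-coloured, and tracing colours around the three lines of $\Pi$ through the point of intersection patterns should be contradictory unless the colour structure is quite restricted — essentially this is the rank-$3$ base case and is exactly the statement that a triangle-colouring with every line $\le 2$-coloured has a point whose removal leaves $\le 2$ colours, which can be checked by a short finite analysis (the number of colours on a plane is at most $2$, or is $3$ in a very specific way pinned down by the line conditions, or the plane has a point of colour used only there).

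The main obstacle is proving the claim cleanly in higher rank rather than just rank $3$: a priori the colour classes could interleave across many hyperplanes, and I need a genuinely global argument that some hyperplane $H$ has $|c(G-H)| \le 2$. I expect the right approach is: show that if $|[c]| \ge 3$, then there is a colour, say colour $1$, such that the points \emph{not} coloured $1$ are contained in a hyperplane. To see this, suppose colours $1, 2, 3$ all occur; I claim the set $c^{-1}(1)$ together with at most one other colour class covers the complement of a hyperplane, by an inductive/closure argument: the span of $c^{-1}(2) \cup c^{-1}(3) \cup \dotsb$ (all colour classes except $1$) must be a proper subspace, for otherwise one finds a line through a colour-$1$ point and two non-colour-$1$ points of different colours among $\{2,3,\dotsc\}$, violating the two-colour condition on that line once one checks the third point of the line cannot be colour $1$ — this uses that for any point $p$ of colour $\ge 2$ and any point $q$ of colour $1$, the third point of $\langle p, q \rangle$ has colour $1$ or $c(p)$, and pushing this along shows $c^{-1}(\{2,3,\dots\})$ is closed under spanning, hence a subspace, hence (if not all of $G$) contained in a hyperplane $H$; then $G - H$ uses only colour $1$, giving the claim with room to spare. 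If instead that span is all of $G$, then $c^{-1}(1)$ is a single point or spans a small subspace, and a symmetric argument isolates a different colour class; either way some colour class is "co-hyperplane-confined." Once the claim is in hand, the top-down recursion described in the first paragraph finishes the proof.
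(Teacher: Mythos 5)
Your top-down recursion is fine as a skeleton: if one could always find a hyperplane $H$ of $G$ with $|c(G-H)| \le 2$, then restricting $c$ to $H$ preserves the hypothesis and induction on dimension produces the chain. The gap is that your proposed proof of this key claim does not work, and the stronger statement you reduce it to is actually false. You argue that when $|[c]| \ge 3$ the union of all colour classes other than some colour is closed under spanning, hence contained in a hyperplane, so that some hyperplane has \emph{monochromatic} complement. Counterexample: in $G \cong \PG(2,3)$ fix a point $x$, colour $x$ with $\alpha$, colour the points of two of the four lines through $x$ (minus $x$) with $\beta_1$, and the points of the other two (minus $x$) with $\beta_2$. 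Every line through $x$ uses $\{\alpha,\beta_i\}$, and every line missing $x$ meets each punctured line once, so uses $\{\beta_1,\beta_2\}$; the hypothesis holds with three colours. Yet no colour class has its complement inside a line: the non-$\alpha$ points are all $12$ points other than $x$, and the complement of each $\beta_i$-class contains $x$ together with two full punctured lines through $x$, which span the plane. Moreover no line has monochromatic complement (a line's complement has $9$ points, the largest colour class has $6$), so no colour class is ``co-hyperplane-confined'' in your sense. Your ``for otherwise'' step also fails on this example: $c^{-1}(\{\beta_1,\beta_2\})$ spans $G$ but there is no line with three colours, so spanning does not yield the rainbow line you need, and $c^{-1}(\{\beta_1,\beta_2\})$ is certainly not closed under spanning.

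What survives is only the weaker claim (some hyperplane sees at most two colours outside it), and that claim is essentially the whole content of the theorem: in the paper it comes from Theorem~\ref{main1} (every connected modular rainbow-triangle-free colouring with at least three colours has a nonempty decomposer), proved by induction using the rank-two case (Lemma~\ref{ranktwo}), a genuine counting argument in rank three (Lemma~\ref{rankthree}), Lemma~\ref{basispartition}, and a contraction step (Lemma~\ref{contracthpdecomposer}); the chain in Theorem~\ref{maintargetintro} then falls out of the lift-join/target machinery (Theorem~\ref{targetpaircolouring}). Note also that the rank-three step really uses finiteness of the field -- the decomposition fails over infinite fields (Section~\ref{infsection}) -- so no soft closure/spanning argument like the one you sketch can substitute for it. As written, your proposal establishes only the reduction of the theorem to the key claim, not the claim itself.
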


Note that this applies to all finite projective geometries, not just those over $\bF_2$; 
this is also the case for Theorem~\ref{mainstructure}. 
Sadly, for larger fields, Theorem~\ref{mainstructure} has no direct applications 
to Conjecture~\ref{multicolouredeh}, because the notion of a 
`line' diverges from that of a `triangle'. 

In the case where the number of colours is equal to the dimension, 
our structure theorem has an especially nice consequence, stating that there is
a unique $n$-colouring of $\PG(n-1,q)$ with no rainbow triangle, 
up to automorphisms and permutations of the colour set. 
In the case where $q = 2$, this follows from a much more general result 
of B\'{e}rczi and Schwarcz [\ref{bs}, Theorem 4], who call these colourings `standard'.
(Their result has an apparently stronger assumption that omits rainbow circuits rather than triangles, 
but we will see in Theorem~\ref{easyequiv} that this hypothesis is equivalent to ours in   
the case of projective geometries.)

\begin{theorem}\label{binaryintro}
    Let $q$ be a prime power, and $c$ be an $n$-colouring of $\PG(n-1,q)$ that uses all $n$ 
    colours, such that each line contains at most two colours. 
    Then there is a maximal chain of subspaces $\es = F_0 \subset \dotsc \subset F_n = \PG(n-1,q)$
    such that the sets $F_i - F_{i-1}$ are monochromatic with different colours. 
\end{theorem}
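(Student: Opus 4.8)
The plan is to prove the statement by induction on $n$, establishing simultaneously the auxiliary fact (needed in the induction) that \emph{every} colouring of $\PG(m-1,2)$ with at most two colours on each line uses at most $m$ colours. The main conclusion reduces to the following Key Lemma: if $c$ uses exactly $n$ colours and has at most two colours on every line of $G\cong\PG(n-1,2)$, then there is a hyperplane $H$ of $G$ with $G-H$ monochromatic. Granting this, say $c^{-1}(i)=G-H$; then $c|_H$ has at most two colours per line (lines of $H$ are lines of $G$) and uses exactly the $n-1$ colours other than $i$ (each such colour occurs in $G$ but not in $G-H$, hence in $H$), so induction applied to $c|_H$ yields a chain $\es=F_0\subset\dots\subset F_{n-1}=H$ with the layers $F_j-F_{j-1}$ monochromatic of pairwise distinct colours; then $F_n:=G$ has $F_n-F_{n-1}=G-H$ monochromatic of colour $i$, which completes the chain. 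The auxiliary fact is maintained in the induction by a minor variant of the argument for the Key Lemma.

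To prove the Key Lemma, I would first apply Theorem~\ref{maintargetintro} to obtain subspaces $\es=W_0\subseteq\dots\subseteq W_k=G$ with $|c(W_{i+1}-W_i)|\le 2$ for all $i$, and then refine this to a maximal flag $\es=V_0\subset V_1\subset\dots\subset V_n=G$ with $\dim V_i=i$: inserting subspaces into a step only shrinks the colour set of a layer, so every layer $V_i-V_{i-1}$ still uses at most two colours. If the top layer $G-V_{n-1}$ uses at most one colour, take $H=V_{n-1}$. Otherwise it uses exactly two colours $a,b$; let $A,B$ be the corresponding colour classes inside $G-V_{n-1}$, and pass to $\bF_2^n$ with $V_{n-1}=\ker f\setminus\{0\}$ for a linear functional $f$. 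The crucial observation is that if $c(z)\notin\{a,b\}$ (so necessarily $z\in V_{n-1}$) and $p\in G-V_{n-1}$, then the third point $z+p$ of the line through $z$ and $p$ again lies outside $V_{n-1}$, so the absence of a rainbow triangle forces $c(p)=c(z+p)$. Hence $A$ and $B$ are each invariant under translation by every such $z$, and therefore under the subspace $Z\le\bF_2^n$ that these vectors span, which is contained in $\ker f$.

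The argument then splits on $\dim Z$. If $\min(|A|,|B|)=1$, a similar line argument forces all of $G$ to be coloured using only $a$ and $b$, whence $n\le 2$, which is a base case. Otherwise $|A|,|B|\ge 2$; since $A$ is a union of $Z$-cosets inside the $2^{n-1}$-element set $G-V_{n-1}$ (itself a union of $Z$-cosets) with $0<|A|<2^{n-1}$, we get $\dim Z\le n-2$. Now $\hat Z:=Z\setminus\{0\}$ is a subspace of $G$ that contains every point whose colour lies outside $\{a,b\}$, so $c|_{\hat Z}$ uses all $n-2$ of those colours; applying the auxiliary fact to $\hat Z$ gives $\dim Z=n-2$ exactly, and forces $\hat Z$ to use only the colours outside $\{a,b\}$. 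Since $\bF_2^n/Z$ is two-dimensional, $G-\hat Z$ is a disjoint union of three $Z$-cosets, each of which is monochromatic in colour $a$ or $b$ (for distinct $u,u'$ in one coset, $u+u'\in\hat Z$ has colour outside $\{a,b\}$, so the line $\{u,u',u+u'\}$ forces $c(u)=c(u')$), and $V_{n-1}$ consists of $\hat Z$ together with one of these cosets. A brief check of which of $a,b$ each of the three cosets gets then shows that one of the two hyperplanes $\langle Z,w\rangle\setminus\{0\}$, with $w$ a coset representative of $A$ or of $B$, has monochromatic complement; this proves the Key Lemma.

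The main obstacle is the Key Lemma, and within it the case in which the flag furnished by Theorem~\ref{maintargetintro} has a genuinely two-coloured top layer: the hyperplane $V_{n-1}$ itself need not work, and one has to use the binary linear structure (translations and cosets of $Z$) together with the nested induction to locate a hyperplane that does. One further remark: as literally written the statement concerns $\PG(n-1,2)$, the hypothesis that $q$ is a prime power playing no role; the analogue for $\PG(n-1,q)$ with general $q$, in which ``at most two colours per line'' takes the place of ``no rainbow triangle'', is instead most naturally read off from the full structure theorem, Theorem~\ref{mainstructure}, since the translation arguments above are specific to the field $\bF_2$.
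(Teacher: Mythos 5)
Your argument is correct, but it takes a genuinely different route from the paper's. The paper restates the result as Theorem~\ref{fullbinary}, for $\PG(n-1,q)$ with arbitrary prime power $q$ (the ``$2$'' in the statement above is a typo), and proves it in a few lines: for $n\ge 3$, Theorem~\ref{main1} supplies a nontrivial decomposer $F$; taking a complement $F'$, Lemma~\ref{liftjoiniff} gives $c=(c|F)\otimes(c|F')$, and the count $r_M(F)+r_M(F')=n=|c(F)|+|c(F')|-|c(F)\cap c(F')|$ combined with Theorem~\ref{easyequiv} forces $r_M(F)=|c(F)|$, $r_M(F')=|c(F')|$ and $c(F)\cap c(F')=\es$, so induction applies to both halves simultaneously; the decomposer may have any rank. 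You instead peel off one dimension at a time via your Key Lemma (a hyperplane with monochromatic complement), extracted from Theorem~\ref{maintargetintro} by an explicit translation/coset analysis in $\bF_2^n$. I checked that analysis and it goes through: the stabiliser subspace $Z$ of the two top-layer colour classes has dimension exactly $n-2$, so the three nonzero $Z$-cosets are monochromatic in $\{a,b\}$ and one of the two hyperplanes $\spn(Z\cup\{w\})$ indeed has monochromatic complement. Two small remarks: your auxiliary fact is precisely Theorem~\ref{easyequiv}(iv) applied to $F=E(M)$, so it need not be re-proved inside the induction; and in the reduction you should note explicitly that colour $i$ cannot also occur in $H$ (which follows from that same fact, since the other $n-1$ colours all occur in $H$), so that the appended layer really has a new colour. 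The trade-off is that your proof is hands-on and binary-specific---the field $\bF_2$ enters essentially through the translation argument, as you acknowledge---and still ultimately rests on Theorem~\ref{main1} via Theorem~\ref{maintargetintro}, whereas the paper's decomposer argument is shorter and handles all $q$ uniformly.
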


Conjecture~\ref{inducedeh} is equivalent to the case of Conjecture~\ref{multicolouredeh}
where $s = s_0 = 2$, so is resolved in the case where $q = k = 2$ 
by Theorem~\ref{mainnonrainbow}. We can resolve two more cases of 
Conjecture~\ref{mainnonrainbow} (up to complementation)
using known exact structure theorems. 

The first is in the case where $q = 3$ and $k = 2$, and $H$ is a set of 
two distinct elements in $\PG(1,3)$. Viewed as a two-colouring, this object $H$ is 
self-complementary, since $\PG(1,3)$ has four elements and symmetric automorphism group. 
Conjecture~\ref{inducedeh} will follow for $H$ from a characterization 
of the $H$-free matroids due to Mizell and Oxley [\ref{mo}],
which we prove separately and in more generality as 
Theorem~\ref{targetiffline}.

\begin{theorem}\label{mainchar3}
  Let $H \subseteq \PG(1,3)$ with $|H| = 2$. Then Conjecture~\ref{inducedeh} holds
  for $H$, with $\delta = 1$ and $C = \tfrac{1}{2}$. These values are best-possible. 
\end{theorem}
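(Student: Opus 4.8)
The plan is to reduce the statement to a condition on lines with exactly two points, invoke a structure theorem for the $H$-free sets, and then read off a large homogeneous subspace, matching it against an explicit extremal example. Since (as noted above) $\PG(1,3)$ has four points and symmetric automorphism group, all two-element subsets $H$ of $\PG(1,3)$ are automorphism-equivalent; moreover any injective homomorphism $\PG(1,3) \to \PG(n-1,3)$ is an isomorphism onto some line (rank-two subspace) $L$, and composing it with a suitable automorphism of $\PG(1,3)$ carries $H$ onto any two-element subset of $L$. Hence $X \sse \PG(n-1,3)$ contains $H$ as an induced restriction if and only if some line of $\PG(n-1,3)$ meets $X$ in exactly two points; equivalently, $X$ is $H$-free precisely when every line meets $X$ in $0$, $1$, $3$, or $4$ points (equivalently, when every pair of elements of the matroid $\PG(n-1,3)|X$ lies in a triangle).

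The key input is the structure theorem for these sets, which we prove in greater generality as Theorem~\ref{targetiffline}, strengthening the Mizell--Oxley classification [\ref{mo}]: $X \sse \PG(n-1,3)$ is $H$-free if and only if there is a chain of subspaces $\es = W_0 \sse W_1 \sse \dots \sse W_t = \PG(n-1,3)$ in which each layer $W_i - W_{i-1}$ is entirely contained in $X$ or entirely disjoint from $X$, the two types alternating as $i$ increases. The ``if'' direction is a quick check: for a line $L$, let $p$ be the unique point of $L$ lying in the first layer that $L$ meets; the remaining three points of $L$ then all lie in a single later layer, so $|L \cap X| \in \{0,1,3,4\}$ according to which of these two layers are ``in''. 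The ``only if'' direction --- showing that a spanning, proper, nonempty, $H$-free $X$ admits a subspace $W$, neither empty nor everything, that is contained in or disjoint from $X$ and relative to which $X$ decomposes, so that one may induct on rank --- is where essentially all of the difficulty lies; this is the step I expect to be the main obstacle, and it is the substance of the Mizell--Oxley argument.

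Granting the structure theorem, the conclusion is routine. For each $i$ pick a subspace $U_i \sse W_i$ complementary to $W_{i-1}$ in $W_i$, so that $U_i \cap W_{i-1} = \es$, $U_i \sse W_i - W_{i-1}$, and $\dim U_i = \dim W_i - \dim W_{i-1}$. Let $S_{\mathrm{in}}$ be the set of indices $i$ with $W_i - W_{i-1} \sse X$ and $S_{\mathrm{out}}$ the remaining indices, and put $d_{\mathrm{in}} = \sum_{i \in S_{\mathrm{in}}} \dim U_i$ and $d_{\mathrm{out}} = \sum_{i \in S_{\mathrm{out}}} \dim U_i = n - d_{\mathrm{in}}$. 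Because the $U_i$ sit inside the nested family $(W_i)$, the sum $W^{\mathrm{in}} := \sum_{i \in S_{\mathrm{in}}} U_i$ is direct, so $\dim W^{\mathrm{in}} = d_{\mathrm{in}}$; moreover, for any point $p \in W^{\mathrm{in}}$, if $j$ is the largest index in $S_{\mathrm{in}}$ for which $p$ has a nonzero component in $U_j$, then $p \in W_j$ but $p \notin W_{j-1}$ (the earlier summands lie in $W_{j-1}$, while $U_j$ avoids $W_{j-1}$), so $p \in W_j - W_{j-1} \sse X$; thus $W^{\mathrm{in}} \sse X$. Symmetrically $W^{\mathrm{out}} := \sum_{i \in S_{\mathrm{out}}} U_i$ has dimension $d_{\mathrm{out}}$ and is disjoint from $X$. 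Since $d_{\mathrm{in}} + d_{\mathrm{out}} = n$, one of $W^{\mathrm{in}}, W^{\mathrm{out}}$ is a homogeneous subspace of dimension at least $\lceil n/2 \rceil$, which proves Conjecture~\ref{inducedeh} for $H$ with $C = \tfrac{1}{2}$ and $\delta = 1$. For the matching lower bound, take $n$ even and let $X$ be a subspace of dimension $\tfrac{n}{2}$; this $X$ is $H$-free since a line not contained in $X$ meets it in at most one point, while any subspace contained in $X$ has dimension at most $\tfrac{n}{2}$ and any subspace disjoint from $X$ has dimension at most $n - \tfrac{n}{2} = \tfrac{n}{2}$, so no homogeneous subspace exceeds $\tfrac{n}{2}$ and the constants $C = \tfrac{1}{2}$ and $\delta = 1$ cannot be improved.
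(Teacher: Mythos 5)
Your proposal is correct and follows essentially the same route as the paper: reduce $H$-freeness to the condition that every line meets $X$ in $0,1,3$ or $4$ points (so every line restriction is a target), invoke Theorem~\ref{targetiffline} to conclude the whole colouring is a target, and then extract a monochromatic flat of rank at least $\lceil n/2\rceil$, with the same extremal example (one colour class a flat of rank about $n/2$) showing optimality. The only difference is that you re-derive the additivity of the monochromatic flat ranks by an explicit direct-sum-of-complements argument, where the paper simply cites Lemma~\ref{targetomega}.
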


The second case we can settle is that of the \emph{claw}, which is defined as a
basis of the seven-point Fano plane $\PG(2,2)$. This will follow from the main theorem of [\ref{nn}], which allows us 
to reduce the problem to Theorem~\ref{mainnonrainbow}.

\begin{theorem}\label{mainclaw}
  Let $H \subseteq \PG(2,2)$ consist of either a (three-element) basis of $\PG(2,2)$
  or its four-element complement. Then Conjecture~\ref{inducedeh} holds for $H$, with $\delta = \tfrac{1}{7}$. 
\end{theorem}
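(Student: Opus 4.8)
The plan is to reduce the claw case to the two-coloured triangle case already handled by Theorem~\ref{mainnonrainbow}. Let $H$ be a three-element basis of $\PG(2,2)$ (the four-element complement case follows by exchanging the roles of the two colours throughout, since $X \subseteq \PG(n-1,2)$ containing the complement of $H$ as an induced restriction is the same as $\PG(n-1,2) \setminus X$ containing $H$ as an induced restriction, and the conclusion of Conjecture~\ref{inducedeh} is symmetric in $X$ and its complement). Suppose $X \subseteq \PG(n-1,2) \cong G$ does not contain the claw $H$ as an induced restriction. Identifying the points of $G$ with nonzero vectors of $\bF_2^n$, the condition ``$X$ has no induced claw'' says precisely that there is no three-element independent set $\{x_1,x_2,x_3\} \subseteq X$ whose span meets $X$ in exactly $\{x_1,x_2,x_3\}$; equivalently, whenever $x_1,x_2,x_3 \in X$ are independent, at least one of the three further points $x_1+x_2, x_1+x_3, x_2+x_3, x_1+x_2+x_3$ of the rank-$3$ flat they span also lies in $X$. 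The main theorem of [\ref{nn}] is a structure theorem for matroids (equivalently, for subsets of projective space up to induced restriction) with no induced claw; I would invoke it to conclude that $X$, viewed as a restriction of $\PG(n-1,2)$, has a highly structured form --- concretely, that either $X$ is already contained in a subspace of small codimension, in which case it misses a large subspace outright, or $X$ (or its complement) is a ``line-like'' or modular configuration that can be described using a bounded number of subspaces.

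The second step is to translate the structural description from [\ref{nn}] into a statement of the form required by Conjecture~\ref{inducedeh}. If the structure theorem places $X$ inside a subspace $W$ with $\dim(W) \le n - t$ for some $t = t(n)$, then the complementary situation gives us a large subspace disjoint from $X$ provided $t$ is at least polynomial in $n$; but more likely the structure theorem gives something weaker, and the right move is to extract from it a partition of the ground set (or of a large subspace) into a bounded number of pieces on each of which $X$ behaves like a \emph{two-coloured} object with no induced claw, i.e.\ like a $c_0$-free colouring of a subgeometry for the relevant two-coloured triangle $c_0$. More precisely, I expect that inside a large subspace $G'$ of $G$, the set $X \cap G'$ restricted appropriately has the property that no triangle (three collinear points) of $G'$ realises the ``pattern'' $\{1,1,2\}$ — or that one of the colour classes is a subspace, reducing to the $s_0 = 1$ case of Theorem~\ref{mainnonrainbow}. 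Applying Theorem~\ref{mainnonrainbow} with $\delta = \tfrac17$ to this two-coloured instance produces a $(c,2)$-homogeneous subspace of $G'$, i.e.\ a subspace avoiding one of the two colours, of dimension at least $C' (\dim G')^{1/7}$; since $\dim G'$ is linear in $n$ (we only lose a bounded additive amount in passing from $G$ to $G'$), this subspace has dimension $\Omega(n^{1/7})$, and a subspace avoiding colour $1$ is disjoint from $X$ while one avoiding colour $2$ is contained in $X$. This is exactly the conclusion of Conjecture~\ref{inducedeh} with $\delta = \tfrac17$.

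The main obstacle is the faithful transcription of the [\ref{nn}] structure theorem: one must verify that ``no induced claw'' in the sense of Conjecture~\ref{inducedeh} coincides with the matroid-theoretic hypothesis in [\ref{nn}] (there may be a discrepancy between forbidding the claw as an induced restriction of a projective geometry and forbidding it as a minor or as an induced submatroid of an arbitrary simple binary matroid), and then to check that the case analysis produced by that theorem always falls into one of the buckets above — namely (i) $X$ or its complement confined to a subspace of linear codimension, or (ii) a configuration on which the two-coloured Theorem~\ref{mainnonrainbow} applies after passing to a bounded-codimension subspace. If [\ref{nn}] yields finitely many exceptional configurations, each must be inspected individually to confirm it cannot be a genuine obstruction at large $n$ (typically because such configurations have bounded size or bounded rank). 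Handling the bookkeeping of how the polynomial bound $n^{1/7}$ degrades under the finitely many reductions — each of which should cost only a constant factor in $C$ and nothing in the exponent $\delta$ — is routine once the structural dichotomy is pinned down.
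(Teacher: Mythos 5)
Your high-level plan --- invoke the structure theorem of [\ref{nn}] for claw-free sets and feed the resulting pieces into the Kelley--Meka-based triangle machinery --- is indeed the paper's route, and your complementation remark for the four-element case is fine. But the form you guess for the structure theorem, and the reduction you build on it, have genuine gaps. The actual statement (Theorem~\ref{clawfreestructure} in the paper) says that the claw-free $2$-colouring is a lift-join $c = \bigotimes_i (c\,|\,F_i)$ over mutually skew flats $F_1,\dotsc,F_t$ whose number $t$ is \emph{not} bounded; the flats can all have small rank. So your step of ``passing to a bounded-codimension subspace $G'$ with $\dim G'$ linear in $n$'' and losing ``only a constant factor in $C$'' does not go through: no single piece need have rank comparable to $n$, and restricting to one piece can destroy the whole bound. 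What rescues the argument in the paper is the additivity of $\omega$ across lift-join pieces (Lemma~\ref{ljomega}, via Lemma~\ref{targetomega}-type bookkeeping): one bounds $\omega_M(X_j \cap F_i)$ piece by piece and then sums, using $\sum_i r_M(F_i) = n$ and the superadditivity of $x \mapsto x^{1/7}$, together with a dichotomy on whether the pieces of the ``good for $X_1$'' types or the remaining type carry at least half the total rank. Without some substitute for this additivity, an unbounded number of small pieces is a real obstruction to your outline.

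The second gap is the case analysis itself. The three outcomes of Theorem~\ref{clawfreestructure} are: $X_1 \cap F_i$ is a union of two disjoint flats with spanning union (which immediately gives a monochromatic flat of rank at least $\tfrac12 r_M(F_i)$, no additive combinatorics needed); $\omega_M(X_2 \cap F_i) \le 1$ (this is the piece that genuinely reduces to the monochromatic-triangle input, i.e.\ Lemma~\ref{compltrifreeomega}/Theorem~\ref{complorsum} behind Theorem~\ref{mainnonrainbow}); and the case where every plane of $F_i$ meets $X_1$ in an even number of points. This last case fits neither of your two buckets --- such an $X_1$ is neither confined to a small subspace nor triangle-pattern-free in any useful sense --- and it is handled by a separate tool, Lemma~\ref{evenplanealpha} from [\ref{bkknp}], which gives a flat of rank about $\tfrac{n}{2}$ inside $X_2$. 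So a correct proof along your lines needs three ingredients you did not anticipate: the lift-join form of the structure theorem with unboundedly many parts, the $\omega$-additivity lemma to aggregate them, and the even-plane lemma for the third outcome. As written, the proposal identifies the right references but its reduction scheme would fail on decompositions with many small parts and is silent on the even-plane configurations.
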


\section{Preliminaries}

We write $\bN$ for the set $\{0, 1, \dotsc \}$, and $[n] = \{1, \dotsc, n\}$ for each $n \in \bN$. 
When discussing matroids, we usually use the notation and terminology of Oxley 
[\ref{oxley}]. 
For sets $X,Y$ in a matroid $M$, 
we write $\sqcap_M(X,Y)$ for the `local connectivity' evaluated as $r_M(X) + r_M(Y) - r_M(X \cup Y)$. 
A rank-one flat is a \emph{point}, and a rank-two flat is a \emph{line}. 
Two flats $F,F'$ of a matroid $M$ are \emph{complementary} 
  if $\sqcap_M(F,F') = 0$ and $F \cup F'$ is spanning in $M$;
such an $F'$ is a \emph{complement} of $F$, and satisfies $r(M) = r_M(F) + r_M(F')$. 
Note that if $B$ is any basis for $M$ containing a basis $I$ for the flat $F$,
then $\cl_M(B - I)$ is an example of a complement of $F$,  so every flat has at least one complement. 

Our first tool is due to Bose and Burton [\ref{bb}]. 
A \emph{projective subspace} of $\bF_2^n$ is a linear subspace with the zero vector removed. 

\begin{theorem}\label{bbthm}
  Let $t \in \bN$. 
  If $X \subseteq \bF_2^n \del \{0\}$ contains no $(t+1)$-dimensional projective subspace,
  then $|X| \le (1 - 2^{-t})2^n$. 
\end{theorem}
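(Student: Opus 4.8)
The plan is to prove the contrapositive in its cleanest equivalent form. Writing $Y = (\bF_2^n \del \{0\}) \setminus X$ for the complement of $X$ among the nonzero vectors, the inequality $|X| \le (1-2^{-t})2^n = 2^n - 2^{n-t}$ is equivalent to $|Y| \ge 2^{n-t}-1$. So it suffices to show the following reformulation: if $Y \sse \bF_2^n \del \{0\}$ with $|Y| \le 2^{n-t}-2$, then there is a linear subspace $W$ of dimension $t+1$ with $W \del\{0\} \sse X$, equivalently with $(W \del \{0\}) \cap Y = \es$; such a $W\del\{0\}$ is a $(t+1)$-dimensional projective subspace contained in $X$.

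I would prove this reformulated statement by induction on $t$, for all $n$ simultaneously. The base case $t = 0$ is immediate: $|Y| \le 2^n - 2 < 2^n-1$ forces some nonzero $v \notin Y$, and $W = \spn(v)$ works. For the inductive step, fix any nonzero $v \notin Y$ (which exists since $|Y| \le 2^{n-t}-2 < 2^n-1$), and pass to the quotient map $\pi \colon \bF_2^n \to \bF_2^n/\spn(v) \cong \bF_2^{n-1}$. Since $0 \notin Y$ and $v \notin Y$, the image $\overline Y = \pi(Y)$ avoids $0$, and $|\overline Y| \le |Y| \le 2^{n-t}-2 = 2^{(n-1)-(t-1)}-2$. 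By the induction hypothesis with dimension $n-1$ and parameter $t-1$, there is a $t$-dimensional subspace $\overline W$ of $\bF_2^{n-1}$ with $(\overline W \del \{0\}) \cap \overline Y = \es$. Its preimage $W = \pi^{-1}(\overline W)$ is a $(t+1)$-dimensional subspace of $\bF_2^n$; and if some $y \in Y$ lay in $W \del \{0\}$, then $\pi(y) \in \overline W$, while $\pi(y) \ne 0$ (as $y \notin \{0,v\} = \spn(v)$) and $\pi(y) \in \overline Y$, contradicting $(\overline W \del\{0\}) \cap \overline Y = \es$. Hence $W \del \{0\} \sse X$, as required. (When $n \le t$ the hypothesis $|Y| \le 2^{n-t}-2$ is vacuous, so no separate treatment of small $n$ is needed.) Converting back: $|X| = (2^n-1) - |Y| \le (2^n-1)-(2^{n-t}-1) = (1-2^{-t})2^n$.

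The argument is short, so I do not expect a serious obstacle; the only thing to get right is the setup. The key point is to induct on $t$ rather than on $n$, and to quotient by a single vector outside $Y$: this shifts \emph{both} parameters down by one, from $(n,t)$ to $(n-1,t-1)$, matching the identity $2^{n-t} = 2^{(n-1)-(t-1)}$, and the trivial bound $|\pi(Y)| \le |Y|$ already supplies the size estimate for the recursive call — no averaging or clever choice of $v$ is needed. The two verifications that require a moment's care are that quotienting by $v \notin Y$ keeps $0$ out of $\pi(Y)$, and that a subspace of the quotient avoiding $\overline Y$ lifts to one avoiding $Y$; both are the one-line checks given above. (If preferred, one could instead take a maximal subspace $W$ with $W\del\{0\}\sse X$ and argue by extremality that $\dim W \ge t+1$, but the quotient induction seems cleaner.) Finally, the extremal configuration $X = \bF_2^n \setminus U$ with $U$ an $(n-t)$-dimensional linear subspace shows the bound is tight, though this is not required for the statement as worded.
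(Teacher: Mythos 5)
Your argument is correct. The reduction to the complementary (blocking-set) form is the right equivalence: $|X|\le(1-2^{-t})2^n$ holds iff $|Y|\ge 2^{n-t}-1$ for $Y=(\bF_2^n\del\{0\})\setminus X$, and your induction on $t$ is sound: choosing a nonzero $v\notin Y$, the quotient by $\spn(v)$ keeps $0$ out of $\pi(Y)$ precisely because $0,v\notin Y$, the size bound transfers since $2^{n-t}=2^{(n-1)-(t-1)}$, and the preimage of a $t$-dimensional subspace avoiding $\pi(Y)$ is a $(t+1)$-dimensional subspace avoiding $Y$ (a nonzero $y\in W$ would have $\pi(y)\ne 0$ since $y\notin\spn(v)$). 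The degenerate cases are handled as you say: non-vacuity of the hypothesis forces $n-t\ge 1$, which is preserved down the recursion, so the base case $t=0$ always has a nonzero vector available.

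One point of comparison: the paper does not prove this statement at all — it is quoted as an external tool due to Bose and Burton, and only its consequences (Corollary~\ref{bbomega} and Theorem~\ref{bbthm2}) are derived in the text. So there is no in-paper argument to measure yours against; your quotient-induction gives a short self-contained proof of the $\bF_2$ case, which is strictly more than the paper supplies, and your closing remark about the complement of an $(n-t)$-dimensional subspace correctly shows the bound is attained.
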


Given a set $X \subseteq \bF_2^n$, write $\omega(X)$ for the largest dimension of a projective
subspace contained in $X$, and $\alpha(X) = \omega (\bF_2^n \del X)$ 
for the largest dimension of a projective subspace contained in the complement of $X$. 
By applying Theorem~\ref{bbthm} to set $(\bF_2^n \del \{0\}) \del X$ with $t = \alpha(X)$, 
we obtain the following corollary. 

\begin{corollary}\label{bbomega}
  Each $X \subseteq \bF_2^n$ satisfies $|X| \ge 2^{n- \alpha(X)} - 1$. 
\end{corollary}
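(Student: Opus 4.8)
The plan is to unwind the definitions and feed the right set into Theorem~\ref{bbthm}, exactly as the sentence preceding the corollary suggests. Write $Y = (\bF_2^n \del \{0\}) \del X$ for the set of nonzero vectors missed by $X$. The first observation is that, since every projective subspace is by definition a set of nonzero vectors, the largest dimension of a projective subspace contained in $Y$ agrees with the largest dimension of one contained in $\bF_2^n \del X$; that is, $\omega(Y) = \alpha(X)$. In particular $Y$ contains no $(\alpha(X)+1)$-dimensional projective subspace.

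Next I would apply Theorem~\ref{bbthm} to $Y$ with $t = \alpha(X)$, which gives $|Y| \le (1 - 2^{-\alpha(X)})2^n = 2^n - 2^{n-\alpha(X)}$. Then I would bound $|Y|$ from below in the trivial way: since $Y = (\bF_2^n \del \{0\}) \del X$, we have $|Y| = (2^n - 1) - |X \cap (\bF_2^n \del \{0\})| \ge (2^n-1) - |X|$. Combining the two displays yields $(2^n - 1) - |X| \le 2^n - 2^{n-\alpha(X)}$, and rearranging gives $|X| \ge 2^{n-\alpha(X)} - 1$, as required.

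There is essentially no obstacle here — the argument is pure bookkeeping — so the only things to be slightly careful about are the two degenerate points. First, one should confirm the convention that $\omega$ of a set containing no point is $0$, so that the identity $\omega(Y) = \alpha(X)$ holds even when $X$ meets every nonzero vector. Second, when $\alpha(X) = n$ (equivalently $X \subseteq \{0\}$, so $Y$ is all of $\bF_2^n \del \{0\}$), the hypothesis of Theorem~\ref{bbthm} with $t = n$ is vacuous and its conclusion reads $|Y| \le 2^n - 1$, which still holds; in this case the claimed bound $|X| \ge 2^0 - 1 = 0$ is trivial, so nothing breaks.
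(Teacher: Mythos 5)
Your argument is correct and is exactly the paper's (one-sentence) proof: apply Theorem~\ref{bbthm} to $(\bF_2^n \del \{0\}) \del X$ with $t = \alpha(X)$ and rearrange. The care you take with the degenerate cases is fine but not needed beyond what you note.
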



Finally, the qualitative Ramsey result that facilitates the
discussion ahead is due to Graham, Leeb and Rothschild [\ref{gr}].

\begin{theorem}\label{grthm}
  There is a function $R_q(s,t)$ so that, for each prime power $q$ and all $s,t \in \bN$,
  for all $n \in \bN$ with $n \ge R_q(s,t)$ and each colouring $c$ of $G \cong \PG(n-1,q)$ 
  with $[c] \subseteq \{\mathsf{Red}, \mathsf{Blue}\}$,
  there is either an $s$-dimensional subspace of $G$ coloured $\mathsf{Red}$,
  or a $t$-dimensional subspace of $G$ coloured $\mathsf{Blue}$. 
\end{theorem}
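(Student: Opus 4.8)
The statement is the special case of the Graham--Leeb--Rothschild theorem in which the objects coloured are the points (the $1$-dimensional subspaces), rather than higher-dimensional subspaces, so in principle one simply cites [\ref{gr}]. For completeness I would give the following self-contained deduction from the Hales--Jewett theorem. The plan has two parts: first establish the \emph{affine} analogue --- every $2$-colouring of the points of a high-dimensional affine space $\AG(N,q)=\bF_q^N$ contains a monochromatic affine subspace of any prescribed dimension --- and then bootstrap the projective statement from the affine one by induction on $s+t$, passing each time to a ``hyperplane at infinity''.

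For the affine step I would identify $\bF_q^N$ with $[q]^N$ as a set (remembering which symbol of $[q]$ plays the role of $0$). By the multidimensional Hales--Jewett theorem --- which follows from the one-dimensional version by the standard iterated product argument --- there is an $N=\mathrm{HJ}(q,2,D)$ such that every $2$-colouring of $[q]^N$ contains a monochromatic $D$-dimensional combinatorial subspace: the set of points obtained by fixing all coordinates outside $D$ pairwise disjoint nonempty blocks $B_1,\dotsc,B_D$ and letting the coordinates inside $B_j$ all equal a common parameter $\lambda_j\in[q]$. The map $(\lambda_1,\dotsc,\lambda_D)\mapsto(\text{that point})$ is an affine map $\bF_q^D\to\bF_q^N$ whose linear part sends $\lambda_j$ to $\lambda_j\mathbf{1}_{B_j}$; since the blocks are disjoint and nonempty this linear part is injective, so the image is a $D$-dimensional affine subspace of $\bF_q^N$. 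Hence the monochromatic combinatorial subspace is a monochromatic affine subspace, as desired.

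For the bootstrap, I would take base cases $R_q(1,t)=t$ and $R_q(s,1)=s$ (if no point is $\mathsf{Red}$ then every point is $\mathsf{Blue}$, and $n\ge t$ forces a $\mathsf{Blue}$ $t$-dimensional subspace; symmetrically), and for $s,t\ge 2$ set $D:=1+\max(R_q(s-1,t),R_q(s,t-1))$ and $R_q(s,t):=1+\mathrm{HJ}(q,2,D)$. Given a $2$-colouring of $G\cong\PG(n-1,q)$ with $n\ge R_q(s,t)$, fix a hyperplane $H_\infty$ of $G$; its complement $G-H_\infty$ is an affine space of dimension $n-1\ge\mathrm{HJ}(q,2,D)$, so by the affine step it contains a monochromatic $D$-dimensional affine subspace $A$; say $A$ is $\mathsf{Red}$ (the $\mathsf{Blue}$ case is symmetric, with the roles of $s-1,t$ replaced by $s,t-1$). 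Let $\bar A$ be the span of $A$ in $G$; then $\bar A\cong\PG(D,q)$, its affine part $\bar A-H_\infty$ equals $A$ (hence is all $\mathsf{Red}$), and its part at infinity $\bar A\cap H_\infty$ is a subspace $\cong\PG(D-1,q)$ with $D-1\ge R_q(s-1,t)$. Applying the inductive hypothesis inside $\bar A\cap H_\infty$ either produces a $\mathsf{Blue}$ $t$-dimensional subspace of $G$ --- and we are done --- or a $\mathsf{Red}$ $(s-1)$-dimensional subspace $W_\infty\subseteq\bar A\cap H_\infty$. In the latter case, pick any $a\in A$ and let $W:=\langle a\rangle+W_\infty$, a subspace of $\bar A$ of dimension $s$. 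Since $a\notin H_\infty$ we have $\dim(W\cap H_\infty)=s-1=\dim W_\infty$, so $W\cap H_\infty=W_\infty$; therefore every point of $W$ lies either in $W_\infty$ ($\mathsf{Red}$) or in $\bar A-H_\infty=A$ ($\mathsf{Red}$), and $W$ is a $\mathsf{Red}$ $s$-dimensional subspace. This closes the induction.

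The substantive content --- and the only genuine obstacle --- is the multidimensional Hales--Jewett theorem powering the affine step; everything afterwards is elementary projective bookkeeping (taking spans, splitting a subspace into its affine part and its part at infinity, and the dimension count $W\cap H_\infty=W_\infty$). The argument is uniform in the prime power $q$ and yields an explicit, albeit enormous, bound $R_q(s,t)$ assembled from Hales--Jewett numbers.
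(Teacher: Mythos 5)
Your argument is correct, but it is worth noting that the paper does not prove this statement at all: it is quoted as a known result and attributed to Graham, Leeb and Rothschild, whose categorical Ramsey theorem (in its vector-space form, which handles colourings of $k$-dimensional subspaces, not just points) contains this as the special case of point colourings with two colours. Your route is the standard elementary alternative: deduce the affine Ramsey theorem for $\AG(N,q)$ from the multidimensional Hales--Jewett theorem (your observation that a monochromatic combinatorial subspace of $[q]^N$ is an affine subspace once $[q]$ is identified with $\bF_q$ is exactly right), and then bootstrap to the projective statement by induction on $s+t$, splitting a spanned copy of $\PG(D,q)$ into its affine part and its part at infinity and coning a monochromatic subspace at infinity over an affine point. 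The dimension bookkeeping checks out: your extra $+1$ in the definition of $D$ gives enough slack that it does not matter whether ``dimension'' is read projectively or as linear rank (the paper uses linear rank, so an $s$-dimensional subspace is a copy of $\PG(s-1,q)$), and the only cases you leave implicit, $s=0$ or $t=0$ (the paper's $\bN$ contains $0$), are vacuous. What each approach buys: citing Graham--Leeb--Rothschild is immediate and gives a far more general theorem, which is all the paper needs since it only uses the bound qualitatively; your derivation is self-contained modulo Hales--Jewett, proves exactly the special case used, and produces explicit (if enormous, Hales--Jewett-type) recursive bounds for $R_q(s,t)$.
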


\section{Monochromatic Triangles}

In this section, we prove Theorem~\ref{mainnonrainbow}. 
We first review the relevant material in additive combinatorics, 
and state the main theorem of [\ref{hp}] in a slightly stronger form.
Logarithms here are all binary, and for $X,Y \subseteq \bF_2^n$, 
$X+ Y$ denotes the sum-set $\{x + y \colon x \in X, y \in Y\}$.

For $\delta \in (0,1]$, we follow [\ref{hp}] by writing $\cL(\delta) = \lceil \log (1/\delta) \rceil$ for the smallest integer $k$ such that $\delta 2^k \ge 1$; note that $k \le n$. 
For $X,V \subseteq \bF_2^n$, write $\mu_V(X) = \frac{|X \cap V|}{|V|}$ and $\mu(X) = \mu_{\bF_2^n}(X) = \frac{|X|}{2^n}$. 
The following is a convenient but slightly weaker 
rephrasing of Theorem~\ref{bbthm} in this language.

\begin{theorem}\label{bbthm2}
  If $\varnothing \ne X \subseteq \bF_2^n$, then $\alpha(X) + 1 \ge \cL(\mu(X))$. 
\end{theorem}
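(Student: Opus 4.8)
The plan is to read this off from Corollary~\ref{bbomega} (which is itself just Bose--Burton, Theorem~\ref{bbthm}, applied to the complement of $X$), after a short bookkeeping step involving the definition of $\cL$.

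First I would unwind what the inequality is really asking. Since $\cL(\delta)$ is by definition the least integer $k$ with $\delta\, 2^{k} \ge 1$, and the set of such $k$ is upward-closed, the desired bound $\cL(\mu(X)) \le \alpha(X)+1$ is equivalent to the single numerical statement $\mu(X)\, 2^{\alpha(X)+1} \ge 1$, i.e.\ to $|X| \ge 2^{\,n-\alpha(X)-1}$. So everything reduces to establishing this lower bound on $|X|$.

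To do that I would invoke Corollary~\ref{bbomega}, which gives $|X| \ge 2^{\,n-\alpha(X)} - 1$. If $\alpha(X) \le n-1$ then $2^{\,n-\alpha(X)} \ge 2$, so $2^{\,n-\alpha(X)} - 1 \ge \tfrac12 \cdot 2^{\,n-\alpha(X)} = 2^{\,n-\alpha(X)-1}$, as needed. The only other possibility is $\alpha(X) = n$, meaning $\bF_2^n \del X$ contains an $n$-dimensional projective subspace; but the unique such subspace is $\bF_2^n \del \{0\}$, forcing $X \subseteq \{0\}$, hence $X = \{0\}$ (as $X \ne \varnothing$) and $|X| = 1 \ge \tfrac12 = 2^{\,n-\alpha(X)-1}$. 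Either way $|X| \ge 2^{\,n-\alpha(X)-1}$, and the theorem follows.

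I do not expect any genuine obstacle here: the statement is essentially a logarithmic repackaging of Bose--Burton, and the only thing that needs care is the off-by-one incurred when replacing the sharp bound $2^{\,n-\alpha(X)}-1$ by the clean power-of-two bound $2^{\,n-\alpha(X)-1}$ — precisely the loss signalled by the word ``weaker'' in the statement — together with the harmless degenerate case $X = \{0\}$.
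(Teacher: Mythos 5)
Your proof is correct and follows essentially the same route as the paper: both reduce the claim to Corollary~\ref{bbomega} and then do the logarithmic bookkeeping, your version via the characterization of $\cL$ as the least $k$ with $\mu(X)2^k \ge 1$ and the inequality $2^{m}-1 \ge 2^{m-1}$, the paper's via floors and ceilings of $\log|X|$. Your explicit treatment of the degenerate case $\alpha(X)=n$ (i.e.\ $X=\{0\}$) is a harmless addition to the paper's ``we may assume $\alpha(X)<n$''.
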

\begin{proof}
  We may assume that  $\alpha(X) < n$. 
  By Corollary~\ref{bbomega}, we have 
  \[ \cL(\mu(X)) = \ceil{\log\left(\tfrac{2^n}{|X|}\right)} = n - \floor{\log |X|} \le n - \floor{\log(2^{n-\alpha(X)}-1)}.\]
  Since $\alpha(X) < n$, we have $\floor{\log(2^{n-\alpha(X)} - 1)} = n-\alpha(X)-1$, and the result follows. 
\end{proof}

The following is the hard result we need from additive combinatorics. 
It follows from a specialized strengthening to $\bF_2$, due to Hunter and Pohoata,
of recent work of Kelley and Meka [\ref{km}]. 
The statement below is a weakening of [\ref{hp}, Proposition 3.1] in the case where $r = 1$. 

\begin{theorem}\label{hplem}
  Let $A \subseteq \bF_2^n$, and $\delta, \gamma \in (0,1)$. 
  There exists a subspace $V$ of $\bF_2^n$ with $\codim(V) = O(\cL(\delta)^5 \cL(\gamma)^2)$ such that either $\mu_V(A) < \delta$,
  or $\mu_V(A+A) > 1 - \gamma$. 
\end{theorem}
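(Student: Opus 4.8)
The plan is to obtain Theorem~\ref{hplem} directly from [\ref{hp}, Proposition~3.1], the $\bF_2$-specialization, due to Hunter and Pohoata, of the Kelley--Meka sumset bounds [\ref{km}]. That proposition carries an auxiliary parameter $r$; the first step is to specialize it to $r = 1$, so that the conclusion concerns the ordinary sumset $A + A$ rather than an iterated sum, and then to weaken the explicit codimension bound appearing there to $O(\cL(\delta)^5 \cL(\gamma)^2)$. After that, no quantitative content remains, and it only remains to check that conventions line up: that the quantity $\cL(\cdot) = \lceil \log(1/\cdot) \rceil$ matches the one used in [\ref{hp}], that $A + A$ and $A - A$ coincide over $\bF_2$ (so that sumset and difference-set formulations are interchangeable), and that the membership $0 \in A + A$, which holds whenever $A \ne \varnothing$, is harmless for the inequality $\mu_V(A+A) > 1 - \gamma$.

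For orientation I would also record the shape of the underlying argument, since Theorem~\ref{hplem} is obtained from the Kelley--Meka \emph{sifting} lemma by a density-increment over subspaces. One maintains a descending chain $\bF_2^n = V_0 \supseteq V_1 \supseteq \dotsb$ and inspects $\mu_{V_i}(A)$ at each stage. If this ever falls below $\delta$, the first alternative holds and we stop. Otherwise $A \cap V_i$ has density at least $\delta$ inside $V_i$, and one feeds it to the Kelley--Meka dichotomy run within the ambient space $V_i$: either the relative sumset $(A \cap V_i) + (A \cap V_i)$ has density exceeding $1 - \gamma$ in $V_i$ --- and since this set is contained in $(A+A) \cap V_i$, the second alternative holds and we stop --- or there is a proper subspace $V_{i+1} \subsetneq V_i$ whose codimension in $V_i$ is bounded by a fixed power of $\cL(\delta)$, with a lower-order dependence on $\cL(\gamma)$, on which the density of $A$ has increased by a definite amount. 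The crucial feature of the Kelley--Meka method over the classical Fourier-analytic (Bogolyubov-type) route is that this gain is tracked in a potential that forces only polylogarithmically many iterations in $1/\delta$ --- rather than polynomially many --- so that the codimensions accumulated along the chain telescope to the claimed $O(\cL(\delta)^5 \cL(\gamma)^2)$.

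The genuine obstacle is entirely on the input side, which is why the paper is right to treat this as a black box: neither the Kelley--Meka sumset estimate nor its $\bF_2$-specialized sharpening in [\ref{hp}] --- the latter involving the spreadness reduction and the combinatorial sifting step --- is something one would reprove here. On our side the work is bookkeeping: confirming that the density gain is genuinely additive in the right potential (so the iteration count is polylogarithmic, not polynomial), that the parameter $\gamma$ enters only through the final sumset-density threshold and hence contributes merely the $\cL(\gamma)^2$ factor, and that replacing the relative sumset on each $V_i$ by the honest sumset $A + A$ loses nothing. Accordingly I would present Theorem~\ref{hplem} as an immediate consequence of [\ref{hp}, Proposition~3.1], and confine the iteration sketch above to a remark.
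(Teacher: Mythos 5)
Your proposal matches the paper exactly: the paper gives no proof of Theorem~\ref{hplem} beyond the remark that it is a weakening of [\ref{hp}, Proposition 3.1] in the case $r=1$, which is precisely the reduction you describe. The additional sketch of the density-increment/sifting machinery is fine as orientation but is not needed, since both you and the paper treat the Hunter--Pohoata result as a black box.
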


Using this, we can prove a key result showing that for every set $X \subseteq \bF_2^n$, 
either the sum-set $X + X$ or the complement of $X$ contains a subspace whose dimension 
is polynomial in $n$. The proof is a straightforward consequence of the techniques in [\ref{hp}]. 

\begin{theorem}\label{complorsum}
  There exists $C = C_{\ref{complorsum}} > 0$ such that every $X \subseteq \bF_2^n$ satisfies
   $\max(\alpha(X), \omega(X+X)) \ge C n^{1/7}$. 
\end{theorem}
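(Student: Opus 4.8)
The plan is to run the machinery of Theorem~\ref{hplem} with carefully chosen parameters $\delta,\gamma$ depending on $\mu(X)$, and to convert the two possible conclusions ($\mu_V(X)$ small, or $\mu_V(X+X)$ large) into lower bounds on $\alpha(X)$ and $\omega(X+X)$ respectively via the Bose--Burton bound in the form of Theorem~\ref{bbthm2}. The subtlety is that the codimension bound $O(\cL(\delta)^5\cL(\gamma)^2)$ in Theorem~\ref{hplem} only gives us a subspace $V$ of $\bF_2^n$; we must then relate quantities inside $V$ back to $\bF_2^n$. Since $\dim(V) = n - \codim(V)$ and any projective subspace sitting inside $V$ is also one inside $\bF_2^n$, both $\alpha$ and $\omega$ are monotone enough under this restriction: $\alpha_V(X\cap V) \le \alpha(X)$ and $\omega((X+X)\cap V) \le \omega(X+X)$ (here $\omega$ of a subset of $V$ uses projective subspaces of $V$). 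So it suffices to produce a good bound inside $V$.

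Concretely, first I would dispose of the trivial regime: if $X = \varnothing$ then $\omega(X+X)$ is vacuous but $\alpha(X) = n$, and if $X$ is very small or very large the bound is immediate, so we may assume $\mu(X)$ is bounded away from $0$ and $1$ in a way that makes $\cL(\mu(X))$, $\cL(1-\mu(X))$ both $O(1)$—actually I'd rather not case on this and instead just set $\delta := \mu(X)/2$ and $\gamma := 1/2$, say, so that $\cL(\delta) = O(\cL(\mu(X))) = O(\log(1/\mu(X)))$ and $\cL(\gamma) = O(1)$; then Theorem~\ref{hplem} yields $V$ with $\codim(V) = O(\cL(\mu(X))^5)$. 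In the first case, $\mu_V(X) < \delta = \mu(X)/2$; in the second, $\mu_V(X+X) > 1-\gamma = 1/2$, so in particular $(X+X)\cap V$ occupies more than half of $V$, and its complement in $V$ has density $< 1/2$, so $\cL$ of that complement density is $1$, whence by Theorem~\ref{bbthm2} applied in $V$ we get $\omega((X+X)\cap V) \ge \cL(1/2) - 1 = 0$—that's too weak, so I need $\gamma$ much smaller. The right move is $\gamma := 2^{-t}$ for a parameter $t$ we optimize: then Theorem~\ref{bbthm2} in $V$ gives $\omega(X+X) \ge \omega((X+X)\cap V) \ge \cL(\gamma) - 1 = t-1$, at the cost of $\codim(V) = O(\cL(\mu(X))^5 t^2)$.

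For the first case I'd instead want to iterate: if $\mu_V(X) < \delta$, apply Theorem~\ref{bbthm2} (in $V$) to conclude $\alpha(X) \ge \alpha_V(X\cap V) \ge \cL(\mu_V(X)) - 1 \ge \cL(\delta) - 1$, which is weak unless $\delta$ is small. So I'd rather recurse: either we halve the density of $X$ inside a subspace of codimension $O(\cL(\mu(X))^5 t^2)$, or we win $\omega(X+X) \ge t-1$. After about $\log(1/\mu(X)) \le n$ halvings the density drops below $2^{-(n')}$ in the current subspace $V'$ of dimension $n'$, forcing $X\cap V'$ to be a subspace of near-full dimension, hence $\alpha(X)$ large; but each halving costs codimension, and we must keep $n'$ positive. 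Balancing the $\Theta(\log(1/\mu(X)))$ rounds, each costing $\poly(\cL(\mu(X)), t)$ codimension, against the final dimension and against $t$, and noting $\cL(\mu(X)) \le n$, one is led to an exponent of the shape $n^{1/7}$—the $5$ from $\cL(\delta)^5$ and the $2$ from $\cL(\gamma)^2$ combine through the iteration count to give $5+2 = 7$ in the denominator (this is exactly the bookkeeping done for general $r$ in [\ref{hp}, Proposition 3.1] specialized to $r=1$). The main obstacle is this optimization: tracking how codimension accumulates over the iteration and choosing $t$ (equivalently the target dimension at each stage) so that the worst of $\alpha(X)$ and $\omega(X+X)$ is still $\Omega(n^{1/7})$, while ensuring the ambient dimension never collapses. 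I expect everything else—the reductions to Theorem~\ref{bbthm2}, the monotonicity under passing to subspaces—to be routine.
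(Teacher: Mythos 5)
Your overall frame (apply Theorem~\ref{hplem} and convert its two outcomes into bounds on $\alpha(X)$ and $\omega(X+X)$ via Theorem~\ref{bbthm2} inside $V$, using monotonicity under passing to a subspace) is the right one, and matches the paper's. The gap is in your parameter choice and the resulting bookkeeping. You tie $\delta$ to the density of $X$ (taking $\delta = \mu(X)/2$) and then compensate with a density-halving recursion, asserting that the exponents $5$ and $2$ ``combine through the iteration count'' to give $1/7$. They do not: if at step $j$ the current density is about $2^{-j}$, the codimension cost of that step is $O(j^5 t^2)$ (with $\gamma = 2^{-t}$), so after $J$ steps the total loss is $O(J^6 t^2)$; to win the $\alpha$-case you need roughly $J \approx t \approx d$, the target dimension, so the total codimension is $O(d^8)$ and balancing against $n$ gives $d = \Theta(n^{1/8})$, not the claimed $n^{1/7}$. (A smaller slip: density below $2^{-\dim V'}$ forces $X \cap V' = \varnothing$, not that $X \cap V'$ is ``a subspace of near-full dimension''.)

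The missing idea --- which is the paper's entire proof --- is that no iteration is needed: apply Theorem~\ref{hplem} once with $\delta = \gamma = 2^{-d}$ where $d = \lceil C n^{1/7} \rceil$. Then $\cL(\delta) = \cL(\gamma) = d$, so the codimension is $O(d^7) = O(C^7 n)$, which for $C$ small enough is at most $2^{-7}n$; hence $\dim(V) \ge (1-2^{-7})n \ge d$. In either outcome, one of the sets $X \cap V$ or $V \setminus (X+X)$ has density less than $2^{-d}$ in $V$, and Theorem~\ref{bbthm2} applied inside $V$ (with the empty case handled trivially) yields a $d$-dimensional subspace of $V$ avoiding that set, giving $\alpha(X) \ge d$ or $\omega(X+X) \ge d$ respectively. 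Your own observation that ``$\alpha(X) \ge \cL(\delta)-1$ is weak unless $\delta$ is small'' is exactly right; the fix is that $\delta$ can be taken exponentially small in one shot, because the codimension cost is only polylogarithmic in $1/\delta$. The exponent $7 = 5+2$ arises precisely from this single application, not from any recursion, so as written your plan does not establish the stated bound.
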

\begin{proof}
  Let $C'$ be the constant implicit in Lemma~\ref{hplem}.
  We show that $C = \min(1 - 2^{-7}, \tfrac{1}{4}(C')^{-1/7})$ satisfies the theorem. 
  Let $X \subseteq \bF_2^n$ and let $d = \lceil C n ^{1/7} \rceil$. 
  If $Cn^{1/7} \le 1$, the result is trivial, so we may assume that $d \le 2Cn^{1/7}$. 

  Let $V$ be a subspace given by Lemma~\ref{hplem} with $\delta = \gamma  = 2^{-d}$,
  so $\cL(\delta) = \cL(\gamma) = d$ and thus, by the choice of $C$, we have 
  \[\dim(V) \ge n - C'd^7 \ge n - C' (2Cn^{1/7})^7 \ge n (1 - 2^{-7}) \ge Cn^{1/7}.\]

  By the choice of $\delta,\gamma$ and $V$, 
  there is a set $Y$ equal to one of $X \cap V$ or $V \del (X + X)$ for which $\mu_V(Y) < 2^{-d}$.
  If $Y = X \cap V$ then $\alpha(X) \ge \alpha_V(Y)$ and if $Y = V \del (X + X)$,
  then $\omega(X+X) \ge \omega((X+X) \cap V) = \alpha (V \del (X + X))) = \alpha_V(Y)$, 
  so $\max(\alpha(X), \omega(X+X)) \ge \alpha_V(Y)$ in both cases,
  and it suffices to show that $\alpha_V(Y) \ge Cn^{1/7}$. 
  
  If $Y = \varnothing$, then $\alpha_V(Y) = \dim(V) \ge Cn^{1/7}$ as required. 
  Otherwise, since $\mu_Y(V) < 2^{-d}$ we have $\cL(\mu_V(Y)) \ge d+1$,
  so Theorem~\ref{bbthm2} gives $\alpha_V(Y) + 1 \ge \cL(\mu_V(Y)) \ge d +1 \ge Cn^{1/7}+1$ and so
  $\alpha_V(Y) \ge Cn^{1/7}$.
\end{proof}

This easily implies the following estimate on the Ramsey number $R_2(2,t)$ 
as defined in Theorem~\ref{grthm}; this was also stated in [\ref{hp}].

\begin{theorem}\label{trifree}
  There is a constant $C$ so that $R_2(2,t) \le C t^7$ for all $t$. 
\end{theorem}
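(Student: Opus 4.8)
The statement to prove is Theorem~\ref{trifree}: that $R_2(2,t) \le Ct^7$. The plan is to deduce this directly from Theorem~\ref{complorsum} by the standard observation that a $\{\mathsf{Red},\mathsf{Blue}\}$-colouring of $\PG(n-1,2)$ with no triangle in one colour class is essentially the complement of a \emph{sum-free set}.

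First I would set up the translation. Identify $\PG(n-1,2)$ with $\bF_2^n \setminus \{0\}$, and let $c$ be a colouring with no $t$-dimensional $\mathsf{Blue}$ subspace. Let $X \subseteq \bF_2^n \setminus\{0\}$ be the set of $\mathsf{Red}$ points; adding the zero vector or not changes nothing essential, so view $X \subseteq \bF_2^n$. The key point is that a $\mathsf{Red}$ triangle is precisely a triple of distinct nonzero vectors $x_1,x_2,x_3 \in X$ with $x_1+x_2+x_3 = 0$, i.e. a solution to $x_1 + x_2 = x_3$ with all three in $X$; so $c$ has no $\mathsf{Red}$ triangle exactly when $(X + X) \cap X \subseteq \{0\}$ (the sum $x_1 + x_2$ with $x_1 \ne x_2$ avoids $X\setminus\{0\}$). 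Thus if the colouring has \emph{no} monochromatic triangle at all — which is the relevant hypothesis for bounding $R_2(2,t)$ when the target $\mathsf{Red}$ dimension is $2$, since a $2$-dimensional $\mathsf{Red}$ subspace is a $\mathsf{Red}$ triangle — then $X+X$ is disjoint from $X\setminus\{0\}$, hence $X \setminus \{0\} \subseteq \bF_2^n \setminus (X+X)$, which gives $\omega(X+X) \le$ something controlled by $\alpha(\bF_2^n \setminus X) $... more cleanly: $(X+X)$ is contained in the complement of $X$ together with $\{0\}$, so any projective subspace inside $X+X$ of dimension $\ge 1$ lies in the complement of $X$, giving $\omega(X+X) \le \max(\alpha(X), 0)$, while a $\mathsf{Blue}$ subspace of dimension $t$ is exactly a $t$-dimensional projective subspace in the complement of $X$, i.e. $\alpha(X) \ge t$ would be what we want to contradict.

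Now apply Theorem~\ref{complorsum}: $\max(\alpha(X), \omega(X+X)) \ge C_{\ref{complorsum}}\, n^{1/7}$. In the no-monochromatic-triangle setting $\omega(X+X) \le \alpha(X)$ (up to the harmless $\{0\}$ issue), so $\alpha(X) \ge C_{\ref{complorsum}} n^{1/7}$, i.e. the complement of $X$ contains a projective subspace of dimension $\ge C_{\ref{complorsum}} n^{1/7}$ — that is, an all-$\mathsf{Blue}$ subspace of that dimension. Hence if $n \ge (t/C_{\ref{complorsum}})^7 =: Ct^7$, then $\alpha(X) \ge t$, so there is a $\mathsf{Blue}$ $t$-dimensional subspace; and since the only $\mathsf{Red}$ subspace we could have forced instead was already ruled out by assuming no $\mathsf{Red}$ triangle (equivalently no $\mathsf{Red}$ $2$-subspace), this shows $R_2(2,t) \le Ct^7$ with $C = C_{\ref{complorsum}}^{-7}$.

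The only real obstacle is bookkeeping around the zero vector and the degenerate case $x_1 = x_2$: one must be careful that "$X+X$" in Theorem~\ref{complorsum} includes $0 = x+x$, so $X+X$ is never a genuine sumset-avoiding-$X$ set but only $(X+X)\setminus\{0\}$ is; since adding or removing a single point changes $\omega$ by at most $1$, this costs nothing asymptotically and can be absorbed by adjusting the constant (or by noting $0 \notin \PG(n-1,2)$ anyway once we pass back to the projective picture). I would also double-check the trivial small-$n$ range, where $Cn^{1/7} \le 1$ makes the conclusion vacuous but then $n < Ct^7$ fails only for bounded $n$, which can be handled by enlarging $C$. Everything else is a direct substitution into Theorem~\ref{complorsum}, so this should be a short proof.
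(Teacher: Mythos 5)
Your proof is correct and follows essentially the same route as the paper: take $X$ to be the red set, observe that the absence of a red triangle forces $(X+X)\setminus\{0\}$ into the blue set so that $\omega(X+X)\le\alpha(X)$, and then read off $\alpha(X)\ge C_{\ref{complorsum}}n^{1/7}$ from Theorem~\ref{complorsum}. The only (cosmetic) difference is that the paper first passes to a colouring with a maximal red class to get the equality $\omega(R+R)=\alpha(R)$, whereas you correctly note that the one-sided inclusion already suffices.
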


\begin{proof}[Proof of Theorem~\ref{trifree}]
  Let $C = C_{\ref{complorsum}}^{-1/7}$. 
  Let $s,n \in \bN$ with $n \ge Cs^7$. 
  If the theorem fails, there is a red-blue colouring of $G \cong \PG(n-1,2)$ 
  with no red two-dimensional subspace and no blue $s$-dimensional subspace. 
  Choose such a colouring with as many red points as possible, 
  and let $R$ and $B$ be the sets of red and blue points respectively, viewed as subsets of $\bF_2^n \del \{0\}$.
  Since there is no red triangle, we have $(R + R) \cap R = \varnothing$.
  By the maximality of $R$, each $e \in B$ must form a triangle 
  with two points in $R$, so $B \subseteq R + R$. 
  It follows that $(R + R) \del \{0\} = B$, so $\omega(R+R) = \omega(B) = \alpha(R)$.
  
  By this observation, Theorem~\ref{complorsum}, and the choice of $C$,
  we have $\omega(B) = \alpha(R) = \min(\alpha(R), \omega(R+R)) \ge C_{\ref{complorsum}}n^{1/7} \ge C_{\ref{complorsum}}(Cs^7)^{1/7} = s$, 
  so there is a blue $s$-dimensional subspace, giving a contradiction. 
\end{proof}

Analogously to [\ref{hp}, Conjecture 1], 
we conjecture that the right bound in Theorem~\ref{complorsum} is linear. 
\begin{conjecture}
  There is a constant $C > 0$ such that, for all $n \in \bN$,
   every $X \subseteq \bF_2^n$ satisfies $\max(\alpha(X), \omega(X+X)) \ge C n$.
\end{conjecture}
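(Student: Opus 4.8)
The plan is to reproduce the argument for Theorem~\ref{complorsum} essentially verbatim, but feeding it a \emph{linear}-codimension strengthening of Theorem~\ref{hplem} in place of the polynomial-codimension bound currently available. Precisely, suppose one could establish the following: for every $A \subseteq \bF_2^n$ and all $\delta, \gamma \in (0,1)$, there is a subspace $V \le \bF_2^n$ with $\codim(V) \le K\bigl(\cL(\delta) + \cL(\gamma)\bigr)$ for some absolute constant $K$, such that $\mu_V(A) < \delta$ or $\mu_V(A+A) > 1 - \gamma$. This is exactly the $\bF_2^n$ analogue of the linear bound conjectured in [\ref{hp}, Conjecture 1], and would follow from a linear-codimension sifting step of Kelley--Meka type; the Hunter--Pohoata input Theorem~\ref{hplem} gives only $\codim(V) = O(\cL(\delta)^5\cL(\gamma)^2)$, and improving this to linear is the crux of the matter.

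Granting such a statement, the reduction runs as follows. Fix a small absolute constant $C > 0$ depending only on $K$. Given $X \subseteq \bF_2^n$, if $Cn \le 1$ the claim is trivial, so we may assume $d := \lceil Cn \rceil \le 2Cn$. Set $\delta = \gamma = 2^{-d}$, so $\cL(\delta) = \cL(\gamma) = d$; then the hypothetical strengthening produces a subspace $V$ with $\dim(V) \ge n - 2Kd \ge n - 4KCn \ge Cn$ (for $C$ small enough in terms of $K$) and with $\mu_V(X) < 2^{-d}$ or $\mu_V(X+X) > 1 - 2^{-d}$. As in the proof of Theorem~\ref{complorsum}, let $Y = X \cap V$ in the first case and $Y = V \del (X+X)$ in the second; in either case $\mu_V(Y) < 2^{-d}$ and $\max(\alpha(X), \omega(X+X)) \ge \alpha_V(Y)$. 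If $Y = \varnothing$ then $\alpha_V(Y) = \dim(V) \ge Cn$; otherwise $\cL(\mu_V(Y)) \ge d+1$, so Theorem~\ref{bbthm2} applied inside $V$ gives $\alpha_V(Y) \ge \cL(\mu_V(Y)) - 1 \ge d \ge Cn$. Either way $\max(\alpha(X), \omega(X+X)) \ge Cn$, as desired.

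The sole obstacle is thus the additive-combinatorics input: the polynomial loss in $\codim(V)$ forces $d = \Theta(n^{1/7})$ in the above (as in Theorem~\ref{complorsum}) where a linear conclusion needs $d = \Theta(n)$, and no linear-codimension version of the Kelley--Meka sifting lemma is presently known, over $\bF_2^n$ or over $\bZ/N\bZ$. Absent such a result one would need an entirely different route -- for instance a polynomial-method or direct Fourier-analytic argument exploiting the special structure of $\bF_2^n$ -- and we do not see one, which is why the statement is presented as a conjecture. It is nonetheless quite plausible: a random $X$ of density $\tfrac12$ has $\omega(X+X) = n$ with high probability, and for a subspace $X$ of codimension $k$ one has $\alpha(X) = k$ and $\omega(X+X) = \omega(X) = n - k$, so $\max(\alpha(X), \omega(X+X)) \ge n/2$; in both cases the quantity is comfortably linear.
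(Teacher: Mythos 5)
The statement you were asked about is presented in the paper as a \emph{conjecture}: the paper offers no proof of it, and it is posed precisely as the linear analogue of Theorem~\ref{complorsum}, mirroring [\ref{hp}, Conjecture 1]. So there is no paper proof to match your argument against, and your submission does not close the gap either: what you give is a conditional reduction, not a proof. The reduction itself is sound and is exactly the natural one --- rerunning the proof of Theorem~\ref{complorsum} with $d = \Theta(n)$ instead of $\Theta(n^{1/7})$, which works verbatim once one has a version of Theorem~\ref{hplem} with $\codim(V) = O(\cL(\delta) + \cL(\gamma))$ (your bookkeeping with $\delta = \gamma = 2^{-d}$, the dichotomy $Y = X \cap V$ versus $Y = V \del (X+X)$, and the application of Theorem~\ref{bbthm2} inside $V$ all check out). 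But that linear-codimension input is precisely the open problem; assuming it assumes the entire substance of the conjecture, since the polynomial loss in the Kelley--Meka/Hunter--Pohoata sifting step is the only thing standing between Theorem~\ref{complorsum} and the linear bound.

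In short: your write-up correctly identifies where the difficulty lives and correctly verifies that the surrounding reduction is routine, and your closing examples (random sets, subspaces) are accurate sanity checks on plausibility. But as a proof of the stated conjecture it has a genuine gap --- the key lemma is hypothesized rather than proved --- and no argument for it is supplied, so the statement remains exactly as open after your proposal as it is in the paper.
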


We can now prove Theorem~\ref{mainnonrainbow}.

\begin{theorem}\label{nonrainbow}
  If $c_0$ is a non-rainbow $s_0$-colouring of $T \cong \PG(1,2)$, and $s \ge s_0$, 
  then Conjecture~\ref{multicolouredeh} holds for $c_0$ and $s$, 
  with $C = C_{\ref{complorsum}}$ and $\delta = \tfrac{1}{7}$. 
\end{theorem}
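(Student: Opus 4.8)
The plan is to push all the work onto Theorem~\ref{complorsum}, applied to the colour class of colour $1$. After permuting the elements of $T=\{x_1,x_2,x_3\}$ and the colour set $\{1,\dots,s\}$, we may assume that either $c_0$ is the constant colouring of $T$ with value $1$ (this covers $s_0=1$ and the case where $c_0$ uses a single colour), or $c_0(x_1)=c_0(x_2)=1$ and $c_0(x_3)=2$ (which forces $s_0=2$); in either case every colour used by $c_0$ lies in $\{1,\dots,s\}$ because $s\ge s_0$. Identify $G\cong\PG(n-1,2)$ with $\bF_2^n\del\{0\}$, so that a subspace of $G$ of dimension $d$ is exactly a projective subspace of $\bF_2^n$ of dimension $d$. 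The image of any injective homomorphism $T\to G$ is a line $\{a,b,a+b\}$ of $G$ — a set of three distinct nonzero vectors summing to $0$ — and conversely every line arises this way; since $c_0$ is non-rainbow, $c$ contains $c_0$ if and only if some line of $G$ carries the colour multiset of $c_0$, with colour $2$ (if present) on a single point. Put $R=c^{-1}(1)\sse\bF_2^n\del\{0\}$; then $0\notin R$, and for distinct $a,b\in R$ the triple $\{a,b,a+b\}$ is a line of $G$ with colour $1$ on two of its points.

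Suppose first $c_0$ is constant $1$. Then $c$ is $c_0$-free exactly when no line of $G$ is monochromatic in colour $1$, which by the remark above is equivalent to $(R+R)\cap R=\es$ (the degenerate terms $a=b$ contribute only $0\notin R$). Apply Theorem~\ref{complorsum} to $X=R$: then $\alpha(R)\ge C_{\ref{complorsum}}n^{1/7}$ or $\omega(R+R)\ge C_{\ref{complorsum}}n^{1/7}$. In the first case $\bF_2^n\del R$ contains a projective subspace $W$ of dimension at least $C_{\ref{complorsum}}n^{1/7}$, and $W$, viewed as a subspace of $G$, avoids colour $1$. In the second case $R+R$ contains such a $W$, and since $(R+R)\cap R=\es$, again $W$ avoids colour $1$. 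Either way $W$ is $(c,s)$-homogeneous of dimension at least $C_{\ref{complorsum}}n^{1/7}$, as required.

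Now suppose $c_0(x_1)=c_0(x_2)=1$ and $c_0(x_3)=2$, and put $B=c^{-1}(2)\sse\bF_2^n\del\{0\}$, disjoint from $R$. Here $c$ is $c_0$-free exactly when $G$ has no line with two points of colour $1$ and one of colour $2$, that is, no distinct $a,b\in R$ with $a+b\in B$; since $0\notin B$ this is equivalent to $(R+R)\cap B=\es$. Apply Theorem~\ref{complorsum} to $X=R$ again. If $\alpha(R)\ge C_{\ref{complorsum}}n^{1/7}$ we obtain a subspace $W$ of $G$ of dimension at least $C_{\ref{complorsum}}n^{1/7}$ avoiding colour $1$. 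If instead $\omega(R+R)\ge C_{\ref{complorsum}}n^{1/7}$ we obtain such a $W$ with $W\sse R+R$; since $(R+R)\cap B=\es$, this $W$ avoids colour $2$. In both cases $W$ is $(c,s)$-homogeneous, so Conjecture~\ref{multicolouredeh} holds for $c_0$ and $s$ with $C=C_{\ref{complorsum}}$ and $\delta=\tfrac17$.

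The only delicate ingredient is Theorem~\ref{complorsum} itself, which we are allowed to assume; beyond it the argument is bookkeeping, the two mild points being the harmless $a=b$ contribution to $R+R$ (absorbed by $0\notin R\cup B$) and the observation that the colour avoided on $W$ — always $1$ or $2$ — lies in $\{1,\dots,s\}$ because $s\ge s_0$. The constant and exponent are inherited verbatim from Theorem~\ref{complorsum}. I do not anticipate any genuine obstacle here; what little care is needed is purely in matching the ``triangle/line'' language of the conjecture with the ``sumset'' language of Theorem~\ref{complorsum}.
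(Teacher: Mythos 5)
Your proposal is correct and follows essentially the same route as the paper: apply Theorem~\ref{complorsum} to the colour class $R$ of the repeated colour, and in the two outcomes obtain a large subspace avoiding either that colour or (via $c_0$-freeness, i.e.\ $(R+R)$ disjoint from the relevant colour class) the other colour of $c_0$. The only cosmetic difference is that you split the monochromatic and two-coloured cases, while the paper treats them uniformly with colours $\kappa$ and $\lambda$.
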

\begin{proof}
  Let $C = C_{\ref{complorsum}}$. 
  Since $c_0$ is not rainbow, we can write $T = \{e,e',f\}$, where 
  $c_0(e) = c_0(e') = \kappa$ and $c_0(f) = \lambda$ for some colours $\kappa$ and $\lambda$. 
  Let $n \in \bN$, and $c$ be a colouring of $G \cong \PG(n-1,2)$.
  Let $X = \{x \in G \colon c(x) = \kappa\}$, seen as a subset of $\bF_2^n \del \{0\}$. 
  By Theorem~\ref{complorsum}, we either have $\alpha(X) \ge Cn^{1/7}$ or $\omega(X+X) \ge Cn^{1/7}$. 
  
  If $\alpha(X) \ge Cn^{1/7}$, then there is a subspace $W$ of $\bF_2^n$ containing 
  no point coloured $\kappa$, for which $\dim(W) \ge Cn^{1/7}$. 
  Thus, $W$ gives a $(c,s)$-homogeneous subspace of the same dimension in $G$. 

  If $\omega(X+X) \ge Cn^{1/7}$, then there is a subspace $W$ of $\bF_2^n$ with $\dim(W) \ge Cn^{1/7}$
  such that for all $u \in W \del \{0\}$, there exist $x,x' \in X$ with $x + x' = u$. 
  If $c(u) = \lambda$, then since $c(x) = c(x') = \kappa$, 
  the triple $\{x,x',u\}$ corresponds to a copy of $c_0$ in $c$. 
  Since $c$ is $c_0$-free, this does not occur, 
  so there is no element of $W \del \{0\}$ coloured $\lambda$ by $c$. 
  Therefore $W$ gives a $(c,s)$-homogeneous subspace in $G$, as required. 
\end{proof}

\section{Modularity}

  Our main results will be applied to projective geometries over finite fields, 
  but it is convenient to work at the more abstract level of modular matroids. 
  The consequence is that our proofs are co-ordinate free, 
  and apply to a slightly larger class of objects. 
  
  A matroid $M$ is \emph{modular} if every pair $F,F'$ of flats of $M$ satisfies 
  $r_M(F) + r_M(F') = r_M(F \cup F') + r_M(F \cap F')$, 
  or equivalently if $r_M(\cl_M(A) \cap \cl_M(B)) = \sqcap_M(A,B)$ for all $A,B \subseteq E(M)$.   
  
  Canonical examples include matroids of rank at most two, free matroids, 
  projective planes in rank three (including non-Desarguesian ones), 
  and projective geometries over finite fields.
  Indeed, the simple connected modular matroids of rank at least four are precisely those 
  of the form $\PG(n-1,q)$ for a prime power $q$ (See [\ref{oxley}, Proposition 6.9.1]). 
  
  Even though Theorem~\ref{mainrainbow} only applies to binary projective geometries, 
  Theorem~\ref{mainstructure} applies to all connected modular matroids,
  which by the aforementioned classification comprise precisely the loopless matroids 
  whose simplification is either a rank-$2$ uniform matroid, a projective plane, 
  or $\PG(n-1,q)$ for $n \ge 4$. 

  The first four lemmas in this section follow easily from the classification
  just discussed, but for completeness, we include self-contained proofs 
  that use the definition of modularity directly. 

  \begin{lemma}
    If $M$ is a loopless modular matroid, then flats $F,F'$ of $M$ are complementary if and only 
    if they are disjoint and their union spans $M$. 
  \end{lemma}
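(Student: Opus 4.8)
The plan is to prove the two directions of the equivalence separately, with the forward direction being essentially immediate from the definition of complementary flats and the reverse direction requiring the modularity hypothesis.

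First, for the forward direction: suppose $F,F'$ are complementary flats, so $\sqcap_M(F,F') = 0$ and $F \cup F'$ is spanning. By definition $F \cup F'$ spans $M$, so it remains only to check that $F$ and $F'$ are disjoint as sets. Since $M$ is loopless and $F,F'$ are flats, using modularity we have $r_M(\cl_M(F) \cap \cl_M(F')) = \sqcap_M(F,F') = 0$; but $\cl_M(F) = F$ and $\cl_M(F') = F'$ since they are flats, so $r_M(F \cap F') = 0$. A loopless matroid has no rank-zero nonempty sets except $\varnothing$ (any element of a rank-zero set would be a loop), so $F \cap F' = \varnothing$. This gives the forward implication.

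For the reverse direction: suppose $F,F'$ are flats that are disjoint and whose union spans $M$. We must show $\sqcap_M(F,F') = 0$; then together with the spanning hypothesis this gives complementarity. Since $F,F'$ are flats, modularity yields $r_M(F) + r_M(F') = r_M(F \cup F') + r_M(F \cap F')$, i.e.\ $\sqcap_M(F,F') = r_M(F \cap F') = r_M(\varnothing) = 0$, using disjointness. This completes the reverse implication and hence the lemma.

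I do not expect any real obstacle here: the whole argument is a direct unwinding of the definition of modularity (in its flat-intersection form) combined with the elementary fact that, in a loopless matroid, a flat of rank zero must be empty. The one point to be slightly careful about is to invoke the correct form of modularity — namely $r_M(F) + r_M(F') = r_M(F \cup F') + r_M(F \cap F')$ for flats, which is exactly the stated definition — and to note that $\cl_M(F) = F$ precisely because $F$ is a flat, so the local-connectivity quantity $\sqcap_M(F,F')$ coincides with $r_M(F \cap F')$. Everything else is bookkeeping.
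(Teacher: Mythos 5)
Your argument is correct and is essentially identical to the paper's proof: both reduce the statement to the identity $\sqcap_M(F,F') = r_M(F \cap F')$ for flats (a direct consequence of modularity) and then use looplessness to conclude that a rank-zero flat intersection is empty. No issues.
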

  \begin{proof}
    It suffices to show that two flats $F_1,F_2$ have local connectivity zero if and only
    if they are disjoint. But $r_M(F_1 \cap F_2) = \sqcap_M(F_1,F_2)$, and since $M$ is loopless, 
    we have $r_M(F_1 \cap F_2) = 0$ if and only if $|F_1 \cap F_2| = 0$. The equivalence holds. 
  \end{proof}

  \begin{lemma}\label{modularcontract}
    If $M$ is a modular matroid, then $M \con C$ is modular for each set $C \sse E(M)$, and $M | F$ is modular for each flat $F$ of $M$. 
  \end{lemma}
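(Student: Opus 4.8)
The plan is to verify both halves of Lemma~\ref{modularcontract} directly from the rank formula that defines modularity, namely that $r_N(F) + r_N(F') = r_N(F \cup F') + r_N(F \cap F')$ for all flats $F, F'$ of the matroid $N$ in question. For the restriction statement, I would first recall that the flats of $M|F$ are exactly the flats of $M$ that are contained in $F$, and that for such a flat $G$ we have $r_{M|F}(G) = r_M(G)$; moreover intersection and union of two flats of $M|F$ agree with the corresponding operations in $M$ (the intersection is automatically a flat contained in $F$, and the closure in $M|F$ of the union equals $\cl_M(G_1 \cup G_2)$, which is still contained in $F$ since $F$ is a flat). Hence the modular equation for $M|F$ is just a special case of the modular equation for $M$, and there is nothing further to prove.

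For the contraction statement, the argument is slightly more delicate because flats of $M \con C$ are the sets $F - \cl_M(C)$ where $F$ is a flat of $M$ containing $\cl_M(C)$, and rank in the contraction is given by $r_{M \con C}(F - \cl_M(C)) = r_M(F) - r_M(C)$. First I would reduce to the case $C = \cl_M(C)$, since $M \con C = M \con \cl_M(C)$. Then, given two flats of $M \con C$, write them as $F_1 - C$ and $F_2 - C$ with $F_1, F_2$ flats of $M$ containing $C$. The key observations are that $(F_1 - C) \cap (F_2 - C) = (F_1 \cap F_2) - C$ with $F_1 \cap F_2$ a flat of $M$ containing $C$, and that the closure of $(F_1 - C) \cup (F_2 - C)$ in $M \con C$ is $\cl_M(F_1 \cup F_2) - C$, again a flat containing $C$. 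Substituting these into the modular identity for $M \con C$, every term acquires a $-r_M(C)$, and after cancelling, what remains is exactly the modular identity for the flats $F_1, F_2, F_1 \cap F_2, \cl_M(F_1 \cup F_2)$ of $M$, which holds by hypothesis.

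I would present this as two short paragraphs of the proof, making the flat/rank correspondences explicit enough to be checked but not belabouring them. The only genuine care needed is in the contraction case: one must confirm that the lattice operations on flats of $M \con C$ correspond correctly to those on the interval of flats of $M$ above $C$, and that the rank defect $r_M(C)$ is constant across all four terms so that it cancels cleanly. There is no real obstacle here — the statement is essentially bookkeeping about the standard structure of flats under minor operations — so the main task is simply to write it cleanly; I would cite [\ref{oxley}] for the basic facts about flats and ranks of minors rather than reproving them.
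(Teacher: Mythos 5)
Your proposal is correct and matches the paper's proof in substance: both halves are routine verifications from standard facts about flats and ranks of minors, and your restriction argument is exactly the paper's. The only difference is presentational --- for the contraction the paper checks the equivalent condition $r_M(\cl_M(A)\cap\cl_M(B))=\sqcap_M(A,B)$ for arbitrary sets $X,Y\subseteq E(M\con C)$ via $\sqcap_{M \con C}(X,Y)=\sqcap_M(X\cup C,Y\cup C)-r_M(C)$, whereas you verify the flat-pair rank identity through the correspondence between flats of $M\con C$ and flats of $M$ above $\cl_M(C)$; in both cases the argument is the same cancellation of $r_M(C)$.
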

  \begin{proof}
    The fact that $M | F$ is modular follows immediately from the definition, since the flats of $M | F$ are the flats of $M$ 
    that are contained in $F$. 
    To see that $M \con C$ is modular for all $C$, let $X, Y \subseteq E(M \con C)$.
    By the modularity of $M$ and the fact that $C \subseteq \cl_M(X \cup C) \cap \cl_M(Y \cup C)$, 
    \begin{align*}
        \sqcap_{M \con C}(X,Y) &= \sqcap_M(X \cup C, Y \cup C) - r_M(C)\\ 
                               &= r_M(\cl_M(X \cup C) \cap \cl_M(Y \cup C)) - r_M(C) \\ 
                               &= r_{M \con C}(\cl_{M \con C}(X) \cap \cl_{M \con C}(Y)),
    \end{align*}
    so $M \con C$ is modular. 
  \end{proof}

  \begin{lemma}\label{modularconnectedcontract}
    If $F$ is a nonempty proper flat of a connected modular matroid $M$, then $M|F$ and $M \con F$ are both connected and modular. 
  \end{lemma}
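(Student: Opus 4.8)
The statement has two parts: that $M \con F$ is modular and connected, and that $M|F$ is modular and connected. Modularity of both is already handled by Lemma~\ref{modularcontract} (taking $C = F$ for the contraction, and using that $F$ is a flat for the restriction), so the real content here is \emph{connectivity}. So the plan is to prove that if $F$ is a nonempty proper flat of a connected modular matroid $M$, then neither $M|F$ nor $M\con F$ has a nontrivial separation. I will treat the two cases separately, and in both cases argue by contradiction using the exchange structure forced by modularity together with the connectivity of $M$.

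\textbf{Connectivity of $M \con F$.} Suppose $M\con F$ is disconnected, so $E(M)-F$ partitions into nonempty sets $A, B$ with no circuit of $M\con F$ meeting both. Equivalently, $r_{M\con F}(A) + r_{M\con F}(B) = r(M\con F)$, i.e. $\sqcap_M(A\cup F, B\cup F) = r_M(F)$. Let $F_A = \cl_M(A\cup F)$ and $F_B = \cl_M(B\cup F)$; these are flats both containing $F$, with $F_A \cup F_B$ spanning $M$, and by modularity $r_M(F_A) + r_M(F_B) = r_M(E(M)) + r_M(F_A\cap F_B)$. Since $F \subseteq F_A\cap F_B$ while the separation forces $r_M(F_A\cap F_B) = r_M(F)$, we get $F_A\cap F_B = F$ (it is a flat of rank $r_M(F)$ containing $F$). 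Now I want to extract a separation of $M$ itself. Pick a basis $I$ of $F$ and extend it to bases $I_A$ of $F_A$ and $I_B$ of $F_B$; then $I_A \cup I_B$ spans $M$ and, because $F_A \cap F_B = F$, one checks $(I_A - I) \cup (I_B - I) \cup I$ is independent, so it is a basis of $M$. The point is that $A$ lies in the flat $F_A$ and $B$ lies in the flat $F_B$, and the only ``overlap'' is inside $F$; I then need to produce an actual partition of $E(M)$ into two parts, each nonempty and proper, with additivity of rank. The natural candidate is $(F_A, B')$ where $B' = E(M) - F_A$; since $A, F$ are both inside $F_A$ and $A$ is nonempty, $F_A$ is a nonempty proper flat (proper because $B$ is nonempty and disjoint from $F_A$, as $F_A \cap F_B = F$ and $B \cap F = \varnothing$), and $r_M(F_A) + r_M(B') = r_M(F_A) + r_M(E(M)) - r_M(F_A) = r_M(E(M))$ would follow once I check $\sqcap_M(F_A, B') = 0$, which holds because $F_A$ is a flat and $\cl_M(B') \supseteq F_B$ forces $\cl_M(B') \cap F_A$ to be a flat contained in $F_B \cap F_A = F$... here I must be slightly careful, since $\cl_M(B')$ need not equal $F_B$. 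The cleaner route is: $B' = E(M) - F_A \supseteq B$, and $r_M(B') \le r_M(F_B) - r_M(F) =$ (corank of $F$ in $F_B$) while $r_M(F_A) + (\text{corank of } F \text{ in } F_B) = r_M(F_A) + r_M(F_A\cup F_B) - r_M(F_A)$ by modularity again; so $r_M(F_A) + r_M(B') \le r_M(E(M))$, and the reverse inequality is submodularity, giving a genuine separation of $M$ with both sides nonempty — contradicting connectivity of $M$.

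\textbf{Connectivity of $M|F$.} This is the easier direction. Suppose $M|F$ is disconnected, so $F = A \sqcup B$ with $A, B$ nonempty flats of $M|F$ (hence flats of $M$ once we take closures within $F$; actually parts of a separation of $M|F$ are flats of $M|F$, and since $F$ is a flat of $M$, $\cl_{M|F}(A) = \cl_M(A) \cap F$) and $r_M(A) + r_M(B) = r_M(F)$, i.e. $\sqcap_M(A,B) = 0$, so by modularity $\cl_M(A) \cap \cl_M(B) = \varnothing$... but $M$ is loopless (being connected with more than one element, or handle the trivial one-element case separately), so $\cl_M(A)$ and $\cl_M(B)$ are disjoint flats. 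Now extend: let $F' = \cl_M(A)$, a nonempty proper flat of $M$ (nonempty since $A\neq\varnothing$; proper since $B \subseteq F - F' $ is nonempty using disjointness of the two closures). Apply connectivity of $M$: $M$ has a circuit $C$ meeting both $F'$ and $E(M)-F'$. Since $F$ is a flat, contracting the complement of $F$ appropriately, or more directly: a connected matroid has no proper nonempty flat $F'$ with $\sqcap_M(F', E(M)-F') = 0$... hmm, this needs the standard fact that a flat of a connected matroid has positive local connectivity with its complement. Rather than invoke that, I would argue inside $F$ directly: in a \emph{connected} matroid $M$, for any nonempty proper flat $G$, $M|G$ being disconnected would let me peel off a ``component'' and produce a $2$-separation of $M$ — but this is circular. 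So the cleanest argument: use Lemma~\ref{modularcontract} to know $M\con(E(M)-F)$ has ground set $F$, it is modular, and its simplification, by the classification into rank-$\le 2$ uniform / projective plane / $\PG(n-1,q)$, is connected whenever it is loopless with at least two points — and $M|F$ connected follows. I expect this is how the authors do it.

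\textbf{Main obstacle.} The genuinely delicate step is the $M\con F$ direction: one must manufacture a true separation of $M$ from a separation of $M\con F$, and the bookkeeping with closures ($\cl_M(A\cup F)$ versus $A \cup F$, and whether $\cl_M(B')$ equals $F_B$) is where sign errors creep in. The modular law $r_M(F_A) + r_M(F_B) = r_M(F_A\cup F_B) + r_M(F_A\cap F_B)$ is the engine, and the key computation is $F_A \cap F_B = F$; once that is in hand, everything else is submodularity. The $M|F$ direction and both modularity claims are essentially immediate from the preceding lemmas and the classification, so I would keep their treatment brief.
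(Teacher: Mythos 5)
The reduction of both claims to connectivity (with modularity deferred to Lemma~\ref{modularcontract}) is fine, but both of your connectivity arguments have real holes. For $M \con F$: your plan hinges on the inequality $r_M(B') \le r_M(F_B) - r_M(F)$, which you assert without proof. First note that $B' = E(M) - F_A = F_B - F = B$ exactly (since $F_A \cup F_B = E(M)$ and, as you correctly showed, $F_A \cap F_B = F$), so the inequality says precisely that $B$ is skew to $F$, i.e.\ $\sqcap_M(B,F) = 0$. This does not follow from what you have established at that point (modularity, $F$ a flat, $(A,B)$ a $1$-separation of $M \con F$): in the modular but disconnected matroid $M = U_{2,3} \oplus U_{2,3}$ with triangles $\{a_1,a_2,a_3\}$ and $\{b_1,b_2,b_3\}$, take $F = \{a_1,b_1\}$, $A = \{a_2,a_3\}$, $B = \{b_2,b_3\}$; all of your intermediate conclusions hold ($(A,B)$ is a $1$-separation of $M\con F$ and $F_A \cap F_B = F$), yet $\sqcap_M(B,F) = 1$ and $r_M(B) = 2 > 1 = r_M(F_B) - r_M(F)$. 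So the step you need secretly requires the connectivity of $M$, which is exactly what your contradiction is supposed to deliver at the end; as written, the argument breaks at its crucial point, and in a modular matroid one generically expects the opposite ($F_B - F$ spanning $F_B$), so no routine patch exists along these lines.

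The $M|F$ direction is also not salvaged by your fallback: $M \con (E(M)-F)$ is not $M|F$ --- since $E(M)-F$ contains a cocircuit and cocircuits are spanning (Lemma~\ref{modularround}), that contraction has rank $0$ --- and the classification of simple \emph{connected} modular matroids cannot be applied to $M|F$ without already knowing $M|F$ is connected; the non-circular version applies the classification to $M$ itself and then uses that flats of projective geometries and planes induce connected restrictions, which you do not carry out. The paper avoids all of this with two short direct arguments, neither of which appears in your proposal. For $M|F$: given $x,y \in F$, take a circuit $C$ of $M$ through them; if $C \ne \{x,y\}$, modularity gives a nonloop $e \in \cl_M(\{x,y\}) \cap \cl_M(C - \{x,y\})$, which lies in $F$ because $F$ is a flat, and one checks $\{e,x,y\}$ is a triangle of $M|F$, so any two elements of $F$ lie on a common circuit of $M|F$. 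For $M \con F$: take a complement $F_0$ of $F$; then $(M \con F)|F_0 = M|F_0$ is connected by the restriction case, it is spanning in $M \con F$, and $M \con F$ is loopless because $F$ is a flat, so $M \con F$ is connected.
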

  \begin{proof}
    We know both are modular from the previous lemma, so it suffices to show that both are connected. 
    To show that $M | F$ is connected, let $x, y \in F$. By the connectedness of $M$, 
    there is a circuit $C$ of $M$ containing $x$ and $y$.
    If $C = \{x,y\}$, then $C$ is a circuit of $M | F$ containing $x$ and $y$, as required. 
    Otherwise, both $\{x,y\}$ and $C - \{x,y\}$ are independent in $M$, so 
    \begin{align*}
      r_M(\cl_M(\{x,y\}) \cap \cl_M(C- \{x,y\})) &= \sqcap_M(C - \{x,y\}, \{x,y\}) \\ 
        &= (|C|-2) + 2 - (|C|-1) = 1.
    \end{align*}
    Therefore $M$ has a nonloop $e$ spanned by both $\{x,y\}$ and $C - \{x,y\}$. Since $F$ is a flat, $e \in F$.
    If $e$ is parallel to some $f \in \{x,y\}$, then the independent set $(C - \{x,y\}) \cup \{f\}$ 
    is spanned by its proper subset $C - \{x,y\}$, a contradiction. 
    Therefore $e$ is parallel to neither $x$ nor $y$ but is spanned by $\{x,y\}$, 
    so $\{e,x,y\}$ is a circuit contained in $F$.  Therefore $M | F$ is connected. 

    To see that $M \con F$ is connected, let $F_0$ be a complement of $F$, so $F_0$ is spanning in $M \con F$. 
    Since $F$ is a flat, the matroid $M \con F$ is loopless and has a connected spanning restriction $M|F_0$, so is connected. 
  \end{proof}

  The next lemma is a well-known result about finite projective planes, stated in matroidal language. 

  \begin{lemma}\label{rankthreemodular}
    If $M$ is a connected, modular rank-three matroid,
    then there is an integer $q \ge 2$ such that every line of $M$ has length $q+1$, 
    every point is contained in exactly $q+1$ lines, and the number of points and the 
    number of lines are both equal to $q^2 + q + 1$.
  \end{lemma}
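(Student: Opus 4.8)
The plan is to exploit modularity to show that every line has the same length, and then count incidences twice. First I would fix a point $p$ of $M$ and count the lines through $p$: since $M$ has rank three, every line $L$ not through $p$ satisfies $\sqcap_M(\{p\}, L) = r_M(\{p\}) + r_M(L) - r_M(\{p\} \cup L) = 1 + 2 - 3 = 1$ by modularity (as $\{p\} \cup L$ is spanning), so $\cl_M(\{p\} \cup \ell)$ for $\ell$ a point of $L$ meets $L$ in exactly one point; this sets up a bijection between the points of any line $L$ missing $p$ and the lines through $p$, once we know there is at least one line missing $p$ and that distinct points of $L$ give distinct lines through $p$ (immediate, since two lines through $p$ meet only in $p$, again by modularity: $r_M(L_1 \cap L_2) = \sqcap_M(L_1,L_2) = 2 + 2 - 3 = 1$). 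Connectivity and rank three guarantee $M$ is not a single line, so such configurations exist.

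The key step is then to extract a single integer $q$: pick any line $L_0$ and set $q+1 = |L_0|$ (here $q \ge 2$ since a connected rank-three matroid has lines of size at least $3$; if some line had only two points, say $\{x,y\}$, one checks using the connectivity argument from Lemma~\ref{modularconnectedcontract} that it cannot be a flat unless $M$ has rank two, contradicting $r(M)=3$). Every point $p \notin L_0$ then lies on exactly $|L_0| = q+1$ lines by the bijection above. To see that a point $p \in L_0$ also lies on exactly $q+1$ lines, apply the same argument with $L_0$ replaced by any line $L_1$ not containing $p$ (which exists, e.g. a line through two points outside $\cl_M(\{p, z\})$ for suitable $z$, using that $M$ is not a single line), giving that $p$ lies on $|L_1|$ lines; but $L_1$ contains a point $p' \notin L_0$, so $|L_1|$ equals the number of lines through $p'$, which is $q+1$. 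Chaining these equalities shows every point lies on exactly $q+1$ lines and, dually, every line has exactly $q+1$ points.

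Finally I would double-count flags. Counting pairs $(p, L)$ with $p$ a point, $L$ a line, and $p \in L$: summing over points gives (number of points) $\times (q+1)$, and summing over lines gives (number of lines) $\times (q+1)$, so the number of points equals the number of lines; call this common value $N$. Fixing a point $p_0$, the $q+1$ lines through $p_0$ partition the remaining $N-1$ points into $q+1$ classes of size $q$ each (every other point lies on exactly one line with $p_0$, and each such line contributes $q$ points besides $p_0$), so $N - 1 = (q+1)q$, i.e. $N = q^2 + q + 1$, as claimed.

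The main obstacle is the bookkeeping at the boundary: making sure there is always a line avoiding a given point (so the incidence bijection is non-vacuous), and ruling out $2$-point lines so that $q \ge 2$. Both follow from connectivity plus $r(M) = 3$ via short modularity computations of the kind already used in Lemma~\ref{modularconnectedcontract}, but they must be stated carefully; everything else is routine double counting.
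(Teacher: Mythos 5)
Your overall strategy (set up a bijection between the points of a line $L$ and the lines through a point not on $L$, then double count incidences) is close in spirit to the paper's proof, and your final counts are fine. However, the step where you conclude that all lines have the same length has a genuine gap. Having fixed $L_0$ with $|L_0|=q+1$ and a point $p\in L_0$, you pick a line $L_1$ avoiding $p$ and assert that ``$|L_1|$ equals the number of lines through $p'$'' for a point $p'\in L_1 - L_0$. That inference is not available: your bijection equates $|L|$ with the degree of a point \emph{not on} $L$, and $p'$ lies on $L_1$. (The degree of $p'$ is indeed $q+1$ because $p'\notin L_0$, but at this stage that says nothing about $|L_1|$.) To repair the chain you need a point lying off both $L_0$ and $L_1$, whose degree then equals both $|L_0|$ and $|L_1|$ -- and the existence of such a point is precisely the nontrivial configuration the paper handles explicitly: before defining the projection $\varphi: L\to L'$ through an external element $g$, it rules out $E(M)=L\cup L'$ using that a connected matroid has no coloop. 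Your remark that ``$M$ is not a single line'' does not address this. A clean patch within your framework: given $L_0,L_1$ meeting in a point, take $x\in L_0$ and $y\in L_1$ off that point, set $L_2=\cl_M(\{x,y\})$; since every line has at least three points, $L_2$ contains a point $u\notin L_0\cup L_1$, and the degree of $u$ equals both $|L_0|$ and $|L_1|$.

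Two smaller slips. First, $\sqcap_M(\{p\},L)=1+2-3=0$, not $1$ (a point off a line is skew to it); what you actually need is $\sqcap_M(L',L)=2+2-3=1$ for a line $L'$ through $p$, which is what forces every line through $p$ to meet $L$. Second, the bare claim that a connected rank-three matroid has no two-point line is false without modularity (e.g.\ $M(K_4)$, or $U_{3,4}$); your parenthetical appeal to Lemma~\ref{modularconnectedcontract}, giving that $M|L$ is connected and hence has at least three points, is the correct justification and should carry the argument. (The paper instead obtains $q\ge 2$ at the end: $q=1$ would force $|M|=3=r(M)$, hence a coloop, contradicting connectedness.)
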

  \begin{proof}
    We may assume that $M$ is simple. By modularity, any two distinct lines intersect in a single element. 
    Let $\cL$ be the set of lines of $M$, and $L, L' \in \cL$ be distinct lines intersecting in a point $e$. 
    Let $f \in L - e$ and $f' \in L' - e$, noting that $f \ne f'$. 
    
    If $E(M) = L \cup L'$, then since $M$ is coloopless, there are distinct $x,x' \in L - L'$
    and distinct $y,y' \in L' - L$. Then $\cl_M(\{x,y\})$ and $\cl_M(\{x',y'\})$ intersect
    in some element $z$, and it is easy to argue that $z \notin L \cup L'$, a contradiction. 
    Therefore there exists $g \in E(M) - (L \cup L')$. 

    Define $\varphi : L \to L'$ by, for each $x \in L$, setting $\varphi(x)$ 
    to be the unique point in $L' \cap \cl_M(\{x,g\})$. 
    Since $x$ is the only element of $L \cap \cl_M(\{\varphi(x), g'\})$, 
    the function $\varphi$ is injective. 
    It follows that $|L'| \ge |L|$. The choices of $L$ and $L'$ were arbitrary, 
    so all lines of $M$ have the same length $q + 1$ for some $q \ge 1$. 
  
    Let $e \in E(M)$. Since $r(M) = 3$, there is a line $L$ not containing $e$. 
    The collection $\cL_e = \{\cl_M(\{e,x\}) : x \in L\}$ contains $|L| = q+1$ distinct lines, 
    and since every line through $e$ intersects $L$, each line through $e$ 
    belongs to $\cL_e$. Therefore $e$ is in exactly $|\cL_e| = q+1$ lines. 
  
    Moreover, the lines in $\cL_e$ partition $E(M) - \{e\}$ and each has length $q+1$, 
    so $|M| = 1 + \sum_{L \in \cL_e} |L - \{e\}| = 1 + (q+1)(q+1-1) = q^2+q+1$. 
  
    By double-counting the set of all pairs $(e, L)$ for which $e \in L \in \cL$, 
    we see that $|M|(q+1) = (q+1)|\cL|$, since each point is in $q+1$ lines and each line has $q+1$ points. 
    Therefore $|\cL| = |M| = q^2 + q+ 1$. 

    It remains to argue that $q \ge 2$. Indeed, if $q = 1$, then $|M| = 3 = r(M)$, 
    so $M$ has a coloop, which contradicts that $M$ is connected. 
  \end{proof}

  The next lemma shows that modular matroids enjoy a property sometimes
  known as \emph{roundness}. 

  \begin{lemma}\label{modularround}
    If $M$ is a connected modular matroid, then every cocircuit of $M$ is spanning. 
  \end{lemma}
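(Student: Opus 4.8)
The plan is to show that a cocircuit $C^*$ of a connected modular matroid $M$ must span $M$, by considering its complementary flat. Recall that a cocircuit of $M$ is the complement of a hyperplane $H$ of $M$; equivalently, $H = E(M) - C^*$ is a flat of corank one, so $r_M(H) = r(M) - 1$. We want to show $r_M(C^*) = r(M)$, which fails precisely when $C^*$ is contained in some hyperplane, i.e.\ when there is a flat $F$ with $H \cup C^* = E(M)$ yet $r_M(F) < r(M)$ for the closure $F = \cl_M(C^*)$. So it suffices to rule out the possibility that $\cl_M(C^*)$ is itself contained in a hyperplane.

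First I would set $F = \cl_M(C^*)$ and suppose for contradiction that $F \neq E(M)$; then $F$ is a nonempty proper flat, since $M$ is connected (hence loopless, so $C^* \neq \varnothing$) and $F$ is proper by assumption. Let $H$ be the hyperplane complementary to $C^*$, so $E(M) = F \cup H$ but we do not yet control $F \cap H$. Apply modularity to the pair of flats $F$ and $H$: we get $r_M(F) + r_M(H) = r_M(F \cup H) + r_M(F \cap H) = r(M) + r_M(F \cap H)$. Since $r_M(H) = r(M) - 1$, this rearranges to $r_M(F \cap H) = r_M(F) - 1$, so $F \cap H$ is a hyperplane of the restriction $M | F$, and its complement within $F$ is $F - H = F \cap C^* = C^*$ (using $C^* \subseteq F$ and $C^* \cap H = \varnothing$). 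Thus $C^*$ is a cocircuit of the matroid $M|F$.

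Now I would derive a contradiction from connectivity. Since $F$ is a nonempty proper flat of the connected modular matroid $M$, Lemma~\ref{modularconnectedcontract} tells us $M|F$ is connected. But $C^*$ is the entire ground set of $M|F$ while also being a cocircuit of $M|F$; equivalently, $F \cap H = \varnothing$, so $F \cap H$ is a flat of $M|F$ of rank $r_M(F) - 1$ with empty point set, forcing $r_M(F) - 1 = 0$, i.e.\ $r_M(F) = 1$. Then $M|F$ is a single point, and the cocircuit $C^* = F$ is a coloop of $M|F$ — but a connected matroid on more than one element has no coloop, and a single-element connected matroid is a coloop, contradicting that $M$ is connected unless $M$ itself has rank one. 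Handling that degenerate base case ($r(M) \le 1$) separately is trivial: the unique cocircuit is all of $E(M)$, which spans. The main obstacle is just being careful about the intersection $F \cap H$ — the hypothesis $E(M) = F \cup H$ alone does not give disjointness, and it is modularity that pins down $r_M(F \cap H)$ and ultimately forces the contradiction; I'd also want to double-check the edge cases where $C^*$ or $F$ could be small.
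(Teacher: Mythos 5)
There is a genuine gap, and it sits exactly at the step you glossed over. After setting $F=\cl_M(C^*)$ and (correctly) using modularity to get $r_M(F\cap H)=r_M(F)-1$, you assert that ``$C^*$ is the entire ground set of $M|F$; equivalently, $F\cap H=\varnothing$.'' Nothing in your argument justifies this: it amounts to claiming that the cocircuit $C^*$ is itself a flat, which is precisely the kind of control over $F\cap H$ you admitted you did not have one paragraph earlier. Modularity only tells you that $F\cap H$ is a flat of corank one in $M|F$; it is perfectly consistent with $F\cap H$ being large. Indeed, what you actually establish in paragraph two is that $C^*$ is a \emph{spanning} cocircuit of the connected matroid $M|F$ (it spans $F$ by the definition of $F$), and there is no contradiction whatsoever in that configuration, so the argument never closes. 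The claim $F\cap H=\varnothing$ is also not a consequence of modularity alone: in the modular (but disconnected) matroid $U_{2,3}\oplus U_{1,1}$ with line $\{a,b,c\}$ and coloop $d$, the cocircuit $C^*=\{b,c\}$ has closure $\{a,b,c\}$, which meets the complementary hyperplane $\{a,d\}$; and your only invocation of connectivity is Lemma~\ref{modularconnectedcontract} applied to $M|F$, which cannot rule this kind of picture out, since $M|F$ is connected there as well. So connectivity has to be used in a more substantive, pointwise way than your plan allows.

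For comparison, the paper's proof is local rather than global: fix $x\in C^*$ and take any $e\notin C^*$; the rank-two restriction to $\cl_M(\{e,x\})$ is connected (Lemma~\ref{modularconnectedcontract}), so $e$ and $x$ lie in a triangle $\{e,f,x\}$, and circuit--cocircuit orthogonality forces $f\in C^*$, whence $e\in\cl_M(\{x,f\})\subseteq\cl_M(C^*)$. Running this over all $e\notin C^*$ shows $C^*$ is spanning. If you try to repair your plan, you will find you need essentially this step (applied to an element $e\in H-F$) to contradict $F\neq E(M)$; the modularity computation with the pair $(F,H)$ does none of the real work.
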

  \begin{proof}
    Let $K$ be a cocircuit of $M$, and let $x \in K$.
    For each $e \notin K$, the rank-two restriction $M | \cl_M(\{e,x\})$ is connected by Lemma~\ref{modularconnectedcontract}, 
    so there is some $f$ for which $T = \{e,f,x\}$ is a triangle of $M$. 
    Since $T$ is a circuit, we have $|T \cap K| \ne 1$, so $f \in K$. 
    It follows that $x \in \cl_M(\{e,f\}) \subseteq \cl_M(K)$. This holds for all $x \in E(M)-K$, 
    so $E(M)-K \subseteq \cl_M(K)$, and the lemma follows. 
  \end{proof}

   Our last lemma in the section exploits the abundance of triangles in modular matroids
   to find such a triangle that hits three particular sets. 

  \begin{lemma}\label{basispartition}
    Let $H$ be a hyperplane of a connected modular matroid $M$, 
    and let $B$ be a basis of $H$. If $(X,Y)$ is a partition of $E(M) - H$
    and the sets $B,X$ and $Y$ are nonempty, then $M$ has a triangle intersecting $X,Y$ and $B$.
  \end{lemma}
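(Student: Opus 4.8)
The plan is to pick a point $x \in X$ and a point $y \in Y$, consider the line $L = \cl_M(\{x,y\})$, and find a suitable third point on $L$ lying in $B$; if that fails directly we will reroute through a different choice of $x$ or $y$. First I would note that since $M$ is connected and modular, the restriction $M|L$ is connected of rank two by Lemma~\ref{modularconnectedcontract}, so $L$ has at least three points (a rank-two connected matroid has no coloop, hence at least two points, and if simplified it is $U_{2,m}$ with $m \ge 3$ whenever $M$ has rank $\ge 3$; I would handle the degenerate small-rank cases separately or observe $M$ has rank $\ge 3$ because $H$, $X$, $B$ are all nonempty and $H$ is a hyperplane). The key tool is modularity applied to $L$ and $H$: since $x \notin H$, the point $x$ is not in the flat $H$, so $L \not\subseteq H$, and $\sqcap_M(L, H) = r_M(\cl_M(L) \cap \cl_M(H)) = r_M(L \cap H)$. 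Because $r_M(L) + r_M(H) = r_M(L \cup H) + r_M(L \cap H)$ and $L \cup H$ spans $M$ (it strictly contains the hyperplane $H$), we get $2 + (r(M)-1) = r(M) + r_M(L \cap H)$, so $r_M(L \cap H) = 1$: the line $L$ meets $H$ in exactly one point, call it $z$.

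Now $z \in H$, $x \in X$, $y \in Y$, and $\{x,y,z\}$ are three distinct collinear points, hence (if $M$ is simple along $L$, which it is once we note parallel classes) this triple is a triangle intersecting $X$, $Y$, and $H$. The remaining issue is to arrange that the point of $L$ landing in $H$ actually lies in $B$ rather than merely in $H$; a basis $B$ of $H$ need not contain $z$. To fix this I would instead argue as follows: the point $z \in H$ lies in some circuit with elements of $B$ since $M|H$ is connected (Lemma~\ref{modularconnectedcontract}), and in fact $z$ is spanned by $B$; more usefully, I would first reduce to the case that $z \in B$ by re-choosing the basis — but $B$ is given, so instead I would run the line argument inside $M$ but choosing the third point to lie in $\cl_M(B' )$ for a small independent subset. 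Cleaner: apply Lemma~\ref{modularconnectedcontract} to see $M \con (E(M)-H-\{y\})$-type contractions, or simply: pick $b \in B$, and consider instead the plane $P = \cl_M(\{b,x,y\})$. Within $P$ (which is a connected modular rank-$\le 3$ matroid, so a projective plane or smaller by Lemma~\ref{rankthreemodular}), the line $\cl_{M}(\{x,y\})$ meets the line $\cl_M(\{b, z\})$ where $z = P \cap (\text{something in } B)$... this is getting complicated, so the genuinely clean route is:

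Choose $b \in B$. Consider the line $L_1 = \cl_M(\{b, x\})$; since $b \in H$ and $x \notin H$, the same modularity computation shows $L_1 \cap H = \{b\}$, and $L_1$ has a third point $w$ with $w \notin H$, so $w \in X \cup Y$. If $w \in Y$ we are done: $\{b, x, w\}$ is a triangle meeting $B$, $X$, $Y$. If $w \in X$, then repeat with $L_2 = \cl_M(\{b,y\})$ to get a third point $w'$; if $w' \in X$ we are again done. The only bad case is that \emph{every} line through $b$ and a point of $X$ has its third non-$H$ point also in $X$, and likewise for $Y$ — but then $X \cup \{b\}$ and $Y \cup \{b\}$ would both be closed under this "third point" operation, forcing (by connectivity of $M \con b$, which is connected and modular by Lemma~\ref{modularconnectedcontract} since $\{b\}$ is a nonempty proper flat) that $X$ and $Y$ cannot be separated, a contradiction with $(X,Y)$ being a partition into nonempty parts; concretely, in $M \con b$ the sets $X$ and $Y$ partition the ground set minus the flat $H \con b$, and the "third point" map is exactly the parallel-class structure, so connectivity of $M \con b$ (equivalently, that $(M\con b)|(X \cup Y \cup \dots)$ has no separation) yields a circuit crossing between $X$ and $Y$, hence a triangle in $M$ through $b$ hitting both. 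The main obstacle, and the step I would spend the most care on, is exactly this last one: turning "$X$ and $Y$ are each closed under the third-point operation" into a contradiction via connectivity of $M \con b$ — making precise that the third-point operation on lines through $b$ corresponds to connectivity in the contraction, so that a connected $M \con b$ forces a line through $b$ with one non-$H$ point in $X$ and the other in $Y$.
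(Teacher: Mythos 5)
There is a genuine gap, and it sits exactly at the step you yourself flagged as the delicate one. Your argument fixes a single $b \in B$ and tries to show that the ``bad case'' --- every line through $b$ meeting $X$ has all of its points outside $H$ in $X$, and likewise for $Y$ --- contradicts the connectedness of $M \con b$. It does not. In $M \con b$ the bad case says only that $X$ and $Y$ are unions of parallel classes (no element of $X$ is parallel in $M \con b$ to an element of $Y$), and a connected matroid can perfectly well have its ground set partitioned into such unions. The circuits of $M\con b$ crossing from $X$ to $Y$ that connectivity does provide are not in general parallel pairs, so they do not give triangles of $M$ through $b$: they give longer circuits through $b$, or triangles whose third element lies in $H$ but not necessarily in $B$ (which was precisely the defect of your first attempt with $\cl_M(\{x,y\})$). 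A concrete counterexample to your claimed contradiction: take $M \cong \PG(2,2)$, let $H$ be a line, $B = \{b,b'\}$ a basis of $H$, and let $X$ and $Y$ be the two pairs of points outside $H$ lying on the two lines through $b$ other than $H$. Then every triangle through $b$ that leaves $H$ is contained in $\{b\} \cup X$ or in $\{b\} \cup Y$, even though $M \con b$ is connected; the lemma holds here only because of the \emph{other} basis element $b'$. So no argument committed to one fixed $b \in B$ can succeed; the statement genuinely requires using the whole basis.

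For comparison, the paper proves the lemma by induction on $r(M)$. The rank-$2$ case is your easy observation. For $r(M)\ge 3$ it picks $f \in B$ and $e \notin H$, sets $H_0 = \cl_M(B-f)$ and $A_0 = \cl_M(H_0 \cup \{e\}) - H_0$, so that $H_0 \cup A_0$ is a hyperplane; if both $X$ and $Y$ meet $A_0$, induction applies inside $M | (H_0 \cup A_0)$ with the basis $B - f$, and otherwise (say $A_0 \subseteq Y$) one takes $x \in X$, uses modularity to find a point $y \in \cl_M(\{x,f\}) \cap (H_0 \cup A_0)$, and uses the circuit--cocircuit intersection property with the cocircuit $E(M)-H$ to force $y \in A_0 \subseteq Y$, giving the triangle $\{x,y,f\}$. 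Your preliminary observations (a line through $f \in B$ and a point off $H$ meets $H$ only in the point of $f$, etc.) are correct and resemble the final step of that proof, but the mechanism replacing your connectivity claim is the induction over the flat $\cl_M(B-f)$, which is where the freedom to use different elements of $B$ enters.
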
  
  \begin{proof}
    Since $0 < |B| = r(M)-1$, we have $r(M) \ge 2$.
    If $r(M) = 2$, then $|B| = 1$, and $M \del \cl_M(B)$ has at least two points by connectedness. 
    Since $X$ and $Y$ are nonempty, it follows that there exists $x \in X$ and $y \in Y$
    not parallel, and $x,y,b$ is the required triangle for $b \in B$. 

    Suppose, therefore, that $r(M) \ge 3$, and inductively that the lemma holds for 
    matroids of rank $r(M)-1$. Let $A = E(M) - H$, let $e \in A$, and let $f \in B$. 
    Let $H_0 = \cl_M(B - f)$ and let $A_0 = \cl_M(H_0 \cup \{e\}) - H_0$, 
    noting that $A_0 \subseteq A$ and $H_0 \cup A_0$ is a hyperplane of $M$.
    If $X \cap A_0$ and $Y \cap A_0$ are both nonempty, 
    then the inductive hypothesis applied to the basis $B-f$ of $H_0$ in $M | (H_0 \cup A_0)$
    with the partition $(X \cap A_0, Y \cap A_0)$ gives the required triangle.
    We may therefore assume, say, that $X \cap A_0 = \es$, so $A_0 \subseteq Y$. 

    Let $x \in X$, so $x \in A - A_0$. By modularity, 
    the line $\cl_M(\{x,f\})$ intersects the hyperplane $A_0 \cup H_0$
    in a nonloop $y$; since $f \in H - H_0$ and $x \in A - A_0$, we know that $y \notin \{x,f\}$, 
    so $\{x,y,f\}$ is a triangle. 
    Since $\{x,f\} \cap A = \{x\}$, 
    but  the circuit $\{x,y,f\}$ does not intersect the cocircuit $A$ in exactly one element, 
    we have $y \in A$, so $y \in A_0 \subseteq Y$. 
    This implies that $\{x,y,f\}$ is the required triangle. 
  \end{proof}

\section{Coloured Matroids and Decomposers}

  A \emph{coloured matroid} is a pair $(M; c)$, where $M$ is a matroid and $c$ is a colouring of $E(M)$. 
  When discussing matroid properties such as connectedness, modularity, or independence of a set,
  we identify $(M;c)$ with the matroid $M$. 
  The colouring $c$ is \emph{simple} (with respect to $M$) if $M$ has no loop, and $c(e) = c(f)$
  for all $e, f$ that are parallel in $M$. Note that $M$ itself need not be simple for this to hold, 
  but simplicity of $M$ implies simplicity of $c$.
  A set $X$ is \emph{$c$-monochromatic} if $c$ is constant on $X$,
  is \emph{$c$-polychromatic} otherwise, and is
  \emph{$c$-rainbow} if $c$ is injective on $X$. 
  We omit the reference to $c$ if it is clear from context. 

  The following lemma shows that the existence of a rainbow triangle
  is equivalent to the existence of a larger rainbow circuit. 

  \begin{lemma}\label{rainbowcircuit}
      A  modular coloured matroid $(M;c)$ has a rainbow triangle if and only if
      $(M;c)$ has a rainbow circuit of size at least $3$.
  \end{lemma}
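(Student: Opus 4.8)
The plan is to prove both directions, with the forward direction (rainbow triangle implies rainbow circuit) being trivial, since a triangle \emph{is} a circuit of size $3$. The substance is the converse: assume $(M;c)$ has a rainbow circuit $C$ with $|C| \ge 3$, and produce a rainbow triangle. I would argue by induction on $|C|$, the base case $|C| = 3$ being immediate. For the inductive step, suppose $|C| \ge 4$; the goal is to find a rainbow circuit of size $|C| - 1$ (or smaller), which by induction yields a rainbow triangle.

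The key tool is modularity applied to a two-element partition of $C$. Pick distinct elements $x, y \in C$, and set $C_1 = \{x,y\}$ and $C_2 = C - \{x,y\}$. Since $C$ is a circuit of size at least $4$, both $C_1$ and $C_2$ are independent in $M$, and $\cl_M(C_1) \cap \cl_M(C_2)$ has rank $\sqcap_M(C_1, C_2) = 2 + (|C|-2) - (|C|-1) = 1$, so there is a nonloop $z$ spanned by both $\{x,y\}$ and by $C - \{x,y\}$ — exactly the computation used in Lemma~\ref{modularconnectedcontract}. Since $z \in \cl_M(\{x,y\})$ and $z$ is a nonloop, either $z$ is parallel to $x$ or to $y$, or $\{x,y,z\}$ is a triangle. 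The plan is to first handle the parallel case: if $z$ is parallel to $x$ (say), then because $c$ need not be assumed simple I cannot immediately conclude, but I can replace $x$ by $z$ throughout — actually the cleaner route is to choose $x,y$ more carefully, or to observe that $z \in \cl_M(C - \{x,y\})$ together with $z$ parallel to $x$ forces $x \in \cl_M(C-\{x,y\})$, contradicting that $C$ is a circuit. So in fact $\{x,y,z\}$ is a genuine triangle.

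Now the colour bookkeeping. The triangle $T = \{x,y,z\}$ is rainbow provided $c(z) \notin \{c(x), c(y)\}$, in which case we are done immediately. If instead $c(z) = c(x)$ or $c(z) = c(y)$, I use the other side: $z \in \cl_M(C - \{x,y\})$ means $(C - \{x,y\}) \cup \{z\}$ is dependent, so it contains a circuit $C'$ with $z \in C'$ (if $z \notin C'$ then $C' \subseteq C - \{x,y\} \subsetneq C$ would be a circuit properly inside the circuit $C$, impossible). Then $C' \subseteq (C - \{x,y\}) \cup \{z\}$ has size at most $|C| - 1$, and its colours lie in $c(C - \{x,y\}) \cup \{c(z)\} \subseteq c(C) - \{c(x), c(y)\} \cup \{c(z)\}$; since $c$ is injective on $C$ and $c(z)$ equals one of $c(x), c(y)$, the set $C'$ receives at most $|C'|$ distinct colours but we need it to be rainbow. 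Here one must be slightly careful: $c$ restricted to $C - \{x,y\}$ is injective (as $C$ is rainbow), and $c(z) \in \{c(x),c(y)\}$ is a colour \emph{not} appearing on $C - \{x,y\}$, so $c$ is injective on $(C-\{x,y\}) \cup \{z\}$, hence on its subset $C'$. Thus $C'$ is a rainbow circuit with $3 \le |C'| \le |C| - 1$, and induction finishes.

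The main obstacle I anticipate is the bookkeeping around non-simplicity of $c$ and the degenerate case where $z$ is parallel to $x$ or $y$ — ensuring that the "triangle" $T$ really has three distinct points and that replacing elements by parallel copies does not secretly shrink the colour set. The argument above sidesteps this by noting $z \in \cl_M(C - \{x,y\})$ rules out $z$ being parallel into $C_1$. One should also double-check the edge case $|C| = 3$ is correctly the base (no circuit of size $<3$ issue since loops and parallel pairs are handled by choosing $C$ of size $\ge 3$). Connectivity of $M$ is not needed here — only modularity — which matches the hypothesis of the lemma. I would also remark that this is where Lemma~\ref{modularconnectedcontract}'s local-connectivity computation gets reused, so citing that computation keeps the proof short.
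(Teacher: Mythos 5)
Your argument is essentially the paper's proof: both partition the rainbow circuit $C$ into two independent pieces, use modularity to produce a nonloop $z$ in the intersection of their closures, and exploit the fact that the two pieces carry disjoint colour sets, so $c(z)$ can clash with at most one side, yielding a rainbow circuit of size at least $3$ that is strictly smaller than $C$. The paper runs this as a minimal-counterexample argument with a general partition $|X_1|,|X_2|\ge 2$, while you specialize to $X_1=\{x,y\}$ and phrase it as induction; these are equivalent.

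The one step you assert without justification is $|C'|\ge 3$ in your second case: a priori the circuit $C'$ through $z$ inside $(C-\{x,y\})\cup\{z\}$ could be a parallel pair $\{z,w\}$ with $w\in C-\{x,y\}$, and the induction hypothesis does not apply to a rainbow circuit of size $2$. This is ruled out by the mirror image of the argument you already gave for the other side: if $z$ were parallel to such a $w$, then $z\in\cl_M(\{x,y\})$ would force $w\in\cl_M(\{x,y\})$, making $\{x,y,w\}$ a dependent proper subset of the circuit $C$ (using $|C|\ge 4$), a contradiction. The paper handles the same point by applying circuit elimination to the two circuits through the common element. With that line added, your proof is complete.
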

  \begin{proof}
    One direction is immediate. For the other direction, let $C$ be a minimal 
    rainbow circuit of $(M;c)$ for which $|C| \ge 3$. Towards a contradiction, suppose that $|C| \ge 4$.
    
    Let $(X_1,X_2)$ be a partition of $C$ with $|X_1|,|X_2| \ge 2$. 
    Since each $X_i$ is a proper subset of $C$, it is independent in $M$,
    which implies that 
    $r_M(\cl_M(X_1) \cap \cl_M(X_2)) = \sqcap_M(X_1,X_2) = |X_1| + |X_2| - r_M(C) = 1$, 
    
    Let $e$ be a nonloop in $\cl_M(X_1) \cap \cl_M(X_2)$. 
    For each $i$, the set $X_i \cup \{e\}$ is dependent, so contains a circuit $C_i$ with $e \in C_i$. Note that $|C_i| \geq 3$ for each $i$; otherwise, applying circuit elimination to $C_1$ and $C_2$ yields a circuit properly contained in $C$ (since $|X_1|, |X_2| \ge 2$).  
    Now, since $C_i - \{e\} \subseteq X_i$ and $X_1 \cup X_2$ is rainbow, 
    one of the circuits $C_1,C_2$ must be rainbow. But both are clearly smaller than $C$, 
    which contradicts minimality.
  \end{proof}

 We now present several equivalent characterizations of modular coloured matroids without rainbow triangles.  

  \begin{theorem}\label{easyequiv}
      Let $(M;c)$ be a modular coloured matroid for which $c$ is simple. The following are equivalent:
      
      \begin{enumerate}[(i)]
      \item\label{nrt} $(M;c)$ has no rainbow triangle;
      \item\label{nrc} $(M;c)$ has no rainbow circuit;
      \item\label{setsmall} $|c(X)| \le r_M(X)$ for all $X \subseteq E(M)$;
      \item\label{flatsmall} $|c(F)| \le r_M(F)$ for every flat $F$ of $M$.
      \end{enumerate}
  \end{theorem}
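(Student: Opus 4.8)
The plan is to establish the cycle of implications $(\ref{nrc}) \Rightarrow (\ref{nrt})$, $(\ref{nrt}) \Rightarrow (\ref{nrc})$, $(\ref{setsmall}) \Rightarrow (\ref{flatsmall}) \Rightarrow (\ref{nrt})$, and $(\ref{nrc}) \Rightarrow (\ref{setsmall})$ (or some equivalent arrangement). Two of these are essentially free: $(\ref{nrc}) \Rightarrow (\ref{nrt})$ is trivial since a rainbow triangle is a rainbow circuit, and $(\ref{nrt}) \Rightarrow (\ref{nrc})$ is exactly Lemma~\ref{rainbowcircuit}. Also $(\ref{setsmall}) \Rightarrow (\ref{flatsmall})$ is immediate by taking $X$ to be a flat. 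So the substantive content is in two implications: $(\ref{flatsmall}) \Rightarrow (\ref{nrt})$ and $(\ref{nrc}) \Rightarrow (\ref{setsmall})$ (closing the cycle).

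For $(\ref{flatsmall}) \Rightarrow (\ref{nrt})$: I would argue the contrapositive. If $(M;c)$ has a rainbow triangle $T = \{x,y,z\}$, then since $c$ is simple and $T$ is a circuit, $T$ spans a line $L = \cl_M(T)$ with $r_M(L) = 2$, yet $c$ takes three distinct values on $T \subseteq L$, so $|c(L)| \ge 3 > 2 = r_M(L)$, contradicting $(\ref{flatsmall})$. (One should double-check the degenerate cases where $M$ has small rank, but a rainbow triangle forces $r(M) \ge 2$ and the argument is robust.)

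For $(\ref{nrc}) \Rightarrow (\ref{setsmall})$: suppose $(M;c)$ has no rainbow circuit, and let $X \subseteq E(M)$ with $r_M(X) = r$. The idea is to choose a basis $I$ of $X$ greedily so that $c(I) = c(X)$ — that is, so that $I$ hits every colour class that appears in $X$. If this is possible then $|c(X)| = |c(I)| \le |I| = r$ and we are done. To see such $I$ exists, process the colour classes of $c$ restricted to $X$ one at a time, maintaining an independent set $I$; when considering a new colour $\kappa$ appearing on some $e \in X$, if $I \cup \{e\}$ is independent we add $e$, and otherwise $e \in \cl_M(I)$, so $e$ together with an appropriate subset of $I$ forms a circuit $C$ — but if every colour class of $I$ so far has been ``used once'' and $c(e) = \kappa \notin c(I)$, then $C$ would be a rainbow circuit (using simplicity to handle the $|C|=2$ case, which can't be rainbow since parallel elements share a colour). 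This contradiction shows the greedy step always succeeds, so $c(I) = c(X)$. Care is needed to set up the invariant precisely: one wants ``$c$ is injective on $I$ and no colour of $X$ already represented in $I$ is skipped,'' and to argue that the circuit produced is genuinely rainbow, which uses that $I$ is rainbow and the new element carries a genuinely new colour.

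The main obstacle is the $(\ref{nrc}) \Rightarrow (\ref{setsmall})$ step: making the greedy/exchange argument airtight, in particular verifying that the circuit obtained when the greedy step fails is actually rainbow (not merely that its elements have distinct colours on $I$, but that $e$'s colour is new) and handling the parallel-element edge case via simplicity of $c$. Everything else is bookkeeping or already in hand via Lemma~\ref{rainbowcircuit}.
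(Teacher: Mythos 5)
Your proposal is correct, and its skeleton matches the paper's: both dispose of (\ref{nrt})$\Leftrightarrow$(\ref{nrc}) via Lemma~\ref{rainbowcircuit} together with simplicity of $c$, treat (\ref{setsmall})$\Leftrightarrow$(\ref{flatsmall}) and the ``rainbow triangle violates the rank bound'' implication as immediate, and put all the work into deducing (\ref{setsmall}) from (\ref{nrc}). The only real divergence is how that step is done. The paper takes a minimal set $X$ with $|c(X)| > r_M(X)$ and shows by a short counting argument that $X$ must itself be rainbow with $r_M(X) = |X|-1$, hence contains a rainbow circuit; you instead build greedily a rainbow independent set $I \subseteq X$ meeting every colour of $c(X)$, observing that if some element $e$ of a new colour lies in $\cl_M(I)$ then any circuit in $I \cup \{e\}$ has pairwise distinct colours and is therefore rainbow (with simplicity ruling out, or rather making contradictory, the two-element case). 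Both arguments bottom out in the same contradiction and are of comparable length; yours has the mild advantage of producing the witness $I$ with $|c(X)| = |I| \le r_M(X)$ constructively, while the paper's is slightly more economical in bookkeeping. Your closing of the equivalence cycle via (\ref{flatsmall})$\Rightarrow$(\ref{nrt}) rather than the paper's (\ref{flatsmall})$\Rightarrow$(\ref{setsmall}) (via closures) plus (\ref{setsmall})$\Rightarrow$(\ref{nrt}) is an immaterial rearrangement; neither step needs modularity, which enters only through Lemma~\ref{rainbowcircuit}, exactly as you use it.
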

  \begin{proof}
    By the simplicity of $c$, every rainbow circuit of $M$ has size at least $3$, 
    so the equivalence of (\ref{nrt}) and (\ref{nrc}) follows from Lemma~\ref{rainbowcircuit}. 
    It is clear that  (\ref{setsmall}) implies (\ref{flatsmall}). 
    Moreover, if (\ref{flatsmall}) holds and $X \subseteq E(M)$, then $|c(X)| \le |c(\cl_M(X))| \le r_M(\cl_M(X)) = r_M(X)$, 
    so (\ref{flatsmall}) implies (\ref{setsmall}). 
    Since a rainbow triangle $T$ satisfies $|c(T)| > r_M(T)$, we also know that (\ref{setsmall}) implies (\ref{nrt}). 
    It is thus sufficient to show that (\ref{nrc}) implies (\ref{setsmall}). 
    
    Suppose that (\ref{nrc}) holds and (\ref{setsmall}) does not, and let $X$ be a minimal set for which $|c(X)| > r_M(X)$. 
    For each $e \in X$, the minimality of $X$ implies that 
    \[|c(X-e)| \le r_M(X-e) \le r_M(X) \le |c(X)| - 1 \le |c(X-e)|,\]
    so equality holds throughout, meaning that $r_M(X) = |c(X)|-1$ 
    and that $e$ is the only element of $X$ with colour $c(e)$. 
    This holds for all $e \in X$, so $X$ is rainbow, and $r_M(X) = |c(X)| - 1 = |X|-1$. 
    Therefore $X$ contains a rainbow circuit $C$, contradicting (\ref{nrc}).     
  \end{proof}

    A rainbow-triangle-free coloured matroid is essentially the same
    as one where every line contains at most two colours. 
    A natural generalization is the class of matroids where all
    rank-$k$ flats contain at most $k$ colours for some fixed $k$.
    However, we can see by way of the following easy result 
    that this does not make the classification problem any more interesting.

  \begin{theorem}
    Let $(M;c)$ be a coloured matroid, 
    and $k \ge 2$ be an integer. 
    If $r(M) \ge k$ and $|c(F)| \le k$ for every rank-$k$
    flat $F$ of $M$, then either $|c| \le k$, or $(M;c)$ has no rainbow triangle. 
  \end{theorem}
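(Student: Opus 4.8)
The plan is to argue by contraposition: assume that $(M;c)$ has a rainbow triangle and that $|c| \ge k+1$, and produce a rank-$k$ flat with at least $k+1$ colours, contradicting the hypothesis. (Note that modularity plays no role here; this works for arbitrary matroids.) The idea is a greedy ``colour-increasing chain'' of flats, seeded by the line spanned by the rainbow triangle.

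Concretely, let $T = \{a,b,c\}$ be a rainbow triangle, so $r_M(T) = 2$ and $F_2 := \cl_M(T)$ is a rank-two flat with $|c(F_2)| \ge |c(T)| = 3$. I would then construct a strictly increasing chain of flats $F_2 \subsetneq F_3 \subsetneq \dotsc \subsetneq F_k$ with $r_M(F_j) = j$ and $|c(F_j)| \ge j+1$ for each $j$, by induction on $j$. Given $F_j$ with $2 \le j \le k-1$: if $c(F_j) \ne [c]$, choose a colour $\kappa \in [c] \setminus c(F_j)$ and an element $e$ with $c(e) = \kappa$; then $e \notin F_j$ (every element of $F_j$ has a colour in $c(F_j)$), so $F_{j+1} := \cl_M(F_j \cup \{e\})$ has rank $j+1$, and $|c(F_{j+1})| \ge |c(F_j)| + 1 \ge j+2$. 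If instead $c(F_j) = [c]$, then since $j < k \le r(M)$ the flat $F_j$ is non-spanning, so there is some $e \in E(M) \setminus F_j$, and $F_{j+1} := \cl_M(F_j \cup \{e\})$ has rank $j+1$ with $c(F_{j+1}) \supseteq c(F_j) = [c]$, hence $|c(F_{j+1})| = |c| \ge k+1 \ge j+2$. Either way the invariant is preserved, so at the end $F_k$ is a rank-$k$ flat with $|c(F_k)| \ge k+1 > k$, the desired contradiction.

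There is no real obstacle here; this is a routine greedy argument. The only points requiring a moment's care are: (i) each extension step must genuinely increase the rank, which is guaranteed because we always adjoin an element outside the current flat (in particular not a loop, since loops lie in every flat); and (ii) the colour count must stay strictly ahead of the rank even once it saturates at $|c|$, which is exactly why the two cases above are split and why the hypothesis $|c| \ge k+1$ is used. For $k = 2$ the chain is empty and $F_2$ itself already contradicts the assumption that every rank-two flat carries at most two colours.
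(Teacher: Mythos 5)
Your proof is correct and follows essentially the same route as the paper: both arguments extend the rainbow triangle to a rank-$k$ flat carrying at least $k+1$ colours, the paper by choosing a $(k-2)$-element set of fresh colours in one step and noting the resulting rainbow set has rank at most $k$, you by building the chain greedily one flat at a time. Your case split (new colour available versus colours already exhausted) is a slightly more laborious but equally valid way of packaging the same idea.
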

  \begin{proof}
  Suppose for a contradiction that $(M;c)$ has a rainbow triangle $T$, 
  but also that $|c| > k$. Then there is a $(k-2)$-element set 
  $S \subseteq E(M) - T$ such that $S \cup T$ is rainbow.
  Since \[r_M(S \cup T) \leq r_M(T)+|S|=2+(k-2)=k,\]
  $S \cup T$ is contained in a rank-$k$ flat of $M$ using at least $|S \cup T|=k+1$ colours,
  a contradiction. 
  \end{proof}

  Our main structural result, Theorem~\ref{mainstructuresimple}, 
  adds a nontrivial fifth condition to the list in Theorem~\ref{easyequiv}. 
  To be able to state and prove this condition, we need a notion of decomposition for coloured matroids. 
  A \emph{decomposer} of a coloured matroid $(M; c)$ is a proper flat $F$ of $M$ such that, for all $e \in E(M) - F$,
  the set $\cl_M(F \cup \{e\}) - F$ is $c$-monochromatic. 
  
  It is easy to see that, if $M$ has no loop, the flat $\es$ is a decomposer if and only if 
  $c$ is simple. A decomposer of rank at least one is \emph{nontrivial}.
  For nearly all of our theorems, the matroids addressed are connected
  and have rank at least one.
  This means they are loopless, that points are the same as parallel classes, 
  and that nontrivial decomposers are precisely nonempty decomposers. 
  
  However, our inductive strategy involves contractions that create parallel pairs;  
  to facilitate this, we work at a level of generality that includes non-simple matroids. 
  The reader should bear in mind the possibility that 
  two distinct elements are parallel (and therefore not in a rainbow triangle), 
  which can affect the course of our arguments. 

  For a colouring $c$ of a set $X$, we write $c | Y$ for the restriction of $c$ to domain $Y \subseteq X$. 
  If $X \subseteq E(M)$, then we write $(M; c) \con X$ for the coloured matroid $((M \con X); c | E(M \con X))$, 
  and $M | X$ for the coloured matroid $(M|X; c |X)$. 
  
  \begin{lemma}\label{decompdecomp}
    Let $(M; c)$ be a coloured matroid. If $F$ is a decomposer of $(M; c)$, and $F_0$ is a decomposer of $(M; c) | F$, 
    then $F_0$ is also a decomposer of $(M; c)$. 
  \end{lemma}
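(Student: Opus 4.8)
The plan is to check the definition of decomposer for $F_0$ in $(M;c)$ directly, with no recourse to modularity. First I would record that $F_0$ is a proper flat of $M$: the flats of $M|F$ are precisely the flats of $M$ contained in $F$, so $F_0$ is a flat of $M$, and $F_0 \subsetneq F \subsetneq E(M)$ since $F_0$ is a proper flat of $M|F$ and $F$ is a proper flat of $M$. It then remains to verify that $\cl_M(F_0 \cup \{e\}) - F_0$ is $c$-monochromatic for every $e \in E(M) - F_0$; write $L = \cl_M(F_0 \cup \{e\})$. Since $E(M) - F_0$ is the disjoint union of $F - F_0$ and $E(M) - F$, I would split into two cases.

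When $e \in F - F_0$: the flat $F$ contains $F_0 \cup \{e\}$, so $L \subseteq F$, whence $L = \cl_M(F_0 \cup \{e\}) \cap F = \cl_{M|F}(F_0 \cup \{e\})$. As $F_0$ is a decomposer of $(M; c) | F$ and $e \in F - F_0$, the set $L - F_0$ is $c$-monochromatic, as required.

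When $e \in E(M) - F$: the key claim is that $L \cap F = F_0$. This follows from a short rank computation. The set $L \cap F$ is a flat of $M$ with $F_0 \subseteq L \cap F \subseteq L$, and $r_M(L) = r_M(F_0) + 1$ because $e \notin F \supseteq \cl_M(F_0) = F_0$; hence $L \cap F$ has rank $r_M(F_0)$ or $r_M(F_0) + 1$, and is therefore equal to $F_0$ or to $L$. The latter is impossible, since it would give $e \in L \subseteq F$. Consequently $L - F_0 = L - F$. Since $F_0 \subseteq F$, we have $L \subseteq \cl_M(F \cup \{e\})$, so $L - F \subseteq \cl_M(F \cup \{e\}) - F$, and the right-hand side is $c$-monochromatic because $F$ is a decomposer of $(M;c)$ and $e \notin F$. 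Thus $L - F_0$ is $c$-monochromatic.

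The only real obstacle is the rank argument establishing $L \cap F = F_0$ in the second case; once that is in hand, both cases reduce immediately to the two hypotheses, and nothing about modularity or simplicity of $c$ is needed.
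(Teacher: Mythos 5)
Your proof is correct and follows essentially the same route as the paper's: the same split into $e \in F$ and $e \notin F$, with the second case reduced to showing $\cl_M(F_0 \cup \{e\}) \cap F = F_0$ and then using $\cl_M(F_0 \cup \{e\}) - F_0 \subseteq \cl_M(F \cup \{e\}) - F$. The only difference is that you spell out, via the rank computation, the identity $\cl_M(F_0 \cup \{e\}) \cap F = F_0$, which the paper simply asserts; your justification is valid and, as you note, uses no modularity.
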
  
  \begin{proof}
    Clearly $F_0$ is a proper flat of $M$. 
    Let $e \in E(M) - F_0$. If $e \in F$, then $\cl_M(F_0 \cup \{e\}) - F_0 = \cl_{M | F}(F_0 \cup \{e\}) - F_0$, which is monochromatic. 
    If $e \notin F$, then $\cl_M(F_0 \cup \{e\}) \cap F = F_0$, and so 
    $\cl_M(F_0 \cup \{e\}) - F_0 \subseteq \cl_M(F \cup \{e\}) - F$, which is monochromatic, as required. 
  \end{proof}
  
  \begin{lemma}\label{decompcontract}
    Let $F$ be a flat of a coloured matroid $(M; c)$. 
    If $F_0$ is a decomposer of $(M; c) \con F$, 
    then $\cl_M(F \cup F_0)$ is a decomposer of $(M; c)$. 
  \end{lemma}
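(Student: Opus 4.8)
The claim is that if $F$ is a flat of $(M;c)$ and $F_0$ is a decomposer of $(M;c)\con F$, then $\cl_M(F\cup F_0)$ is a decomposer of $(M;c)$. Write $F_1 = \cl_M(F\cup F_0)$. First I would check that $F_1$ is a proper flat of $M$: it is a flat by construction, and it is proper because $F_0$ is a \emph{proper} flat of $M\con F$, so there is some $e\in E(M\con F)$ with $e\notin F_0$, and then $e\notin F_1$ (if $e$ were in $\cl_M(F\cup F_0)$, then $e$ would be in $\cl_{M\con F}(F_0) = F_0$, a contradiction). So $E(M)-F_1$ is nonempty and $F_1\neq E(M)$.

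The heart of the argument is to show that for each $e\in E(M)-F_1$, the set $\cl_M(F_1\cup\{e\})-F_1$ is $c$-monochromatic. The key observation is the identity
\[
  \cl_M(F_1\cup\{e\})-F_1 \;=\; \cl_{M\con F}(F_0\cup\{e\})-F_0,
\]
which should hold because contracting $F$ and then taking closure of $F_0\cup\{e\}$ is the same as taking $\cl_M(F\cup F_0\cup\{e\}) = \cl_M(F_1\cup\{e\})$ and then removing $F$-stuff; more precisely $\cl_{M\con F}(Z) = \cl_M(Z\cup F) - F$ for $Z\subseteq E(M\con F)$, so $\cl_{M\con F}(F_0\cup\{e\}) = \cl_M(F_1\cup\{e\}) - F$, and subtracting $F_0$ from this (noting $F_0\cap F=\es$ since $F_0\subseteq E(M\con F)=E(M)-F$) leaves $\cl_M(F_1\cup\{e\}) - F_1$, using $F_1 = \cl_M(F\cup F_0)$ so that $F_1 - F = F_0$ inside $E(M\con F)$. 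Once this set-equality is established, the right-hand side is $c$-monochromatic by the hypothesis that $F_0$ is a decomposer of $(M;c)\con F$ — and the colouring of $(M;c)\con F$ on these elements agrees with $c$, since $E(M\con F)\subseteq E(M)$ and the contracted colouring is just the restriction $c|E(M\con F)$. Therefore $\cl_M(F_1\cup\{e\})-F_1$ is $c$-monochromatic, as needed.

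The step I expect to require the most care is the set-theoretic identity $\cl_M(F_1\cup\{e\})-F_1 = \cl_{M\con F}(F_0\cup\{e\})-F_0$, specifically the bookkeeping around which ground set each side lives in and the fact that $e\in E(M)-F_1 \subseteq E(M)-F = E(M\con F)$, so that $e$ is a legitimate element of $M\con F$ and $e\notin F_0$. The underlying matroid facts — that $\cl_{M\con F}(Z) = \cl_M(Z\cup F)-F$, and that a closure in $M\con F$ pulls back to a flat of $M$ containing $F$ — are standard (e.g. from Oxley [\ref{oxley}]), so no modularity is needed here; this lemma is purely about matroid contraction and decomposers. I would present it in two short paragraphs: one verifying properness, one establishing the identity and concluding monochromaticity.
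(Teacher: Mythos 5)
Your proposal is correct and follows essentially the same route as the paper: properness of $\cl_M(F\cup F_0)$ from properness of $F_0$ in $M\con F$, then the comparison of $\cl_M(F\cup F_0\cup\{e\})-\cl_M(F\cup F_0)$ with $\cl_{M\con F}(F_0\cup\{e\})-F_0$ and the decomposer hypothesis. The only cosmetic difference is that the paper merely asserts the containment "$\subseteq$" (all that is needed, since a subset of a monochromatic set is monochromatic), whereas you verify the full equality via $\cl_{M\con F}(Z)=\cl_M(Z\cup F)-F$ and $F_1-F=F_0$, which is fine.
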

  \begin{proof}
    Since $F_0$ is a proper flat of $M \con F$, 
    we see that $\cl_M(F \cup F_0)$ is a proper flat of $M$. 
    Now let $e \in E(M) - \cl_M(F \cup F_0)$. Then 
    \[\cl_M(F \cup F_0 \cup \{e\}) - \cl_M(F \cup F_0) \sse \cl_{M \con F}(F_0 \cup \{e\}) - F_0,\] 
    which is monochromatic. 
  \end{proof}

We now proceed to show that every connected, modular rainbow-triangle-free
coloured matroid of rank at least two has a decomposer. 
We first handle the rank-two case, which is an easy case analysis. 

\begin{lemma}\label{ranktwo}
  Every connected coloured matroid $(M ;c)$ with $r(M) = 2$ and $|c| \ge 3$
  has a rainbow triangle or a nonempty decomposer.
\end{lemma}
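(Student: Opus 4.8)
The plan is to first pin down what $M$ and the notion of ``nonempty decomposer'' mean when $r(M)=2$, and then run a short colour case analysis driven by the hypothesis $|c|\ge 3$. Since $M$ is connected with $r(M)=2$, it is loopless with no coloops, and its simplification is a uniform matroid $U_{2,k}$; moreover $k\ge 3$, since $U_{2,2}$ is a direct sum of two coloops and hence disconnected. Write $P_1,\dots,P_k$ for the parallel classes, equivalently the points of $M$, so $k\ge 3$. Two elementary observations drive everything: first, any three elements lying in three distinct parallel classes form a triangle of $M$, since such a triple has rank two and spans $M$ while every pair drawn from it is independent; second, a nonempty flat $F$ is a decomposer of $(M;c)$ if and only if $F$ is a single point and $E(M)-F$ is $c$-monochromatic, because the only rank-two flat of $M$ is $E(M)$, so $\cl_M(F\cup\{e\})=E(M)$ for every point $F$ and every $e\notin F$. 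Thus it suffices to prove that if $(M;c)$ has no rainbow triangle, then $E(M)-P_i$ is monochromatic for some $i$.

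So assume there is no rainbow triangle. Using $|c|\ge 3$, choose three elements of pairwise distinct colours; by the first observation they cannot lie in three distinct classes, so two of them are parallel, which furnishes a class $P$ receiving two distinct colours $a\ne b$. Also fix an element $z$ with a colour $d\notin\{a,b\}$, which exists since $|c|\ge 3$. I would then split on whether $z\in P$. If $z\in P$, then for any two further classes $Q,R$ and any $u\in Q$ and $v\in R$, the three triples obtained by adjoining $\{u,v\}$ to an $a$-coloured, a $b$-coloured, and a $d$-coloured element of $P$ are triangles, hence non-rainbow, and this forces $c(u)=c(v)$ (otherwise $\{c(u),c(v)\}$ would have to contain all of $\{a,b,d\}$); since $k\ge 3$, this makes $E(M)-P$ monochromatic, so $P$ is a decomposer. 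If instead $z\in Q$ for some $Q\ne P$, then for any class $R\notin\{P,Q\}$ and any $w\in R$, comparing $w$ against the $a$- and $b$-coloured elements of $P$ and against $z$ forces $c(w)=d$; a similar comparison of an arbitrary element of $Q$ against the $a$- and $b$-coloured elements of $P$ and a $d$-coloured element of a third class shows that $Q$ is also monochromatic of colour $d$, so $E(M)-P$ is again monochromatic and $P$ is a decomposer. In every case $P$ is a nonempty decomposer.

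The only step needing genuine care is the bookkeeping when $z\notin P$: one must use the two colours $a,b$ present on $P$ together with the third colour $d$ simultaneously to force that \emph{every} class other than $P$ --- including the class of $z$ itself --- is monochromatic of colour $d$, and this repeatedly relies on $k\ge 3$ to supply an auxiliary third class. Everything else is routine: the reduction of ``decomposer'' to ``$E(M)-P$ monochromatic'' and the triangle-recognition fact are both immediate consequences of $r(M)=2$, and the remaining deductions are just small arguments about which of three prescribed colours can coincide.
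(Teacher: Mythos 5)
Your proof is correct and follows essentially the same elementary route as the paper: both reduce ``nonempty decomposer'' to a point $P$ with $E(M)-P$ monochromatic, exploit the fact that any three elements from three distinct parallel classes form a triangle, and use connectedness only to guarantee at least three points. The paper organizes the case analysis slightly differently (it argues by contradiction, showing that two polychromatic points would force $E(M)=P\cup Q$, contradicting connectedness), whereas you locate a single two-coloured point and verify directly that it is a decomposer; the two arguments are of comparable length and difficulty.
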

    
\begin{proof}
  Suppose that neither holds. 
  Let $\cP$ be the set of points of $M$, and for each $i$, let $\cP_i = \{P \in \cP : |c(P)| = i\}.$
  If there exist $P,Q \in \cP_1$ with $c(P) \ne c(Q)$, then since $|c| \ge 3$
  there is some $x$ with $c(x) \notin c(P) \cup c(Q)$, 
  and $\{e,f,x\}$ is a rainbow triangle for any $e \in P$ and $f \in Q$, a contradiction. 
  So any two points in $\cP_1$ have the same colour. 

  It follows that, if $\cP - \{P\} \subseteq \cP_1$ for some $P \in \cP$, then $P$ is a decomposer of $M$, 
  a contradiction. So there are distinct points $P, Q \in \cP \del \cP_1$. 
  Note that for all $(\kappa,\lambda) \in c(P) \times c(Q)$ with $\kappa \ne \lambda$,
  we have $c(E - P \cup Q) \subseteq \{\kappa, \lambda\}$, as otherwise there is a rainbow triangle. 
  Since $P, Q \notin \cP_q$, such a pair $(\kappa, \lambda)$ exists,
  so $[c] \subseteq c(P \cup Q)$ and therefore $|c(P \cup Q)| \ge 3$.
  It follows in turn that $c(P) \times c(Q)$ contains distinct pairs together comprising at least 
  three distinct colours $\kappa, \lambda, \mu$, which implies that 
  $c(E - P \cup Q) \subseteq \{\kappa, \lambda\} \cap \{\kappa, \mu\} \cap \{\lambda, \mu\} = \es$, 
  so $E - P \cup Q = \es$, which contradicts the connectedness of $M$. 
\end{proof}

We now prove a lemma to facilitate induction in the rank-three case.

\begin{lemma}\label{rankthree}
  Let $(M; c)$ be a connected, modular coloured matroid 
  with no rainbow triangle. 
  If $r(M) = 3$ and $|c| \ge 3$, then $M$ has a line $L$ 
  for which $(M; c) | L$ has a nonempty decomposer. 
\end{lemma}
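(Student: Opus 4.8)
The plan is to find a line $L$ of $M$ such that the restriction $(M;c)|L$ — which is a connected rank-$2$ coloured matroid — either has a nonempty decomposer directly, or uses at most $2$ colours (in which case $\es$ is a decomposer of $(M;c)|L$ by simplicity, but we actually want a \emph{nonempty} one, so this case needs care). In view of Lemma~\ref{ranktwo}, which says every connected rank-$2$ coloured matroid with at least $3$ colours has a rainbow triangle or a nonempty decomposer, and since $(M;c)$ is rainbow-triangle-free, it suffices to produce a line $L$ with $|c(L)| \ge 3$: then $(M;c)|L$ is connected (restrictions of connected modular matroids to flats are connected by Lemma~\ref{modularconnectedcontract}), rank $2$, has $\ge 3$ colours and no rainbow triangle, so Lemma~\ref{ranktwo} hands us a nonempty decomposer of $(M;c)|L$.

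So the real content is: \emph{a connected modular rank-$3$ rainbow-triangle-free coloured matroid with $|c| \ge 3$ has a line using at least three colours.} I would prove the contrapositive-flavoured statement: suppose every line of $M$ uses at most $2$ colours; show $|c| \le 2$. Actually that's false in general (take three monochromatic concurrent lines of distinct colours through a common point in a projective plane with $q \ge 3$... wait, but then two of those points together with a third colinear point give a rainbow triangle only if some line hits all three colours). Hmm — let me reconsider. The correct target is weaker: I only need \emph{one} line with $\ge 3$ colours, OR I need to directly exhibit a nonempty decomposer of some $(M;c)|L$ by other means. Let me instead argue as follows. Fix a colour class $\kappa$ and consider whether there is a line $L$ with $|c(L)| \ge 3$. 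If yes, done as above. If no — every line of $M$ has $|c(L)| \le 2$ — then I claim $M$ itself has a nonempty decomposer, and in fact any line inherits one trivially: pick any point $P$ of $M$; I want to show that for every line $L$ through... no. Let me think once more: if every line has at most $2$ colours, I want to find a point $P$ and a line $L\ni P$ with $L$ monochromatic, so $P$ is... still not a decomposer of $L$ unless $L$ has a second colour.

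Here is the clean route. Assume every line of $M$ uses at most two colours and $|c| \ge 3$; I will derive a contradiction, which forces some line to use $\ge 3$ colours and completes the proof via the paragraph above. Since $|c| \ge 3$ pick points/elements $e_1, e_2, e_3$ of pairwise distinct colours; they are not colinear (a line through all three would have $3$ colours), so $\{e_1,e_2,e_3\}$ spans $M$. Consider the line $L_{12} = \cl_M(\{e_1,e_2\})$: it contains colours $c(e_1)$ and $c(e_2)$ and no others, and similarly $L_{13}, L_{23}$. Now take any element $x \notin L_{12}\cup L_{13}\cup L_{23}$ (such $x$ exists: by Lemma~\ref{rankthreemodular} there are $q^2+q+1$ points and three lines cover at most $3(q+1)-3 = 3q$ of them, and $q^2+q+1 > 3q$ for $q\ge 2$). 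The line $\cl_M(\{x,e_1\})$ meets $L_{23}$ (modularity, rank $3$) in a point $y$ with $c(y)\in\{c(e_2),c(e_3)\}$, say $c(y)=c(e_2)$; then the line through $x$ and $e_1$ contains colours $c(e_1), c(x), c(e_2)$ — to avoid three colours we need $c(x)\in\{c(e_1),c(e_2)\}$. Running the same argument with $e_2$ (line $\cl_M(\{x,e_2\})$ meets $L_{13}$) and with $e_3$ pins $c(x)$ into intersections of two-element colour sets; carefully chasing the three constraints forces all of $c(e_1),c(e_2),c(e_3)$ to be ``reachable'', and a short combinatorial argument (exactly as in the last paragraph of the proof of Lemma~\ref{ranktwo}) shows the constraint sets intersect to $\es$, contradicting that $c(x)$ exists.

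\textbf{Main obstacle.} The delicate part is the final colour-chase: showing that the at-most-two-colours condition on the three lines $\cl_M(\{x,e_i\})$ is genuinely contradictory rather than merely restrictive. I expect the bookkeeping to mirror the endgame of Lemma~\ref{ranktwo} almost verbatim — indeed one could likely quote Lemma~\ref{ranktwo} applied inside a suitable rank-$2$ minor (contract a well-chosen point so that the three special elements become colinear) rather than redoing the case analysis by hand, and that is the route I would ultimately take to keep the argument short. The existence of the ``generic'' element $x$ off the three lines is the only place modularity/the projective-plane structure (Lemma~\ref{rankthreemodular}) is used, and the inequality $q^2+q+1 > 3q$ for $q \ge 2$ is what makes it work; the edge behaviour at small $q$ is worth a sentence of care but causes no real trouble.
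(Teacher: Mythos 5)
Your reduction to Lemma~\ref{ranktwo} is fine as far as it goes: if some line $L$ satisfies $|c(L)| \ge 3$, then $(M;c)|L$ is connected (Lemma~\ref{modularconnectedcontract}), has no rainbow triangle, and so has a nonempty decomposer. The fatal problem is the other branch. You propose to show that ``every line uses at most two colours and $|c|\ge 3$'' is contradictory, but it is not: for a \emph{simple} colouring, having no rainbow triangle is \emph{equivalent} to every line using at most two colours (three pairwise non-parallel, differently coloured elements of a rank-$2$ flat form a rainbow triangle; this is condition (\ref{flatsmall}) of Theorem~\ref{easyequiv} applied to lines). Concretely, colour a projective plane by fixing a point $x$ with colour $\alpha$ and making each line through $x$ monochromatic off $x$ with colour $\beta_1$ or $\beta_2$ (both used); this is rainbow-triangle-free with $|c|=3$, and every line sees at most two colours. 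So no colour-chase around a generic point $x$ can produce the contradiction you are after, and your ``main obstacle'' is not bookkeeping — the statement you are trying to prove is false.

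What your argument never exploits is the actual negation of the conclusion: if \emph{no} line restriction has a nonempty decomposer, then not only does every line $L$ satisfy $|c(L)|\le 2$, but (by Lemma~\ref{linetarget}-style reasoning, or directly) every line must contain \emph{at least two} elements of each of its two colours — a single element of one colour on a line would be a point that decomposes that line. This is the extra leverage the paper uses: after reducing to a simple projective plane of order $q$ via Lemma~\ref{rankthreemodular} and showing $|c|=3$, a projection argument shows every line meets each of its two colour classes in exactly $\tfrac{1}{2}(q+1)$ points, and counting the points of the smallest colour class through one of its elements gives $\tfrac{1}{3}(q^2+q+1) \ge \tfrac{1}{2}(q^2+1)$, i.e.\ $q\le 1$, a contradiction. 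Your proposal contains no analogue of this counting step, and without it the proof cannot be completed along the lines you sketch.
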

\begin{proof}    
  Suppose not, and let $M$ be a counterexample for which $|M|$ is minimized.
  Let $\cL$ be the set of lines of $M$. 
  By Lemma~\ref{ranktwo}, we have $|c(L)| \le 2$ for all $L \in \cL$. 

  \begin{claim}
    The colouring $c$ is simple. 
  \end{claim}
  \begin{subproof}
    If some point $P$ is polychromatic, then since $|c| \ge 3$, 
    there is some $x$ for which $|c (P \cup \{x\})| \ge 3$,
    and now any line $L$ containing $P$ and $x$ satisfies $|c(L)| \ge 3$, 
    a contradiction. 
  \end{subproof}

  If $\{e,f\}$ is a parallel pair of $M$, then since $c(e) = c(f)$,
  the matroid $(M; c) \del e$ is also a counterexample, a contradiction to minimality. 
  Therefore $M$ is simple. 
  Thus, Lemma~\ref{rankthreemodular} gives that there is some $q \ge 2$ 
  such that every line of $M$ contains $q+1$ elements, 
  each element of $M$ lies in $q+1$ lines, and $|E(M)| = |\cL| = q^2+q+1$.

  \begin{claim}\label{twocollines}
    For each $L \in \cL$, 
  there are distinct colours $i,j$ such that $c(L) = \{i,j\}$,
  and $L$ contains at least two elements of each colour. 
  \end{claim}
  \begin{subproof}
    We know that $|c(L)| \le 2$, but since $(M; c) | L$ has no rank-$1$ decomposer, 
    we also have $|c(L -e)| \ge 2$ for each $e \in L$. The claim follows. 
  \end{subproof}

  \begin{claim}
    $|c| = 3$. 
  \end{claim}
  \begin{subproof}
    If $|c| \ge 4$, we can find $L,L' \in \cL$ with $|c(L \cup L')| \ge 4$. 
    But $|c(L \cup L')| = |c(L)| + |c(L')| - |c(L \cap L')| \le 2 + 2 -1 = 3$. 
  \end{subproof}

  Assume for simplicity that $c(M) = [3]$.
  For each $i \in [3]$, let $X_i = c^{-1}(\{i\})$, 
  so $|X_1| + |X_2| + |X_3| = |M| = q^2 + q + 1$.
  
  For all distinct $i,j \in [3]$, write $\cL_{ij}$ for the set of lines $L \in \cL$ with $c(L) = \{i,j\}$. 
  By~\ref{twocollines}, we have $|\cL_{12}| + |\cL_{23}| + |\cL_{13}| = |\cL| = q^2 + q + 1$. 
  If $L_{ij}$ is empty for some $i \ne j$, then either $X_i = \es$ or $X_j = \es$, contradicting $|c| = 3$. 
  Therefore $|\cL_{ij}| > 0$ for all $i \ne j$. 
  
  \begin{claim}\label{ballinediff}
    Let $i,j,j' \in [3]$ be distinct. If $L \in \cL_{ij}$ and $L' \in \cL_{ij'}$, 
    then $|L \cap X_i| = |L' \cap X_i|$. 
  \end{claim}
  \begin{subproof}
    By symmetry, it suffices to show that $|L \cap X_i| \le |L' \cap X_i|$. 
    Let $\{e\} = L \cap L'$ (so $c(e) = i$), and let $f \in L$ with $c(f) = j$ 
    and $f' \in L'$ with $c(f') = j'$. Then $\cl_M(\{f,f'\}) \in \cL_{jj'}$.
    By~\ref{twocollines}, there exists $g \in \cl_M(\{f,f'\}) - \{f,f'\}$ 
    with $c(g) = j$. Note that $g \notin L \cup L'$.  

    Define a function $\varphi : L \to L'$ by setting $\varphi(x)$ to be the 
    unique element in $L' \cap \cl_M(\{x,g\})$. (The set $\cl_M(\{x,g\})$ is 
    always a line distinct from $L'$, since $g \notin L \cup L'$.)
    If $\varphi(x) = \varphi(x') = y$, then the line $\cl_M(\{g,y\})$ intersects 
    $L$ in both $x$ and $x'$, which implies that $x = x'$. Thus $\varphi$ is injective. 
    
    Note that $\varphi(e) = e$. If $x \in L-e$ with $c(x) = i$, 
    then since $\{x, g, \varphi(x)\}$ is a triangle and $c(g) = j$, 
    we have $c(\varphi(x)) \in \{i,j\}$, and since 
    $c(\varphi(x)) \in L' \in \cL_{ij'}$, it follows that $c(\varphi(x)) = i$. 

    Thus, $\varphi(L \cap X_1) \subseteq (L' \cap X_1)$, and  
    the claim follows by injectivity.   
  \end{subproof}

  \begin{claim}\label{balllinesame}
    If $i \ne j$ and $L, L' \in \cL_{ij}$, then $|L \cap X_i| = |L' \cap X_i|$. 
  \end{claim}
  \begin{proof}
    Let $k \in [3] - \{i,j\}$, and let $L_0 \in \cL_{ik}$. By~\ref{ballinediff}, 
    we have $|L \cap X_i| = |L_0 \cap X_i| = |L' \cap X_i|$, as required. 
  \end{proof}

  By the last two claims, there exist\textbf{} integers $n_1, n_2, n_3$ so that, for all $i \ne j$ and $L \in \cL_{ij}$, 
  we have $|L \cap X_i| = n_i$ and $|L \cap X_j| = n_j$. It follows that $n_1 + n_2 = n_1 + n_3 = n_2 + n_3 = q+1$, 
  whence $n_1 = n_2 = n_3 = \tfrac{1}{2}(q+1)$. So every line in $\cL_{ij}$ contains 
  exactly $\tfrac{1}{2}(q+1)$ points of each colour.

  Let $t \in [3]$ be such that $|X_t|$ is minimized, so $|X_t| \le \tfrac{1}{3}(q^2 + q + 1)$.
  Let $x \in X_t$. Each of the $q+1$ lines through $x$ is in $\cL_{ti}$ for some $i \neq t$, 
  so contains $\tfrac{q+1}{2} - 1$ points in $X_t$ other than $x$. These lines intersect only at $x$, 
  so 
  \[ \tfrac{1}{3}(q^2 + q+ 1) \ge |X_t| \ge 1 + (q+1) \left(\tfrac{q+1}{2}-1\right) = \tfrac{1}{2}(q^2+1),\] 
  which yields the contradiction $q \le 1$.
\end{proof}

We now show that a decomposer for a hyperplane can be contracted while preserving
the property of having no rainbow triangle. 

\begin{lemma}\label{contracthpdecomposer}
  Let $(M; c)$ be a connected, modular coloured matroid
  with no rainbow triangle. 
  Let $H$ be a hyperplane of $M$. If $F$ is a decomposer of $(M; c) | H$,
  then $(M; c) \con F$ has no rainbow triangle. 
\end{lemma}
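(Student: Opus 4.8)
The plan is to argue by contradiction: assuming $(M;c)\con F$ has a rainbow triangle $T=\{a,b,d\}$, I will produce a rainbow triangle in $(M;c)$, contradicting the hypothesis. First I would clear away trivialities. If $F=\es$ then $(M;c)\con F=(M;c)$, so I may assume $r_M(F)\ge 1$; since $F$ is a proper flat of $M|H$, this forces $r(M)\ge r_M(F)+2\ge 3$, so $M$ is loopless. Because parallel elements of $M$ remain parallel in $M\con F$, the elements $a,b,d$ are pairwise non-parallel in $M$. Because $T$ is a circuit of $M\con F$ all of whose proper subsets are independent, $r_M(X\cup F)-r_M(F)=\min(|X|,2)$ for every $X\subseteq T$; in particular $r_M(F\cup T)=r_M(F)+2$. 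If $r_M(T)=2$ then $T$ is itself a rainbow triangle of $M$ and we are done, so from now on $r_M(T)=3$. The core idea is to exhibit, on a line spanned by two elements of $T$, a third point whose colour is forced by the decomposer hypothesis; I would split according to whether some element of $T$ lies in $H$.

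\emph{Case 1: some element, say $d$, lies in $H$.} Then $\cl_M(F\cup\{d\})\subseteq H$, since $F\subseteq H$ and $H$ is a flat. Let $L=\cl_M(\{a,b\})$. A one-line modularity computation from $r_M(F\cup T)=r_M(F)+2$ gives $\sqcap_M(L,\cl_M(F\cup\{d\}))=1$, so these two flats meet in a point $p$ (nonempty as $M$ is loopless). I would check that $p\notin F$ — otherwise $b\in\cl_M(F\cup\{a\})$, making $a,b$ parallel in $M\con F$ — and that $p$ is parallel to neither $a$ nor $b$ — otherwise two of $a,b,d$ would be parallel in $M\con F$. Hence $\{a,b,p\}$ is a genuine triangle of $M$ on $L$. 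Since $p\in H-F$ and $\cl_M(F\cup\{p\})=\cl_M(F\cup\{d\})$, the decomposer hypothesis applied to $p$ makes $\cl_M(F\cup\{d\})-F$ monochromatic; as $p,d$ both lie there, $c(p)=c(d)$, so $\{a,b,p\}$ is a rainbow triangle of $M$ — the desired contradiction.

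\emph{Case 2: none of $a,b,d$ lies in $H$.} This is the step I expect to be the main obstacle; note it cannot arise over $\bF_2$, where a rainbow triangle of $M\con F$ always meets $H$. The plan: let $\Pi=\cl_M(T)$, a rank-three flat not contained in $H$ (it contains $a\notin H$), so by modularity $\ell:=\Pi\cap H$ is a line, and for each pair $\{x,y\}\subseteq T$ the line $\cl_M(\{x,y\})$ meets $H$ in a single point $q_{xy}\in\ell$. Using non-parallelism in $M\con F$ one checks that $q_{ab},q_{ad},q_{bd}$ are three distinct points, none lying in $F$. Also $F\cap\Pi$ is a single point $f_0$, which lies in $\ell$ (as $F\subseteq H$). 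Since $f_0\in F$ and $\ell=\cl_M(\{f_0,q_{ab}\})$, the entire line $\ell$ — hence $q_{ad}$ and $q_{bd}$ — lies in $\cl_M(F\cup\{q_{ab}\})$; and since $q_{ab}\in H-F$, the decomposer hypothesis makes $\cl_M(F\cup\{q_{ab}\})-F$ monochromatic, so $c(q_{ab})=c(q_{ad})=c(q_{bd})$. But $\{x,y,q_{xy}\}$ is a triangle of $M$, so no-rainbow-triangle together with $c(x)\ne c(y)$ gives $c(q_{xy})\in\{c(x),c(y)\}$; thus the common colour of the $q$'s lies in $\{c(a),c(b)\}\cap\{c(a),c(d)\}\cap\{c(b),c(d)\}=\es$, a contradiction.

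Beyond routine rank bookkeeping, the only real difficulty is Case 2. Over $\bF_2$ every rainbow triangle of $M\con F$ meets $H$, so Case 1 always applies; over larger fields one cannot simply trace the triangle into $\cl_M(F\cup\{d\})$ for a chosen element $d$, because that flat need not lie inside $H$ and so the decomposer hypothesis does not see it. The remedy is to trace instead into the hyperplane $H$ itself and exploit that the three traces $q_{ab},q_{ad},q_{bd}$, together with a point $f_0$ of $F$, all lie on one line, so a single ``decomposer line'' $\cl_M(F\cup\{q_{ab}\})$ governs all of them.
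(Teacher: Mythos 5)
Your proof is correct. In the main case --- where the rainbow triangle $T$ of $M \con F$ meets $H$ --- your argument is essentially the paper's: compute $\sqcap_M(\{a,b\},F\cup\{d\})=1$, pick a nonloop $p$ in $\cl_M(\{a,b\})\cap\cl_M(F\cup\{d\})$, check that $p\notin F$ and that $p$ is parallel to neither $a$ nor $b$, and invoke the decomposer property of $F$ in $(M;c)|H$ to force $c(p)=c(d)$, producing a rainbow triangle of $M$. Where you genuinely diverge is the case that $T$ is disjoint from $H$. The paper eliminates this case up front by an extremal choice of $T$: it intersects the line $\cl_{M\con F}(T)$ with the hyperplane $H-F$ of $M\con F$ to get a point $w$, argues that $T\cup\{w\}$ contains a rainbow triangle through $w$, and concludes that a rainbow triangle meeting $H$ existed all along. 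You instead kill the case directly: trace each of the three lines $\cl_M(\{x,y\})$, $\{x,y\}\subseteq T$, into $H$, note that the three trace points together with the point $F\cap\cl_M(T)$ all lie on the single line $\cl_M(T)\cap H$, so the decomposer property forces the three traces to share one colour, which would have to lie in $\{c(a),c(b)\}\cap\{c(a),c(d)\}\cap\{c(b),c(d)\}=\es$. Your route is marginally longer but buys some robustness: it never has to re-manufacture a rainbow triangle from $T\cup\{w\}$, a step in the paper that requires a little care when $w$ is parallel in $M\con F$ to an element of $T$ (recall $c$ is not assumed simple). All the rank and non-parallelism bookkeeping you sketch (e.g.\ $r_M(F\cup X)-r_M(F)=\min(|X|,2)$ for $X\subseteq T$, the distinctness of $q_{ab},q_{ad},q_{bd}$, and their disjointness from $F$) checks out.
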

\begin{proof}
  By Lemma~\ref{modularconnectedcontract}, the matroid $M \con F$ is connected and modular. 
  Suppose that $(M; c) \con F$ has a rainbow triangle $T = \{x,y ,z\}$, 
  chosen so that $z \in H$ if possible.  
  Since $H - F$ is a hyperplane of the modular matroid $M \con F$, 
  there exists $w$ in $\cl_{M \con F}(T) \cap H$, 
  and since $T$ is rainbow, the set $T \cup \{w\}$ contains a rainbow 
  triangle of $(M; c) \con F$ containing $w$.
  Thus there exists a rainbow triangle of $M \con F$ intersecting $H$;  
  by the choice of $T$, it follows that $z \in H$. Now
  \begin{align*} 
      \sqcap_M(\{x,y\}, F \cup \{z\}) &= r_M(\{x,y\}) - r_{M \con (F \cup \{z\})}(\{x,y\}) \\
        &=  2 - (r_{M \con F} (T) - 1) = 1,
  \end{align*}
  so by modularity, there exists $z' \in \cl_M(F \cup \{z\}) \cap \cl_M(\{x,y\})$. 
  If we had $z' \in F$, then since $z' \in \cl_M(\{x,y\})$, we would have
  \[2 = r_M (\{x,y\}) > r_{M \con z'} (\{x,y\}) \ge r_{M \con F} (\{x,y\}) = 2,\]
  a contradiction. Therefore $z' \in \cl_M(F \cup \{z\}) - F$, and so $z$ and $z'$ are 
  parallel in $M \con F$, implying that $z'$ is parallel to neither $x$ nor $y$ in $M$. 
  Since $z' \in \cl_M(\{x,y\})$, It follows that $\{x,y,z'\}$ is a triangle of $M$. 
  
  Because $F \cup \{z\} \subseteq H$ and $F$ is a decomposer of $(M; c) | H$, 
  the set $\cl_M(F \cup \{z\}) - F$, which contains $z$ and $z'$, is monochromatic. 
  Therefore $c(z') = c(z)$. Since $\{x,y,z\}$ is rainbow, 
  so is the triangle $\{x,y,z'\}$ of $M$, giving a contradiction.  
\end{proof}

We now prove the decomposition theorem that will imply our main
structure theorem for rainbow-triangle-free matroids. 

\begin{theorem}\label{main1}
  Let $(M; c)$ be a connected, modular coloured matroid with no rainbow triangle. 
  If $r(M) \ge 2$ and $|c| \ge 3$, then $(M; c)$ has a nonempty decomposer. 
\end{theorem}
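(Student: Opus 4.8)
The plan is to induct on $r(M)$. The base case $r(M) = 2$ is exactly Lemma~\ref{ranktwo}: a connected coloured matroid of rank two with $|c| \ge 3$ and no rainbow triangle has a nonempty decomposer. So assume $r(M) \ge 3$ and that the statement holds for all connected modular rainbow-triangle-free coloured matroids of smaller rank. The first thing I would try to dispose of is the case $|c| \le 2$ appearing after contraction, so I should be careful: the inductive hypothesis requires $|c| \ge 3$ on the smaller matroid, and this will need to be tracked.

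The main engine is the following. First, choose a hyperplane $H$ of $M$ with $|c(H)| \ge 3$. Such a hyperplane exists: since $|c| \ge 3$, pick elements $e_1, e_2, e_3$ of three distinct colours; they have rank at most $3 \le r(M)$, hence lie in a flat of rank $r(M)-1$ that can be extended to a hyperplane (or argue directly that some hyperplane contains all three, using that in a modular matroid of rank $\ge 3$ any rank-$\le r(M)-1$ set is in a hyperplane). By Lemma~\ref{modularconnectedcontract}, $(M;c)|H$ is connected and modular, and it inherits having no rainbow triangle; since $|c(H)| \ge 3$, the inductive hypothesis gives a nonempty decomposer $F$ of $(M;c)|H$. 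If $r(H) = 2$ we instead invoke Lemma~\ref{ranktwo} directly (or note $r(H) = r(M)-1 \ge 2$ and the induction covers it). Now by Lemma~\ref{contracthpdecomposer}, $(M;c) \con F$ is a coloured matroid with no rainbow triangle, and it is connected and modular by Lemma~\ref{modularconnectedcontract}. If $(M;c)\con F$ has a nonempty decomposer $F_0$ — which happens by induction whenever $r(M\con F) \ge 2$ and $|c|$ restricted there is at least $3$ — then Lemma~\ref{decompcontract} promotes $\cl_M(F \cup F_0)$ to a decomposer of $(M;c)$, which is nonempty since $F \ne \es$. This would finish the induction.

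The remaining work, and the part I expect to be the main obstacle, is handling the degenerate outcomes of the contraction step: either $r(M \con F) \le 1$, or $|c|$ on $M \con F$ drops to at most $2$. If $r(M \con F) \le 1$ then $F$ is itself a hyperplane of $M$ that is a decomposer of $(M;c)|H = (M;c)|F$ — but being a decomposer of $M|F$ when $F$ is a hyperplane, together with $F$ being a decomposer-by-hyperplane, should force $F$ to be a decomposer of $(M;c)$ itself via a short direct argument using modularity (every element outside $F$ together with $F$ spans $M$, and one checks monochromaticity of $\cl_M(F\cup\{e\})-F$ directly). If instead $|c|$ on $M\con F$ is at most $2$: here I would argue that $F$ was already "almost everything" colour-wise, and handle it by hand — for instance, if $M\con F$ uses only colours from a $2$-set $\{i,j\}$, then every rank-one flat of $M$ not inside $F$ has all its elements coloured from $\{i,j\}$, which combined with the decomposer structure of $F$ inside $H$ and a rainbow-triangle-free argument should exhibit a decomposer directly (likely $F$ itself, or a flat slightly larger than $F$). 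This case analysis at the "boundary" of the induction — ensuring the hypotheses of Lemma~\ref{contracthpdecomposer}, Lemma~\ref{decompcontract}, and the inductive hypothesis all genuinely apply, and cleaning up when they don't — is where the real care is needed; the skeleton of "find a decomposer in a hyperplane, contract it, recurse" is straightforward.
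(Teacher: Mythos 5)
Your skeleton --- find a nonempty decomposer of some hyperplane restriction, contract it, and recurse via Lemma~\ref{contracthpdecomposer} and Lemma~\ref{decompcontract} --- is exactly the paper's strategy, but the two steps you treat as routine or "to be cleaned up" are in fact the substance of the proof. First, the existence of a hyperplane $H$ with $|c(H)| \ge 3$ is not guaranteed: three elements of distinct colours have rank at most $3$, which is at most $r(M)-1$ only when $r(M) \ge 4$. When $r(M) = 3$ the three representatives may form a basis, and it can genuinely happen that every line of the projective plane is $2$-coloured even though $|c| \ge 3$; then no hyperplane restriction is covered by your inductive hypothesis. This is precisely what Lemma~\ref{rankthree} exists for --- it shows, via a nontrivial counting argument in the projective plane, that some line restriction still has a nonempty decomposer --- and your proof must invoke it (or reprove it); your parenthetical appeal to "any rank-$\le r(M)-1$ set is in a hyperplane" does not apply here.

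Second, the degenerate outcome you flag, namely that $|c|$ drops to at most $2$ on $M \con F$, cannot be disposed of by inspecting $F$ "by hand"; avoiding it is the crux of the argument. The paper's resolution is to take $F$ to be a \emph{minimal} nonempty decomposer of $(M;c)|H$, so that $(M;c)|F$ has no nonempty decomposer and hence (by the minimal-counterexample hypothesis) $r_M(F) = 1$ or $|c(F)| \le 2$. Then, since $F$ is not a decomposer of $M$ itself, there is a polychromatic set $A = \cl_M(F \cup \{e\}) - F$, and one proves $c(F) \subseteq c(A)$ by showing that each colour class of $F$ must span $F$ (else $F$ would have a decomposer) and applying Lemma~\ref{basispartition} to produce a triangle meeting both colour classes of $A$ and a monochromatic basis of $F$; rainbow-triangle-freeness then forces each colour of $F$ to reappear in $A$. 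This guarantees no colour is lost under contraction, so the bad case never arises. Without the minimal choice of $F$ and this argument your induction does not close. (A minor point: the case $r(M \con F) \le 1$ you worry about cannot occur, since $F$ is a proper flat of the hyperplane $H$ and therefore has corank at least $2$ in $M$.)
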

\begin{proof}
  Suppose not, and let $(M ; c)$ be a counterexample chosen so that $r(M)$ is minimized.

  \begin{claim}
    There is a hyperplane $H$ of $M$ for which $(M; c) | H$ has a nonempty decomposer.
  \end{claim}
  \begin{subproof}
    If $M$ has a hyperplane $H$ for which $|c(H)| \ge 3$, then by the minimality in the choice of $M$,
    the restriction $(M; c) | H$ has a proper nonempty decomposer, as required. 

    Assume, therefore, that every hyperplane $H$ satisfies $|c(H)| \le 2$. 
    Let $x_1,x_2,x_3$ be elements of different colours.
    Since no hyperplane contains all three, we have $r(M) \le r(\{x_1,x_2,x_3\}) \le 3$. 
    If $r(M) = 2$, then Lemma~\ref{ranktwo} contradicts the choice of $M$ 
    as a counterexample. 
    So $r(M) = 3$, and the claim follows from Lemma~\ref{rankthree}.
  \end{subproof}

  Let $H$ be a hyperplane of $M$ for which $(M; c) |H$ has a nonempty decomposer, 
  and let $F$ be a minimal nonempty decomposer of $(M; c) |H$. 

  \begin{claim}\label{nodecompF}
    $(M; c) | F$ has no nonempty decomposer. 
  \end{claim}
  \begin{subproof}
    If $F_0$ were such a decomposer, then since $F$ is a decomposer of $(M; c) |H$, 
    by Lemma~\ref{decompdecomp} we see that $F_0$ is a decomposer of $(M; c) | H$
    that is properly contained in $F$, a contradiction to the minimality in the choice of $F$.
  \end{subproof}

  Since $r(M | F) < r(M)$, and the modular, connected coloured matroid $(M; c) | F$ has no rainbow triangle but is not a counterexample, 
  the minimality in the choice of $M$ immediately gives 
  \begin{claim}\label{mintwocolour}
    $r_M(F) = 1$ or $|c(F)| \le 2$.
  \end{claim}

  Since $F$ is not a decomposer of $M$, there exists $e \in E(M) - F$ 
  so that the set $A = \cl_M(F \cup \{e\}) - F$ contains elements of two different colours. 
  Note that $F \cup A$ is a flat and that $F$ is a hyperplane of $M | F \cup A$. 

  \begin{claim}\label{affinecolsubset}
    $c(F) \sse c(A)$. 
  \end{claim}
  \begin{subproof}
    Let $\beta \in c(F)$. If $c^{-1}(\beta)$ is not spanning in $F$, 
    then $r_M(F) \ge 2$ and $F$ has a hyperplane $F_0$ such that 
    $c(F - F_0) \subseteq c(F) - \{\beta\}$, 
    which by~\ref{mintwocolour} implies that $F-F_0$ is monochromatic. 
    This implies that $F_0$ is a nonempty decomposer of $(M; c) | F$, a contradiction. Therefore $c^{-1}(\beta)$ is spanning in $F$. 
    Let $B$ be a basis for $F$ with $c(B) = \{\beta\}$. 

    By its choice, the set $A$ is not monochromatic. Let $(X,Y)$ be a partition of $A$
    with $X,Y$ nonempty so that $c(X)$ and $c(Y)$ are disjoint. By Lemma~\ref{basispartition},
    there is a triangle of $M$ intersecting $X,Y$ and $B$. This triangle 
    is not rainbow, which implies that $\beta \in c(X) \cup c(Y) \subseteq c(A)$. 
    This holds for all $\beta \in c(F)$, so the claim follows. 
  \end{subproof}

  \begin{claim}
    $(M; c) \con F$ has a nonempty decomposer. 
  \end{claim}
  \begin{subproof}
    By Lemma~\ref{contracthpdecomposer}, we see that $(M;c) \con F$ has no rainbow triangle. 
    Since $F$ is a proper subflat of the hyperplane $H$, we have $r(M \con F) \ge 2$, 
    so by induction it suffices to show that $c(M \con F) = c(M)$. 
    Indeed, since $A \subseteq E(M) - F$, 
    we have $c(M \con F) \supseteq c(A) \supseteq c(F)$ by~\ref{affinecolsubset}, 
    so $c(M \con F) = c(M \con F) \cup c(F) = c(M)$.
  \end{subproof}

  Now Lemma~\ref{decompcontract} implies that $M$ has a nonempty decomposer, 
  contrary to its choice as a counterexample. 
\end{proof} 

\section{Lift-Joins}

To turn Theorem~\ref{main1} into a structure theorem, 
we need an operation that can be used to glue two rainbow-triangle-free colourings 
together into a larger one.

We now describe this operation, 
then use Theorem~\ref{main1} to derive Theorem~\ref{mainstructure}.
Theorem~\ref{main1} is the hard part of the actual proof; 
the work in this section amounts to straightforward manipulation of 
definitions in order that 
Theorem~\ref{mainstructure} can be carefully stated and proved. 

We call the gluing operation a \emph{lift-join}.
This was first defined in a slightly different (two-coloured) context in [\ref{bkknp}]. 
The definition uses the fact that, if $F_1$ and $F_2$ are skew flats of 
a modular matroid $M$ and $e \in \cl_M(F_1 \cup F_2) - F_1$, then 
the flats $\cl_M(F_1 \cup \{e\})$ and $F_2$ intersect in a point. 
A lift-join combines colourings of $F_1$ and $F_2$ to obtain a colouring of the closure of their union. 

\begin{definition}
  Let $F_1$ and $F_2$ be skew flats of a modular matroid $M$, 
  and let $c_1, c_2$ be colourings of $F_1, F_2$ respectively with $c_2$ simple in $M$. 
  
  The \emph{lift-join} $c_1 \otimes c_2$ 
  is the colouring $c$ of $\cl_M(F_1 \cup F_2)$
  such that $c(e) = c_1(e)$ for all $e \in F_1$, 
  and for each $e \in \cl_M(F_1 \cup F_2) - F_1$, 
  we have $\{c(e)\} = c_2(\cl_M(F_1 \cup \{e\}) \cap F_2)$. 
\end{definition}

We establish some basic properties of lift-joins. 

\begin{lemma}\label{liftjoinbasic}
  Let $F_1$ and $F_2$ be skew flats of a modular matroid $M$,
  and let $c_1, c_2$ be colourings of $F_1, F_2$ respectively with $c_2$ simple. Then
  \begin{enumerate}[(i)]
    \item $[c_1 \otimes c_2] = [c_1] \cup [c_2]$,
    \item\label{ljsimple} If $c_1$ is simple in $M$, then so is $c_1 \otimes c_2$,
    \item\label{ljrestrict} $(c_1 \otimes c_2) | F_i = c_i$ for each $i \in [2]$,
    \item\label{ljcomp} $d \circ (c_1 \ls c_2) = (d \circ c_1) \ls (d \circ c_2)$ for every 
      function $d$ with domain containing $[c_1] \cup [c_2]$,
    \item\label{ljdecomposer} If $r_M(F_2) > 0$, then $F_1$ is a decomposer of the coloured matroid 
      $(M | \cl_M(F_1 \cup F_2); c_1 \otimes c_2)$. 
    
  \end{enumerate}
\end{lemma}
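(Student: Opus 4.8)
The plan is to verify each of the five properties in turn, all of which follow fairly directly by unpacking the definition of the lift-join and using elementary modular-matroid facts (chiefly that skew flats together with an element of their closure span a point in one of the flats). Let me set notation: write $c = c_1 \otimes c_2$, $F = \cl_M(F_1 \cup F_2)$, and for $e \in F - F_1$ write $\pi(e)$ for the unique point in $\cl_M(F_1 \cup \{e\}) \cap F_2$, so that $c(e) = c_2(\pi(e))$ by definition (here I use simplicity of $c_2$ to speak of $c_2$ as constant on a point). I should first record the basic observation that $\pi$ is well-defined, which is exactly the fact quoted before the definition, and that $e \in \pi(e)$'s parallel class is false in general — rather $\pi(e) \subseteq \cl_M(F_1 \cup \{e\})$ — but that $\cl_M(F_1 \cup \{e\}) = \cl_M(F_1 \cup \{e'\})$ whenever $\pi(e) = \pi(e')$, since both equal $\cl_M(F_1 \cup \pi(e))$ by the dimension count $r_M(F_1) + 1$.

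For (i): $[c] \supseteq [c_1]$ is immediate since $c$ agrees with $c_1$ on $F_1$. For any $f \in F_2$, picking any $e \in \cl_M(F_1 \cup \{f\}) - F_1$ (which exists as $F_1$ is a proper subflat of that line-over-$F_1$) gives $\pi(e) = f$'s parallel class, so $c(e) = c_2(f)$, giving $[c] \supseteq [c_2]$. Conversely every value of $c$ is either a value of $c_1$ (on $F_1$) or of the form $c_2(\pi(e))$ (off $F_1$), so $[c] \subseteq [c_1] \cup [c_2]$. For (\ref{ljrestrict}): the case $i=1$ is by definition; for $i = 2$, take $f \in F_2$ and, as just argued, an $e \in \cl_M(F_1 \cup \{f\}) - F_1$ with $\pi(e)$ the parallel class of $f$; but I want $c(f)$ itself, so instead note $\pi(f) = f$ when $f \in F_2 - F_1 = F_2$ (as $F_1, F_2$ skew and $r_M(F_2) \ge 1$, or trivially if $r_M(F_2)=0$ then $F_2 = \es$ and there is nothing to check), since $\cl_M(F_1 \cup \{f\}) \cap F_2 \ni f$; hence $c(f) = c_2(f)$. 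For (\ref{ljcomp}): both sides are colourings of $F$; on $F_1$ both equal $d \circ c_1$; for $e \in F - F_1$, the left side is $d(c_2(\pi(e)))$ and the right side, by definition of the lift-join $(d\circ c_1) \otimes (d \circ c_2)$, is $(d \circ c_2)(\pi(e)) = d(c_2(\pi(e)))$ — here I should check $d \circ c_2$ is still simple, which holds since $c_2$ is.

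For (\ref{ljsimple}): assume $c_1$ simple in $M$ and let $e, e'$ be parallel in $M$ with both in $F$; I must show $c(e) = c(e')$. If both lie in $F_1$, use simplicity of $c_1$. If $e \in F_1$ and $e' \notin F_1$ (or vice versa), then $e' \in \cl_M(\{e\}) \subseteq F_1$ since $F_1$ is a flat — contradiction, so this case is vacuous. If neither is in $F_1$, then $\cl_M(F_1 \cup \{e\}) = \cl_M(F_1 \cup \{e'\})$, so $\pi(e) = \pi(e')$ and hence $c(e) = c(e')$. Finally (\ref{ljdecomposer}): assume $r_M(F_2) > 0$, work in $M' = M|F$ so that $F$ is the whole ground set; $F_1$ is a proper flat of $M'$ since $F_2$ is nonempty and skew to $F_1$. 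For $e \in F - F_1$, I must show $\cl_{M'}(F_1 \cup \{e\}) - F_1$ is $c$-monochromatic. But for any $e'$ in this set, $\cl_M(F_1 \cup \{e'\}) = \cl_M(F_1 \cup \{e\})$, so $\pi(e') = \pi(e)$ and $c(e') = c_2(\pi(e)) = c(e)$, and $F_1 \cap (\cl_M(F_1 \cup \{e\}) - F_1) = \es$ so the set is indeed off $F_1$ and its $c$-values are all the single value $c_2(\pi(e))$.

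I expect no serious obstacle here; the main thing to be careful about is the book-keeping around non-simple $M$ — in particular being precise that $\pi(e)$ is a point (parallel class) rather than an element, that $c_2$ being simple lets us treat $c_2(\pi(e))$ unambiguously, and that two elements off $F_1$ spanning the same line over $F_1$ have the same $c$-value; this last fact is the workhorse for (\ref{ljsimple}) and (\ref{ljdecomposer}). I would also double-check the degenerate cases $r_M(F_1) = 0$ and $r_M(F_2) = 0$ separately, since some statements (notably (\ref{ljdecomposer})) carry the hypothesis $r_M(F_2) > 0$ precisely to avoid the degeneracy where $F = F_1$ and $F_1$ fails to be a proper flat.
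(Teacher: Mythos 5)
Your proposal is correct and follows essentially the same route as the paper: the paper dismisses (i)--(iv) as immediate from the definition and proves (\ref{ljdecomposer}) by exactly your observation that every $e'$ in $\cl_M(F_1\cup\{e\})-F_1$ determines the same point of $F_2$ and hence the same colour. Your write-up simply makes the routine verifications (and the parallel-class bookkeeping) explicit.
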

\begin{proof}
  The first four facts follow immediately from the definition. 

  To see (\ref{ljdecomposer}), suppose that $r(F_2) > 0$, so $F_1$ is a proper flat of $M | \cl_M(F_1 \cup F_2)$. 
  Let $x \in \cl_M(F_1 \cup F_2) - F_1$, and $y$ be a nonloop in $\cl_M(F_1 \cup \{x\}) \cap F_2$. 
  By construction, we have $c(x') = c(y)$ for all $x' \in \cl_M(F_1 \cup \{x\}) - F_1$, 
  so $\cl_M(F_1 \cup \{x\}) - F_1$ is monochromatic, as required. 
\end{proof}

In fact, appropriately phrased, the implication in Lemma (\ref{liftjoinbasic})(\ref{ljdecomposer}) holds in both 
directions. 
\begin{lemma}\label{liftjoiniff}
  Let $(M; c)$ be a modular coloured matroid, 
  and $F_1$ and $F_2$ be complementary flats of $M$.
  The following are equivalent: 
  \begin{itemize}
    \item $F_1$ is a decomposer of $(M; c)$,
    \item $F_1$ is a proper flat, $c | F_2$ is simple, and $c = (c | F_1) \otimes (c | F_2)$. 
  \end{itemize}
\end{lemma}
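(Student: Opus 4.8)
The plan is to prove the two implications separately, with Lemma~\ref{liftjoinbasic}(\ref{ljdecomposer}) doing the work for one of them. I would first record that, since $F_1$ and $F_2$ are complementary, $\cl_M(F_1 \cup F_2) = E(M)$ and $r_M(F_1) + r_M(F_2) = r(M)$, so that $F_1$ is a proper flat precisely when $r_M(F_2) > 0$. The backward implication then follows almost immediately: assuming $F_1$ is proper, $c|F_2$ is simple, and $c = (c|F_1) \otimes (c|F_2)$, the flats $F_1$ and $F_2$ are skew (being complementary) and $r_M(F_2) > 0$, so Lemma~\ref{liftjoinbasic}(\ref{ljdecomposer}) applied with $c_1 = c|F_1$ and $c_2 = c|F_2$ says exactly that $F_1$ is a decomposer of $(M | \cl_M(F_1 \cup F_2);\, (c|F_1) \otimes (c|F_2))$, which is the coloured matroid $(M; c)$.

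For the forward implication, suppose $F_1$ is a decomposer of $(M; c)$; then $F_1$ is a proper flat by definition, so $r_M(F_2) > 0$, and I will assume (as holds in the connected setting where this lemma is used) that $M$ is loopless. I would first check that $c|F_2$ is simple: $M|F_2$ is loopless, and if $e, f \in F_2$ are parallel in $M$ then $f \in \cl_M(\{e\}) \subseteq \cl_M(F_1 \cup \{e\})$ while $f \notin F_1$ (since $F_1 \cap F_2 = \varnothing$), so $e$ and $f$ both lie in $\cl_M(F_1 \cup \{e\}) - F_1$, which is monochromatic because $F_1$ is a decomposer; hence $c(e) = c(f)$. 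With $c|F_2$ simple, the lift-join $c' = (c|F_1) \otimes (c|F_2)$ is defined on $\cl_M(F_1 \cup F_2) = E(M)$ and agrees with $c$ on $F_1$ by construction. For $e \in E(M) - F_1$, let $y$ be the unique point of $\cl_M(F_1 \cup \{e\}) \cap F_2$, which exists by skewness and modularity (as noted just before the definition of the lift-join). Then $c'(e) = (c|F_2)(y)$ by definition, whereas $y \notin F_1$ (as $y \in F_2$) and $y, e \in \cl_M(F_1 \cup \{e\})$, so $c(y) = c(e)$ because $\cl_M(F_1 \cup \{e\}) - F_1$ is monochromatic. Thus $c'(e) = c(e)$ for every such $e$, so $c = (c|F_1) \otimes (c|F_2)$.

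I do not expect a genuine obstacle here; the only real care is the bookkeeping around non-simplicity. One must use the hypothesis that $c|F_2$ is simple at precisely the right moments — it is what makes $\cl_M(F_1 \cup \{e\}) \cap F_2$ a single parallel class on which $c$ is constant, so that the lift-join value is well defined, and what guarantees that the point $y$ above is a nonloop lying outside $F_1$ — and, since contractions elsewhere in the paper can create parallel pairs, one should take care not to implicitly assume $M$ is simple. With those caveats, both directions are short.
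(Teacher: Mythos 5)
Your proof is correct and follows essentially the same route as the paper: the backward direction is the same appeal to Lemma~\ref{liftjoinbasic}(\ref{ljdecomposer}), and the forward direction verifies simplicity of $c|F_2$ and agreement with the lift-join via monochromaticity of $\cl_M(F_1\cup\{e\})-F_1$, exactly as in the paper. Your explicit looplessness caveat is harmless, since the paper's notion of a simple colouring already forces that setting.
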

\begin{proof}
    Suppose first that $F_1$ is a decomposer of $(M;c)$. By definition, $F_1$ is a proper flat. 
    If $e,e' \in F_2$ are parallel, then since $e' \in \cl_M(F_1 \cup \{e\}) - F_1$, we have $c(e) = c(e')$, 
    so $c | F_2$ is simple. For each $e \in F_1$ we have $c(e) = (c | F_1) (e)$. 
    For each $e \in \cl_M(F_1 \cup F_2) - F_1$, let $P = \cl_M(F_1 \cup \{e\}) \cap F_2$; this is a point of $M$. 
    Since $P \subseteq \cl_M(F_1 \cup \{e\})$, we know that $(c | F_2)(P) = c(P) = \{c(e)\}$, 
    and therefore $c = (c | F_1) \otimes (c | F_2)$.

    Conversely, suppose that $F_1$ is a proper flat, $c | F_2$ is simple, and $c = (c | F_1) \ls (c | F_2)$. 
    Since $r_M(F_2) = r(M) - r_M(F_1) > 0$ and $\cl_M(F_1 \cup F_2) = E(M)$, Lemma~\ref{liftjoinbasic}(\ref{ljdecomposer})
    implies that $F_1$ is a decomposer of $M$. 
\end{proof}

Recall that sets $X_1, \dotsc, X_t$ are \emph{mutually skew} in $M$ if $r_M(\cup_i X_i) = \sum_i r_M(X_i)$, 
or equivalently if $X_i$ is skew in $M$ to $\cup_{j < i} X_j$ for all $i \in \{1, \dotsc, t\}$. 
If $c_1, \dotsc, c_t$ are colourings of mutually skew flats $F_1, \dotsc, F_t$ of $M$
such that $c_i$ is simple for all $i > 1$, 
then we can iterate lift-joins to obtain a colouring $\bigotimes_i c_i$ of $\cl_M(\cup_i F_i)$. 
More precisely, we have $\bigotimes_i c_i = (\dotsc (((c_1 \otimes c_2) \otimes c_3) \otimes \dotsc )\otimes c_t)$. 
However, the next lemma proves associativity, telling us that the parentheses can be omitted. 
Note that the right-hand-side is well-defined by Lemma~\ref{liftjoinbasic}(\ref{ljsimple}).

\begin{lemma}\label{assoc}
  Let $F_1,F_2,F_3$ be mutually skew flats of a modular matroid $M$, and $c_1, c_2, c_3$ be colourings
  of $F_1,F_2,F_3$ respectively with $c_2$ and $c_3$ simple. 
   Then $(c_1 \otimes c_2) \otimes c_3 = c_1 \otimes (c_2 \otimes c_3)$. 
\end{lemma}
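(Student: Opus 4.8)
The plan is to prove associativity by showing that both sides agree on each element of $\cl_M(F_1 \cup F_2 \cup F_3)$, splitting into the natural cases according to where that element lies. First I would fix the common domain: since $F_1, F_2, F_3$ are mutually skew, $\cl_M(F_1 \cup F_2)$ and $F_3$ are skew, and $F_1$ and $\cl_M(F_2 \cup F_3)$ are skew, so both $(c_1 \otimes c_2) \otimes c_3$ and $c_1 \otimes (c_2 \otimes c_3)$ are colourings of $\cl_M(F_1 \cup F_2 \cup F_3)$; call this flat $F$. I would also note, via Lemma~\ref{liftjoinbasic}(\ref{ljsimple}), that $c_2 \otimes c_3$ is simple so the right-hand side is well-defined, and record the basic modular fact I will use repeatedly: if $G_1, G_2$ are skew flats and $e \in \cl_M(G_1 \cup G_2) - G_1$, then $\cl_M(G_1 \cup \{e\}) \cap G_2$ is a single point.

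The case analysis would run as follows. If $e \in F_1$, both sides give $c_1(e)$ directly from the definition of lift-join (the left side because $e \in \cl_M(F_1 \cup F_2) \supseteq F_1$ and $(c_1 \otimes c_2)(e) = c_1(e)$; the right side immediately). If $e \in \cl_M(F_1 \cup F_2) - F_1$, then the left-hand side computes $c$ on $e$ via the point $P = \cl_M(F_1 \cup \{e\}) \cap F_2$, giving $c_2(P)$; for the right-hand side I need $(c_1 \otimes (c_2 \otimes c_3))(e)$, which (since $e \notin F_1$) is $(c_2 \otimes c_3)$ evaluated at the point $Q = \cl_M(F_1 \cup \{e\}) \cap \cl_M(F_2 \cup F_3)$, and I would argue $Q = P$ lies in $F_2$, so $(c_2 \otimes c_3)(Q) = c_2(Q) = c_2(P)$, matching. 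The remaining, main case is $e \in F - \cl_M(F_1 \cup F_2)$. Here the left side takes $P_1 = \cl_M(\cl_M(F_1 \cup F_2) \cup \{e\}) \cap F_3 \subseteq F_3$ and returns $c_3(P_1)$. The right side takes $P_2 = \cl_M(F_1 \cup \{e\}) \cap \cl_M(F_2 \cup F_3)$, which is a point outside $F_2$ (else $e \in \cl_M(F_1 \cup F_2)$), then evaluates $(c_2 \otimes c_3)$ at $P_2$, i.e.\ takes $P_3 = \cl_M(F_2 \cup \{p_2\}) \cap F_3$ for $p_2 \in P_2$ and returns $c_3(P_3)$. So the crux is a purely geometric claim: $P_1 = P_3$ as points of $F_3$.

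The main obstacle is exactly that geometric identity $P_1 = P_3$, and I would prove it by a rank/modularity computation. Let $p_2 \in P_2$, so $p_2 \in \cl_M(F_1 \cup \{e\})$ and $p_2 \in \cl_M(F_2 \cup F_3)$. Then $\cl_M(F_2 \cup \{p_2\}) \subseteq \cl_M(F_2 \cup F_3)$, and its intersection with $F_3$ is the point $P_3$; on the other hand $\cl_M(F_1 \cup F_2 \cup \{e\}) = \cl_M(F_1 \cup F_2 \cup \{p_2\}) \supseteq \cl_M(F_2 \cup \{p_2\})$, so $P_3 \in \cl_M(F_1 \cup F_2 \cup \{e\}) \cap F_3 = P_1$ (using that the latter is a single point — here I need $e \notin \cl_M(F_1 \cup F_2)$ to see $\cl_M(\cl_M(F_1\cup F_2) \cup \{e\})$ meets the skew flat $F_3$ in exactly a point, via the modular fact above applied to the skew pair $\cl_M(F_1 \cup F_2)$, $F_3$). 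Since both $P_1$ and $P_3$ are points of $F_3$ and $P_3 \subseteq P_1$, they are equal. The only subtlety to watch is the degenerate ranks ($r_M(F_2) = 0$ or $r_M(F_3) = 0$), where the relevant "point" may be empty and the lift-join is trivial on the corresponding part; these I would dispose of in a line at the start by noting that a rank-zero flat contributes nothing to either iterated lift-join. Once $P_1 = P_3$, the two sides agree on every $e \in F$, completing the proof.

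\begin{proof}
Write $F = \cl_M(F_1 \cup F_2 \cup F_3)$. Since $F_1,F_2,F_3$ are mutually skew, the pairs $(\cl_M(F_1 \cup F_2), F_3)$ and $(F_1, \cl_M(F_2 \cup F_3))$ are each skew, so both $c_1 \otimes c_2$ and $c_2 \otimes c_3$ are defined, the latter simple by Lemma~\ref{liftjoinbasic}(\ref{ljsimple}), and $(c_1 \otimes c_2) \otimes c_3$ and $c_1 \otimes (c_2 \otimes c_3)$ are both colourings of $F$. We repeatedly use the fact recalled before the definition of lift-join: if $G_1,G_2$ are skew flats of $M$ with $r_M(G_2) > 0$ and $e \in \cl_M(G_1 \cup G_2) - G_1$, then $\cl_M(G_1 \cup \{e\}) \cap G_2$ is a point of $M$. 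We verify that the two colourings agree on each $e \in F$, by cases on the position of $e$.

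If $e \in F_1$, then $(c_1 \otimes c_2)(e) = c_1(e)$, so $\bigl((c_1 \otimes c_2) \otimes c_3\bigr)(e) = c_1(e)$; likewise $\bigl(c_1 \otimes (c_2 \otimes c_3)\bigr)(e) = c_1(e)$.

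If $e \in \cl_M(F_1 \cup F_2) - F_1$, let $P = \cl_M(F_1 \cup \{e\}) \cap F_2$, so $\bigl((c_1 \otimes c_2) \otimes c_3\bigr)(e) = (c_1 \otimes c_2)(e) = c_2(P)$. Since $e \notin F_1$, we have $\bigl(c_1 \otimes (c_2 \otimes c_3)\bigr)(e) = (c_2 \otimes c_3)(Q)$ where $Q = \cl_M(F_1 \cup \{e\}) \cap \cl_M(F_2 \cup F_3)$. As $P \subseteq \cl_M(F_1 \cup \{e\}) \cap \cl_M(F_2 \cup F_3) = Q$ and both are points, $Q = P \subseteq F_2$, so $(c_2 \otimes c_3)(Q) = c_2(Q) = c_2(P)$, as needed.

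Finally, suppose $e \in F - \cl_M(F_1 \cup F_2)$. Since $\cl_M(F_1 \cup F_2)$ and $F_3$ are skew and $e \notin \cl_M(F_1 \cup F_2)$, the set $P_1 = \cl_M(\cl_M(F_1 \cup F_2) \cup \{e\}) \cap F_3$ is a point, and
\[
\bigl((c_1 \otimes c_2) \otimes c_3\bigr)(e) = c_3(P_1).
\]
On the other side, $e \notin F_1$, so $\bigl(c_1 \otimes (c_2 \otimes c_3)\bigr)(e) = (c_2 \otimes c_3)(P_2)$ with $P_2 = \cl_M(F_1 \cup \{e\}) \cap \cl_M(F_2 \cup F_3)$, a point. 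If $P_2 \subseteq F_2$, then $e \in \cl_M(F_1 \cup P_2) \subseteq \cl_M(F_1 \cup F_2)$, a contradiction; hence $P_2 \not\subseteq F_2$, so for $p_2 \in P_2$ we have $\bigl(c_2 \otimes c_3\bigr)(P_2) = c_3(P_3)$ where $P_3 = \cl_M(F_2 \cup \{p_2\}) \cap F_3$, a point. Now $\cl_M(F_2 \cup \{p_2\}) \subseteq \cl_M(F_2 \cup F_3)$, and since $p_2 \in \cl_M(F_1 \cup \{e\})$ we have $\cl_M(F_1 \cup F_2 \cup \{p_2\}) \subseteq \cl_M(F_1 \cup F_2 \cup \{e\})$, whence $P_3 \subseteq \cl_M(F_2 \cup \{p_2\}) \subseteq \cl_M(F_1 \cup F_2 \cup \{e\})$, giving $P_3 \subseteq \cl_M(\cl_M(F_1 \cup F_2) \cup \{e\}) \cap F_3 = P_1$. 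As $P_1$ and $P_3$ are both points of $F_3$, we conclude $P_1 = P_3$, so $c_3(P_1) = c_3(P_3)$, and the two colourings agree on $e$.

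Therefore $(c_1 \otimes c_2) \otimes c_3 = c_1 \otimes (c_2 \otimes c_3)$.
\end{proof}
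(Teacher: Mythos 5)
Your proof is correct and takes essentially the same route as the paper's: an element-wise verification with the same three cases ($e \in F_1$, $e \in \cl_M(F_1 \cup F_2) - F_1$, and $e$ outside $\cl_M(F_1 \cup F_2)$), where your points such as $\cl_M(F_1 \cup \{e\}) \cap F_2$ are exactly the paper's ``element of $F_2$ parallel to $e$ in $M \con F_1$'', and your closure computation showing $P_3 \subseteq P_1$ is the same geometric content as the paper's observation that the relevant elements are parallel in $M \con \cl_M(F_1 \cup F_2)$.
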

\begin{proof}
  Let $e \in \cl_M(F_1 \cup F_2 \cup F_3)$. If $e \in F_1$, then $e \in \cl_M(F_1 \cup F_2)$, so 
    \[((c_1 \otimes c_2) \otimes c_3)(e) = (c_1 \otimes c_2)(e) = c_1(e) = (c_1 \otimes (c_2 \otimes c_3))(e). \] 
  If $e \in \cl_M(F_1 \cup F_2) - F_1$, then let $f \in F_2$ be parallel to $e$ in $M \con F_1$. 
  Since $f \in \cl_M(F_2 \cup F_3)$, we have 
  \[((c_1 \otimes c_2) \otimes c_3)(e) = (c_1 \otimes c_2)(e) = c_2 (f) = (c_1 \otimes (c_2 \otimes c_3))(e). \]  
  Finally, suppose that $e \in \cl_M(F_1 \cup F_2 \cup F_3) - \cl_M(F_1 \cup F_2)$. 
  Let $f' \in \cl_M(F_2 \cup F_3)$ be parallel to $e$ in $M \con F_1$. 
  If we have $f' \in \cl_M(F_2)$ we would have $e \in \cl_{M \con F_1}(F_2) = \cl_M(F_1 \cup F_2)$, a contradiction. 
  So $f' \in \cl_M(F_2 \cup F_3) - \cl_M(F_2)$. Let $f \in F_3$ be parallel to $f'$ in $M \con F_2$. 

  By definition, we have $(c_1 \otimes (c_2 \otimes c_3)) (f') = (c_2 \otimes c_3) (f') = c_3(f)$. 
  On the other hand, since $e$ and $f$ are parallel in $M \con F_1$, while
  $f$ and $f'$ are parallel in $M \con F_2$, 
  all three are parallel in $M \con \cl_M(F_1 \cup F_2)$, which gives
  $((c_1 \otimes c_2) \otimes c_3) (e) = c_3(f) = (c_1 \otimes (c_2 \otimes c_3))(e)$,
  as required. 
\end{proof}

It is easy to see, on the other hand, that lift-joins are not in general commutative, 
so the order of the indices is still important in an expression $\bigotimes_i c_i$. 
Note also that for such an expression to be well-defined, each $c_i$ except possibly $c_1$ must be simple. 

\begin{lemma}\label{ljrainbow}
  Let $F_1$ and $F_2$ be skew flats of a modular matroid $M$, 
  and $c_1,c_2$ be colourings of $F_1,F_2$ respectively so that $c_2$ is simple. 
  If $(M | \cl_M(F_1 \cup F_2); c_1 \otimes c_2)$ has a rainbow triangle, then 
  so does one of $(M | F_1; c_1)$ or $(M | F_2; c_2)$.
\end{lemma}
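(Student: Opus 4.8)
The plan is to prove the contrapositive: assuming that neither $(M|F_1;c_1)$ nor $(M|F_2;c_2)$ has a rainbow triangle, I want to show that the lift-join $c := c_1 \otimes c_2$ has no rainbow triangle on $N := M|\cl_M(F_1 \cup F_2)$. So let $T = \{x,y,z\}$ be a triangle of $N$ and aim to show $|c(T)| \le 2$. The natural case split is by how many elements of $T$ lie in $F_1$.

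If all three elements of $T$ lie in $F_1$, then $T$ is a triangle of $M|F_1$ and $c(T) = c_1(T)$ is not rainbow by hypothesis; similarly if $T$ meets $F_1$ in at most one point, then I would like to project into $F_2$. Here I would use the decomposer structure: by Lemma~\ref{liftjoinbasic}(\ref{ljdecomposer}), $F_1$ is a decomposer of $(N;c)$, so for each $e \in E(N) - F_1$ the flat $\cl_N(F_1 \cup \{e\}) - F_1$ is monochromatic, and moreover $c(e) = c_2(P_e)$ where $P_e = \cl_M(F_1 \cup \{e\}) \cap F_2$. The key observation is that the map sending $e \mapsto P_e$ (for $e \notin F_1$) is essentially the contraction map $M \con F_1 \to M|F_2$ followed by taking the corresponding point of $F_2$; it sends triangles of $M\con F_1$ to triangles (or smaller dependent sets) of $M|F_2$. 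Concretely, if $T$ is a triangle of $N$ disjoint from $F_1$, then $T$ is a triangle (or parallel pair, if $N\con F_1$ has created parallels) of $N \con F_1$, so $\{P_x, P_y, P_z\}$ spans a line or point of $M|F_2$, hence is contained in a rank-$\le 2$ flat; since $c(T) = c_2(\{P_x,P_y,P_z\})$ and $c_2$ has no rainbow triangle (so $|c_2(L)| \le 2$ for each line $L$ of $M|F_2$, using Lemma~\ref{ranktwo} or \ref{easyequiv}), we get $|c(T)| \le 2$.

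The remaining case, where $T$ meets $F_1$ in exactly one element, say $z \in F_1$ and $x,y \notin F_1$, is where most of the care is needed. Since $T = \{x,y,z\}$ is a triangle and $z \in F_1$, the elements $x,y$ are parallel in $M \con F_1$, hence $P_x = P_y =: P$, so $c(x) = c(y) = c_2(P)$, and thus $c(T) = \{c_1(z), c_2(P)\}$ has at most two colours regardless — so this case is actually automatic and needs no rainbow-freeness hypothesis at all. (I should double-check using skewness that $x,y\notin F_1$ and $z\in F_1$ with $\{x,y,z\}$ a circuit really does force $x,y$ parallel in $M\con F_1$: $\{x,y\} \cup \{z\}$ dependent with $z \in F_1$ gives $r_{M\con F_1}(\{x,y\}) \le r_M(\{x,y,z\}) - 1 = 1$, so indeed they are parallel.)

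I would organize the writeup as the three cases $|T \cap F_1| \in \{3, 1, 0\}$ (the case $|T\cap F_1| = 2$ is impossible since then the third element would be in the flat $F_1$), handling the one-point case first as the trivial one. The main obstacle is the case $|T \cap F_1| = 0$: I need to argue cleanly that applying the contraction $M\con F_1$ to the triangle $T$ yields a dependent set among $\{P_x, P_y, P_z\}$ in $M|F_2$, being careful about whether these three points are distinct (they may coincide, which only helps) and invoking that a line of $M|F_2$ carries at most two $c_2$-colours. This last fact is exactly the rank-two specialization already available via Lemma~\ref{ranktwo} together with the observation that $(M|F_2;c_2)$ restricted to a line has rank two and no rainbow triangle, so uses at most two colours — or, more directly, it is the contrapositive of Lemma~\ref{ranktwo} applied to that line. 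A minor subtlety to flag: $c_2$ is assumed simple, which ensures that parallel elements of $M|F_2$ get the same colour, so that $c_2(\{P_x,P_y,P_z\})$ is well-defined as a set of size at most the number of distinct points among them.
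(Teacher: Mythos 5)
Your proposal is correct; it proves the contrapositive and is organized a little differently from the paper's direct argument, but the underlying mechanism is the same. The paper takes a rainbow triangle $T$ of the lift-join and splits on $\sqcap_M(F_1,T)\in\{2,1,0\}$: when it is $2$, $T\subseteq F_1$; when it is $1$, two elements of $T$ lie in a set of the form $\cl_M(F_1\cup\{x\})-F_1$, which is monochromatic by Lemma~\ref{liftjoinbasic}(\ref{ljdecomposer}), a contradiction; when it is $0$, the elements of $T$ are parallel in $M\con F_1$ to three elements of $F_2$ forming an honest rainbow triangle of $(M|F_2;c_2)$. You instead split on $|T\cap F_1|\in\{3,2,1,0\}$: your one-point case is the same monochromatic-fibre observation (the two elements off $F_1$ become parallel in $M\con F_1$, so they get the same colour), and your zero-point case absorbs both the paper's skew case and the residual $\sqcap_M(F_1,T)=1$ situation by claiming only that the projected points $P_x,P_y,P_z$ span a flat of rank at most two in $F_2$, then using that a simple rainbow-triangle-free colouring puts at most two colours on any line. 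That is what the contrapositive buys you: you never need to check that the projections are distinct and non-parallel, at the cost of an appeal to Theorem~\ref{easyequiv} (or the one-line argument that three distinctly coloured, hence pairwise non-parallel, nonloop elements of a rank-two flat form a rainbow triangle), whereas the paper's direct construction of $T'\subseteq F_2$ avoids this. One small caution: Lemma~\ref{ranktwo} by itself does not yield $|c_2(L)|\le 2$, since its conclusion is a disjunction allowing a nonempty decomposer; the correct justification is Theorem~\ref{easyequiv} together with simplicity of $c_2$, which you do also offer. The remaining steps --- the impossibility of $|T\cap F_1|=2$, the parallelism computation in the one-point case, and the rank bound on the projected points --- all check out.
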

\begin{proof}
  We may assume that $\cl_M(F_1 \cup F_2) = E(M)$ and that $r_M(F_2) > 0$. 
  Suppose that $T$ is a rainbow triangle in $(M; c_1 \otimes c_2)$.
  
  If $\sqcap_M(F_1,T) = 2$, then $T \sse F_1$, so $T$ is a rainbow triangle of $(M | F_1; c_1)$.
  If $\sqcap_M(T, F_1) = 1$, then let $e$ be a nonloop in $\cl_M(T) \cap F_1$, 
  and let $x \in T - \cl_M(\{e\})$. Since $T \subseteq \cl_M(\{e,x\})$ and 
   $T \not\subseteq F_1$, we know that $x \notin F_1$. 
  Now $T - \cl_M(\{e\}) \subseteq \cl_M(\{e, x\}) - F_1 \subseteq \cl_M(F_1 \cup \{x\}) - F_1$. 
  But by Lemma~\ref{liftjoinbasic}(\ref{ljdecomposer}), the set $\cl_M(F_1 \cup \{x\}) - F_1$ 
  is therefore monochromatic and contains at least two elements of $T$, a contradiction. 

  Suppose, therefore, that $\sqcap_M(F_1,T) = 0$, so $T$ is skew to $F_1$,
  and $(M \con F_1) | T = M | T$.  
  Each $e \in T$ is parallel to some element of $F_2$ in $M \con F_1$. 
  Let $T' \subseteq T$ be a triple containing an element parallel in $M \con F_1$ to each element of $T$. 
  Since $F_1$ and $F_2$ are skew, we have $(M \con F_1) | F_2 = M | F_2$. 
  By the choice of $T'$ we have 
  \[M | T' = (M \con F_1) | T' \cong (M \con F_1) | T = M | T,\]
  and thus $T'$ is a triangle of $M$. 
  But also by the choice of $T'$ and Lemma~\ref{liftjoinbasic}(\ref{ljdecomposer}), 
  we see that $(c_1 \otimes c_2) (T) = c_2(T')$, and so $T'$ is a rainbow triangle of $(M | F_2; c_2)$.
\end{proof}

Given a set $X$ of elements of a matroid $M$, write $\omega_M(X)$ for the rank of the 
largest flat of $M$ that is contained in $X$. (In the case where $M \cong \PG(n-1,2)$, 
this essentially agrees with the definition of $\omega$ given earlier for subsets of $\bF_2^n$.)
The following lemma gives an expression for $\omega_M(X)$, where $X$ is a colour class 
in a lift-join of colourings. 

\begin{lemma}\label{ljomega}
  Let $F_1, \dotsc, F_t$ be mutually skew flats of a connected modular 
  coloured matroid  $(M; c)$, such that $c = \bigotimes_{i=1}^t (c | F_i)$. 
  If $X$ is a colour class of $c$,
  then $\omega_M(X) = \sum_{i = 1}^t \omega_M(F_i \cap X)$. 
\end{lemma}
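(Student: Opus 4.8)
The plan is to induct on $t$. The case $t=1$ forces $F_1 = E(M)$ and is trivial, and if some $F_i$ is empty we may discard it and decrease $t$ (a lift-join with the empty colouring has no effect, and $\omega_M(F_i \cap X) = \omega_M(\es) = 0$), so assume $t \ge 2$ with every $F_i$ a nonempty flat. By associativity (Lemma~\ref{assoc}) we may write $c = d \otimes (c|F_t)$, where $d = \bigotimes_{i=1}^{t-1}(c|F_i)$ is a colouring of the nonempty proper flat $E' := \cl_M(\cup_{i<t}F_i)$; by Lemma~\ref{liftjoinbasic}(\ref{ljrestrict}) we have $c|E' = d$, the flats $E'$ and $F_t$ are complementary in $M$, and $(M|E'; c|E')$ is connected and modular by Lemmas~\ref{modularcontract} and~\ref{modularconnectedcontract}. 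Given the $t=2$ case, applying it to the complementary pair $(E', F_t)$ relates $\omega_M(X)$ to $\omega_M(E' \cap X) + \omega_M(F_t \cap X)$, and applying the inductive hypothesis inside $(M|E'; c|E')$ to $F_1, \dots, F_{t-1}$ (whose colour class is $X \cap E'$, using that $\omega_M$ and $\omega_{M|E'}$ agree on subsets of $E'$) finishes the induction.

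So the substance is the case $t = 2$: let $F_1, F_2$ be complementary flats of a connected modular coloured matroid $(M;c)$ with $c = (c|F_1)\otimes(c|F_2)$, write $c_i = c|F_i$, let $X$ be the class of colour $\kappa$, and note $c_i^{-1}(\kappa) = X \cap F_i$. For $\omega_M(X) \ge \omega_M(F_1 \cap X) + \omega_M(F_2 \cap X)$, choose flats $G_i \subseteq F_i \cap X$ with $r_M(G_i) = \omega_M(F_i \cap X)$; since $G_1, G_2$ are skew, $W := \cl_M(G_1 \cup G_2)$ is a flat of rank $r_M(G_1) + r_M(G_2)$, so it suffices to show $W \subseteq X$. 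For $e \in G_1$ this is clear since $c(e) = c_1(e) = \kappa$; a short local-connectivity computation gives $F_1 \cap W = G_1$, so any other $e \in W$ lies outside $F_1$, whence $c(e) = c_2(P)$ for the point $P = \cl_M(F_1 \cup \{e\}) \cap F_2$. As $\cl_M(F_1 \cup \{e\}) \subseteq \cl_M(F_1 \cup G_2)$, modularity gives $P \subseteq \cl_M(F_1 \cup G_2) \cap F_2 = G_2$, and since $c_2$ is constant on the point $P \subseteq G_2 \subseteq X$ we get $c(e) = \kappa$. For the reverse bound, take a flat $G \subseteq X$ with $r_M(G) = \omega_M(X)$ and set $G^* := \cl_M(F_1 \cup G) \cap F_2$, a flat contained in $F_2$. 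Two applications of $r_M(\cl_M(A)\cap\cl_M(B)) = \sqcap_M(A,B)$ give $r_M(G) = r_M(F_1 \cap G) + r_M(G^*)$, and since $F_1 \cap G \subseteq F_1 \cap X$ it remains only to show $G^* \subseteq X$. For a point $P \subseteq G^*$, the flat $\cl_M(F_1 \cup P)$ has rank $r_M(F_1)+1$ and $\cl_M(F_1 \cup P) \cap G$ has rank $r_M(F_1 \cap G) + 1$, strictly more than $F_1 \cap G$, so choose $g \in (\cl_M(F_1 \cup P) \cap G) - F_1$; then $\cl_M(F_1 \cup \{g\}) = \cl_M(F_1 \cup P)$ by a rank count, hence $\cl_M(F_1 \cup \{g\}) \cap F_2 = \cl_M(F_1 \cup P) \cap F_2 = P$ (this intersection has rank one and contains $P$), so $c_2(P) = c(g) = \kappa$ and $P \subseteq X$. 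Since $M$ is loopless, the flat $G^* \subseteq F_2$ is the union of the points it contains, so $G^* \subseteq X$.

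The only delicate part is the bookkeeping of degenerate cases (empty flats, or a rank-zero matroid) in the reduction step; the rest is a handful of local-connectivity identities applied together with the skewness of $F_1$ and $F_2$, so I expect no genuine obstacle beyond choosing at each step which pair of flats to intersect and tracking which closure contains which.
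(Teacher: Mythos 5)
Your proof is correct and follows essentially the same route as the paper's: reduce to $t=2$ via associativity, obtain the lower bound from the closure of the union of maximal flats in $F_1 \cap X$ and $F_2 \cap X$, and obtain the upper bound by projecting a maximal flat $G \subseteq X$ through $F_1$ onto $F_2$. The only cosmetic difference is in the upper bound, where the paper takes a skew complement of $G \cap F_1$ inside $G$ and maps it elementwise into $F_2$ via parallelism in $M \con F_1$, whereas you intersect $\cl_M(F_1 \cup G)$ with $F_2$ and do the rank arithmetic directly; the two computations are interchangeable.
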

\begin{proof}
    By induction and associativity, it suffices to show this for $t = 2$. 
    Suppose, therefore, that $c = (c | F_1) \ls (c | F_2)$ for complementary 
    flats $F_1, F_2$ of $M$, and let $X = c^{-1}(\kappa)$ for some $\kappa \in [c]$.

    For each $i$, let $K_i \subseteq F_i \cap X$ and $r_M(K_i) = \omega_M(F_1 \cap X)$.
    We clearly have $c(K_1) = \{\kappa\},$ and for each $e \in \cl_M(K_1 \cup K_2) - K_1$,
    the point $f \in K_2 \cap \cl_M(K \cup \{e\})$ has $c(f) = \kappa$ and $c(f) = c(e)$, 
    so we have $c(\cl_M(K_1 \cup K_2)) = \{\kappa\}$ and thus 
    $\omega_M(X) \ge r_M(K_1 \cup K_2) = r_M(K_1) + r_M(K_2) = \sum_{i=1}^2 \omega_M(F_i \cap X)$.

    For the other direction, let $K$ be a flat of $M$ with $K \subseteq X$ 
    and $r_M(K) = \omega_M(X)$. Let $K_1 = K \cap F_1$, and let $K_2$ be a complement of $K_1$ in $M | K$.
    By construction, we have $\omega_M(X) = r_M(K_1) + r_M(K_2)$, and $r_M(K_1) \le \omega_M(K \cap F_1)$, 
    so it suffices to show that $r_M(K_2) \le \omega_M(F_2 \cap X)$.
    
    Since $K_2$ and $F_1$ are disjoint, modularity implies that they are skew in $M$, 
    so $(M \con F_1) | K_2 = M| K_2$. By modularity, for each $e \in K_2$, the flat $\cl_M(F_1 \cup \{e\})$
    intersects $F_2$ in a unique element $\varphi(e)$, and $e$ and $\varphi(e)$ are parallel in $M \con F_1$. 
    Let $K_2' = \varphi(K_2) \subseteq F_2$. Since $K_2' \subseteq F_2$ and $F_2$ is skew to $F_1$, 
    we have $M | K_2 = (M \con F_1) | K_2 \cong (M \con F_1) | K_2' = M | K_2'$, where the isomorphism is given by 
    $\varphi$. 
    So $K_2'$ is a flat of $M$ of rank equal to $r_M(K_2)$, and since $e \in \cl_M(F_1 \cup \varphi(e))$ 
    for each $e$, we have $c(e) = c(\varphi(e))$ and thus $c(K_2') = c(K_2) \subseteq \{\kappa\}$. 
    It follows that $r_M(K_2) = r_M(K_2') \le \omega_M(F_2 \cap X)$, as required.
\end{proof}

Finally, we can state our main structure theorem, 
which roughly asserts that rainbow-triangle-free coloured matroids are precisely 
those that can be obtained by gluing simple two-coloured flats together using lift-joins. 
The only technical exception is the first summand, which can be non-simple, 
or a rank-one flat that uses more than two colours.

\begin{theorem}\label{mainstructure}
  Let $(M; c)$ be a connected, modular coloured matroid. The following are equivalent: 
  \begin{enumerate}[(i)]
    \item\label{rtf} $(M; c)$ is rainbow-triangle-free;
    \item\label{rcf} $(M; c)$ has no rainbow circuit of size $3$ or more;
    \item\label{rtfdecomp} there are mutually skew nonempty flats $F_i, \dotsc, F_t$ of $M$
      such that $c = \bigotimes_{i=1}^t (c | F_i)$,
      the colouring $c | F_i$ is simple with $|c(F_i)| \le 2$ for all $i \ge 2$, 
      and either $r_M(F_1) = 1$ or $|c(F_1)| \le 2$.
  \end{enumerate}
\end{theorem}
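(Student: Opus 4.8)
The plan is to prove Theorem~\ref{mainstructure} by establishing the cycle of implications $(\ref{rtf}) \Leftrightarrow (\ref{rcf})$, then $(\ref{rtfdecomp}) \Rightarrow (\ref{rtf})$, and finally $(\ref{rtf}) \Rightarrow (\ref{rtfdecomp})$. The first equivalence is the simplest: one direction is trivial since a rainbow triangle is a rainbow circuit of size $3$, and the other direction is exactly Lemma~\ref{rainbowcircuit}, which says a modular coloured matroid has a rainbow triangle iff it has a rainbow circuit of size at least $3$. (A small subtlety: condition (\ref{rcf}) does not explicitly assume $c$ is simple, so I should note that a parallel pair with different colours would be a rainbow circuit of size $2$, hence excluded only by the ``size $3$ or more'' phrasing — so strictly (\ref{rcf}) as stated permits non-simple $c$, which is fine and matches the flexibility allowed in (\ref{rtfdecomp}) for $F_1$.)

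For $(\ref{rtfdecomp}) \Rightarrow (\ref{rtf})$, I would argue by induction on $t$ using Lemma~\ref{ljrainbow}. If $c = \bigotimes_{i=1}^t (c|F_i)$ has a rainbow triangle, then writing $c = (c|F_1) \otimes \bigl(\bigotimes_{i=2}^t (c|F_i)\bigr)$ via associativity (Lemma~\ref{assoc}), Lemma~\ref{ljrainbow} gives a rainbow triangle in either $(M|F_1; c|F_1)$ or in $(M|\cl_M(\cup_{i\ge2}F_i); \bigotimes_{i=2}^t(c|F_i))$. In the first case, since $r_M(F_1)=1$ (so $F_1$ is a single parallel class and has no triangle) or $|c(F_1)| \le 2$ (so no rainbow triangle is possible on three differently-coloured points), we get a contradiction; in the second case we recurse. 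The base case $t=1$ is handled by the same dichotomy on $F_1$. I should double-check that $M|F_1$ being connected and modular is not actually needed here — Lemma~\ref{ljrainbow} only needs skewness and modularity of the ambient $M$, and restrictions of modular matroids to flats are modular by Lemma~\ref{modularcontract}.

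The hard direction, and the main obstacle, is $(\ref{rtf}) \Rightarrow (\ref{rtfdecomp})$. Here I would induct on $r(M)$. If $|c| \le 2$, take $t = 1$ and $F_1 = E(M)$; if $r(M) \le 1$, again take $t=1$ and $F_1 = E(M)$ (the ``$r_M(F_1)=1$'' escape clause, covering possibly non-simple rank-one matroids). Otherwise $r(M) \ge 2$ and $|c| \ge 3$, so Theorem~\ref{main1} supplies a nonempty decomposer; take $F_1$ to be a \emph{maximal} such decomposer — actually it is cleaner to let $F$ be a minimal nonempty decomposer and set $F_1 = F$, or to proceed as follows. Let $F$ be a nonempty decomposer and let $F_2'$ be a complement of $F$ in $M$; by Lemma~\ref{liftjoiniff}, $c|F_2'$ is simple and $c = (c|F) \otimes (c|F_2')$. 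Now $M|F$ and $M\con F$ are connected and modular (Lemma~\ref{modularconnectedcontract}), and $M|F_2' \cong M\con F$, so $(M|F_2'; c|F_2')$ is connected, modular and rainbow-triangle-free (being a lift-join summand of one, or directly a restriction). Both $M|F$ and $M|F_2'$ have strictly smaller rank, so by induction each decomposes as an iterated lift-join of simple two-coloured (or first-summand-special) flats. The remaining work is bookkeeping: splice the two decompositions together using associativity (Lemma~\ref{assoc}) and the fact that mutual skewness is preserved, being careful that \emph{only the very first summand overall} is allowed to be non-simple or to use $>2$ colours — so I must ensure the decomposition of $M|F$ is placed first and the decomposition of $M|F_2'$ (all of whose summands are automatically simple, since $c|F_2'$ is simple in $M$) comes after. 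One genuine subtlety to handle carefully: when $r_M(F_1)$ in the final list equals $1$ but $|c(F_1)| > 2$, this is exactly the permitted exceptional first summand, and it can only arise from the $M|F$ recursion bottoming out at a rank-one flat — I should confirm the induction produces precisely this and nothing worse. The cleanest framing is probably to prove a slightly stronger statement by induction: ``every connected modular rainbow-triangle-free $(M;c)$ with $|c| \ge 3$ and $r(M) \ge 2$ has $c = (c|F_1)\otimes(c|F_2')$ with $F_1$ a nonempty decomposer and $c|F_2'$ simple with $|c|F_2'| \le |c|$,'' and then unfold.
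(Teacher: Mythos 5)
Your proposal is correct and follows essentially the same route as the paper: (\ref{rtf})$\Leftrightarrow$(\ref{rcf}) via Lemma~\ref{rainbowcircuit}, (\ref{rtfdecomp})$\Rightarrow$(\ref{rtf}) by induction on $t$ with Lemma~\ref{ljrainbow} and associativity, and (\ref{rtf})$\Rightarrow$(\ref{rtfdecomp}) by induction on rank using Theorem~\ref{main1} to get a nonempty decomposer, Lemma~\ref{liftjoiniff} to split $c$ as $(c|F)\otimes(c|F_2')$, and splicing the two inductive decompositions. The one detail you flag but leave open -- that the first summand of the $F_2'$-side decomposition uses at most two colours -- closes exactly as in the paper: that summand is simple (a restriction of the simple colouring $c|F_2'$), so if it used more than two colours the inductive dichotomy would force it to have rank one, and simplicity would then give a single colour, a contradiction.
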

\begin{proof}
  The equivalence of (\ref{rtf}) and (\ref{rcf}) follows from Lemma~\ref{rainbowcircuit}, 
  so it suffices to show that (\ref{rtf}) and (\ref{rtfdecomp}) are equivalent. 
  
  First suppose that there are flats $F_i$ such that (\ref{rtfdecomp}) holds. 
  This implies that $c_i:=c | F_i$ is rainbow-triangle-free for each $i$. 
  It follows by induction and Lemma~\ref{ljrainbow} that 
  $(M; c) = (M; \bigotimes_{i=1}^t c_i)$ is rainbow-triangle-free.
  

  Conversely, suppose that $(M; c)$ is rainbow-triangle-free. 
  If $r(M) = 0$, then we can take $t=0$. 
  If $r(M) = 1$ or $|c| \le 2$, then it is easy to check that we can take $t = 1$ 
  and $F_1 = E(M)$.
  We may assume, therefore, that $|c| \ge 3$ and $r(M) \ge 2$, 
  and that (\ref{rtf}) implies (\ref{rtfdecomp}) for all matroids of smaller rank than $M$. 
  
  By Theorem~\ref{main1}, there is a nontrivial decomposer $F^1$ of $(M; c)$. 
  Let $F^2$ be a complement of $F^1$ in $M$.
  Since $F^1$ is a decomposer, we have that $r_M(F^2) > 0$ 
  and the restriction $c | F^2$ is a simple colouring. 
  
  Using the fact that $F^1$ and $F^2$ are proper flats and induction, 
  for each $i \in [2]$, there are flats $F_1^i, \dotsc, F_{t_i}^i$ 
  satisfying (\ref{rtfdecomp}) in the coloured matroid $(M;c) | F^i$. 
  For each $i \in [t_1]$, let $c^1_i:=c |F^1_i$ and for each $j \in [t_2]$, let $c^2_j=c | F^2_j$.  By Lemma~\ref{liftjoiniff}, 
  $c = (c | F^1) \otimes (c | F^2) = (\bigotimes_{i=1}^{t_1} c^1_i) \otimes (\bigotimes_{j=1}^{t_2} c_j^2)$. 

  We now show that $F_1^1, \dotsc, F_{t_1}^1, F_1^2, \dotsc, F_{t_2}^2$ is a sequence of 
  flats satisfying (\ref{rtfdecomp}) for $M$. 
  This is almost completely straightforward, since $\otimes$ is associative 
  and the flats in the collection $\{F_i^1\} \cup \{F_j^2\}$ are mutually skew. 
  All that remains is to show that the colouring $c | F^2_1$ is simple and that $|c(F^2_1)| \le 2$. 
  The simplicity follows from the fact that $c | F^2_1$ is a restriction of the simple colouring $c | F^2$.
  If $|c(F^2_1)| > 2$ then since $F^2_1, \dotsc, F^2_{t_2}$ satisfy (\ref{rtfdecomp}), 
  we know that $r_M(F^2_1) = 1$, but then simplicity gives $|c(F^2_1)| =1$, a contradiction. 
\end{proof}

Note that any collection of flats satisfying (\ref{rtfdecomp}) must have spanning
union in $M$, since otherwise $\bigotimes_i (c | F_i)$ would not be defined on all of $M$.

When the colouring $c$ is itself simple, all the $c_i$ must be simple, 
so we can guarantee that $|c_i| \le 2$ for all $i$, giving a less technical version of Theorem~\ref{mainstructure}. 
We remark that all the other equivalent conditions in Theorem~\ref{easyequiv} could also be added to this list. 

\begin{theorem}\label{mainstructuresimple}
  Let $(M; c)$ be a connected, modular coloured matroid for which $c$ is simple. 
  The following are equivalent: 
  \begin{enumerate}[(i)]
    \item $(M; c)$ is rainbow-triangle-free;
    \item there are mutually skew nonempty flats 
      $F_1, \dotsc, F_t$ of $M$ such that
      $c = \bigotimes_{i=1}^t (c | F_i)$, and $|c(F_i)| \le 2$ for each $i$. 
  \end{enumerate}
\end{theorem}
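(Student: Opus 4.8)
The plan is to derive Theorem~\ref{mainstructuresimple} as a corollary of Theorem~\ref{mainstructure} by checking that, when $c$ is simple, the slightly technical exception in condition~(\ref{rtfdecomp}) disappears. First I would invoke Theorem~\ref{mainstructure}: since $(M;c)$ is a connected modular coloured matroid, it is rainbow-triangle-free if and only if there are mutually skew nonempty flats $F_1, \dotsc, F_t$ of $M$ with $c = \bigotimes_{i=1}^t (c | F_i)$, where $c | F_i$ is simple with $|c(F_i)| \le 2$ for all $i \ge 2$, and either $r_M(F_1) = 1$ or $|c(F_1)| \le 2$. So the backward direction (existence of such a decomposition implies rainbow-triangle-free) is immediate from Theorem~\ref{mainstructure}, and for the forward direction it remains only to upgrade the conclusion about $F_1$.

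For the forward direction, suppose $(M;c)$ is rainbow-triangle-free and $c$ is simple, and take flats $F_1, \dotsc, F_t$ as supplied by Theorem~\ref{mainstructure}. The only thing that could fail for the simpler statement is that $r_M(F_1) = 1$ while $|c(F_1)| > 2$. But $F_1$ is a flat of $M$, and since $c$ is simple with respect to $M$, any two elements of $M$ that are parallel receive the same colour; in a rank-one flat all elements are pairwise parallel, so $c | F_1$ is constant and $|c(F_1)| = 1 \le 2$. Hence $|c(F_1)| \le 2$ holds regardless, and the collection $F_1, \dotsc, F_t$ already witnesses condition~(ii) of Theorem~\ref{mainstructuresimple}, with $|c(F_i)| \le 2$ for every $i$ (including $i=1$). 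One should also observe that $c | F_i$ being simple is automatic for every $i$ here, being a restriction of the simple colouring $c$, so condition~(ii) as stated (which does not even mention simplicity of the pieces) is satisfied.

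I expect no real obstacle: the entire content is that simplicity of $c$ forces monochromaticity on rank-one flats, which collapses the one asymmetric clause of Theorem~\ref{mainstructure}. The only point requiring a word of care is making sure the flats $F_i$ produced by Theorem~\ref{mainstructure} are genuinely flats of $M$ (so that simplicity of $c$ in $M$ applies to them) — this is part of the statement of~(\ref{rtfdecomp}), so it is granted. The remark in the paper that the other equivalent conditions of Theorem~\ref{easyequiv} could be appended to the list need not be proved here, as it is not part of the formal statement.

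\begin{proof}
  By Theorem~\ref{mainstructuresimple} being a specialization of Theorem~\ref{mainstructure}, it suffices to show that, when $c$ is simple, condition~(\ref{rtfdecomp}) of Theorem~\ref{mainstructure} is equivalent to condition~(ii) above.

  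Suppose first that $(M;c)$ is rainbow-triangle-free. By Theorem~\ref{mainstructure}, there are mutually skew nonempty flats $F_1, \dotsc, F_t$ of $M$ with $c = \bigotimes_{i=1}^t (c | F_i)$, where $c | F_i$ is simple with $|c(F_i)| \le 2$ for all $i \ge 2$, and either $r_M(F_1) = 1$ or $|c(F_1)| \le 2$. If $|c(F_1)| \le 2$ we are done, so suppose instead that $r_M(F_1) = 1$. Then any two elements of $F_1$ are parallel in $M$, and since $c$ is simple with respect to $M$, the colouring $c | F_1$ is constant, giving $|c(F_1)| = 1 \le 2$. In either case $|c(F_i)| \le 2$ for all $i \in \{1, \dotsc, t\}$, so the flats $F_1, \dotsc, F_t$ witness condition~(ii).

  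Conversely, suppose there are mutually skew nonempty flats $F_1, \dotsc, F_t$ of $M$ with $c = \bigotimes_{i=1}^t (c | F_i)$ and $|c(F_i)| \le 2$ for each $i$. Each $c | F_i$ is a restriction of the simple colouring $c$, hence is simple, so in particular the lift-join $\bigotimes_{i=1}^t (c | F_i)$ is well-defined. Since $|c(F_i)| \le 2 \le r_M(F_i)$ is not needed, we simply note that these flats satisfy condition~(\ref{rtfdecomp}) of Theorem~\ref{mainstructure} (taking $F_1$ as the first summand, which vacuously satisfies `$r_M(F_1) = 1$ or $|c(F_1)| \le 2$' since $|c(F_1)| \le 2$). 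Therefore $(M;c)$ is rainbow-triangle-free by Theorem~\ref{mainstructure}.
\end{proof}
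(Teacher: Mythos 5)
Your proposal is correct and follows essentially the same route as the paper, which obtains Theorem~\ref{mainstructuresimple} directly from Theorem~\ref{mainstructure} by the one-line observation that when $c$ is simple every rank-one flat is monochromatic (so the technical exception for $F_1$ collapses) and every restriction $c|F_i$ is automatically simple. The only cosmetic issue is the garbled sentence ``Since $|c(F_i)| \le 2 \le r_M(F_i)$ is not needed\dots'' in your converse direction, which should simply be deleted.
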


In particular, this applies whenever $M$ is the matroid of a 
finite-dimensional Desarguesian projective geometry over a finite field. 
The statement of Theorem~\ref{mainstructuresimple} is easy to extend meaningfully 
to geometries over infinite fields, 
but in fact the extension is false in general, for interesting reasons; 
see Section~\ref{infsection} for a discussion. 

Finally, we formulate the rank-three case of the above
as a theorem about projective planes, which are equivalent to simple, 
modular, connected rank-three matroids. 
The case of Desarguesian planes was independently proved by Carter and Vogt [\ref{cartervogt}] 
and by Hales and Straus [\ref{hs82}]. 
The proof, using Theorem~\ref{mainstructuresimple}, is straightforward. 

\begin{theorem}\label{mainplane}
  Let $c$ be a colouring of the points of a projective plane $P$. 
  Then every line of $P$ contains points of at most two colours 
  if and only if either
  \begin{enumerate}[1.]
    \item\label{twocolplane} $|c| \le 2$,
    \item\label{threecol} there are colours $\alpha, \beta_1, \beta_2$ so that 
      $[c] = \{\alpha, \beta_1, \beta_2\}$, and either 
      \begin{enumerate}[i.]
        \item\label{goodpt} there is a point $x$ of $P$
          so that $c(x) = \alpha$, and each line $L$ through $x$
          satisfies $c(L-\{x\}) = \{\beta_1\}$ or $c(L-\{x\}) = \{\beta_2\}$, or 
        \item\label{goodline} there is a line $L$ of $P$ such that $c(P-L) = \{\alpha\}$ 
          and $c(L) = \{\beta_1, \beta_2\}$, and $L$ contains at least two points of each colour. 
      \end{enumerate}
  \end{enumerate}
\end{theorem}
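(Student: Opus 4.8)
The plan is to deduce Theorem~\ref{mainplane} directly from Theorem~\ref{mainstructuresimple} by unwinding what a lift-join decomposition $c = \bigotimes_{i=1}^t (c|F_i)$ of a rank-three simple modular matroid $P$ looks like. First I would dispose of the trivial implications: if any of the conditions \ref{twocolplane}, \ref{goodpt}, \ref{goodline} holds, a short direct check shows every line has at most two colours. For \ref{twocolplane} this is immediate; for \ref{goodpt}, a line through $x$ uses $\alpha$ and one $\beta_i$, while a line $L'$ missing $x$ meets each line through $x$ in one point, and if $L'$ saw both $\beta_1$ and $\beta_2$ one could produce a rainbow triangle through $x$ — but it is cleaner to note that condition~\ref{goodpt} is literally saying $\{x\}$ is a nontrivial decomposer with the two sides $|c|\le 2$, and condition~\ref{goodline} is saying $L$ is a decomposer; so in both cases Theorem~\ref{mainstructuresimple}(ii) holds with $t=2$, hence (i) holds, i.e. every line has $\le 2$ colours.

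For the forward direction, suppose every line of $P$ has at most two colours, i.e. $(P;c)$ is rainbow-triangle-free. If $|c|\le 2$ we are in case~\ref{twocolplane}. Otherwise $|c|\ge 3$, and Theorem~\ref{mainstructuresimple} gives mutually skew nonempty flats $F_1,\dots,F_t$ with $c = \bigotimes_i (c|F_i)$ and $|c(F_i)|\le 2$ for each $i$. Since $\sum_i r_P(F_i) = r(P) = 3$ and each $r_P(F_i)\ge 1$, the rank partition is either $(1,1,1)$ or $(2,1)$ or $(1,2)$ (the $t=1$ case is excluded since then $|c|=|c(F_1)|\le 2$). I would show the $(1,1,1)$ and $(1,2)$ cases collapse into the two listed subcases. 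The key observations are: in a lift-join $\bigotimes_i(c|F_i)$, the first flat $F_1$ is always a decomposer (Lemma~\ref{liftjoinbasic}(\ref{ljdecomposer}), via associativity grouping $F_2,\dots,F_t$), and conversely a rank-one decomposer $\{x\}$ means precisely that every line through $x$ is monochromatic off $x$ — giving condition~\ref{goodpt} once one checks all three colours appear and that $c(x)$ is the unique colour appearing on every punctured line (call it $\alpha$; the other colours are the $\beta_i$'s, and since $|c|=3$ exactly, there are two of them). When instead the decomposer has rank two, i.e. is a line $L$, then $c(P-L)$ is monochromatic, say colour $\alpha$, and $c(L)$ uses the remaining colours; since $|c|=3$, $|c(L)|=2$, giving condition~\ref{goodline}, and one must still verify $L$ contains at least two points of each colour (if $L$ had a unique point of colour $\beta_1$, that point together with its removal would exhibit $L$ itself as decomposable, contradicting... actually here one argues directly: a point $y\in L$ of colour $\beta_1$ with $L$ otherwise $\beta_2$-and-$\alpha$... ) — this verification is the one genuinely fiddly point.

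So the main obstacle is the bookkeeping in the rank-two-decomposer case: showing that when $|c|=3$ and $P$ has a line decomposer $L$, one can arrange $c(P-L)$ to be a single colour $\alpha$ and $L$ to carry at least two points of each of $\beta_1,\beta_2$. The cleanest route: by Lemma~\ref{liftjoiniff}, $L$ being a decomposer means $c = (c|L)\otimes(c|F_2)$ with $F_2$ a point and $c|F_2$ simple, so $c|F_2$ is a single colour, and the lift-join definition then forces every point of $P-L$ to inherit that colour via $\cl_P(L\cup\{e\})\cap F_2 = F_2$ — wait, that is not right since $\cl_P(L\cup\{e\}) = P$; rather the roles should be reversed, taking $F_1 = L$ as the \emph{first} summand is not forced, so I would instead start from the decomposition's actual first flat. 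Let me restructure: if the rank partition is $(2,1)$ with $F_1$ the line, then $c|F_1$ has $\le 2$ colours, $F_2=\{x\}$ is a point with $c(x)$ some colour, and for $e\in P-F_1$ we have $c(e) = c(\cl_P(F_1\cup\{e\})\cap F_2)$, but $\cl_P(F_1\cup\{e\}) = P \supseteq F_2$, so $c(e) = c(x)$ for \emph{all} $e\notin F_1$ — hence $P - F_1$ is monochromatic of colour $c(x)=:\alpha$, and since $|c|=3$, $c|F_1$ must use the two colours $\beta_1,\beta_2$, neither equal to $\alpha$ (if $\alpha\in c(F_1)$ then $|c|\le 3$ still but we'd need $\beta_1,\beta_2$ distinct from... in fact $|c(F_1)|\le 2$ and $|c|=3$ force $c(F_1)=\{\beta_1,\beta_2\}$ with $\alpha\notin c(F_1)$). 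For the "two points of each colour" claim on $L=F_1$: if $F_1$ had a unique point $y$ of colour $\beta_1$, then $F_1 - \{y\}$ would be $\beta_2$-monochromatic, making $\{y\}$ a rank-one decomposer of $(P;c)|F_1$, which by Lemma~\ref{decompdecomp} would be a decomposer of $(P;c)$ — and a point decomposer lands us in case~\ref{goodpt} via the argument above; so either case~\ref{goodpt} holds or $L$ has two points of each colour, and in the latter we are in case~\ref{goodline}. The symmetric $(1,2)$ partition reduces to the $(1,1,1)$/$(2,1)$ analysis since its first flat is a point decomposer. I expect the whole write-up to be under a page, with this decomposer-bookkeeping being the only part requiring care.
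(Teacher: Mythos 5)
Your proposal is correct and follows exactly the route the paper intends (the paper offers no written proof beyond the remark that Theorem~\ref{mainplane} follows straightforwardly from Theorem~\ref{mainstructuresimple}): unwind the rank-three lift-join decomposition, with a point decomposer yielding case 2.i, a line decomposer yielding case 2.ii, and the degenerate line case (a unique point of one colour on the line) folding back into 2.i via Lemma~\ref{decompdecomp}. The only blemishes are cosmetic: in the point-decomposer case you should say $c(x)$ is the unique colour \emph{not} appearing on the punctured lines (your parenthetical shows this is what you meant), and the check you flag --- that the punctured lines through a decomposer point use only two colours --- is the one-line observation that three differently coloured punctured lines would be met by a single line carrying three colours, contrary to hypothesis.
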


\subsection*{Kemperman's theorem}

In the case where $M$ is a Desarguesian projective geometry 
and $|c| = 3$, 
Theorem~\ref{mainstructuresimple} can almost certainly be derived from Kemperman's theorem 
on small sumsets in abelian groups. 
Though the original version of the theorem in [\ref{kemperman}] does not use this terminology, 
we phrase our discussion in terms of the shorter proof in [\ref{bdm}], which uses the convenient concept of a 
\emph{critical trio}, defined to be a triple $(A,B,C)$ of subsets of an additive abelian group $G$ for which
$|A| + |B| + |C| > |G|$ and $0 \notin A + B + C$. 

To apply this to Theorem~\ref{mainstructuresimple} with $|c| = 3$ and $M = \PG(n-1,q)$, 
let $A_1,A_2,A_3$ be the colour classes of a rainbow-triangle-free
$3$-colouring of $\PG(n-1,q)$. Each element $e$ of $M$ is by definition a $1$-dimensional 
subspace of $\bF_q^n$. For each $i \in [3]$, let $X_i = \bigcup_{e \in A_i} e \subseteq \bF_q^n$. 
By construction, the sets $X_i$ pairwise intersect only in the zero vector, 
and satisfy $|X_1| + |X_2| + |X_3| = q^n+2$. 
If there exist vectors $x_1,x_2,x_3$ with $x_i \in X_i$ and $x_3 \ne 0$ while $x_1 + x_2 + x_3 = 0$, 
then either some $x_i$ is zero and the other two are parallel and nonzero (which does not occur by the construction of the $X_i$)
or all three are nonzero, which does not occur since the original colouring is rainbow-triangle-free. 
It follows, therefore, that $0 \notin X_1 + X_2 + (X_3 \del \{0\})$ and $|X_1| + |X_2| + |X_3 \del \{0\}| = q^n+1$. 
Therefore $(X_1,X_2,X_3 \del \{0\})$ is a critical trio in the additive group $\bF_q^n$. 

Kemperman's theorem as formulated in [\ref{bdm}, Theorem 4.5] gives an exact characterization of all critical trios, 
in terms of so-called `beats', `chords', `continuations' and a chain of subgroups. 
We do not state the result here, since its statement requires a plethora of definitions, 
but the correspondence with Theorem~\ref{mainstructuresimple} is clear -- 
`impure beats' correspond to lift-joins, and the chain of subgroups correspond to the subspaces spanned by 
unions of initial segments of the $F_i$. Deriving the three-coloured case of Theorem~\ref{mainstructuresimple}
from Kemperman's theorem is likely a matter of little more than reconciling 
distinct sets of terminologies. 

With more than three colours, the simplicity of the correspondence breaks down. 
While one could possibly use an inductive strategy where distinct colours 
are merged to find a proof of Theorem~\ref{mainstructuresimple} using Kemperman's theorem, 
such a proof could easily be longer than ours. 

\section{Targets}


Before applying Theorem~\ref{mainstructuresimple} to derive our main results on the geometric \erdos-Hajnal conjecture, 
we define a special type of colouring. 
This is a version of a definition originally given in [\ref{nn}] in the language of induced submatroids, 
which is essentially the two-coloured case.

Call a coloured matroid $(M; c)$ a \emph{target} if there are flats 
$\cl_M(\es) = F_0 \subseteq F_1 \subseteq \dotsc \subseteq F_k = E(M)$ of $M$
such that every nonempty set of the form $F_0$ or $F_{i+1} - F_i$ is monochromatic. 

Given a target $(M;c)$ with $r(M) \ge 1$, it can be convenient to choose 
the flats $F_0 \subseteq \dotsc \subseteq F_k$ with $k$ minimal. 
Given such a choice, $F_{i+1}-F_{i-1}$ must contain 
at least two colours for all $1 \le i < k$, since otherwise we could remove $F_i$ 
from the list. It follows that there is always a choice of the $F_i$ so that the 
containments $F_i \subseteq F_{i+1}$ are all strict, and for all $i$, 
the sets $F_i - F_{i-1}$ and $F_{i+1} - F_i$ are monochromatic with different colours. 

This section will establish some basic properties of targets, 
as well as prove two theorems that classify targets in terms of excluded subconfigurations.
We omit the trivial proofs of the first two lemmas. 
\begin{lemma}\label{restricttarget}
  If $(M; c)$ is a target and $F$ is a flat of $M$, then $(M; c) | F$ is a target. 
\end{lemma}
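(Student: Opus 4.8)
The plan is to intersect the chain of flats witnessing that $(M;c)$ is a target with the flat $F$. First I would recall that $(M;c)$ being a target provides flats $\cl_M(\es) = F_0 \subseteq F_1 \subseteq \dotsc \subseteq F_k = E(M)$ such that $F_0$ and each $F_{i+1} - F_i$ is monochromatic whenever nonempty. I would then set $F_i' := F_i \cap F$ for each $i$. Since the intersection of two flats of a matroid is again a flat, and since the flats of $M|F$ are exactly the flats of $M$ contained in $F$, each $F_i'$ is a flat of $M|F$. These form a chain $F_0' \subseteq \dotsc \subseteq F_k'$ with $F_k' = E(M) \cap F = F = E(M|F)$, and $F_0' = F_0 \cap F$ is precisely the set of loops of $M|F$, that is, $\cl_{M|F}(\es)$.

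Next I would check the monochromaticity condition for this new chain. The set $F_0' = F_0 \cap F$ is contained in $F_0$, and a subset of a monochromatic set is monochromatic, so $F_0'$ is monochromatic if nonempty. For the successive differences, a short set-theoretic computation gives $F_{i+1}' - F_i' = (F_{i+1} \cap F) - (F_i \cap F) = (F_{i+1} - F_i) \cap F$, which is again a subset of a monochromatic set and hence monochromatic whenever nonempty. Since the colouring of $(M;c)|F$ is the restriction $c|F$, these sets remain monochromatic with respect to the colouring of $M|F$. Thus the chain $F_0' \subseteq \dotsc \subseteq F_k'$ witnesses that $(M;c)|F$ is a target.

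There is no real obstacle here; the only minor points requiring care are confirming that $F_0 \cap F$ is indeed the closure of $\es$ in $M|F$ — so that the first flat of the new chain has the required form — and noting that passing to the restriction does not change which subsets are monochromatic. Both follow immediately from the definitions, which is why the proof is routine.
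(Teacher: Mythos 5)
Your proof is correct and is exactly the routine argument the paper has in mind (the paper explicitly omits this proof as trivial): intersecting the witnessing chain with $F$ yields a chain of flats of $M|F$ from $\cl_{M|F}(\es) = \cl_M(\es)\cap F$ to $F$ whose successive differences $(F_{i+1}-F_i)\cap F$ are subsets of monochromatic sets, hence monochromatic. Nothing further is needed.
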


\begin{lemma}\label{targetcomp}
  If $(M; c)$ is a target and $d$ is a function whose domain contains $[c]$, 
  then $(M; d \circ c)$ is a target.
\end{lemma}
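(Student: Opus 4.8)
The plan is to reuse the very chain of flats that already witnesses that $(M; c)$ is a target. By definition, there are flats $\cl_M(\es) = F_0 \subseteq F_1 \subseteq \dotsc \subseteq F_k = E(M)$ of $M$ such that every nonempty set of the form $F_0$ or $F_{i+1} - F_i$ is $c$-monochromatic. I claim that this same chain shows $(M; d \circ c)$ is a target.

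First I would record the elementary observation that $c$-monochromaticity is preserved under composition with $d$: if $Y \subseteq E(M)$ is $c$-monochromatic, say $c(y) = \kappa$ for all $y \in Y$ with $\kappa \in [c]$, then since the domain of $d$ contains $[c]$, the composite $d \circ c$ is defined on $Y$ and takes the constant value $d(\kappa)$ there, so $Y$ is $(d \circ c)$-monochromatic. Applying this to each nonempty set among $F_0$ and the differences $F_{i+1} - F_i$ gives precisely the condition needed for $\cl_M(\es) = F_0 \subseteq \dotsc \subseteq F_k = E(M)$ to witness that $(M; d \circ c)$ is a target.

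The only point deserving a remark is that $d \circ c$ is a genuine colouring of $E(M)$, i.e.\ that it is defined everywhere; this is immediate since $[c] = c(E(M))$ is contained in the domain of $d$. There is no real obstacle here: the statement follows by unwinding the definition of a target and using that postcomposing with $d$ cannot turn a constant function into a non-constant one. (One could also note the parallel with Lemma~\ref{restricttarget}, where restricting a witnessing chain of flats to a flat $F$ again produces a witnessing chain, by $\cl_{M|F}$ of the intersections; but here no such bookkeeping is needed.)
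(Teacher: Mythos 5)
Your proof is correct and is exactly the routine argument the paper has in mind (the paper omits the proof as trivial): the witnessing chain of flats for $c$ still works for $d \circ c$ because postcomposition with $d$ preserves monochromaticity. Nothing further is needed.
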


\begin{lemma}\label{targetifflj}
  A connected, modular coloured matroid $(M;c)$ is a target if and only if $c$ is a lift-join
  of a family of monochromatic flats of $M$. 
\end{lemma}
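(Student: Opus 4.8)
The plan is to prove both directions by induction on $r(M)$, using the chain of flats on one side and the sequence of skew flats in a lift-join on the other. For the forward direction, suppose $(M;c)$ is a target with chain $\cl_M(\es) = F_0 \subseteq F_1 \subseteq \dotsc \subseteq F_k = E(M)$, chosen (as in the discussion preceding the lemma) so that the containments are strict and each $F_i - F_{i-1}$ is monochromatic. I would induct on $k$. If $k \le 1$ then $c$ is monochromatic (a single monochromatic flat, a trivial lift-join). If $k \ge 2$, then $F_{k-1}$ is a proper flat; let $G_k$ be a complement of $F_{k-1}$ in $M$, so $r_M(G_k) > 0$ and $F_{k-1}, G_k$ are complementary. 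The key observation is that $F_{k-1}$ is a \emph{decomposer} of $(M;c)$: for any $e \in E(M) - F_{k-1}$ we have $\cl_M(F_{k-1} \cup \{e\}) - F_{k-1} \subseteq E(M) - F_{k-1} = F_k - F_{k-1}$, which is monochromatic. Since a decomposer's colouring on a complement is automatically simple (here $G_k$ is a subset of the monochromatic set $F_k - F_{k-1}$, so $c|G_k$ is constant, hence simple), Lemma~\ref{liftjoiniff} gives $c = (c|F_{k-1}) \otimes (c|G_k)$. Now $(M;c)|F_{k-1}$ is a target (Lemma~\ref{restricttarget}) with a shorter chain, so by induction it is a lift-join $\bigotimes_{i=1}^{t} (c|H_i)$ of monochromatic flats, and $c|G_k$ is itself a monochromatic flat; by associativity (Lemma~\ref{assoc}) we get $c = (c|H_1) \otimes \dotsc \otimes (c|H_t) \otimes (c|G_k)$, as required. (One should check $r(M|F_{k-1}) < r(M)$, which holds since $G_k$ has positive rank.)

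For the converse, suppose $c = \bigotimes_{i=1}^t (c|F_i)$ where $F_1, \dotsc, F_t$ are mutually skew and each $c|F_i$ is constant, say with colour value making $F_i$ monochromatic. I would induct on $t$. For $t \le 1$ the colouring is monochromatic and $(M;c)$ is trivially a target. For $t \ge 2$, set $F^- = \cl_M(F_1 \cup \dotsc \cup F_{t-1})$; by associativity $c = (c|F^-) \otimes (c|F_t)$ where $c|F^- = \bigotimes_{i=1}^{t-1}(c|F_i)$ is, by induction applied to $(M;c)|F^-$, a target, with chain $\es = G_0 \subseteq G_1 \subseteq \dotsc \subseteq G_m = F^-$ whose successive differences are monochromatic. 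By Lemma~\ref{liftjoinbasic}(\ref{ljdecomposer}), $F^-$ is a decomposer of $(M;c)$ (noting $r_M(F_t) > 0$), so $E(M) - F^- = \cl_M(F^- \cup F_t) - F^-$ is monochromatic. Therefore the chain $G_0 \subseteq G_1 \subseteq \dotsc \subseteq G_m = F^- \subseteq E(M)$ witnesses that $(M;c)$ is a target: each $G_{j} - G_{j-1}$ is monochromatic by the inductive hypothesis, and the final difference $E(M) - F^-$ is monochromatic as just argued.

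The only genuinely substantive ingredient is the identification of the top flat in the chain (respectively the closure of the initial skew flats) with a \emph{decomposer}, at which point Lemma~\ref{liftjoiniff} and Lemma~\ref{liftjoinbasic}(\ref{ljdecomposer}) convert between the two descriptions and associativity (Lemma~\ref{assoc}) lets the induction go through cleanly. I expect the main obstacle to be purely bookkeeping: verifying that the simplicity hypotheses needed to even write down the iterated lift-joins are satisfied (each $c|F_i$ for $i \ge 2$ constant hence simple; $c|G_k$ constant hence simple in the forward direction), and confirming the rank strictly drops so the induction is well-founded. These are all immediate from the fact that a monochromatic set has simple colouring and that a decomposer forces its complement to have positive rank. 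No new lemmas beyond those already in the excerpt should be needed.
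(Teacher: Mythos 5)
Your proposal is correct and takes essentially the same route as the paper: both directions rest on the observation that a monochromatic difference in the chain makes the lower flat a decomposer, together with Lemma~\ref{liftjoiniff} to pass between decomposers and lift-joins and associativity to assemble the pieces. The only difference is organizational---you peel off the top layer of the chain and recurse, whereas the paper iterates up the chain from the bottom, and your converse spells out the induction the paper calls immediate.
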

\begin{proof}
  Suppose that $\es = F_0 \subseteq \dotsc F_k = E(M)$ are flats certifying that $(M;c)$ 
  is a target. For each $1 \le i \le k$, since $F_i - F_{i-1}$ is monochromatic, 
  the flat $F_{i-1}$ is a decomposer of $M | F_i$, so by Lemma~\ref{liftjoiniff},
  there is a complement $K_i$ of $F_{i-1}$ in $M | F_i$ for which $c | F_i = (c | F_{i-1}) \ls (c | K_i)$.
  It follows easily by induction on $t$ that $c | F_t = \bigotimes_{i=1}^t (c | K_i)$ for all $1 \le t \le k$.
  Setting $k = t$ gives that $c = \bigotimes_{i=1}^k (c | K_i)$, as required. 

  Conversely, suppose that $c = \bigotimes_{i=1}^k (c | K_i)$, where 
  $K_1, \dotsc, K_k$ are mutually skew monochromatic flats of $M$ with spanning union.
  It is immediate that the collection of flats $F_i : 0 \le i \le k$ defined by 
  $F_i = \cl\left(\bigcup_{j = 1}^i K_j \right)$ certifies that $(M; c)$ is a target. 
\end{proof}

The next lemma relates targets to decomposers. 
\begin{lemma}\label{targetdecomposer}
  Let $F$ be a decomposer of a modular coloured matroid $(M; c)$, 
  and $F'$ be a complement of $F$ in $M$. 
  If $(M;c) | F$ and $(M; c) | F'$ are targets, then $(M; c)$ is a target. 
\end{lemma}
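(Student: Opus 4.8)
The natural approach is to combine a target decomposition of the restriction to $F$ with one for the restriction to $F'$, using the decomposer structure to glue them. First I would invoke Lemma~\ref{liftjoiniff}: since $F$ is a decomposer of $(M;c)$ and $F'$ is a complement of $F$, the colouring $c|F'$ is simple and $c = (c|F) \otimes (c|F')$. Next, since $(M;c)|F$ is a target, apply Lemma~\ref{targetifflj} to write $c|F = \bigotimes_{i=1}^{k}(c|K_i)$ for mutually skew monochromatic flats $K_1,\dotsc,K_k$ of $M|F$ with spanning union in $M|F$; similarly write $c|F' = \bigotimes_{j=1}^{m}(c|K'_j)$ for mutually skew monochromatic flats $K'_1,\dotsc,K'_m$ of $M|F'$ with spanning union in $M|F'$.

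The key step is then to check that the concatenated sequence $K_1,\dotsc,K_k,K'_1,\dotsc,K'_m$ consists of mutually skew monochromatic flats of $M$ whose union spans $M$. Monochromaticity is inherited. For mutual skewness, observe that each $K_i \subseteq F$ and each $K'_j \subseteq F'$, and that $F$ and $F'$ are skew (being complementary in the modular matroid $M$), so $r_M\!\left(\bigcup_i K_i \cup \bigcup_j K'_j\right) = r_M\!\left(\bigcup_i K_i\right) + r_M\!\left(\bigcup_j K'_j\right) = \sum_i r_M(K_i) + \sum_j r_M(K'_j)$, where the first equality uses skewness of $F$ and $F'$ together with $\bigcup_i K_i \subseteq F$, $\bigcup_j K'_j \subseteq F'$, and the inner equalities use the mutual skewness within each family. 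The spanning condition follows because $\bigcup_i K_i$ spans $F$, $\bigcup_j K'_j$ spans $F'$, and $F \cup F'$ is spanning in $M$. Then by associativity of lift-joins (Lemma~\ref{assoc}), $\bigotimes_{i=1}^{k}(c|K_i) \otimes \bigotimes_{j=1}^{m}(c|K'_j) = (c|F) \otimes (c|F') = c$, so $c$ is a lift-join of monochromatic flats of $M$, and Lemma~\ref{targetifflj} gives that $(M;c)$ is a target.

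\textbf{Main obstacle.} The only point requiring care is the applicability of Lemma~\ref{targetifflj} and Lemma~\ref{assoc}, which presuppose the relevant matroids are connected and modular; here one uses that $F$ and $F'$ are flats of the connected modular matroid $M$, so $M|F$ and $M|F'$ are modular (Lemma~\ref{modularconnectedcontract} or the preceding lemma), and that the lift-join $c = (c|F)\otimes(c|F')$ is taken inside the connected modular $M$ itself — so no connectivity hypothesis is needed at the top level beyond what is given. A second small subtlety is that Lemma~\ref{targetifflj} as stated requires $(M;c)$ connected and modular, which we have by hypothesis, while its application to $M|F$ and $M|F'$ requires those restrictions to be connected; if $F$ or $F'$ is empty or a single point this is automatic, and otherwise one appeals to Lemma~\ref{modularconnectedcontract}. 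I would flag that the degenerate cases $r_M(F) = 0$ or $r_M(F') = 0$ are trivial: if $F' = \cl_M(\es)$ then $F = E(M)$ and the statement reduces to $(M;c)|F$ being a target, and similarly if $F$ is trivial. Apart from these bookkeeping checks, the proof is a direct assembly of Lemmas~\ref{liftjoiniff}, \ref{targetifflj}, and \ref{assoc}, so I do not expect a genuine difficulty.
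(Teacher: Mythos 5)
Your proposal is correct and follows essentially the same route as the paper's proof: decompose $c|F$ and $c|F'$ into lift-joins of monochromatic flats via Lemma~\ref{targetifflj}, use Lemma~\ref{liftjoiniff} to get $c = (c|F)\otimes(c|F')$, and conclude by associativity and Lemma~\ref{targetifflj} again. Your extra bookkeeping on mutual skewness, spanning, and connectivity of the restrictions is a more careful rendering of steps the paper leaves implicit.
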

\begin{proof}
    
    By hypothesis and Lemma~\ref{targetifflj}, we know that $c | F$ is a 
    lift-join of a family of monochromatic flats of $M | F$, 
    and likewise for $c | F'$ and $M | F'$. 
    By Lemma~\ref{liftjoiniff}, we have $c = (c | F) \ls (c | F')$.
    The result now follows from associativity and Lemma~\ref{targetifflj} applied to $(M;  c)$.
\end{proof}

\begin{lemma}\label{linetarget}
  A rank-two coloured matroid $(M; c)$ is a target if and only if $M$ has a point $P$
  for which $P$ and $E(M) - P$ are monochromatic. 
\end{lemma}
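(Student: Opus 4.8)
The plan is to prove both directions directly from the definition of a target, using that a rank-two matroid has only a trivial chain structure available. First I would handle the forward direction. Suppose $(M;c)$ is a target, witnessed by flats $\cl_M(\es) = F_0 \subseteq F_1 \subseteq \dotsc \subseteq F_k = E(M)$ with every nonempty set of the form $F_0$ or $F_{i+1}-F_i$ monochromatic. Since $r(M)=2$, the only flats of $M$ are $\es$, the points, and $E(M)$; moreover a loopless rank-two matroid is connected when it has at least two points, but here we cannot assume $M$ is connected, so I should be slightly careful. In any case, the strictly increasing refinement of the chain (discussed just before Lemma~\ref{restricttarget}) has length at most $r(M)=2$ if we discard repetitions, except possibly for the bottom flat $F_0$ which can be a nonempty set of loops. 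I would argue: if $M$ has a loop then $F_0 \ne \es$ is monochromatic, and the remaining part of the chain covers the simplification; in the loopless case $F_0 = \es$. Either way, after passing to the minimal chain, either $E(M)$ is itself monochromatic (take $P$ any point — then $P$ and $E(M)-P$ are both monochromatic), or the chain is $\es \subseteq P \subseteq E(M)$ for some point $P$ (equivalently $F_0 \subseteq F_1 = P \subseteq F_2 = E(M)$), and then $P = F_1 - F_0$ and $E(M)-P = F_2 - F_1$ are exactly the required monochromatic sets.

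For the converse, suppose $M$ has a point $P$ such that $P$ and $E(M)-P$ are monochromatic. I would simply exhibit the chain $F_0 = \cl_M(\es)$, $F_1 = P$, $F_2 = E(M) = \cl_M(P \cup \{e\})$ for any $e \notin P$ (this is a flat since it equals $E(M)$). Then $F_1 - F_0$ is either $P$ (monochromatic by hypothesis) or $P$ minus the set of loops; in the loopless case $F_0 = \es$ and $F_1 - F_0 = P$; if there are loops, $F_0$ is the loop set, which is monochromatic because it is contained in $P$ by the flat axioms — wait, loops lie in every flat, so $F_0 \subseteq P$, hence $F_0$ and $P - F_0 = F_1 - F_0$ are both monochromatic as subsets of the monochromatic set $P$. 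Finally $F_2 - F_1 = E(M) - P$ is monochromatic by hypothesis. Thus $(M;c)$ is a target.

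The only mild subtlety — and the step I expect to require the most care — is the bookkeeping around loops and around whether $M$ is required to be connected or simple; the lemma statement imposes neither, so one must make sure the chain $\es \subseteq P \subseteq E(M)$ really consists of flats of $M$ in all cases (it does: $\es$'s closure is the loop set, $P$ is a rank-one flat, and $E(M)$ is the top flat) and that the monochromatic conditions degrade gracefully when some of these sets coincide or are empty. Everything else is a direct unwinding of the definition of a target in rank two, where the poset of flats is too small to allow any nontrivial chain beyond the one through a single point.
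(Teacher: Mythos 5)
Your proof takes the same route as the paper's: in rank two a strict chain of flats from $\cl_M(\es)$ to $E(M)$ has length at most two, so a target chain either stops at $k=1$ or passes through a single point $P$, and conversely the chain $\cl_M(\es) \subseteq P \subseteq E(M)$ certifies the target property; your converse direction is complete and correct even with loops. The one place the forward direction goes astray is exactly the loop bookkeeping you flagged: when $F_0 = \cl_M(\es) \ne \es$, the set $F_1 - F_0$ is not a point (it is a point minus the loop set), and the point $F_1$ itself need not be monochromatic if the loops carry a different colour --- indeed a rank-two matroid with one red loop and all other elements blue is a target with no monochromatic point, so the ``only if'' direction is literally false in the presence of differently coloured loops, and your fallback cases (``either $E(M)$ is monochromatic or the chain passes through a point $P = F_1 - F_0$'') do not cover this. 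This is a shared defect rather than one specific to your argument: the paper's own proof makes the same silent looplessness assumption (its step ``if $k=1$ then $M$ is monochromatic'' fails on the same example), and the lemma is only ever invoked for restrictions of connected, hence loopless, matroids, where both proofs are sound.
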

\begin{proof}
  If $M$ has such a point $P$, the flats $\cl_M(\es) \subseteq P \subseteq E(M)$ certify that $(M ;c)$ is a target. 
  If $M$ is a target, let $F_0, \dotsc, F_k$ be flats of $M$ such that $1 \le k \le 2$ and 
  $\cl_M(\es) = F_0 \subset \dotsc \subset F_k = E(M)$ while each set $F_{i+1} - F_i$ is monochromatic.
  If $k = 1$ then $M$ is monochromatic and any point $P$ satisfies the lemma. 
  If $k = 2$, then $F_1$ is a point satisfying the lemma.  
\end{proof}

\begin{lemma}\label{decomposetarget}
  If $(M; c)$ is a modular target for which $|c| \ge 2$, 
  then there are distinct colours $\kappa,\lambda$ and a nontrivial decomposer $F$ of $M$
  such that $E(M)-F$ is monochromatic with colour $\kappa$, 
  and $M|F$ is a target that has a monochromatic basis of colour $\lambda$. 
\end{lemma}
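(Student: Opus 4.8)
The plan is to read $F$ off a minimal target chain by peeling off its top layer. Assume throughout that $M$ is connected; this is the case needed for the applications, and the argument below uses it in an essential way. Since $|c|\ge 2$, the matroid $M$ has at least two elements, so as $M$ is connected it is loopless and $\cl_M(\es)=\es$. As in the discussion preceding the lemma, fix flats $\es=F_0\subset F_1\subset\dotsc\subset F_k=E(M)$ with all containments strict, such that each $F_i-F_{i-1}$ is monochromatic and the colours of $F_i-F_{i-1}$ and $F_{i+1}-F_i$ differ for $1\le i<k$; since $|c|\ge 2$ and $F_0=\es$ we must have $k\ge 2$, as otherwise $E(M)=F_1-F_0$ would be monochromatic. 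Put $F:=F_{k-1}$, let $\kappa$ be the colour of $E(M)-F=F_k-F_{k-1}$, and let $\lambda$ be the colour of $F_{k-1}-F_{k-2}$; then $\kappa\ne\lambda$ by the choice of chain.

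Most of the required properties of $F$ are then immediate, and need neither modularity nor connectedness. The flat $F$ is proper since $F_{k-1}\subsetneq F_k=E(M)$, and has rank at least one since $\es=F_0\subsetneq F_1\subseteq F_{k-1}$ while $M$ is loopless, so $F$ is nontrivial. For each $e\in E(M)-F$ we have $F\cup\{e\}\subseteq F_k$, and since $F_k$ is a flat this gives $\cl_M(F\cup\{e\})-F\subseteq F_k-F_{k-1}$, which is monochromatic; hence $F$ is a decomposer of $(M;c)$ and $E(M)-F$ is monochromatic of colour $\kappa$. Finally, the chain $F_0\subset\dotsc\subset F_{k-1}$ witnesses that $(M;c)|F$ is a target (this also follows from Lemma~\ref{restricttarget}).

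It remains to produce a monochromatic basis of colour $\lambda$ in $(M;c)|F$, and for this it suffices to show that $F_{k-1}-F_{k-2}$ spans $F_{k-1}$: that set is monochromatic of colour $\lambda$, so any maximal independent subset of it is then a basis of $M|F$, monochromatic of colour $\lambda$. Here I would use that $M|F_{k-1}$ is connected and modular, by Lemma~\ref{modularconnectedcontract}, since $F_{k-1}$ is a nonempty proper flat of the connected modular matroid $M$. If $F_{k-1}-F_{k-2}$ did not span $F_{k-1}$, it would lie in some hyperplane $H$ of $M|F_{k-1}$, and then the cocircuit $F_{k-1}-H$ would satisfy $F_{k-1}-H\subseteq F_{k-1}-(F_{k-1}-F_{k-2})=F_{k-2}$; but $F_{k-1}-H$ spans $M|F_{k-1}$ by Lemma~\ref{modularround}, giving $F_{k-1}=\cl_M(F_{k-1}-H)\subseteq\cl_M(F_{k-2})=F_{k-2}$, contradicting $F_{k-2}\subsetneq F_{k-1}$.

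The one genuinely substantive step is the last one -- that the top layer of a target chain spans the flat it sits in -- and this is precisely where connectedness enters, through the roundness of connected modular matroids (Lemma~\ref{modularround}); everything else is bookkeeping with the defining chain of a target. So I expect no real difficulty beyond making sure the connected, modular toolkit (Lemmas~\ref{modularconnectedcontract} and~\ref{modularround}) is genuinely available, i.e.\ that we are working inside a flat of a connected modular matroid, which we are. (If $M$ is not assumed connected the statement can fail -- for instance for a colouring of three coloops by three distinct colours -- so some hypothesis of this kind is needed.)
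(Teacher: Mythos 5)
Your proof is correct and follows essentially the same route as the paper: take $F=F_{k-1}$ for the top flat of a reduced target chain, note the decomposer and monochromaticity claims are bookkeeping, and use roundness of connected modular matroids (Lemmas~\ref{modularconnectedcontract} and~\ref{modularround}) to show the monochromatic layer $F_{k-1}-F_{k-2}$ spans $F$, hence contains a monochromatic basis of colour $\lambda$ --- the paper phrases this step as ``$F_{k-1}-F_{k-2}$ is codependent in $M|F_{k-1}$, so contains a cocircuit, which is spanning'', whereas you run the contrapositive through a hyperplane, which is the same idea. Your explicit connectedness hypothesis is a legitimate catch rather than a deviation: the lemma as stated omits it, but the paper's own proof also applies Lemma~\ref{modularround} to $M|F_{k-1}$, which needs connectedness of that restriction (via Lemma~\ref{modularconnectedcontract}); your three-coloop, three-colour example correctly shows some such hypothesis is necessary, and in the places the lemma is used (Theorems~\ref{targetiffplane} and~\ref{targetiffline}) the coloured matroid supplied is indeed connected.
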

\begin{proof}
  Let $k$ be minimal so that there are flats $F_0, \dotsc, F_k$ of $M$ with 
  $\cl_M(\es) = F_0 \subset \dotsc \subset F_k = E(M)$ and each $F_{i+1} - F_i$ monochromatic. 
  Since $|c| \ge 2$ we have $k \ge 2$. 
  Let $\{\kappa\} = c(E(M)-F_{k-1})$ and $\{\lambda\} = c(F_{k-1}-F_{k-2})$. 
  If $\kappa = \lambda$, we could remove $F_{k-1}$ from the sequence, which contradicts minimality,
  so $\kappa \ne \lambda$. 
  Since $F_{k-2}$ is a proper subflat of $F_{k-1}$,
  the set $F_{k-1} - F_{k-2}$ is codependent in $M | F_{k-1}$, 
  so it contains a cocircuit, which by Lemma~\ref{modularround} is spanning. 
  It therefore contains a monochromatic basis of $F_{k-1}$ with colour $\lambda$, 
  so $F = F_{k-1}$ satisfies the lemma.
\end{proof}

\begin{lemma}\label{minimalnottarget}
  Let $(M;c)$ be a connected, modular coloured matroid with $r(M) \ge 3$
  that is not a target, but for which $(M;c)|H$ is a target for every hyperplane $H$ of $M$. Then 
  \begin{enumerate}[(i)]
    \item\label{subtarget} $(M; c)| F$ is a target for every proper flat $F$ of $M$, 
    \item\label{nodecomp} $M$ has no nontrivial decomposer, and
    \item\label{twocolour} $|c|=2$.
  \end{enumerate}
\end{lemma}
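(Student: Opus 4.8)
The plan is to prove the three statements in order: (i) and (ii) are short deductions from the lemmas already developed for targets and decomposers, whereas (iii) is where the substantial Theorem~\ref{main1} gets invoked.

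For (i): every proper flat $F$ of $M$ is contained in some hyperplane $H$ of $M$ (extend a basis of $F$ to a basis $B$ of $M$, pick an element $e \in B$ outside the chosen basis of $F$, and take $\cl_M(B - e)$). Since $(M;c)|H$ is a target by hypothesis and $F$ is a flat of $M|H$, Lemma~\ref{restricttarget} gives that $(M;c)|F = ((M;c)|H)|F$ is a target.

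For (ii): suppose toward a contradiction that $M$ has a nontrivial decomposer $F$, and let $F'$ be a complement of $F$ in $M$. Being a nontrivial decomposer, $F$ is a proper flat with $r_M(F) \ge 1$, so $r_M(F') = r(M) - r_M(F)$ lies strictly between $0$ and $r(M)$; hence $F'$ is also a proper flat. By part (i), both $(M;c)|F$ and $(M;c)|F'$ are targets, so Lemma~\ref{targetdecomposer} forces $(M;c)$ to be a target, contrary to hypothesis.

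For (iii): first, $|c| \ge 2$, since a coloured matroid using at most one colour is trivially a target (take the chain $\cl_M(\es) \subseteq E(M)$). Now suppose for contradiction that $|c| \ge 3$. I claim $(M;c)$ has no rainbow triangle: a rainbow triangle $T$ would span a rank-two flat $\cl_M(T)$, which is proper since $r(M) \ge 3$ and hence contained in some hyperplane $H$; then $(M;c)|H$, which is a connected modular target by Lemma~\ref{modularconnectedcontract}, would contain a rainbow triangle. But a connected modular target's colouring is a lift-join of monochromatic flats by Lemma~\ref{targetifflj}, and iterating Lemma~\ref{ljrainbow} a rainbow triangle in such a lift-join would produce one inside a monochromatic flat, which is absurd. (Alternatively, the three points of $T$ would lie in three distinct monochromatic layers $F_{i+1} - F_i$ of the target chain, and the two lying in the earlier layers span a flat avoiding the third, contradicting that $T$ is a circuit.) Thus $(M;c)$ is connected, modular and rainbow-triangle-free with $r(M) \ge 2$ and $|c| \ge 3$, so Theorem~\ref{main1} yields a nonempty---hence, since $M$ is loopless, nontrivial---decomposer of $(M;c)$, contradicting part (ii). Therefore $|c| \le 2$, and combined with $|c| \ge 2$ this gives $|c| = 2$.

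The crux is part (iii): the key point is that the hypothesis, together with the fact that targets are rainbow-triangle-free, forces $(M;c)$ to be globally rainbow-triangle-free, which is exactly the hypothesis needed to apply the substantial Theorem~\ref{main1}; parts (i) and (ii) are routine bookkeeping with Lemmas~\ref{restricttarget} and~\ref{targetdecomposer}.
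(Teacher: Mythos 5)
Your proposal is correct and follows essentially the same route as the paper: (i) via Lemma~\ref{restricttarget}, (ii) via complements and Lemma~\ref{targetdecomposer}, and (iii) by showing $(M;c)$ is rainbow-triangle-free and invoking Theorem~\ref{main1} to contradict (ii). The only cosmetic difference is in excluding rainbow triangles: the paper applies Lemma~\ref{linetarget} to the rank-two flat $\cl_M(T)$ (a proper flat, hence a target by (i), yet three-coloured), whereas you argue more generally that any target is rainbow-triangle-free — both arguments are valid.
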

\begin{proof}
  Since every proper subflat is contained in a hyperplane, (\ref{subtarget}) is immediate by Lemma~\ref{restricttarget}. 

  To see (\ref{nodecomp}), let  $F$ be a nontrivial decomposer of $M$, and 
  $F'$ be a complement of $F$ in $M$.
  We have $r_M(F') = r(M) - r_M(F) < r(M)$, so both $F$ and $F'$ are proper subflats and are therefore
  targets by (\ref{subtarget}). It follows from Lemma~\ref{targetdecomposer} that $M$ is a target, a contradiction.

  Since $M$ is not a target, we have $|c| \ge 2$. 
  If $M$ has a rainbow triangle $T$, then $(M; c) | \cl_M(T)$ is not a target by Lemma~\ref{linetarget}, 
  contrary to~(\ref{subtarget}). So $M$ has no rainbow triangle. 
  If $|c| \ge 3$, it thus follows from Theorem~\ref{main1} that $(M;c)$ has a nontrivial decomposer $F$, 
  contradicting (\ref{nodecomp}). 
\end{proof}

This first theorem shows that a modular coloured matroid whose planes are targets is a target. 
In the special case of two-coloured binary projective geometries,
this was essentially proved in [\ref{nn}]. 

\begin{theorem}\label{targetiffplane}
  Let $(M ;c)$ be a connected modular coloured matroid with $r(M) \ge 3$.
  Then $(M ;c)$ is a target if and only if $(M; c) | P$ is a target for every plane $P$ of $M$. 
\end{theorem}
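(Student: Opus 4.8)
The forward direction is immediate from Lemma~\ref{restricttarget}, since every plane is a flat of $M$. For the converse, suppose for a contradiction that every plane of $M$ is a target but $(M;c)$ is not, and choose such a counterexample $(M;c)$ with $r(M)$ minimum. Since $r(M)\ge 3$ and the hypothesis is inherited by restriction to flats of rank at least three (each plane of a flat $F$ is a plane of $M$), minimality forces $(M;c)|F$ to be a target for every proper flat $F$ with $r_M(F)\ge 3$; and for $r_M(F)\le 2$ we can invoke Lemma~\ref{linetarget} together with the plane hypothesis applied to any plane containing $F$. In particular $(M;c)|H$ is a target for every hyperplane $H$, so Lemma~\ref{minimalnottarget} applies: $M$ has no nontrivial decomposer and $|c|=2$.

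\textbf{The two-coloured reduction.} The heart of the argument is thus to show that a connected modular two-coloured matroid of rank at least three, all of whose planes are targets, is itself a target. Here I would lean on the structure of rank-three targets from Lemma~\ref{linetarget} (or rather its rank-three consequences): in a plane $P$, being a target means either $P$ is monochromatic or there is a point $x$ with $P-x$ monochromatic of the other colour, or dually a line $\ell$ with $P-\ell$ monochromatic. Writing $A,B$ for the two colour classes in $M$, the plane condition says that in every plane, one colour class is either empty, a single point, a line, a line-minus-a-point, or the complement of one of these. The goal is to promote this to a global statement: one colour class of $M$ is of the form $E(M)-H$ for a hyperplane $H$, with $(M;c)|H$ again a target — equivalently, $H$ is a (nontrivial) decomposer with monochromatic complement, contradicting the conclusion of Lemma~\ref{minimalnottarget} that $M$ has no nontrivial decomposer. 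To locate such an $H$: pick a point $e$ of the minority colour, say $c(e)=\kappa$; in every plane through $e$, the set of $\kappa$-coloured points must (by the target classification, since $|c|=2$) be very restricted, and I would argue that the $\kappa$-class is contained in a hyperplane, using modularity to intersect planes and to propagate the "$\kappa$-points lie on a line of this plane" conclusions from plane to plane. Once the $\kappa$-class $X$ lies in a hyperplane $H$, modularity and the plane condition force $E(M)-H$ to be monochromatic (colour $\lambda\ne\kappa$) — each point outside $H$ together with a line in $H$ spans a plane whose target structure pins down its colour — so $H$ is a decomposer, and $(M;c)|H$ is a target by the inductive hypothesis (it is a proper flat of rank $r(M)-1\ge 2$, handled either by minimality if its rank is at least $3$, or directly by Lemma~\ref{linetarget} if $r(M)=3$). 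Then Lemma~\ref{targetdecomposer} gives that $(M;c)$ is a target, the desired contradiction.

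\textbf{Main obstacle.} The delicate point is the propagation step: showing that the minority colour class $X$ is confined to a single hyperplane, rather than being scattered so that in each individual plane it looks like a line but globally it is not flat. I expect this to require a careful argument using modularity of $M$ — for instance, taking two planes $P_1,P_2$ meeting in a line $\ell$, comparing the "$X$-supporting line" of $P_1$ and of $P_2$, and checking their interaction on $\ell$ — possibly together with a rank-four base case, since the classification in Lemma~\ref{minimalnottarget} already does the hard work of killing decomposers and extra colours, but does not directly produce the hyperplane. An alternative, cleaner route would be to avoid isolating a hyperplane explicitly and instead run the contradiction purely through decomposers: Lemma~\ref{minimalnottarget}(ii)--(iii) tells us $|c|=2$ and there is no nontrivial decomposer, so it suffices to show that a connected modular two-coloured matroid of rank $\ge 3$ with no nontrivial decomposer has a plane that is not a target, which by Lemma~\ref{linetarget}'s rank-three form means a plane in which neither colour class is flat-or-cofinite-in-a-line. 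This reframing makes the combinatorial content sharper and is likely where the real work lies; it is plausibly short once one exploits that "no decomposer" means every $\cl_M(F\cup\{e\})-F$ with $F$ a hyperplane is polychromatic, which is a strong local obstruction to the target structure.
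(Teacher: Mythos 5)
Your setup is fine and matches the paper's: the forward direction via Lemma~\ref{restricttarget}, a minimum-rank counterexample, the observation that all hyperplane restrictions are targets, and then Lemma~\ref{minimalnottarget} giving $|c|=2$ and no nontrivial decomposer. But from that point on you have a strategy, not a proof. The entire content of the theorem is concentrated in the step you flag as the ``main obstacle'': promoting the plane-by-plane target structure to a global decomposer. You never carry this out, and the intermediate statement you propose to prove (that the minority colour class is confined to a single hyperplane whose complement is monochromatic) is not justified and is not obviously the right target. In a two-coloured rank-three target the chain can be point--line--plane with alternating colours, so the colour class that is ``confined'' in a given plane can be a point together with the complement of a line rather than a subset of a line, and which colour plays that role can vary from plane to plane; your sketch of comparing ``$X$-supporting lines'' of two planes through a common line therefore does not get off the ground as stated. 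The alternative reframing in your last paragraph (show that no-decomposer plus two colours forces a non-target plane) is just a restatement of what must be proved, with ``plausibly short'' standing in for the argument.

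For comparison, the paper closes this gap quite differently: it does not try to locate a hyperplane containing a colour class. Instead it picks a bichromatic hyperplane $H$, applies Lemma~\ref{decomposetarget} to the target $(M;c)|H$ to get a nontrivial decomposer $F$ of $(M;c)|H$ with a $\lambda$-monochromatic basis $B$ and $c(H-F)=\{\kappa\}$, then uses the fact that neither $F$ nor $F\cup A$ can be a decomposer of $M$ (Lemma~\ref{minimalnottarget}) together with Lemma~\ref{basispartition} to manufacture a single plane $P$ that contains a monochromatic spanning basis in \emph{each} colour. That is immediately incompatible with $(M;c)|P$ being a target, since a target has a colour class contained in a proper flat, hence non-spanning. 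If you want to salvage your approach, you would need to supply the missing propagation argument in full; as it stands, the proposal does not prove the converse direction.
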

\begin{proof}
  Let $(M; c)$ be a counterexample of minimum rank, 
  so $(M;c)$ is not a target, but restricting to any plane gives a target, and hence $r(M) \ge 4$. 
  By this minimality, the coloured matroid $(M;c) |H$ is a target
  for each hyperplane $H$ of $M$, so $(M;c)$ satisfies the hypotheses of Lemma~\ref{minimalnottarget}. 
  
  By Lemma~\ref{minimalnottarget}(\ref{twocolour}), we have $|c| = 2$.
  Let $H$ be a hyperplane of $M$ for which $|c(H)| = 2$, noting that $(M;c)|H$ is a target.
  By the choice of $H$ and Lemma~\ref{decomposetarget}, there is a nontrivial decomposer $F$ 
  of $(M ; c)|H$ and a basis $B$ of $F$ such that $c(B) = \{\lambda\}$ and $c(H-F) = \{\kappa\}$ 
  for distinct colours $\kappa, \lambda$. 

  Since $F$ is not a decomposer of $M$, there exists $e \in E(M)-F$ such that the set $A = \cl_M(F \cup \{e\})-F$
  satisfies $c(A) = \{\kappa,\lambda\}$. 
  Note that $F \cup A$ is a flat, and $F$ is a hyperplane of the matroid $M|(F \cup A)$.
  By Lemma~\ref{basispartition} applied to this matroid with the basis $B$ of $F$, we see that $M$ has a triangle $\{x,y,b\}$
  with $x,y \in \cl_M(F \cup \{e\}) - F$ and $b \in B$, such that $c(x) = \kappa$ and $c(y) = \lambda$. 

  Since $F \cup A$ is not a decomposer of $M$ by Lemma~\ref{minimalnottarget}(\ref{nodecomp}), 
  there exists $z \in E(M) - (F \cup A)$ with $c(z) = \lambda$.
  Let $P = \cl_M(\{x,y,z\})$. Since $z \notin F \cup A$ we have $z \in P- \cl_M(\{y,b\})$,
  so $\{y,b,z\}$ is a monochromatic basis of $P$ with colour $\lambda$. 

  Since $x \in P - H$, the set $H \cup P$ is spanning, so by modularity $r_M(H \cap P) = \sqcap_M(P,H) = 2$, 
  and the flat $H \cap P$ is a line. If $H \cap P \subseteq F$, then $(H \cap P) \cup \{x\}$ is a rank-$3$ subset 
  of $P$ contained in the flat $F \cup A$, and so $P \subseteq F \cup A$, which contradicts $z \in P$. 
  Therefore $H \cap P$ contains at most one element of $F$, and $c((H \cap P) - F) \subseteq c(H-F) = \{\kappa\}$, 
  so (since $M|(H \cap P)$ is connected), the line $H \cap P$ contains at least two elements coloured $\kappa$. 
  Now $x$ is a third element of $P$ coloured $\kappa$ outside this line, so $P$ has a monochromatic basis coloured $\kappa$. 

  We have just seen that $P$ has a monochromatic basis of each colour. 
  But $(M;c)|P$ is a target by Lemma~\ref{minimalnottarget} (\ref{subtarget}), 
  so $P$ has a proper subflat $F_0$ with $P-F_0$ monochromatic, 
  implying that some colour class of $(M;c)|P$ lies in $F_0$ so is nonspanning in $P$. This is a contradiction. 
\end{proof}

With an additional hypothesis that the lines are not too short, we can upgrade the previous theorem 
to be stated in terms of lines rather than planes. It is impossible to drop this extra hypothesis, 
since all rank-$2$ coloured binary projective geometries are targets, but not all coloured binary projective 
geometries are targets. 

The following theorem was also proved for $2$-colourings in [\ref{mo}, Theorem 2].
Our proof is also a little shorter, since it indirectly exploits Theorem~\ref{main1}. 

\begin{theorem}\label{targetiffline}
  Let $(M;c)$ be a connected modular coloured matroid having a line with at least four points. 
  Then $(M;c)$ is a target if and only if $(M;c)|L$ is a target for each line $L$ of $M$.
\end{theorem}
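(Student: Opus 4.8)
The forward implication is immediate from Lemma~\ref{restricttarget}, since every line is a flat. For the converse, suppose $(M;c)|L$ is a target for every line $L$ of $M$. The plan is to reduce to the case where $M$ is a projective plane via Theorem~\ref{targetiffplane}, and then settle that case by combining Lemma~\ref{minimalnottarget} with a short counting argument.

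First some reductions. Any two parallel elements of $M$ lie on a common line (as $M$ has a line, $r(M)\ge 2$), and each line is a target, so Lemma~\ref{linetarget} forces parallel elements to receive the same colour; hence $c$ is simple, and since both targethood and the hypothesis pass to the simplification, we may assume $M$ is simple. If $r(M)=2$ then $M$ is itself a line and $(M;c)$ is a target by hypothesis, so assume $r(M)\ge 3$. By Lemma~\ref{rankthreemodular} (when $r(M)=3$) and the classification of higher-rank modular matroids, all lines of $M$ have a common length $q+1$, and as some line has at least four points, $q\ge 3$. By Theorem~\ref{targetiffplane} (trivially so when $r(M)=3$, where $M$ itself is a plane) it suffices to show $(M;c)|P$ is a target for every plane $P$ of $M$; here $M|P$ is a connected modular projective plane of order $q\ge 3$ (Lemma~\ref{modularconnectedcontract}) all of whose lines are lines of $M$ and hence have target restrictions. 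So we may assume $M$ is itself a projective plane of order $q\ge 3$.

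Now suppose for contradiction that $(M;c)$ is not a target. Its hyperplanes are exactly its lines, all targets, so Lemma~\ref{minimalnottarget}(\ref{twocolour}) gives $|c|=2$. Let $X_1,X_2$ be the colour classes with $|X_1|\le|X_2|$; by Lemma~\ref{linetarget}, every line $L$ satisfies $\min(|L\cap X_1|,|L\cap X_2|)\le 1$, i.e.\ $|L\cap X_1|\in\{0,1,q,q+1\}$. The claim is that this forces $X_1$ to be a single point, a whole line, or a whole line with one point deleted; in each case a chain of flats $\es\subseteq F_1\subseteq F_2\subseteq E(M)$ has monochromatic successive differences (respectively $\es\subset\{x\}\subset E(M)$, $\es\subset L^*\subset E(M)$, $\es\subset\{w\}\subset L^*\subset E(M)$), so $(M;c)$ is a target, a contradiction. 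The case $|X_1|\le 1$ is clear. Otherwise let $L^*$ maximise $|L^*\cap X_1|$; since two points of $X_1$ span a line meeting $X_1$ in at least two points, $|L^*\cap X_1|\in\{q,q+1\}$. If $L^*\subseteq X_1$ and some $z\in X_1\setminus L^*$ existed, every line through $z$ would meet $X_1$ in at least two and hence at least $q$ points, so $|X_2|\le q+1<q+2\le|X_1|$, a contradiction; thus $X_1=L^*$. If $|L^*\cap X_1|=q$, write $\{w\}=L^*\cap X_2$ (so no line lies in $X_1$) and suppose some $z\in X_1\setminus L^*$ exists: for each of the $q$ points $u\in L^*\cap X_1$ the line $\cl_M(\{z,u\})$ meets $X_1$ in exactly $q$ points, of which $q-1$ lie off $L^*$, and double-counting these $q(q-1)$ incidences against the points of $X_1$ off $L^*$ (each such point other than $z$ lying on at most one such line) yields $|X_1\setminus L^*|\ge(q-1)^2$; hence $|X_1|\ge q^2-q+1$ and $|X_2|=q^2+q+1-|X_1|\le 2q$, so $|X_1|\le|X_2|$ forces $q^2-3q+1\le 0$, which fails for $q\ge 3$. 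Therefore $X_1\subseteq L^*$, i.e.\ $X_1=L^*\setminus\{w\}$, as claimed.

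The bulk of the work, and the main obstacle, is this last counting case: extracting the bound $|X_1\setminus L^*|\ge(q-1)^2$ from the incidence count and then, via $|X_1|\le|X_2|$, turning it into a contradiction that genuinely requires $q\ge 3$ — the argument really does break for $q=2$, where the conclusion of the theorem is false (cf.\ the remark following the statement). The reductions in the first two paragraphs, by contrast, are routine bookkeeping with the definitions of target, decomposer, and lift-join established earlier, together with the modular classification.
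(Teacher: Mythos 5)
Your proof is correct, and its skeleton matches the paper's: reduce to the case of a projective plane of order $q \ge 3$ via Theorem~\ref{targetiffplane}, invoke Lemma~\ref{minimalnottarget} to get $|c|=2$, and finish with an incidence count that uses $q \ge 3$. The endgame differs, though. The paper uses Lemma~\ref{minimalnottarget}(\ref{nodecomp}) to argue that \emph{both} colour classes must be spanning (otherwise their closures would be decomposers), extracts a basis from each, and shows each class has size at least $q^2-q+1$, so the two classes cannot fit in $q^2+q+1$ points when $q \ge 3$. You instead never use the no-decomposer conclusion: you take the smaller class $X_1$, observe that every line meets it in $0,1,q$ or $q+1$ points, and classify $X_1$ outright as a point, a line, or a line minus a point (each of which visibly yields a target chain, contradicting non-targethood), with the bound $|X_1\setminus L^*|\ge (q-1)^2$ and the inequality $|X_1|\le|X_2|$ doing the work in the hardest subcase. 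Both counts are sound; yours is slightly longer but yields more information (an exact description of the near-counterexample colourings and of where $q=2$ genuinely fails), while the paper's is marginally shorter because the decomposer lemma lets it treat the two colour classes symmetrically.
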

\begin{proof}
  Let $M$ be a minimum-sized counterexample. By Lemma~\ref{restricttarget}, it follows that $M$ is not a target, 
  but $(M;c)|L$ is a target for every line $L$, which implies that $r(M) \ge 3$. 
  By Lemma~\ref{targetiffplane}, there is a plane $P$ of $M$ such that $(M;c) |P$ is not a target. 
  Since $(M;c)|P$ is a counterexample, we must have $P = E(M)$ by minimality. 
  
  By the minimality in the choice of $M$, the coloured matroid $(M;c) |H$ is a target
  for each hyperplane $H$ of $M$, so $(M;c)$ satisfies the hypotheses of Lemma~\ref{minimalnottarget}. 
  By Lemma~\ref{minimalnottarget}(\ref{subtarget}), we see that each restriction to a point
  is a target, so each point is monochromatic. By the minimality in the choice of $M$, 
  it easily follows that $M$ is simple.

  Now $M$ is a simple, connected rank-three modular matroid that has a line with at least four elements, 
  so is a projective plane of order $q \ge 3$. 

  By Lemma~\ref{minimalnottarget}, we have $|c| = 2$; let $c(M) = [2]$, 
  and for each $i$ let $X_i$ be the set of points of colour $i$. 
  Since $(M;c)|L$ is a target for each line $L$, 
  by Lemma~\ref{linetarget} we see the following: 
  \begin{claim}\label{bigline}
    If $i \in [2]$ and $x \ne y \in X_i$, then $|X_i \cap \cl_M(\{x,y\})| \ge q$. 
  \end{claim}

  Fix $i \in [2]$. If $X_i$ is not spanning, then $E(M)-\cl_M(X_i)$ is nonempty 
  and monochromatic, so $\cl_M(X_i)$ is a decomposer of $(M;c)$, 
  contradicting Lemma~\ref{minimalnottarget}(\ref{nodecomp}). 
  So $X_i$ contains a basis $B_i = \{x_1,x_2,x_3\}$. 
  By~\ref{bigline}, the line $L = \cl_M(\{x_1,x_2\})$ satisfies $|L \cap X_i| \ge q$, 
  and for each $y \in L \cap X_i$, the line $L' = \cl_M(\{x_3,y\})$ satisfies $|L' \cap X_i| \ge q$. 
  The lines in the collection $\cL = \{\cl_M(\{x_3,y\}): y \in L \cap X_i\}$ 
  pairwise intersect only at $x_3$,  so
  \[|X_i| \ge 1 + \sum_{L' \in \cL}|(L'-x_3) \cap X_i| \ge 1 + q(q-1) = q^2-q+1.\]
  But this holds for both $X_1$ and $X_2$, so $q^2+q + 1 = |M| = |X_1| + |X_2| \ge 2(q^2-q+1)$. 
  This gives $q^2 + 1 \le 3q$, which contradicts $q \ge 3$. 
\end{proof}

Recall that, for $X \subseteq E(M)$, we write $\omega_M(X)$ for the rank of the largest flat of $M$ contained in $X$. 

\begin{lemma}\label{targetomega}
  Let $(M;c)$ be a target for which $M$ is connected and modular, 
  and for each $\lambda \in [c]$, let $X_\lambda$ be the colour class of $\lambda$. 
  If $L \subseteq [c]$, 
  then $\sum_{\lambda \in L} \omega_M(X_\lambda) = \omega_M( \cup_{\lambda \in L} X_\lambda)$. 
\end{lemma}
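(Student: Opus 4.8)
The plan is to reduce the statement to Lemma~\ref{ljomega}, which already handles a single colour class, by passing to a two-colouring that merges the colours of $L$ into one. First I would invoke Lemma~\ref{targetifflj} to write $c = \bigotimes_{i=1}^{t}(c | K_i)$ for some mutually skew monochromatic flats $K_1,\dots,K_t$ of $M$ with spanning union; discarding any empty ones, we may assume each $K_i$ is nonempty. Write $c(K_i) = \{\kappa_i\}$. Since $M$ is loopless, each $c | K_i$ is simple, so this iterated lift-join is well-defined. Two elementary observations will be used repeatedly: first, each $K_i$ is a flat, so $\omega_M(K_i) = r_M(K_i)$; second, since the colour classes of $c$ partition $E(M)$ and $K_i \subseteq X_{\kappa_i}$, the set $K_i \cap X_\lambda$ equals $K_i$ when $\kappa_i = \lambda$ and is empty otherwise (and likewise for intersections with unions of colour classes).

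Next, applying Lemma~\ref{ljomega} to the colour class $X_\lambda$ of $(M;c)$ for each $\lambda \in [c]$ gives
\[
  \omega_M(X_\lambda) \;=\; \sum_{i=1}^{t}\omega_M(K_i \cap X_\lambda) \;=\; \sum_{i\,:\,\kappa_i = \lambda} r_M(K_i).
\]
If $L = \varnothing$ the claim is trivial (both sides are $0$), so assume $L \ne \varnothing$ and set $Y = \bigcup_{\lambda \in L} X_\lambda$. Let $d : [c] \to \{0,1\}$ be the indicator function of $L$. By Lemma~\ref{targetcomp}, $(M; d \circ c)$ is again a target, and iterating Lemma~\ref{liftjoinbasic}(\ref{ljcomp}) yields $d \circ c = \bigotimes_{i=1}^{t}\bigl((d \circ c) | K_i\bigr)$, where each $(d \circ c) | K_i$ is monochromatic (hence simple). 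Since $Y$ is exactly the colour class of $1$ under $d \circ c$, Lemma~\ref{ljomega} applied to $(M; d \circ c)$ gives $\omega_M(Y) = \sum_{i=1}^{t}\omega_M(K_i \cap Y)$, and by the observation above $K_i \cap Y$ equals $K_i$ when $\kappa_i \in L$ and is empty otherwise, so $\omega_M(Y) = \sum_{i\,:\,\kappa_i \in L} r_M(K_i)$.

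Finally I would combine these two computations. Partitioning the index set $\{i : \kappa_i \in L\}$ according to the value of $\kappa_i$ gives
\[
  \sum_{\lambda \in L}\omega_M(X_\lambda) \;=\; \sum_{\lambda \in L}\ \sum_{i\,:\,\kappa_i = \lambda} r_M(K_i) \;=\; \sum_{i\,:\,\kappa_i \in L} r_M(K_i) \;=\; \omega_M(Y),
\]
which is precisely the asserted identity. The argument is essentially bookkeeping built on top of Lemma~\ref{ljomega}; the only genuine point is the use of the merging map $d$ to turn a union of colour classes into a single colour class while keeping the lift-join structure intact (via Lemmas~\ref{targetcomp} and \ref{liftjoinbasic}(\ref{ljcomp})), so I expect no real obstacle beyond treating the degenerate case $L = \varnothing$ separately and checking that the flats $K_i$ produced by Lemma~\ref{targetifflj} can be taken nonempty.
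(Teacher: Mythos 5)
Your proposal is correct and follows essentially the same route as the paper: decompose $c$ via Lemma~\ref{targetifflj} into a lift-join of monochromatic flats, apply Lemma~\ref{ljomega} once to each colour class and once to the merged colouring $d \circ c$ (with $d$ the indicator of $L$), and compare the resulting sums. The only differences are cosmetic — you explicitly cite Lemma~\ref{liftjoinbasic}(\ref{ljcomp}) to justify that $d \circ c$ inherits the lift-join decomposition and you treat $L = \varnothing$ separately, both of which the paper leaves implicit.
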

\begin{proof}
  Let $F_1, \dotsc, F_s$ be mutually skew monochromatic flats of $M$ with spanning union,
  such that $c = \bigotimes_{i=1}^s (c | F_i)$, as given by Lemma~\ref{targetifflj}.
  Let $\{\lambda_i\} = c(F_i)$, and let $I = \{i \in \{1, \dotsc, s\} : \lambda_i \in L \}$. 
  Then, by Lemma~\ref{ljomega}, we have 
  \[\sum_{\lambda \in L} \omega_M(X_\lambda) = \sum_{\lambda \in L} \sum_{i=1}^s \omega_M(F_i \cap X_{\lambda}) = \sum_{i=1}^s \sum_{\lambda \in L} \omega_M(F_i \cap X_{\lambda}).\]
  Now $F_i \cap X_\lambda$ is equal to $F_i$ if $\lambda = \lambda_i$, and is empty otherwise. 
  It follows that $\sum_{\lambda \in L} \omega_M(F_i \cap \lambda) = r_M(F_i)$ if $i \in I$, 
  and $\sum_{\lambda \in L} \omega_M(F_i \cap \lambda) = 0$ otherwise. Therefore 
  \begin{equation}\label{tolhs}\sum_{\lambda \in L} \omega_M(X_\lambda) = \sum_{i \in I} r_M(F_i).
  \end{equation}

  Now let $d : [c] \to \{0,1\}$ be the colouring with $d(\lambda) = 1$ if and only if $\lambda \in L$, 
  and let $X = (d \circ c)^{-1}(1)$; so $X = \bigcup_{\lambda \in L} X_{\lambda}$ by construction. 
  Since $X$ is a colour class of the colouring $d \circ c = \bigotimes_{i=1}^s ((d \circ c) | F_i)$, Lemma~\ref{ljomega} gives 
  \[\omega_M(X) = \sum_{i=1}^s \omega_M(F_i \cap X).\]
  Now $F_i \cap X$ is equal to $F_i$ if $i \in I$, and is empty otherwise. 
  So in fact $\omega_M(X) = \sum_{i \in I}r_M(F_i)$. Combined with (\ref{tolhs}), 
  this gives the result. 
\end{proof}

We now restate and prove Theorem~\ref{binaryintro}. In the language of this section, 
it states that rainbow-triangle-free coloured binary projective geometries where 
the number of colours is equal to the dimension are all targets, determined by a 
maximal chain of flats. This implies that they are unique up to matroid isomorphism and permutation of colours. 

\begin{theorem}\label{fullbinary}
    Let $q$ be a prime power, and $(M;c)$ be a coloured matroid with no rainbow triangle for which $M \cong \PG(n-1,q)$ and $|c| = n \ge 1$. 
    Then there exist a permutation $\kappa_1, \dotsc, \kappa_n$ of $[c]$ 
    and flats $\es = F_0 \subset F_1 \subset \dotsc F_n = E(M)$ of $M$, 
    such that $r_M(F_i) = i$ and $c(F_i - F_{i-1}) = \{\kappa_i\}$ for each $i$.
\end{theorem}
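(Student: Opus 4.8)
The plan is to deduce Theorem~\ref{fullbinary} from the structure theorem, Theorem~\ref{mainstructuresimple}, together with a short counting argument that uses the hypothesis $|c| = n$ to pin the decomposition down completely. Note first that $M \cong \PG(n-1,q)$ is connected and modular and that $c$ is simple (it is a colouring of a simple matroid), so Theorem~\ref{easyequiv} applies to $(M;c)$ and, since $(M;c)$ is rainbow-triangle-free, gives $|c(F)| \le r_M(F)$ for every flat $F$ of $M$. Theorem~\ref{mainstructuresimple} also applies, and yields mutually skew nonempty flats $F_1, \dotsc, F_t$ of $M$, with $\cl_M\bigl(\bigcup_i F_i\bigr) = E(M)$, such that $c = \bigotimes_{i=1}^t (c | F_i)$ and $|c(F_i)| \le 2$ for each $i$.

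Next comes the counting. The $F_i$ are mutually skew with spanning union, so $\sum_i r_M(F_i) = n$, and iterating Lemma~\ref{liftjoinbasic}(i) gives $[c] = \bigcup_i c(F_i)$. Therefore
\[ n = |c| = \Bigl|\bigcup_i c(F_i)\Bigr| \le \sum_i |c(F_i)| \le \sum_i r_M(F_i) = n , \]
so every inequality is an equality. Hence $|c(F_i)| = r_M(F_i)$ for all $i$; the sets $c(F_i)$ are pairwise disjoint, so partition $[c]$; and, as $|c(F_i)| \le 2$, each $F_i$ has rank $1$ or $2$.

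The remaining work is to refine every rank-two $F_i$ into two rank-one monochromatic flats and then reassemble. Fix such an $F_i$: it is a line of $M$ carrying exactly the two colours $c(F_i) = \{a_i, b_i\}$, and I would first show that $(M;c) | F_i$ is a target, i.e.\ (Lemma~\ref{linetarget}) that $F_i$ contains a point $K_i$ with $F_i - K_i$ monochromatic, say of colour $a_i$. Then $K_i$ is a decomposer of $(M;c) | F_i$; choosing a complement $K_i'$ of $K_i$ in $M | F_i$, Lemma~\ref{liftjoiniff} gives $c | F_i = (c | K_i) \otimes (c | K_i')$ with $K_i$ and $K_i'$ monochromatic of colours $b_i$ and $a_i$ respectively. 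Substituting each such pair $(K_i, K_i')$, in that order, for $F_i$ in $c = \bigotimes_i (c | F_i)$ and applying associativity (Lemma~\ref{assoc}), I obtain mutually skew rank-one monochromatic flats $K_1, \dotsc, K_n$ with spanning union and $c = \bigotimes_{j=1}^n (c | K_j)$. Setting $F_j := \cl_M\bigl(\bigcup_{l \le j} K_l\bigr)$ for $0 \le j \le n$, mutual skewness gives $r_M(F_j) = j$, so $\es = F_0 \subset F_1 \subset \dotsc \subset F_n = E(M)$; by Lemma~\ref{targetifflj} this chain certifies that $(M;c)$ is a target, so each $F_j - F_{j-1}$ is monochromatic. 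Since $K_j \subseteq F_j$ while $K_j$, being skew to $F_{j-1} = \cl_M\bigl(\bigcup_{l < j} K_l\bigr)$, is disjoint from $F_{j-1}$, we get $K_j \subseteq F_j - F_{j-1}$ and hence $c(F_j - F_{j-1}) = \{\kappa_j\}$ where $\kappa_j := c(K_j)$. Finally, two of the $K_l$ coming from a single rank-two $F_i$ receive the distinct colours $a_i, b_i$, while $K_l$'s coming from different $F_i$'s receive colours lying in the disjoint sets $c(F_i)$; thus $\kappa_1, \dotsc, \kappa_n$ are $n$ distinct elements of the $n$-element set $[c]$, so form a permutation of $[c]$, as required.

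The one step demanding genuine work is the refinement of the rank-two pieces — showing that each such line $F_i$ is itself a target, equivalently that one of its two colour classes is a single point; I would handle this by a direct analysis of two-coloured lines in $M$. Everything else is bookkeeping layered on top of Theorem~\ref{mainstructuresimple} and the counting identity above, the key observation being that $|c| = n$ forces each flat in the structure decomposition to have rank equal to its number of colours.
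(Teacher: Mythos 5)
Your overall route is close in spirit to the paper's, just packaged non-inductively: the paper inducts on $n$, using Theorem~\ref{main1} to produce a decomposer, Lemma~\ref{liftjoiniff} to split $c$ as a lift-join over a complementary pair of flats, and the counting forced by Theorem~\ref{easyequiv} to get $r_M(F)=|c(F)|$ and $r_M(F')=|c(F')|$ with disjoint colour sets, then applies induction to both pieces; you instead invoke Theorem~\ref{mainstructuresimple} once and run the same counting globally over the pieces $F_1,\dotsc,F_t$. The parts you actually carry out --- the equalities $|c(F_i)|=r_M(F_i)$, disjointness of the $c(F_i)$, each piece having rank $1$ or $2$, and the reassembly of refined pieces via associativity, Lemma~\ref{liftjoiniff} and Lemma~\ref{targetifflj} --- are correct.

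The genuine gap is the step you defer: that every rank-two piece $F_i$, a line carrying exactly two colours, has a singleton colour class. You call it ``the one step demanding genuine work'' and give no argument, and at the stated level of generality no argument exists: for $q\ge 3$ the claim is false, and so is the theorem as stated. Take $M\cong\PG(2,3)$, a line $L$ with two points of colour $1$ and two of colour $2$, and every point off $L$ coloured $3$ (case 2(ii) of Theorem~\ref{mainplane}): this colouring has no rainbow triangle and $|c|=3=n$, its structure decomposition is $F_1=L$ together with a point $F_2$ off $L$, so it satisfies everything you derive, yet no chain as in the conclusion exists, since $L$ minus a point is never monochromatic and every other line leaves bicoloured points of $L$ outside it. The result is really about $q=2$ (as the name and Theorem~\ref{binaryintro} indicate; the paper's own base case ``for $n=2$ the result is straightforward'' has the same defect for $q\ge3$), and in the binary case your deferred step is immediate --- a three-point line using both of its two colours has a singleton class --- so there your proof is complete once that one sentence is added. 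As written, however, treating general $q$ and promising a ``direct analysis of two-coloured lines'', the proposal leaves precisely the one nontrivial (and, in general, false) step unproved.
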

\begin{proof}
    It is easy to see, along the lines of the proof of Theorem~\ref{targetifflj}, that 
    a coloured matroid $(M;c)$ satisfies the conclusion if and only if $c$ is a lift-join of 
    rank-one monochromatic flats of distinct colours. Thus, it is enough to prove that $M$ has this property. 

    If $n = 2$, then since every triple of points of $M$ is a triangle, 
    the result is straightforward. 
    Suppose inductively that $n \ge 3$, and that the theorem is true for smaller $n$. 
    Since $|c| = n \ge 3$, Theorem~\ref{main1} implies that $(G;c)$ has a nontrivial decomposer $F$. 

    Let $F'$ be a complement of $F$ in $M$. By Lemma~\ref{liftjoiniff} we have $c = (c | F) \ls (c | F')$. 
    By Theorem~\ref{easyequiv}, we have $|c(F)| \le r_M(F)$ and similar for $F'$, so 
    \[r_M(F) + r_M(F') = n = |c| = |c(F)| + |c(F')| - |c(F) \cap c(F')|, \]
    and so $r_M(F) = |c(F)|$ and $r_M(F') = |c(F')|$ while $c(F) \cap c(F') = \es$. 
    By induction, both $c|F$ and $c|F'$ are lift-joins of monochromatic rank-one flats
    of distinct colours. The sets of colours are disjoint, and $c = (c | F) \ls (c | F')$, 
    and so the same is true for $M$, as required. 
\end{proof}

\section{Erd\H{o}s-Hajnal}

This section will derive the lower bound in Theorem~\ref{mainrainbow} from Theorem~\ref{mainstructuresimple}. 
We first prove that a rainbow-triangle-free coloured matroid can be
partioned into two-coloured pieces, in such a way that the partition (viewed as a colouring) 
is a target. This easily implies Theorem~\ref{maintargetintro}, since the `layers' of the
target $(M,C)$ are bicoloured relative to $c$. 

\begin{theorem}\label{targetpaircolouring}
  Let $(M; c)$ be a connected, modular coloured matroid with no rainbow triangle. 
  If $c$ is simple and $|c| \ge 2$, 
  then $M$ has a colouring $C$ for which $[C] \subseteq \binom{[c]}{2}$, 
  such that $(M; C)$ is a target, 
  and $c(e) \in C(e)$ for each $e \in E(M)$. 
\end{theorem}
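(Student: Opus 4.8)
The plan is to extract the lift-join decomposition of $c$ furnished by Theorem~\ref{mainstructuresimple} and read off the desired target colouring $C$ from the induced chain of flats. Since $(M;c)$ is connected, modular and rainbow-triangle-free with $c$ simple, Theorem~\ref{mainstructuresimple} gives mutually skew nonempty flats $F_1, \dotsc, F_t$ of $M$ with $c = \bigotimes_{i=1}^{t}(c | F_i)$ and $|c(F_i)| \le 2$ for each $i$; the hypothesis $|c| \ge 2$ ensures $E(M) \ne \es$, so $t \ge 1$, and (as noted after Theorem~\ref{mainstructure}) the union of the $F_i$ is spanning in $M$. Put $G_i = \cl_M\bigl(\bigcup_{j \le i} F_j\bigr)$ for $0 \le i \le t$, so that $\es = G_0 \subseteq G_1 \subseteq \dotsc \subseteq G_t = E(M)$ is a chain of flats of $M$; since the $F_j$ are mutually skew and nonempty, hence of positive rank as $M$ is loopless, the ranks strictly increase along the chain, so each set $G_i - G_{i-1}$ is nonempty.

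First I would check that $c$ takes only colours from $c(F_i)$ on the layer $G_i - G_{i-1}$. By associativity (Lemma~\ref{assoc}) and Lemma~\ref{liftjoinbasic}(\ref{ljrestrict}), the restriction of $c$ to $G_i$ is the lift-join of the restriction of $c$ to $G_{i-1}$ with $c | F_i$ along the skew pair $(G_{i-1}, F_i)$, so for $e \in G_i - G_{i-1}$ the definition of lift-join gives $c(e) = (c | F_i)\bigl(\cl_M(G_{i-1} \cup \{e\}) \cap F_i\bigr) \in c(F_i)$. Then, for each $i \in \{1, \dotsc, t\}$, I would fix a two-element set $P_i \in \binom{[c]}{2}$ with $c(F_i) \subseteq P_i$ (possible since $1 \le |c(F_i)| \le 2 \le |[c]|$), and define $C \colon E(M) \to \binom{[c]}{2}$ by $C(e) = P_i$ for the unique index $i$ with $e \in G_i - G_{i-1}$; this is well-defined because the nonempty sets $G_1 - G_0, \dotsc, G_t - G_{t-1}$ partition $E(M) = G_t$. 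Finally I would verify the three requirements: $[C] \subseteq \{P_1, \dotsc, P_t\} \subseteq \binom{[c]}{2}$; the chain $\es = G_0 \subseteq \dotsc \subseteq G_t = E(M)$ certifies that $(M; C)$ is a target, since each $G_i - G_{i-1}$ is $C$-monochromatic and $G_0 = \es$; and $c(e) \in c(F_i) \subseteq P_i = C(e)$ for every $e \in G_i - G_{i-1}$.

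I expect no genuine obstacle: all the substantial content lies in Theorem~\ref{main1} and Theorem~\ref{mainstructuresimple}, and what remains is to match the lift-join decomposition to the definition of a target. The only step needing a little care is the claim $c(G_i - G_{i-1}) \subseteq c(F_i)$, which is why the associativity lemma and the precise form of the lift-join definition are invoked; after that, the construction of $C$ and the verification of the three conditions are routine.
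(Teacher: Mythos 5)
Your proposal is correct. The key claim $c(G_i - G_{i-1}) \subseteq c(F_i)$ does follow as you say: writing $d_i = \bigotimes_{j=1}^i (c\,|\,F_j)$, repeated use of Lemma~\ref{liftjoinbasic}(\ref{ljrestrict}) gives $c\,|\,G_i = d_i = d_{i-1} \otimes (c\,|\,F_i)$ with $F_i$ skew to $G_{i-1}$, and the definition of the lift-join then confines the colour of any $e \in G_i - G_{i-1}$ to $c(F_i)$; since $c$ is simple, $M$ is loopless, so $G_0 = \cl_M(\es) = \es$ and your chain genuinely certifies that $(M;C)$ is a target. The route is, however, different from the paper's. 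The paper does not invoke Theorem~\ref{mainstructuresimple} at all: it runs its own induction on rank, taking a nontrivial decomposer $F_0$ from Theorem~\ref{main1} (when $|c| \ge 3$), a complement $F_1$, inductively obtained pair-colourings $C_0, C_1$ of the two pieces, and then sets $C = C_0 \otimes C_1$, using Lemma~\ref{targetdecomposer} to see that the result is a target and the decomposer property of $F_0$ for both $c$ and $C$ to check $c(e) \in C(e)$ off $F_0$. Your version instead treats the structure theorem as a black box and reads the target off the chain of partial closures $G_i$, in effect coarsening $c$ layer by layer; this is shorter and avoids a second induction, at the cost of being a corollary of the full-strength Theorem~\ref{mainstructuresimple} rather than of the bare decomposer theorem. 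One cosmetic difference: the paper's $C$ comes packaged as a lift-join of the $C_i$, while yours is certified directly by the flag $\es = G_0 \subseteq \dotsb \subseteq G_t = E(M)$ (essentially the easy direction of Lemma~\ref{targetifflj}); for the later applications via Lemma~\ref{targetomega} either presentation suffices, since that lemma only needs $(M;C)$ to be a target.
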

\begin{proof}
  The result is vacuously true if $r(M) \le 1$, since a simple colouring $c$ of $M$ with $|c| \ge 2$ cannot exist. 
  Suppose, therefore, that $r(M) \ge 2$, and that the result holds for coloured matroids of smaller rank. 

  If $|c| = 2$, then we can take $C$ to assign $[c] \in \binom{[c]}{2}$ to every element of $M$, 
  and $C$ is a $1$-colouring so is a target. Suppose, therefore, that $|c| \ge 3$. 

  By Theorem~\ref{main1}, $M$ has a nontrivial decomposer $F_0$.
  Let $F_1$ be a complement of $F_0$ in $M$.
  Since both $F_0$ and $F_1$ are proper flats, 
  there inductively exist colourings $C_0 \sse \binom{c(F_0)}{2}$ and $C_1 \sse \binom{c(F_1)}{2}$ 
  of $F_0$ and $F_1$ respectively such that $(M | F_0; C_0)$ and $(M | F_1; C_1)$ are targets, 
  and $c (e) \in C_i(e)$ for all $i \in [2]$ and $e \in F_i$. 

  We claim that the colouring $C_0 \otimes C_1$ works for $M$. 
  First, we have $[C_0 \otimes C_1] = [C_0] \cup [C_1] \subseteq \binom{[c(F_0)]}{2} \cup \binom{[c(F_1)]}{2} \subseteq \binom{[c]}{2}$. 
  By Lemma~\ref{targetdecomposer},
  we know that $(M; C_0 \otimes C_1)$ is a target. It remains to show that $e \in (C_0 \otimes C_1)(e)$ for all $e \in E(M)$. 
  
  If $e \in F_0$, then $(C_0 \otimes C_1) (e) = C_0(e) \ni c(e)$,
  as required. If $e \notin F_0$, then let $f \in \cl_M(F_0 \cup \{e\}) \cap F_1$. 
  Since $F_0$ is a decomposer of $(M; c)$, we have $c(e) = c(f)$. 
  Since $F_0$ is a decomposer of $(M; C_0 \otimes C_1)$, 
  we also have $(C_0 \otimes C_1)(e) = (C_0 \otimes C_1)(f) = C_1(f) \ni c(f) = c(e)$, as required.
\end{proof}

Since two-coloured pieces in the above theorem form a target, we can easily find a large subspace 
in one of them. This is analogous to [\ref{fgp}, Corollary 1.5].

\begin{theorem}\label{mainomega}
  Let $(M; c)$ be a connected, modular, rainbow-triangle-free coloured matroid with $c$ simple and $|c| \ge 2$. 
  Then $M$ has a flat $F$ for which $r_M(F) \ge r(M)/ \binom{|c|}{2}$ and $|c(F)| \le 2$. 
\end{theorem}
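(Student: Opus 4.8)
The plan is to derive this directly from Theorem~\ref{targetpaircolouring} and Lemma~\ref{targetomega} by a pigeonhole argument. First I would apply Theorem~\ref{targetpaircolouring} to $(M;c)$ to obtain a colouring $C$ of $M$ with $[C] \subseteq \binom{[c]}{2}$ such that $(M;C)$ is a target and $c(e) \in C(e)$ for every $e \in E(M)$. The purpose of passing to $C$ is twofold: each colour class of $C$ is a union of at most two colour classes of $c$, so a monochromatic flat of $(M;C)$ automatically uses at most two colours of $c$; and because $(M;C)$ is a target, Lemma~\ref{targetomega} applies and controls the ranks of the largest flats inside the classes of $C$.

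Next I would invoke Lemma~\ref{targetomega} for the target $(M;C)$ with $L = [C]$. Writing $X_\lambda = C^{-1}(\lambda)$ for $\lambda \in [C]$, the union $\bigcup_{\lambda \in [C]} X_\lambda$ is all of $E(M)$, whose largest contained flat is $E(M)$ itself, so the lemma gives $\sum_{\lambda \in [C]} \omega_M(X_\lambda) = \omega_M(E(M)) = r(M)$. Since $[C] \subseteq \binom{[c]}{2}$ we have $|[C]| \le \binom{|c|}{2}$, so by averaging there is some $\lambda \in [C]$ with $\omega_M(X_\lambda) \ge r(M)/|[C]| \ge r(M)/\binom{|c|}{2}$.

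Finally, let $F$ be a flat of $M$ with $F \subseteq X_\lambda$ and $r_M(F) = \omega_M(X_\lambda)$, which exists by the definition of $\omega_M$. Then $r_M(F) \ge r(M)/\binom{|c|}{2}$. Moreover $C$ takes the constant value $\lambda$ on $F$, and since $c(e) \in C(e) = \lambda$ for every $e \in F$, we get $c(F) \subseteq \lambda$; as $\lambda \in \binom{[c]}{2}$ (or, in the degenerate case $|c|=2$, $\lambda = [c]$), this yields $|c(F)| \le 2$, as required. The case $|c| = 2$ is covered uniformly: there $C$ is the constant colouring with value $[c]$, $|[C]| = 1 = \binom{|c|}{2}$, and the argument simply returns $F = E(M)$.

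I do not expect a real obstacle here, since all the difficulty has been front-loaded into Theorem~\ref{targetpaircolouring} (which rests on Theorem~\ref{main1}) and into the additivity statement Lemma~\ref{targetomega}. The one point requiring care is that the rank additivity of Lemma~\ref{targetomega} holds for the \emph{target} colouring $C$ and not for $c$ itself — $c$ need not be a target — so the averaging must be carried out over $[C]$ rather than over $[c]$, which is exactly why the bound comes out as $r(M)/\binom{|c|}{2}$ rather than $r(M)/|c|$.
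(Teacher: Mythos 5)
Your proposal is correct and follows essentially the same route as the paper: apply Theorem~\ref{targetpaircolouring} to obtain the target colouring $C$ with values in $\binom{[c]}{2}$, use the additivity of $\omega_M$ over the colour classes of $C$ from Lemma~\ref{targetomega}, and average over the at most $\binom{|c|}{2}$ classes. The extra care you take with the degenerate case $|c|=2$ and with noting that the additivity applies to $C$ rather than $c$ is sound but not needed beyond what the paper's own two-line argument already does.
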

\begin{proof}
  Let $C$ be a colouring given by Theorem~\ref{targetpaircolouring}, and for each pair $P \in \binom{[c]}{2}$, 
  let $X_P$ be the set of elements of $M$ coloured $P$ by $C$. By the choice of $C$, the sets $X_P$ partition $E(M)$. 
  
  By Lemma~\ref{targetomega} we have $\sum_P \omega_M(P) = r(M)$, so there exists $P \in \binom{[c]}{2}$ 
  for which $\omega_M(X_P) \ge r(M)/ \binom{|c|}{2}$. Now any flat $F \subseteq C_P$ of rank $\omega_M(X_P)$ 
  is two-coloured, so satisfies the theorem. 
\end{proof}

In fact, the techniques in Theorem~\ref{mainomega} routinely generalize to the case where we are looking for a large flat using at most
some prescribed number of colours. In the following result, Theorem~\ref{mainomega} is the case where $\ell = 2$. 
More generally, the result is very similar to the upper bound [\ref{fgp}, Theorem 1.6].

\begin{theorem}\label{fewercolours}
  Let $\ell,k \in \bN$ with $\ell \le k$ and $k \ge 2$,  
  and let $(M; c)$ be a connected, modular coloured matroid for which 
  $c$ is simple and has no rainbow triangle. If $|c| = k$, 
  then $M$ has a flat $F$ such that $|c(F)| \le \ell$ and 
  $r_M(F) \ge \tfrac{\binom{\ell}{2}}{\binom{k}{2}} r(M)$. 
\end{theorem}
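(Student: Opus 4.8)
The plan is to prove this by the same averaging idea used for Theorem~\ref{mainomega}, but averaging over all $\ell$-element subsets of the colour set rather than over single pairs. The cases $\ell\in\{0,1\}$ are trivial, since the empty flat $\es$ satisfies $|c(\es)|=0\le\ell$ and $r_M(\es)=0=\tfrac{\binom{\ell}{2}}{\binom{k}{2}}r(M)$, so I would first record this and assume $\ell\ge 2$. Since $|c|=k\ge 2$, Theorem~\ref{targetpaircolouring} applies and produces a colouring $C$ of $M$ with $[C]\subseteq\binom{[c]}{2}$ such that $(M;C)$ is a target and $c(e)\in C(e)$ for every $e\in E(M)$.

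Next I would set up the counting. For $P\in\binom{[c]}{2}$ let $X_P$ be the $C$-colour class of $P$, with the convention that $X_P=\es$ when $P\notin[C]$; these sets partition $E(M)$. Applying Lemma~\ref{targetomega} to the target $(M;C)$ (and discarding the empty classes) gives $\omega_M\bigl(\bigcup_{P\in\mathcal{Q}}X_P\bigr)=\sum_{P\in\mathcal{Q}}\omega_M(X_P)$ for every $\mathcal{Q}\subseteq\binom{[c]}{2}$, and in particular $\sum_{P\in\binom{[c]}{2}}\omega_M(X_P)=\omega_M(E(M))=r(M)$. Now for each $L\in\binom{[c]}{\ell}$ put $f(L)=\sum_{P\in\binom{L}{2}}\omega_M(X_P)$. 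Each pair $P\in\binom{[c]}{2}$ lies in exactly $\binom{k-2}{\ell-2}$ of the $\binom{k}{\ell}$ sets $L$, so
\[\sum_{L\in\binom{[c]}{\ell}}f(L)=\binom{k-2}{\ell-2}\sum_{P\in\binom{[c]}{2}}\omega_M(X_P)=\binom{k-2}{\ell-2}\,r(M),\]
and hence some $L$ achieves $f(L)\ge\binom{k-2}{\ell-2}r(M)/\binom{k}{\ell}=\tfrac{\binom{\ell}{2}}{\binom{k}{2}}r(M)$, using the elementary identity $\binom{k-2}{\ell-2}/\binom{k}{\ell}=\tfrac{\ell(\ell-1)}{k(k-1)}$.

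Finally, I would fix such an $L$, set $Y=\bigcup_{P\in\binom{L}{2}}X_P$, and choose a flat $F\subseteq Y$ with $r_M(F)=\omega_M(Y)$. By Lemma~\ref{targetomega} (applied with $\mathcal{Q}=\binom{L}{2}$), $\omega_M(Y)=f(L)\ge\tfrac{\binom{\ell}{2}}{\binom{k}{2}}r(M)$, and every $e\in F\subseteq Y$ satisfies $C(e)\in\binom{L}{2}$, so $c(e)\in C(e)\subseteq L$; thus $c(F)\subseteq L$ and $|c(F)|\le\ell$, which is exactly what is required. I do not expect a genuine obstacle here: all of the structural content is already carried by Theorem~\ref{targetpaircolouring} and Lemma~\ref{targetomega}, and what remains is the double-counting identity above. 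The only things to watch are the harmless bookkeeping that $X_P$ is empty for $P\notin[C]$ (so that the index set $\binom{L}{2}$ may be used in place of $\binom{L}{2}\cap[C]$ throughout) and the verification of the small-$\ell$ boundary cases.
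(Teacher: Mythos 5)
Your proposal is correct and follows essentially the same route as the paper: apply Theorem~\ref{targetpaircolouring} to get the target colouring $C$ by pairs, use Lemma~\ref{targetomega} for additivity of $\omega_M$ over the classes $X_P$, double-count pairs inside $\ell$-subsets of $[c]$ via $\binom{k-2}{\ell-2}$, and take a majority $\ell$-set $L$ whose union of classes contains the desired flat. The only differences are harmless bookkeeping (your explicit treatment of $\ell\in\{0,1\}$ and of empty classes $X_P$), so there is nothing to correct.
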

\begin{proof}
  We may assume that $\ell \le k$, as otherwise the result is trivial. 
  Let $C : E(M) \to \binom{[c]}{2}$ be a colouring of $M$ as given by Lemma~\ref{targetpaircolouring}, 
  so $(M;C)$ is a target, and $c(e) \in C(e)$ for all $e \in E(M)$. 

  For each $P \in \binom{[c]}{2}$, let $X_P$ be the colour class of $P$ with respect to $C$. 
  Applying Lemma~\ref{targetomega} with $L = \binom{S}{2}$ and later with $L = \binom{[c]}{2}$, 
  we have 
  \begin{align*}
    \sum_{S \in \binom{[c]}{\ell}} \omega_M \left(\cup_{P \in \binom{S}{2}} X_P\right) 
    &= \sum_{S \in \binom{[c]}{\ell}} \sum_{P \in \binom{S}{2}} \omega_M(X_P) \\ 
    &= \sum_{P \in \binom{[c]}{2}}\sum_{P \subseteq S \in \binom{[c]}{\ell}} \omega_M(X_P) \\ 
    &= \tbinom{k-2}{\ell-2} \sum_{P \in \binom{[c]}{2}} \omega_M(X_P) \\ 
    &= \tbinom{k-2}{\ell-2} \omega_M \left(\cup_{P \in \binom{[c]}{2}} X_P\right) \\ 
    &= \tbinom{k-2}{\ell-2} r(M).
  \end{align*}
  By a majority argument, there is therefore some $S \in \binom{[c]}{\ell}$ for which 
  
  \[\omega_M\left(\cup_{P \in \binom{S}{2}} X_P\right) \ge \binom{k}{\ell}^{-1} \binom{k-2}{\ell-2} r(M)  = \tfrac{\binom{k}{2}}{\binom{\ell}{2}} r(M).\] 
  By the choice of the colouring $C$ and the $X_P$, every element in $\cup_{P \in \binom{S}{2}} X_P$
  has a colour in the $\ell$-element set $S$, 
  so $\cup_{P \in \binom{S}{2}} X_P$ contains a flat $F$ of rank at least
  $\tfrac{\binom{k}{2}}{\binom{\ell}{2}} r(M)$ with $c(F) \subseteq S$, as required. 
\end{proof}

We can now prove the lower bound in Theorem~\ref{mainrainbow}. 
\begin{corollary}\label{ehrainbowtri}
  Let $c_0$ be a $3$-colouring of  $\PG(1,2)$ 
  where the three elements have distinct colours. 
  If $n \ge 2$ and $c$ is a $c_0$-free $3$-colouring of $G = \PG(n-1,2)$, 
  then $G$ has a subspace of dimension at least $\tfrac{n}{3}$ containing
  points of at most two colours. 
\end{corollary}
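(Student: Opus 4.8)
The statement is essentially an immediate corollary of Theorem~\ref{mainomega}, so the plan is just to set up the hypotheses and translate back. First I would record the structural facts about the host: for $n \ge 2$, the matroid $G \cong \PG(n-1,2)$ is connected, modular, and simple, so the coloured matroid $(G;c)$ meets all the standing hypotheses of Theorem~\ref{mainomega} except possibly the rainbow-triangle-free condition and $|c| \ge 2$.

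Next I would argue that being $c_0$-free is the same as being rainbow-triangle-free. In $\PG(n-1,2)$, thinking of points as nonzero vectors in $\bF_2^n$, a triangle (a two-dimensional subspace, i.e.\ a line) is precisely a triple of distinct points summing to zero. An injective homomorphism $\psi : \PG(1,2) \to G$ with $c(\psi(x)) = c_0(x)$ for every point $x$ therefore has as its image a line of $G$ whose three points receive the three distinct colours used by the rainbow colouring $c_0$; conversely any rainbow triangle of $(G;c)$ gives such a $\psi$. Hence $c$ is $c_0$-free if and only if $(G;c)$ has no rainbow triangle.

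Then I would split into cases on $|c|$. If $|c| \le 2$, the subspace $F = G$ itself has dimension $n \ge n/3$ and uses at most two colours, and we are done. If $|c| = 3$, then $(G;c)$ satisfies all hypotheses of Theorem~\ref{mainomega}, which produces a flat $F$ with $|c(F)| \le 2$ and $r_G(F) \ge r(G)/\binom{|c|}{2} = n/\binom{3}{2} = n/3$. Since a rank-$d$ flat of $\PG(n-1,2)$ is exactly a $d$-dimensional subspace, $F$ is the required subspace.

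\textbf{Main obstacle.} There is essentially no obstacle at this stage: all the real content has been pushed into Theorem~\ref{mainomega} (hence Theorem~\ref{targetpaircolouring} and ultimately the decomposition result Theorem~\ref{main1}). The only points requiring any care are the bookkeeping translation between ``copy of the rainbow $c_0$'' and ``rainbow triangle'', and the identification of rank-$d$ flats of $\PG(n-1,2)$ with $d$-dimensional subspaces; both are routine.
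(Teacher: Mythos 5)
Your proposal is correct and follows essentially the same route as the paper: translate $c_0$-freeness into the absence of a rainbow triangle in the coloured matroid $(G;c)$ and then invoke Theorem~\ref{mainomega} to get a flat of rank at least $n/\binom{3}{2} = n/3$ using at most two colours. Your explicit case split on $|c| \le 2$ is a slight extra precaution (the paper applies Theorem~\ref{mainomega} directly), but it is the same argument in substance.
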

\begin{proof}
  In the connected modular coloured matroid $(M;c)$ corresponding to $G$ and $c$,
  a rainbow triangle would give a copy of $c_0$, so $(M; c)$ is rainbow-triangle-free.
  By Theorem~\ref{mainomega}, there is a flat $F$ of $M$ with 
  $r_M(F) \ge r(M)/\binom{3}{2} = n/3$ and $|c(F)| \le 2$. 
  This flat gives the required subspace. 
\end{proof}

\section{Lower Bounds}

We now prove using standard probabilistic arguments that the bound in Theorem~\ref{fewercolours} 
(and hence Theorems~\ref{ehrainbowtri} and~\ref{mainomega})
is best-possible up to an additive logarithmic error term.  
We first need a simple lemma about random red-blue coloured geometries. 

\begin{lemma}\label{randomub}
  Let $q$ be a prime power, let $n \in \bN$, and $\kappa, \lambda$ be colours. 
  If $(M; c)$ is a coloured matroid for which 
  $M \cong \PG(n-1,q)$, and $c$ independently assigns each element of $M$ colour 
  $\kappa$ or $\lambda$ with probability $\tfrac{1}{2}$, 
  then for all $\epsilon > 0$, we asymptotically almost surely have $\omega_M(X) \le (1 + \epsilon) \log_q(n)$ 
  for each colour class $X$. 
\end{lemma}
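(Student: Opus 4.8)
The plan is a routine first-moment (union bound) argument: reduce the claim to the nonexistence of monochromatic flats of a single well-chosen rank, then bound the expected number of such flats.

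First I would fix $\epsilon > 0$ and set $r = \floor{(1+\epsilon)\log_q(n)} + 1$, so that $r > (1+\epsilon)\log_q(n)$ and hence $q^r > n^{1+\epsilon}$. For the reduction: if some colour class $X$ of $c$ satisfies $\omega_M(X) > (1+\epsilon)\log_q(n)$, then since $\omega_M(X)$ is an integer, $X$ contains a flat of $M$ of rank at least $r$; restricting $M$ to that flat and taking a subflat of rank exactly $r$ yields a flat $F$ of $M$ with $r_M(F) = r$ lying entirely within a single colour class of $c$. So it suffices to prove that, asymptotically almost surely, no rank-$r$ flat of $M$ is monochromatic under $c$.

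Then I would run the union bound. The matroid $M \cong \PG(n-1,q)$ has exactly $\gbinom{n}{r}{q}$ flats of rank $r$, each containing $\tfrac{q^r-1}{q-1}$ points, and since $c$ colours the points independently and uniformly from $\{\kappa,\lambda\}$, a fixed such flat is monochromatic with probability $2 \cdot 2^{-(q^r-1)/(q-1)}$. Using the crude estimates $\gbinom{n}{r}{q} \le q^{rn}$ and $\tfrac{q^r-1}{q-1} \ge q^{r-1} = q^r/q > n^{1+\epsilon}/q$, the probability that some rank-$r$ flat is monochromatic is at most
\[
  \gbinom{n}{r}{q}\cdot 2^{\,1-(q^r-1)/(q-1)} \;\le\; q^{rn}\cdot 2^{\,1-n^{1+\epsilon}/q} \;=\; 2^{\,rn\log_2 q \,+\, 1 \,-\, n^{1+\epsilon}/q}.
\]
Since $r = O(\log n)$, the exponent is $O(n\log n) - n^{1+\epsilon}/q + 1 \to -\infty$ as $n \to \infty$, so this probability tends to $0$, which is exactly the required conclusion (applied for each of the finitely many, here two, colour classes).

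Since everything here is elementary, there is no substantive obstacle. The only points needing a little care are the choice of $r$ — large enough that $q^r$ surpasses $n^{1+\epsilon}$, so that the doubly-exponential decay in the size of a flat beats the mere $2^{O(n\log n)}$ count of flats — and recording a usable upper bound on the Gaussian binomial coefficient $\gbinom{n}{r}{q}$.
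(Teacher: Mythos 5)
Your proposal is correct and follows essentially the same route as the paper: a union bound over rank-$t$ flats with $t$ just above $(1+\epsilon)\log_q(n)$, bounding the Gaussian binomial by $q^{tn}$ and the flat size from below by $q^{t-1}$, so that the exponent $O(n\log n)-\Omega(n^{1+\epsilon})$ tends to $-\infty$. The only cosmetic difference is that you fold both colours into one factor of $2$ while the paper treats each colour class separately.
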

\begin{proof}
  For each $k \in \bN$, let $f(k) = \tfrac{q^k-1}{q-1}$. 
  Let $i \in \{\kappa, \lambda\}$, and $X = c^{-1}(i)$ be the colour class of $i$. 
  Let $t \in \bN$. The number of rank-$t$ flats of $M$ is $\gbinom{n}{t}{q}$. 
  Each such flat has $f(t)$ elements, 
  so for each $i \in \{\kappa, \lambda\}$,
  the probability that such a flat is monochromatic with colour $i$ is $2^{-f(t)}$, 
  so by the union bound, the probability that $(M;c)$ has a monochromatic flat with 
  colour $i$ is at most $\gbinom{n}{t}{q} 2^{-f(t)}$. Now
  \begin{align*}
    \log_2\left(\gbinom{n}{t}{q} 2^{-f(t)}\right) &= 
      \sum_{i=1}^t \log_2\left(\frac{q^{n-i+1}-1}{q^i-1}\right) - f(t) \\ 
      &\le \sum_{i=1}^t \log_2(q^{n}) - f(t) \\ 
      &\le t n \log_2 q - q^{t-1}.
  \end{align*}
  If $t \ge (1+\epsilon) \log_q n $, then 
  $tn \log_2 q - q^{t-1} \le (1 + \epsilon) n \log_2 n - q^{-1}n^{1 + \epsilon}$. 
  This expression tends to $-\infty$ as $n$ grows, 
  whence we see that almost surely $M$ has no monochromatic flat of rank at least $t$, 
  giving $\omega_M(X) \le t$.
\end{proof}

The next result shows that the bound in Theorem~\ref{fewercolours} cannot be improved by more than 
$2 k\log_q n$. The true best-possible bound will depend on geometric Ramsey numbers, so is currently out of reach. 

\begin{theorem}
  Let $q$ be a prime power, let $\ell, k \in \bN$ with $k \ge 2$ and $1 \le \ell \le k$. 
  For all sufficiently large $n$, 
  there is a rainbow-triangle-free coloured matroid $(M; c)$ with $|c| = k$, 
  such that $M \cong \PG(n-1,q)$, and every flat $F$ of $M$ with $|c(F)| \le \ell$ 
  satisfies $r_M(F) \le \frac{\binom{\ell}{2}}{\binom{k}{2}}n  + 2 k \log_q (n)$. 
\end{theorem}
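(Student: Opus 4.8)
The plan is to build $(M;c)$ probabilistically, gluing together independent random two-colourings of a balanced family of mutually skew flats so that rainbow-triangle-freeness is automatic from Theorem~\ref{mainstructuresimple}, and then reading the rank bound off from Lemmas~\ref{ljomega} and~\ref{randomub}.

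First I would fix a large $n$, set $m=\lfloor n/\binom{k}{2}\rfloor$, and choose mutually skew flats $(F_P:P\in\binom{[k]}{2})$ of $M\cong\PG(n-1,q)$ whose union spans $M$, each of dimension $m$ except that one of them absorbs the $O(\binom{k}{2})$ leftover coordinates; such flats exist by splitting a basis of $\bF_q^n$ into blocks. On each $F_P$ I put an independent uniformly random two-colouring $c_P$ using the two colours of $P$, and set $c=\bigotimes_P c_P$ in some fixed order of the pairs. Since every $c_P$ is simple and uses at most two colours, Theorem~\ref{mainstructuresimple} gives that $(M;c)$ is rainbow-triangle-free; and a fixed colour $\kappa$ is absent from $c$ only if it is avoided on each of the (large) pieces $F_P$ with $\kappa\in P$, an event of probability $2^{-\Theta(n)}$, so almost surely $|c|=k$.

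The heart of the argument is bounding $r_M(F)$ for a flat $F$ with $|c(F)|\le\ell$. Choose an $\ell$-set $S\supseteq c(F)$, let $d\colon[k]\to\{0,1\}$ be the indicator of $S$, and put $Y_S=\bigcup_{\kappa\in S}c^{-1}(\kappa)$, which is the $1$-colour class of $d\circ c$. By Lemma~\ref{liftjoinbasic} we have $d\circ c=\bigotimes_P(d\circ c_P)$, so Lemma~\ref{ljomega} applies, and since $F$ is a flat contained in $Y_S$,
\[
 r_M(F)\le\omega_M(Y_S)=\sum_{P\in\binom{[k]}{2}}\omega_M(F_P\cap Y_S).
\]
The $\binom{\ell}{2}$ pairs $P\subseteq S$ each contribute $\omega_M(F_P)=r_M(F_P)\le m+O(\binom{k}{2})$; the pairs with $P\cap S=\varnothing$ contribute $0$; and each of the at most $\ell(k-\ell)$ pairs with $|P\cap S|=1$ contributes $\omega_M$ of a single colour class of the uniform random two-colouring $c_P$ of $F_P\cong\PG(m-1,q)$, which by Lemma~\ref{randomub} is at most $(1+\epsilon)\log_q m\le(1+\epsilon)\log_q n$ almost surely. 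A union bound over the finitely many pairs and $\ell$-sets $S$ makes all these estimates hold simultaneously. Choosing $\epsilon$ small in terms of $k$ and $n$ large, summing the three cases gives $r_M(F)\le\frac{\binom{\ell}{2}}{\binom{k}{2}}n+2k\log_q n$; since all the relevant events hold almost surely, a suitable $c$ exists for every large $n$.

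The main obstacle is this last accounting step: controlling the total contribution of the pieces $F_P$ that \emph{straddle} $S$, i.e.\ those with $|P\cap S|=1$. A uniform two-colouring of $\PG(m-1,q)$ still leaves a monochromatic flat of rank $\approx\log_q m$ in each colour class, and no explicit colouring does much better (the sharp logarithmic constant is governed by geometric Ramsey numbers, hence ``out of reach''), so these straddling pieces genuinely appear and one must check their contributions fit inside the error budget. The remaining ingredients --- producing the skew block decomposition of the basis, checking simplicity of each $c_P$, and confirming $|c|=k$ --- are routine.
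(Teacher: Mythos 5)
Your construction and analysis coincide with the paper's proof: the same lift-join of $\binom{k}{2}$ mutually skew, roughly balanced flats each carrying a random two-colouring from Lemma~\ref{randomub}, with rainbow-triangle-freeness from the structure theorem and the rank bound obtained by splitting the sum in Lemma~\ref{ljomega} according to $|P\cap S|\in\{0,1,2\}$. The only cosmetic differences are that the paper balances the block ranks between $\floor{n/t}$ and $\ceil{n/t}$ rather than dumping the remainder into one block, and fixes one good colouring per block rather than union-bounding over all blocks simultaneously; neither affects correctness.
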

\begin{proof}
  Let $t = \binom{k}{2}$, and let $S_1, \dotsc, S_t$ be an enumeration of $\binom{[c]}{2}$. 
  Let $\epsilon = \tfrac{1}{2(k-\ell)}$,
  and choose $n$ large enough so that $n \ge q^{\ell}$ and so that Lemma~\ref{randomub}
  applies for $\epsilon, q$, and every matroid of rank at least $\floor{\frac{n}{t}}$. 

  Let $G_1, \dotsc, G_t$ be a collection of mutually skew flats of $M$ with spanning union, 
  each having rank in $\{\floor{n/t}, \ceil{n/t}\}$. For each $i \in [t]$, let $c_i$ be a 
  colouring of $G_i$ with the two colours in $S_i$ for which there is no monochromatic 
  flat of dimension larger than $\left(1 + \epsilon\right) \log_q (r_M(G_i))$, 
  as given by Lemma~\ref{randomub}. We show that the colouring $c = \bigotimes_{i=1}^t c_i$ 
  of $M$ satisfies the theorem.

  \begin{claim}
    For each $T \in \binom{[c]}{\ell}$, we have $\omega_M(c^{-1}(T)) \le \tfrac{\binom{\ell}{2}}{t} n + \ell k \log_q(n)$. 
  \end{claim}
  \begin{subproof}
    Let $d : [c] \to \{0,1\}$ be the indicator function of $T$. 
    For each $b \in \{0,1,2\}$, let $I_b = \{i \in [t] : |P_i \cap T| = b\}$, 
    so $[t] = I_0 \cup I_1 \cup I_2$, and $|I_b| = \binom{\ell}{b} \binom{k-\ell}{2-b}$. 
    Let $X = c^{-1}(T) = (d \circ c)^{-1}(1)$. 
    Since $d \circ c = \bigotimes_{i=1}^t (d \circ c_i)$, 
    Lemma~\ref{ljomega} implies that 
    \begin{align*}
      \omega_M(X) = \sum_{i=1}^t \omega_M(G_i \cap X) = 
        \sum_{b = 0}^2 \sum_{i \in I_b} \omega_M(G_i \cap X)
    \end{align*}
    For $i \in I_0$, the elements of $G_i$ have colour outside $T$, so $G_i \cap X = \es$. 
    For $i \in I_1$, only one colour assigned to the elements of $G_i$ lies in $T$, 
    so $\omega_M(G_i \cap X)$ is the rank of some monochromatic flat of $c_i$, 
    which (by the choice of $c_i$) is at most 
    $(1 + \epsilon) \log_q(r_M(G_i)) \le (1 + \epsilon) \log_q(n)$. 
    For $i \in I_2$, we have $X \subseteq G_i$, 
    so $\omega_M(G_i \cap X) = r_M(G_i) \le \tfrac{n}{t} + 1$. 
    Combining these estimates gives 
    \begin{align*} \omega_M(X) &\le |I_2| (\tfrac{n}{t} + 1) + |I_1| (1+\epsilon) \log_q (n) \\ 
      &=  \tbinom{\ell}{2} \left(\tfrac{n}{t}+1\right) + \ell(k-\ell) (1 + \epsilon) \log_q(n) \\ 
      &= \tfrac{\binom{\ell}{2}}{t} n + \ell ((k - \ell) (1 + \tfrac{1}{2(k-\ell)}) \log_q(n) + \tfrac{1}{2}(\ell-1)) \\ 
      &\le \tfrac{\binom{\ell}{2}}{t} n + \ell k \log_q(n), 
    \end{align*}
    where the last inequality uses $1 \le \ell \le \log_q(n)$ and the choice of $\epsilon$. 
  \end{subproof}
  Since $t = \binom{k}{2}$,
  we have $r_M(F) \le \tfrac{\binom{\ell}{2}}{\binom{k}{2}} n + \ell k \log_q(n)$ for every 
  flat $F$, as required.  
\end{proof}

We state two corollaries showing that the bounds in Theorems~\ref{mainomega} and~\ref{ehrainbowtri}
are nearly sharp, mirroring the lower and upper bounds given in [\ref{fgp}, Theorem 1.6]. 
The second completes the proof of Theorem~\ref{mainrainbow}. 

\begin{corollary}
  Let $q$ be a prime power, and $k \in \bN$. For all sufficiently large $n$, 
  there is a rainbow-triangle-free coloured matroid $(M; c)$ with $|c| = k$, 
  such that $M \cong \PG(n-1,q)$, and every flat $F$ of $M$ with $|c(F)| \le 2$ 
  satisfies $r_M(F) \le n / \binom{c}{2} + 2 k \log_q (n)$.   
\end{corollary}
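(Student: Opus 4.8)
The plan is to obtain this as the special case $\ell = 2$ of the preceding theorem. That theorem asserts that, for every prime power $q$, every $k \ge 2$, and all sufficiently large $n$, there is a rainbow-triangle-free coloured matroid $(M;c)$ with $|c| = k$ and $M \cong \PG(n-1,q)$ in which every flat $F$ with $|c(F)| \le \ell$ satisfies $r_M(F) \le \tfrac{\binom{\ell}{2}}{\binom{k}{2}} n + 2k\log_q(n)$. Putting $\ell = 2$ and using $\binom{2}{2} = 1$ makes this bound read $r_M(F) \le n/\binom{k}{2} + 2k\log_q(n)$, which is exactly the assertion of the corollary. (The symbol $\binom{c}{2}$ in the displayed bound is a typo for $\binom{k}{2}$.)

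Two minor provisos are worth recording in the write-up. First, for $\ell = 2$ the hypothesis $1 \le \ell \le k$ of the cited theorem reduces to $k \ge 2$; for $k \le 1$ the quantity $\binom{k}{2}$ vanishes and the statement is vacuous or ill-posed, so we may simply assume $k \ge 2$ (and when $k = 2$ the condition $|c(F)| \le 2$ holds automatically, the bound being just $n \ge r_M(F)$). Second, one should note that the coloured matroid supplied by the preceding theorem does use exactly $k$ colours, since it is built as the lift-join $\bigotimes_{i=1}^{\binom{k}{2}} c_i$ over an enumeration of $\binom{[c]}{2}$ with each $c_i$ a two-colouring of a skew flat by the $i$th pair; hence it is a legitimate witness for the corollary.

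There is no real obstacle here: all of the content lies in the preceding theorem, and beneath that in Lemma~\ref{randomub} (on the $\omega$-value of random two-colourings) and Lemma~\ref{ljomega} (on $\omega$ under lift-joins). The corollary itself is pure bookkeeping with binomial coefficients, so the only things to get right are the specialisation $\binom{2}{2} = 1$ and the correction of the typographical $\binom{c}{2}$.
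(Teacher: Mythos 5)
Your proposal is correct and matches the paper exactly: the corollary is stated there as an immediate consequence of the preceding theorem with $\ell = 2$, so that $\binom{\ell}{2} = 1$ gives the bound $n/\binom{k}{2} + 2k\log_q(n)$ (and you are right that $\binom{c}{2}$ is a typo for $\binom{k}{2}$). Your side remarks about needing $k \ge 2$ and the witness colouring using all $k$ colours are accurate but not needed beyond what the theorem already supplies.
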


\begin{corollary}
  For all sufficiently large $n$, there is a rainbow-triangle-free $3$-colouring of $\PG(n-1,2)$, 
  such that every bicoloured subspace has dimension at most $\tfrac{n}{3} + 6 \log_2 (n)$.   
\end{corollary}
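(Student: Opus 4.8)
The plan is to invoke the preceding theorem with the parameters $q = 2$, $k = 3$, and $\ell = 2$, and then simply unwind what those choices give. First I would record that with $\ell = 2$ and $k = 3$ we have $\binom{\ell}{2} = 1$ and $\binom{k}{2} = 3$, so the coefficient $\binom{\ell}{2}/\binom{k}{2}$ appearing in the theorem equals $\tfrac{1}{3}$; likewise $2k\log_q(n) = 6\log_2(n)$. The side conditions $k \ge 2$ and $1 \le \ell \le k$ are plainly satisfied by $(k,\ell) = (3,2)$, so the theorem applies and supplies, for all sufficiently large $n$, a rainbow-triangle-free coloured matroid $(M;c)$ with $|c| = 3$ and $M \cong \PG(n-1,2)$ in which every flat $F$ of $M$ with $|c(F)| \le 2$ satisfies $r_M(F) \le \tfrac{n}{3} + 6\log_2(n)$.

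Next I would translate this matroid statement into the language of the corollary. Identifying $\PG(n-1,2)$ with the matroid $M$ as in the conventions of Section~1, the colouring $c$ is a $3$-colouring of the points of $\PG(n-1,2)$, and since $(M;c)$ is rainbow-triangle-free, no triangle (i.e.\ no triple of collinear points) receives three distinct colours, which is exactly the requirement on $c$. A subspace of $\PG(n-1,2)$ is a flat $F$ of $M$, its dimension is $r_M(F)$, and it is bicoloured precisely when $|c(F)| \le 2$. Hence the bound $r_M(F) \le \tfrac{n}{3} + 6\log_2(n)$ furnished by the theorem is literally the assertion that every bicoloured subspace of $\PG(n-1,2)$ has dimension at most $\tfrac{n}{3} + 6\log_2(n)$, and the `sufficiently large $n$' is inherited verbatim. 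This completes the proof.

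There is essentially no obstacle here: the entire content of the corollary is contained in the previous theorem, and the only `work' is the elementary bookkeeping of the binomial coefficients and the purely notational passage between flats of $M$ and subspaces of the projective geometry. If there is any point to watch, it is merely to confirm that `bicoloured' in the statement of the corollary is intended to mean `using at most two colours' (so that it matches the hypothesis $|c(F)| \le 2$ of the theorem, rather than `using exactly two colours'); under that reading the deduction is immediate.
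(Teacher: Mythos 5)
Your proposal is correct and matches the paper's intent exactly: the corollary is stated as an immediate specialization of the preceding theorem with $(q,k,\ell)=(2,3,2)$, for which $\binom{\ell}{2}/\binom{k}{2}=\tfrac13$ and $2k\log_q(n)=6\log_2(n)$, and the paper offers no further argument. Your bookkeeping and the translation between flats of the coloured matroid and bicoloured subspaces of $\PG(n-1,2)$ (with ``bicoloured'' meaning $|c(F)|\le 2$) are exactly what is needed.
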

  
\section {Ternary Lines}

We now prove Theorem~\ref{mainchar3}, which we rephrase more concretely
and in terms of two-colourings rather than induced subgeometries. 
The key observation is that ternary projective geometries containing no line
with exactly two points of each colour are targets by Theorem~\ref{targetiffline}.

\begin{theorem}
  Let $c_0$ be a $2$-colouring of the four points of $\PG(1,3)$, containing 
  exactly two points of each colour. 
  If $n \ge 2$, and $c$ is a $c_0$-free $2$-colouring of $G = \PG(n-1,3)$, 
  then $G$ has a monochromatic subspace of dimension at least $\ceil{\tfrac{n}{2}}$.
  Moreover, the value of $\ceil{\tfrac{n}{2}}$ is best-possible. 
\end{theorem}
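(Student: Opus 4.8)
The plan is to recognise copies of $c_0$ as lines carrying exactly two points of each colour, to invoke Theorem~\ref{targetiffline} to conclude that the associated coloured matroid is a target, and then to read off the monochromatic subspace from the target structure; for the sharpness claim I would exhibit an extremal lift-join.

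Let $(M;c)$ be the connected, modular coloured matroid with $M \cong \PG(n-1,3)$ corresponding to $G$ and $c$; since $n \ge 2$, every line of $M$ (a rank-two flat) has exactly $3+1=4$ points. An injective homomorphism from $\PG(1,3)$ into $G$ maps $\PG(1,3)$ bijectively onto a line of $M$, so a copy of $c_0$ in $c$ is precisely a line $L$ of $M$ on which $c$ uses each colour exactly twice. As $c$ is $c_0$-free, no line of $M$ has exactly two points of each colour, so on every line $L$ the colours occur with multiplicities $(4,0)$, $(0,4)$, $(3,1)$ or $(1,3)$; in each case there is a point $P$ of $L$ with both $P$ and $L-P$ monochromatic, so $(M;c)|L$ is a target by Lemma~\ref{linetarget}. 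Since $M$ is connected and modular with a line having at least four points, Theorem~\ref{targetiffline} now yields that $(M;c)$ itself is a target.

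If $|c|=1$ then $G$ is monochromatic and there is nothing to prove, so assume $[c]$ consists of two colours with classes $X_1,X_2$. Applying Lemma~\ref{targetomega} with $L=[c]$ gives $\omega_M(X_1)+\omega_M(X_2)=\omega_M(E(M))=r(M)=n$, so some $X_i$ contains a flat of rank at least $n/2$, and hence, ranks being integers, a flat $F$ with $r_M(F)\ge \ceil{n/2}$. This flat is the required monochromatic subspace.

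For sharpness, for each $n \ge 2$ I would take complementary (hence skew) flats $F_1,F_2$ of $M \cong \PG(n-1,3)$ with $r_M(F_1)=\ceil{n/2}$ and $r_M(F_2)=\floor{n/2}$, and let $c=c_1\otimes c_2$ be the lift-join of the constant colouring $c_1$ of $F_1$ with colour $1$ and the constant colouring $c_2$ of $F_2$ with colour $2$; both $c_i$ are simple since $M$ is simple. By Lemma~\ref{targetifflj} this $c$ is a target, hence rainbow-triangle-free; moreover $X_1=F_1$ and $X_2=E(M)-F_1$, so each line meets the flat $F_1$ in a subflat of rank $0$, $1$ or $2$ and therefore carries $0$, $1$ or $4$ points of colour $1$, never exactly two, so $c$ is $c_0$-free. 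A monochromatic flat of colour $1$ lies in $F_1$ and so has rank at most $\ceil{n/2}$, while by Lemma~\ref{ljomega} we have $\omega_M(X_2)=\omega_M(F_1\cap X_2)+\omega_M(F_2\cap X_2)=0+r_M(F_2)=\floor{n/2}$; thus no monochromatic subspace of this $c$ has dimension exceeding $\ceil{n/2}$, so the bound cannot be improved. The only substantive ingredient is Theorem~\ref{targetiffline}, which is available; the remaining work — the line-by-line colour analysis and checking that the extremal lift-join is $c_0$-free — is routine, so I do not anticipate a genuine obstacle beyond careful bookkeeping.
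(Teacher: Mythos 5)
Your proposal is correct and follows essentially the same route as the paper: recognise copies of $c_0$ as lines with two points of each colour, deduce from Lemma~\ref{linetarget} and Theorem~\ref{targetiffline} that $(M;c)$ is a target, split $n = \omega_M(X_1)+\omega_M(X_2)$ to extract the flat, and certify sharpness with the colouring whose one colour class is a rank-$\ceil{n/2}$ flat. Your write-up is in fact slightly more careful than the paper's at the end, since you explicitly verify that the extremal colouring is $c_0$-free (each line meets $F_1$ in $0$, $1$ or $4$ points).
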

\begin{proof}
  If some line $L$ of $M \cong \PG(n-1,3)$ contains exactly two points of each colour, then 
  (since any ordered pair of distinct points of $\PG(1,3)$ is related by 
  a linear isomorphism to any other) there is a copy of $c_0$ inside $G$. 
  
  It follows that each line of $M$ contains at most one point of some colour, 
  and therefore $(M; c)|L$ is a target for every line $L$. By Theorem~\ref{targetiffline},
  we see that $(M;c)$ itself is a target. If $X_1$ and $X_2$ are the colour classes of $c$, 
  we have $\omega_M(X_1) + \omega_M(X_2) = n$, so $\omega(X_i) \ge \tfrac{n}{2}$ for some $i$, 
  so some $X_i$ contains the required monochromatic subspace. 

  To see that $\ceil{\tfrac{n}{2}}$ is best possible, consider a $2$-colouring $c$ of the 
  matroid $M = \PG(n-1,3)$
  in which one colour class is a rank-$\ceil{\tfrac{n}{2}}$-dimensional flat $F$. 
  Every monochromatic flat $F'$ of $M$ with rank greater than $\tfrac{n}{2}$ must intersect $F$
  due to a dimension argument, so must be contained in $F$ by the construction of $c$; 
  but then $\ceil{\tfrac{n}{2}} < r_M(F') \le r_M(F) = \ceil{\tfrac{n}{2}}$, a contradiction. 
\end{proof}

\section{Claws}

The following was proved in [\ref{bkknp}], Lemma 4.5. 
There, its statement is phrased using the `critical number' $\chi$, which for a set $X$ of elements
in a matroid $M \cong \PG(n-1,2)$ is defined by $\chi(X) = n - \omega_M(E(M)-X)$. 

\begin{lemma}\label{evenplanealpha}
  If $M \cong \PG(n-1,2)$ and $X \subseteq E(M)$ is a set 
  such that $|X \cap P|$ is even for every plane $P$ of $M$, 
  then $\omega_M(E(M)-X) \ge \ceil{\tfrac{n}{2}} - 1$. 
\end{lemma}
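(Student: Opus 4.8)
The plan is to translate the parity hypothesis into a statement that the indicator function of $X$ is a low-degree polynomial, and then apply the classical fact that quadratic forms over $\bF_2$ have large totally singular subspaces. Identify $M\cong\PG(n-1,2)$ with $\bF_2^n\setminus\{0\}$, so that a flat of rank $k$ is a $k$-dimensional linear subspace with $0$ deleted, and set $g=\mathds{1}_X\colon\bF_2^n\to\bF_2$; since $0\notin X$ we have $g(0)=0$, and for every linear subspace $W\le\bF_2^n$ one has $|X\cap W|\equiv\sum_{v\in W}g(v)\pmod 2$. For $n\le 2$ the bound $\ceil{n/2}-1\le 0$ is trivial, so assume $n\ge 3$.

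First I would bootstrap the hypothesis to: $\sum_{v\in W}g(v)=0$ for \emph{every} subspace $W$ with $\dim W\ge 3$. This is an easy double-counting induction on $\dim W$ — the case $\dim W=3$ is the hypothesis applied to a plane, and for $\dim W=k+1\ge 4$ each nonzero vector of $W$ lies in an odd number ($2^k-1$) of rank-$k$ subspaces of $W$, so $|X\cap W|$ has the same parity as $\sum_P|X\cap P|$, a sum of even numbers. Next, write $g=\sum_{S\subseteq[n]}c_S x^S$ in multilinear form; a standard computation shows that for a subspace $W$ with a chosen basis, $\sum_{v\in W}g(v)$ is the coefficient of the top monomial of $g$ re-expressed in that basis. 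Taking $W=\spn(e_i:i\in S)$ gives $\sum_{v\in W}g(v)\equiv c_S$ whenever $|S|=\dim W$, so a downward induction on $|S|$ (starting from $S=[n]$, $W=\bF_2^n$) yields $c_S=0$ for all $|S|\ge 3$. Hence $\deg g\le 2$.

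It remains to prove the self-contained lemma that a function $h\colon\bF_2^m\to\bF_2$ with $h(0)=0$ and $\deg h\le 2$ vanishes identically on some subspace of dimension $\ge\ceil{m/2}-1$. I would prove this by induction on $m$, the cases $m\le 2$ being trivial. For $m\ge 3$ there is a nonzero $v$ with $h(v)=0$, since otherwise $h=1+\mathds{1}_{\{0\}}$ has degree $m>2$. The form $B(x,y)=h(x+y)+h(x)+h(y)$ is bilinear and alternating (as $\deg h\le 2$), so $v$ lies in $W:=\{x:B(x,v)=0\}$, which has dimension $\ge m-1$; moreover $h(x+v)=h(x)$ for all $x\in W$, so $h$ descends to a degree-$\le 2$ function $\tilde h$ on $W/\langle v\rangle$ (of dimension $\ge m-2$) with $\tilde h(0)=0$. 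Applying the inductive hypothesis to $\tilde h$ and pulling the resulting subspace back along $W\to W/\langle v\rangle$ (which adds back the line $\langle v\rangle$) produces a subspace of dimension $\ge\ceil{(m-2)/2}-1+1=\ceil{m/2}-1$ on which $h$ vanishes. Applied to $g$ with $m=n$, this gives a linear subspace $U$ with $\dim U\ge\ceil{n/2}-1$ and $U\cap X=\varnothing$; then $U$ spans a flat of $M$ of rank $\dim U$ contained in $E(M)-X$, which is exactly the claim $\omega_M(E(M)-X)\ge\ceil{n/2}-1$.

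The conceptual heart is the first part — once $\mathds{1}_X$ is viewed as a polynomial, the parity conditions on planes force it to be quadratic — and I expect the main technical obstacle to lie in the second part: getting the induction to deliver exactly $\ceil{m/2}-1$ rather than a weaker bound. The subtlety is that, quotienting by a vector $v$ not in the radical of $B$, one loses two dimensions (passing from $\bF_2^m$ to $W/\langle v\rangle$) while only gaining one back, so the $-1$ in the bound is precisely what the "elliptic" behaviour produces, and the ceiling-function bookkeeping (together with the small base cases $m\in\{1,2\}$) must be tracked with care.
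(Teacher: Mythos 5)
Your proposal is correct. Note, though, that the paper does not prove Lemma~\ref{evenplanealpha} at all: it is quoted from [\ref{bkknp}, Lemma 4.5], where even-plane sets are likewise analysed via quadratic forms, so your argument is essentially a compact, self-contained reconstruction of the underlying mathematics rather than a different theorem. Your two steps both check out: the double-counting bootstrap (each hyperplane count $2^k-1$ being odd) upgrades the plane hypothesis to all subspaces of dimension at least $3$, and evaluating $\sum_{v\in W} g(v)$ on the coordinate subspaces $W=\spn(e_i : i\in S)$ kills every ANF coefficient $c_S$ with $|S|\ge 3$ directly -- in fact no downward induction on $|S|$ is needed, since the sum over such a $W$ equals $c_S$ on the nose. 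The quadratic-form step is also sound: $B(x,y)=h(x+y)+h(x)+h(y)$ is alternating because $h(0)=0$ and $\deg h\le 2$, the descent to $W/\langle v\rangle$ (equivalently, restriction to a complement of $\langle v\rangle$ in $W$) preserves the degree bound, pulling the inductive subspace back adds the line $\langle v\rangle$ on which $h$ also vanishes, and the bookkeeping $\ceil{(m-2)/2}=\ceil{m/2}-1$ is right; the case where $v$ lies in the radical of $B$ (so $\dim W=m$) only improves the bound and is covered by monotonicity. The bound you prove is the familiar statement that a quadratic form on $\bF_2^m$ has a totally singular subspace of dimension at least $\ceil{m/2}-1$, with the elliptic forms showing this is tight, which is exactly the source of the $-1$ in the lemma. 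What your route buys is independence from the external reference; what the citation buys the paper is brevity and consistency with the matroidal language ($\omega_M$, critical number) used elsewhere.
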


The next lemma follows from Lemma~\ref{complorsum}.
\begin{lemma}\label{compltrifreeomega}
  If $M \cong \PG(n-1,2)$ and $X \subseteq E(M)$ is a set
  with $\omega_M(X) \le 1$, then $\omega_M(E(M)-X) \ge C_{\ref{complorsum}}n^{1/7}$. 
\end{lemma}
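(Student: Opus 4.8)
The plan is to reduce the statement directly to Theorem~\ref{complorsum} by translating everything into the language of subsets of $\bF_2^n$. Identify $M \cong \PG(n-1,2)$ with $\bF_2^n \del \{0\}$ in the usual way, so that rank-$k$ flats of $M$ correspond to $k$-dimensional projective subspaces (i.e.\ $k$-dimensional linear subspaces with the zero vector deleted). Since no projective subspace contains $0$, for any $X \subseteq E(M)$ we have $\omega_M(X) = \omega(X)$ and $\omega_M(E(M)-X) = \omega(\bF_2^n \del X) = \alpha(X)$, where $\omega$ and $\alpha$ are as defined before Corollary~\ref{bbomega}. So the statement to prove is exactly: if $X \subseteq \bF_2^n \del \{0\}$ has $\omega(X) \le 1$, then $\alpha(X) \ge C_{\ref{complorsum}} n^{1/7}$.

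The key observation is that the hypothesis $\omega_M(X) \le 1$ is precisely the statement that $X$ contains no rank-two flat of $M$, i.e.\ no line $\{x,y,x+y\}$ with $x,y$ distinct and nonzero. Equivalently, for all distinct $x,y \in X$ we have $x+y \notin X$; since also $0 \notin X$ and $x+x = 0$ for every $x$, this is the same as saying $(X+X) \cap X = \varnothing$. This translation is the one place requiring a small amount of care (keeping track of the zero vector), and is the only real content beyond invoking the hard input.

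Next I would apply Theorem~\ref{complorsum} to $X$, which gives $\max(\alpha(X), \omega(X+X)) \ge C_{\ref{complorsum}} n^{1/7}$, and split into two cases. If $\alpha(X) \ge C_{\ref{complorsum}} n^{1/7}$, we are done immediately. Otherwise $\omega(X+X) \ge C_{\ref{complorsum}} n^{1/7}$, so there is a linear subspace $U \le \bF_2^n$ with $\dim U = \omega(X+X)$ and $U \del \{0\} \subseteq X+X$. By the observation above, $(X+X) \cap X = \varnothing$, so $U \del \{0\} \subseteq \bF_2^n \del X$; hence $\alpha(X) = \omega(\bF_2^n \del X) \ge \dim U \ge C_{\ref{complorsum}} n^{1/7}$. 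In both cases $\omega_M(E(M)-X) = \alpha(X) \ge C_{\ref{complorsum}} n^{1/7}$, as required.

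There is no serious obstacle: the content is entirely carried by Theorem~\ref{complorsum}, and the remaining work is the dictionary between matroid flats and projective subspaces together with the elementary reformulation of ``contains no line'' as ``$X$ is disjoint from its sumset''. The only thing to be vigilant about is that $X+X$ contains $0$ while $X$ does not, so the disjointness $(X+X)\cap X = \varnothing$ must be argued from the absence of lines in $X$ rather than asserted as a formal consequence of sum-freeness.
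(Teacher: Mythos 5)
Your proof is correct and follows essentially the same route as the paper: translate $\omega_M(X)\le 1$ into the disjointness of $X$ from $X+X$ (so that $\alpha(X)\ge\omega(X+X)$) and then invoke Theorem~\ref{complorsum}. The paper simply notes that this inequality makes the maximum in Theorem~\ref{complorsum} equal to $\alpha(X)$ outright, whereas you split into two cases that collapse to the same conclusion.
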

\begin{proof}
  If $\wh{X} \subseteq \bF_2^n$ is the set of vectors corresponding to $X$,
  the fact that $\omega_M(X) \le 1$ implies that 
  $\wh{X} + \wh{X} \subseteq \bF_2^n - \wh{X}$, so $\alpha(\wh{X}) \ge \omega(\wh{X}+\wh{X})$. 
  Lemma~\ref{complorsum} therefore gives that 
  $\omega_M(E(M)-X) = \alpha(\wh{X}) \ge C_{\ref{complorsum}}n^{1/7}$. 
\end{proof}

For the next two results, let $\wh{c}$ be a 
$2$-colouring of the binary projective plane $\PG(2,2)$
in which the colour class of $1$ is a three-element independent set. 
Viewed as a binary matroid rather than a colouring, 
this is the \emph{claw}. The main theorem of [\ref{nn}], 
which we phrase here in terms of colourings, precisely describes
the structure of claw-free binary matroids. 

\begin{theorem}\label{clawfreestructure}
  Let $c$ be a $\wh{c}$-free $2$-colouring of $M \cong \PG(n-1,2)$ 
  with colour classes $X_1$ and $X_2$. 
  Then there are mutually skew flats $F_1, \dotsc, F_t$ of $M$ 
  such that $c = \bigotimes_i (c | F_i)$, and for each $i$, either
  \begin{enumerate}[(i)]
    \item\label{pgsum} $X_1 \cap F_i$ is the union of two disjoint flats of $F_i$ whose 
      union is spanning in $F_i$, 
    \item\label{compltrifree} $\omega_M(X_2 \cap F_i) \le 1$, or 
    \item\label{evenplane} for each plane $P$ of $F_i$, the set $X_1 \cap P$ has even size. 
  \end{enumerate} 
\end{theorem}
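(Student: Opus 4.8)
\textbf{Proof plan for Theorem~\ref{clawfreestructure}.}

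The plan is to run an induction on $r(M)$, mirroring the lift-join decomposition strategy used throughout the paper but now for the claw rather than the rainbow triangle. The base cases $r(M) \le 2$ are trivial: when $r(M) \le 1$ the matroid is a single point, and when $r(M)=2$ every $2$-colouring of a line has each plane (there are none of rank $3$) vacuously of even intersection, or more simply one checks directly that a single flat $F_1 = E(M)$ satisfies one of the three alternatives. For the inductive step, the first task is to locate a nontrivial decomposer of $(M;c)$ so that the lift-join machinery of Section~5 (specifically Lemma~\ref{liftjoiniff} and associativity) lets us glue together decompositions of the two complementary flats. Here the key input is the structural dichotomy one extracts from the main theorem of [\ref{nn}] (Theorem~\ref{clawfreestructure}'s cited source): a connected claw-free binary matroid that is \emph{not} itself of one of the three types (i)--(iii) must admit a nontrivial decomposer $F_0$, whose complement $F_1$ in $M$ is then also claw-free. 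Applying the inductive hypothesis to $(M;c)|F_0$ and $(M;c)|F_1$ produces mutually skew families of flats for each, and since a decomposer gives $c = (c|F_0) \otimes (c|F_1)$, concatenating the two families (with the $F_0$-family first) yields the required family for $M$ by associativity of lift-joins (Lemma~\ref{assoc}); one must check that the flats of the combined list remain mutually skew, which follows because $F_0$ and $F_1$ are skew and skewness is preserved under passing to subflats.

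The substance of the argument is therefore the classification of the \emph{irreducible} pieces: a connected, modular (hence here binary projective) coloured matroid $(M;c)$ with no claw and no nontrivial decomposer must fall into exactly one of (i), (ii), (iii) with $F_i = E(M)$. One proves this by a careful case analysis on $\omega_M(X_1)$ and $\omega_M(X_2)$, the maximum ranks of monochromatic flats in each colour class. If $\omega_M(X_2) \le 1$ we are immediately in case~(ii). Otherwise $X_2$ contains a line, and one examines how $X_1$ meets the various flats; the absence of a claw (a three-element independent set all of whose pairwise-sum points lie outside it) forces strong closure-type constraints. The two remaining outcomes are that $X_1$ decomposes as a union of two skew spanning flats (case (i), the ``projective sum'' situation), or that $X_1$ has even intersection with every plane (case (iii), the ``affine-type'' or even-set situation, the one that Lemma~\ref{evenplanealpha} is tailored for). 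The cleanest route is probably to invoke the already-published structure theorem of [\ref{nn}] as a black box for exactly this irreducible classification, so that the present theorem becomes a matter of repackaging that result in the lift-join language and confirming the three named alternatives are the ones that appear.

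The main obstacle I anticipate is bookkeeping around the \emph{irreducible} case rather than any deep new idea: one must be sure that the trichotomy (i)/(ii)/(iii) is genuinely exhaustive for decomposer-free connected claw-free binary matroids, and that the inductive gluing does not accidentally merge two pieces in a way that violates skewness or loses one of the three properties on a layer. A secondary technical point is the degenerate ranks --- rank-one flats, or flats where $X_1 \cap F_i$ is empty or all of $F_i$ --- which trivially satisfy (i) or (iii) but need to be handled so the statement reads uniformly; these are best absorbed into the base case and the remark (as in the target discussion) that the containments can be taken strict. Apart from that, every tool needed --- Lemma~\ref{liftjoiniff}, Lemma~\ref{assoc}, Lemma~\ref{decompcontract}, and the cited [\ref{nn}] structure theorem --- is already in place, so the proof should be short modulo invoking that external input.
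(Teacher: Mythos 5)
The paper gives no proof of this statement at all: it is presented as the main theorem of [\ref{nn}], simply rephrased in the colouring/lift-join language, so the only ``proof'' is the citation. Your plan ultimately rests on the same move---outsourcing the substantive classification to [\ref{nn}]---and the inductive decomposer-gluing wrapper you add is harmless but redundant once that theorem is invoked in its full global-decomposition form, so your approach is essentially the paper's.
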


We can now prove Theorem~\ref{mainclaw}, which we rephrase in terms of two-colourings. 

\begin{theorem}\label{claweh}
  There exists $C > 0$ such that, 
  if $c$ is a $\wh{c}$-free two-colouring of $M \cong \PG(n-1,2)$, 
  then $M$ has a monochromatic flat of rank at least $C n^{1/7}$.
\end{theorem}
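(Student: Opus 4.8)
The plan is to combine the structure theorem for claw-free binary matroids (Theorem~\ref{clawfreestructure}) with the fact that each of its three possible ``building block'' types admits a large monochromatic subflat, and then argue that a large subflat in one block carries over to a large monochromatic flat of $M$. Concretely, I would set $C = \min\left(\tfrac12, C_{\ref{complorsum}}\right)$ and proceed by induction on $n$ (or $r(M)$), the base case being trivial.

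First I would apply Theorem~\ref{clawfreestructure} to obtain mutually skew flats $F_1,\dotsc,F_t$ with $c = \bigotimes_i (c|F_i)$, each $F_i$ of one of the three listed types. Since the $F_i$ have spanning union, $\sum_i r_M(F_i) = n$, so some $F_i$ has $r_M(F_i) \ge n/t$. That alone is not enough, so instead I would want a lower bound on $\omega_M(X_j \cap F_i)$ for some colour $j$ in terms of $r_M(F_i)$ for \emph{every} $F_i$, and then use Lemma~\ref{ljomega}: writing $X_j = c^{-1}(j)$ as a colour class of the lift-join, $\omega_M(X_j) = \sum_i \omega_M(X_j \cap F_i)$, so it suffices to find one $i$ with $\omega_M(X_1 \cap F_i)$ or $\omega_M(X_2\cap F_i)$ of size $\ge C r_M(F_i)^{1/7} \ge Cn^{1/7}/t^{1/7}$ — wait, this loses a factor. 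Better: I would show directly that for \emph{each} $F_i$, $\max(\omega_M(X_1\cap F_i), \omega_M(X_2 \cap F_i)) \ge C\, r_M(F_i)^{1/7}$; then since $r_M(F_i) \ge n/t$ for some $i$, and $t$ is an absolute constant ($t \le n$ but we want $t$ bounded — actually $t$ is not bounded), I must be more careful and instead argue: one of the colour classes $X_1, X_2$ satisfies $\omega_M(X_j) = \sum_i \omega_M(X_j\cap F_i)$, and I bound this sum below using a per-block estimate that is \emph{linear} in $r_M(F_i)$ in cases (\ref{pgsum}) and (\ref{evenplane}), and only sublinear in case (\ref{compltrifree}).

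The per-block analysis is the heart of the argument. In case (\ref{pgsum}), $X_1 \cap F_i$ is a union of two disjoint spanning flats $A \cup B$ of $F_i$ with $r(A)+r(B) = r_M(F_i)$, so $\omega_M(X_1 \cap F_i) \ge \max(r(A), r(B)) \ge \tfrac12 r_M(F_i)$. In case (\ref{evenplane}), Lemma~\ref{evenplanealpha} applied to $M|F_i$ gives $\omega_M(F_i - X_1) = \omega_M(X_2 \cap F_i) \ge \ceil{r_M(F_i)/2} - 1$. In case (\ref{compltrifree}), Lemma~\ref{compltrifreeomega} (i.e.\ Theorem~\ref{complorsum}) applied to $M|F_i$ gives $\omega_M(X_2 \cap F_i) \ge C_{\ref{complorsum}} r_M(F_i)^{1/7}$. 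So in every case one of the two colour classes restricted to $F_i$ has $\omega_M \ge C\, r_M(F_i)^{1/7}$ where $C = \min(\tfrac12, C_{\ref{complorsum}})$ and we crudely bound $\tfrac12 r_M(F_i) \ge \tfrac12 r_M(F_i)^{1/7}$ etc. To turn this into a bound on $\omega_M(X_j)$ for a \emph{single} $j$, I would split $[t] = I_1 \sqcup I_2$ according to which colour class attains the bound on $F_i$, note $\sum_{i\in I_1} r_M(F_i) + \sum_{i \in I_2} r_M(F_i) = n$, so some $I_j$ has $\sum_{i\in I_j} r_M(F_i) \ge n/2$, and then $\omega_M(X_j) = \sum_i \omega_M(X_j \cap F_i) \ge \sum_{i \in I_j} C\, r_M(F_i)^{1/7} \ge C\, (n/2)^{1/7}$ using superadditivity-in-reverse — actually $\sum x_i^{1/7} \ge (\sum x_i)^{1/7}$ since $1/7 \le 1$ and the $x_i$ are nonnegative. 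Hence $\omega_M(X_j) \ge C(n/2)^{1/7} \ge (C/2) n^{1/7}$, giving the required monochromatic flat after adjusting the constant.

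The main obstacle is making sure the ``split by colour'' bookkeeping is valid: the inequality $\omega_M(X_j) = \sum_i \omega_M(X_j \cap F_i)$ requires $c = \bigotimes_i(c|F_i)$ with the $F_i$ mutually skew and spanning, which Theorem~\ref{clawfreestructure} supplies, and the concavity step $\sum_i x_i^{1/7} \ge (\sum_i x_i)^{1/7}$ is elementary. One minor subtlety: in cases (\ref{pgsum}) and (\ref{evenplane}) the $-1$ and ceilings mean the bound $\omega_M(X_j\cap F_i) \ge C r_M(F_i)^{1/7}$ can fail for very small $r_M(F_i)$; this is harmless because such blocks contribute negligibly, or one can absorb it by choosing $n$ large and noting blocks with $r_M(F_i) \le 2$ contribute a bounded total, so I would simply state the cleaner asymptotic bound with a possibly smaller absolute constant $C$, which is all the theorem asks for.
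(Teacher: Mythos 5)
Your proposal is essentially the paper's own proof: the same appeal to Theorem~\ref{clawfreestructure}, the same three per-block estimates (halving for cases~(\ref{pgsum}) and~(\ref{evenplane}), Lemma~\ref{compltrifreeomega} for case~(\ref{compltrifree})), the same use of Lemma~\ref{ljomega} together with $\sum_i x_i^{1/7} \ge \left(\sum_i x_i\right)^{1/7}$, and the same final pigeonhole on which colour's blocks carry total rank at least $n/2$. One small slip: in case~(\ref{compltrifree}) the hypothesis is $\omega_M(X_2\cap F_i)\le 1$, so Lemma~\ref{compltrifreeomega} bounds $\omega_M(X_1\cap F_i)$ rather than $\omega_M(X_2\cap F_i)$ --- harmless here, since your bookkeeping splits the blocks by whichever colour attains the bound.
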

\begin{proof}
  Let $C' = \min(C_{\ref{complorsum}}, \tfrac{1}{2})$; we show that $C = C'/2^{1/7}$ 
  satisfies the theorem.
  Let $F_1, \dotsc, F_t$ be flats given by Lemma~\ref{clawfreestructure}. 
  Let $(I_1, I_2, I_3)$ be the partition of $\{1, \dotsc, t\}$ 
  corresponding to which of (\ref{pgsum}), (\ref{compltrifree}), (\ref{evenplane})
  holds for $F_i$. 
  For each $i \in I_1$, we clearly have 
  $\omega_M(F_i \cap X_1) \ge \tfrac{1}{2}r_M(F_i) \ge C' (r_M(F_i))^{1/7}$. 

  For each $i \in I_2$, applying Lemma~\ref{compltrifreeomega} to $F_i \cap X_1$
  in the matroid $M | F_i$ gives $\omega_M(F_i \cap X_1) \ge C' (r_M(F_i))^{1/7}$. 
  It follows from Lemma~\ref{ljomega} that 
  \[ \omega_M(X_1) \ge \sum_{i \in I_1 \cup I_2} C' (r_M(F_i))^{1/7} 
    \ge C' \left(\sum_{i \in I_1 \cup I_2}r_M(F_i)\right)^{1/7}.\] 

  For each $i \in I_3$, Lemma~\ref{evenplanealpha} gives
  $\omega_M(F_i \cap X_2) \ge \tfrac{1}{2}r_M(X_i),$ so  
  \[ \omega_M(X_2) \ge \sum_{i \in I_3} \tfrac{1}{2} r_M(F_i)
    = \tfrac{1}{2} \sum_{i \in I_3}r_M(F_i).\] 
  Since the $F_i$ are skew with spanning union, we have $\sum_{i} r_M(F_i) = n$, 
  so either $\sum_{i \in I_1 \cup I_2} r_M(F_i) \ge \tfrac{n}{2}$ or  
  $\sum_{i \in I_3} r_M(F_i) \ge \tfrac{n}{2}$.
  The first implies that $\omega_M(X_1) \ge (C'2^{-1/7})n^{1/7} = C n^{1/7}$ 
  and the second gives $\omega_M(X_2) \ge \tfrac{n}{2} \ge C n^{1/7}$, as required.
\end{proof}

\section{Infinite Fields}\label{infsection}

Matroids are traditionally defined to have finite ground sets, 
but this is certainly not necessary. 
Infinite matroids of finite rank can easily be defined from the usual matroid rank axioms by simply 
dropping the requirement that the ground set is finite 
while maintaining the stipulation that rank is integer-valued. 
With the notable exception of matroid duality,
most matroid theory works exactly as expected for these objects -- 
see [\ref{bdkpw}] for a discussion. 
It is natural to ask whether Theorem~\ref{mainstructuresimple} holds in this expanded setting, 
since it would then apply to colourings of finite-dimensional projective 
space over fields such as $\mathbb{Q}, \mathbb{R}$ and $\mathbb{C}$. 
Indeed, every component of the proof of Theorem~\ref{mainstructuresimple} 
easily adapts to  finite-rank infinite matroids, 
except Lemma~\ref{rankthree}, whose proof involves counting points in lines. 

Unfortunately, this issue is fundamental,
because the generalization of Theorem~\ref{mainstructuresimple} to projective 
geometries over infinite fields turns out to be false. 
We show via an algebraic construction that for many infinite fields $\bF$
(including $\mathbb{Q}, \mathbb{R}$ and $\mathbb{C}$), 
there is a rainbow-triangle-free $n$-colouring of $\PG(n-1,\bF)$
that cannot be decomposed via lift-joins. 
The construction is a higher-dimensional version of one discovered for projective planes 
by Carter and Vogt [\ref{cartervogt}]
and, independently, Hales and Straus [\ref{hs82}]. 

To explain the construction, 
we need some algebra. A \emph{non-Archimedean valuation} on a field $\bF$
is a function $v : \bF \to \bR \cup \{\infty\}$ so that $v^{-1}(\infty) = \{0\}$, 
and $v(ab) = v(a) + v(b)$ and $v(a+b) \ge \min(v(a), v(b))$ for all $a, b \in \bF$. 
Note that $v(a^{-1}) = - v(a)$ for all $a$ and therefore that $v(1) = v(-1) = 0$;
if there is some $a \ne 0$ for which $v(a) \ne 0$, then $v$ is \emph{nontrivial}.

It is routine to show that finite fields do not admit nontrivial non-Archimedean valuations, 
and it is known that the same is true for any algebraic extension of a finite field. 
Many other fields do have such valuations, however. For example, if $p$ is a prime, 
then the $p$-adic valuation on $\bQ$, defined by $v(p^t \tfrac{r}{s}) = t$ 
for all $r,s$ relatively prime to $p$, is non-Archimedean. 
Nontrivial valuations on $\bR$ and $\bC$ are harder to construct, but their existence 
follows from the axiom of choice. 

Given a non-Archimedean valuation $v$ on a field $\bF$, an integer $n \ge 1$, 
and a vector $(u_1, \dotsc, u_n) \in \bF^n$, 
write $c_v(u)$ for the smallest index $s$ such that $v(u_s) = \min_i v(u_i)$; 
this is an $n$-colouring of $\bF^n \del \{0\}$.

It is easy to check using the definition of a non-Archimedean valuation
that $c_v(\lambda u) = c_v(u)$ for all $\lambda \in \bF^{\times}$, 
so $c_v$ can also be viewed as an $n$-colouring defined on the projective space $\PG(n-1,\bF)$
whose points are the one-dimensional subspaces of $\bF^n$. 

(For $p$-adic valuations on the field of rationals, we can view $c_v$ very concretely:
to colour a point $x = (r_1, \dotsc, r_n)$, simply scale the $r_i$ to be integers with no common factor, 
and colour $x$ with the first index whose entry is not a multiple of $p$.)

For the remainder of the section, write $\PG(n-1,\bF)$ for the projective geometry whose points
are explicitly the $1$-dimensional subspaces of $\bF_q^n$, 
and let $[x]$ be the element of $\PG(n-1,\bF)$ corresponding to the vector $x \in \bF^n \del \{0\}$. 

We first show that the colouring $c_v$ is rainbow-triangle-free -- in fact
it is just as easy to show the equivalent statement that it has no rainbow circuit. 
(Showing this for all circuits has the tiny advantage of not exploiting
the equivalence in Lemma~\ref{rainbowcircuit}, 
which does hold for finite-rank infinite matroids, 
but we technically did not prove in this setting.)

\begin{theorem}\label{nonarchrainbow}
    Let $n \ge 1$, and let $v$ be a non-Archimedean valuation on a field $\bF$.
    Then every $c_v$-rainbow set in $\PG(n-1,\bF)$ is independent. 
\end{theorem}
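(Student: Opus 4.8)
The plan is to prove the contrapositive: a linearly dependent set of points in $\PG(n-1,\bF)$ cannot be $c_v$-rainbow. Suppose $[x^{(1)}], \dotsc, [x^{(m)}]$ are distinct points that are dependent; passing to a minimal dependent subset (a circuit) we may assume there is a relation $\sum_{j=1}^m \lambda_j x^{(j)} = 0$ with every $\lambda_j \in \bF^{\times}$. Since $c_v(\lambda u) = c_v(u)$ for all $\lambda \in \bF^{\times}$, we may replace each $x^{(j)}$ by $\lambda_j x^{(j)}$ without changing any colour, and hence assume $\sum_{j=1}^m x^{(j)} = 0$ with each $x^{(j)} \ne 0$. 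Assume for contradiction that the colours $s_j := c_v(x^{(j)})$ are pairwise distinct.

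The core of the argument is an ultrametric domination trick. Write $x^{(j)} = (x^{(j)}_1, \dotsc, x^{(j)}_n)$ and $M_j = \min_i v(x^{(j)}_i)$, so $s_j$ is the least index $i$ with $v(x^{(j)}_i) = M_j$. Let $\mu = \min_j M_j$, let $J = \{j : M_j = \mu\}$, and among the finitely many $j \in J$ let $j^*$ be the one minimizing $s_{j^*}$ (unique, since the $s_j$ are distinct); set $i_0 = s_{j^*}$. I would then verify, looking at coordinate $i_0$ only, that $v(x^{(j^*)}_{i_0}) = \mu$ while $v(x^{(j)}_{i_0}) > \mu$ for every $j \ne j^*$: for $j \notin J$ this holds because $v(x^{(j)}_{i_0}) \ge M_j > \mu$, and for $j \in J \setminus \{j^*\}$ we have $s_j > i_0$ by the choice of $j^*$, and since $s_j$ is the least index attaining the minimum valuation $M_j = \mu$ on $x^{(j)}$, every earlier coordinate --- in particular the $i_0$-th --- has valuation strictly greater than $\mu$.

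With strict domination in hand, the standard strong-triangle lemma (if $v(a) < v(b)$ then $v(a+b) = v(a)$, obtained by applying $v(a+b) \ge \min(v(a),v(b))$ twice, using $v(-1) = 0$) gives, for the finite sum over $j$, that $v\bigl(\sum_{j} x^{(j)}_{i_0}\bigr) = \mu < \infty$. But $\sum_j x^{(j)}_{i_0}$ is the $i_0$-th coordinate of $\sum_j x^{(j)} = 0$, so its valuation is $v(0) = \infty$, a contradiction. Hence the colours $s_1, \dotsc, s_m$ are not pairwise distinct, so the set is not $c_v$-rainbow, which proves the contrapositive.

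I do not anticipate a serious obstacle; the only point needing care in the write-up is the coordinate-$i_0$ case analysis establishing strict domination --- in particular, that one must first normalize by the $\lambda_j$, and that the ``smallest index'' tie-break built into the definition of $c_v$ is precisely what forces the terms $j \in J \setminus \{j^*\}$ to be strictly dominated. Everything else is routine use of the valuation axioms. Note that the argument shows a set fails to be rainbow as soon as it is dependent, so it directly gives the stronger ``no rainbow circuit'' formulation without invoking Lemma~\ref{rainbowcircuit} (which, as remarked in the excerpt, was not formally established in the infinite-rank setting).
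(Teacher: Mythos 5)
Your proof is correct, but it takes a genuinely different route from the paper's. The paper argues directly: ordering the rainbow points $x^1,\dotsc,x^t$ by colour $k_1 < \dotsb < k_t$, it forms the $t\times t$ matrix $A_{ij}=x^i_{k_j}$ and shows $\det A \ne 0$ by observing that in the permutation expansion the identity term has strictly smaller valuation than every other term (the strictness coming, as in your argument, from the smallest-index tie-break in the definition of $c_v$), so $v(\det A)=\sum_i v(x^i_{k_i})<\infty$. You instead prove the contrapositive: normalize a minimal dependency to $\sum_j x^{(j)}=0$, locate a single coordinate $i_0$ at which exactly one summand attains the globally minimal valuation $\mu$ and all others exceed it strictly, and conclude via the ultrametric ``dominant term wins'' identity that the $i_0$-th coordinate of the zero vector has finite valuation. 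Your case analysis establishing strict domination (splitting off $j\notin J$ and $j\in J\setminus\{j^*\}$) is sound, and the normalization by the $\lambda_j$ using projective invariance of $c_v$ is exactly the step that makes the coordinate-wise argument work. Your version is more elementary --- no determinants or permutation signs --- and isolates a single coordinate rather than a full transversal; the paper's version has the mild bonus of exhibiting an explicit nonsingular submatrix indexed by the colours, but for the theorem as stated the two arguments deliver the same conclusion. Your closing remark is also accurate: like the paper's proof, yours shows directly that any dependent set fails to be rainbow, so no appeal to Lemma~\ref{rainbowcircuit} is needed.
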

\begin{proof}
    Let $I = \{x^1, \dotsc, x^t\}$ be a $c_v$-rainbow set, where 
    $c_v(x^1) < c_v(x^2) < \dotsc < c_v(x^t)$.
    Let $x^i = [(x^i_1, \dotsc, x^i_n)]$ and  $c_v(x^i) = k_i$ for each $i$.
    
    Write $|x|$ for $v(x)$. 
    Since $c_v(x^i_1, \dotsc, x^i_n) = k_i$ for each $i$, 
    we have $|x^i_{k_i}| \le |x^i_j|$ for all $j$, 
    and the inequality is strict if $j < k_i$. 

    To show that $I$ is independent, it suffices to show that the $t \times t$ matrix $A$
    defined by  $A_{ij} = x^i_{k_j}$ is nonsingular. 
    Suppose, therefore, that the determinant $|A|$ of $A$ is zero.
    Write $s(\pi) = \pm 1$ for the sign of a permutation $\pi \in S_n$.     For each $\pi \in S_n$, using $|s(\pi)| = 0$, we have 
    \[\left| s(\pi) \prod_{i=1}^t x^i_{k_{\pi(i)}} \right| = \sum_{i=1}^t |x^i_{k_{\pi(i)}}| \ge 
        \sum_{i=1}^t |x^i_{k_i}|.\]
    Moreover, if $\pi$ is not the identity permutation, there is some $s$ such that $\pi(s) < s$, 
    which implies that $k_{\pi(s)} < k_s$, so $|x^s_{k_{\pi(s)}}| > |x^s_{k_s}|$, 
    and the inequality above is strict. Therefore 
    \begin{align*}
        \sum_{i=1}^t |x^i_{k_i}| &< \min_{\pi \ne 1} \left|s(\pi)\prod_{i=1}^t x^i_{k_{\pi(i)}}\right| 
        \le \left|\sum_{\pi \ne 1} s(\pi)\prod_{i=1}^t x^i_{k_{\pi(i)}}\right| = 
            \left||A|- \prod_{i=1}^t x^i_{k_i}\right|.
    \end{align*}
    But $|A| = 0$ and $\left|- \prod_{i=1}^t x^i_{k_i}\right| = \sum_{i=1}^t |x^i_{k_i}|$, 
    giving a contradiction. 
\end{proof}

If $F$ is a nontrivial decomposer of a coloured projective geometry $(G; c)$ 
with $|c| \ge 2$, there clearly exist $x \in F$ and $y \ne F$ with $c(x) \ne c(y)$, 
and $x$ is the only point of the line $\cl_M(\{x,y\})$ with colour $c(x)$. 

The next theorem, on the other hand, 
shows that colourings $c_v$ arising from nontrivial non-Archimedean
valuations contain no lines that intersect a colour class in exactly one point, 
and hence they cannot be decomposed via lift-joins. 
This in turn implies that the statement of Theorem~\ref{mainstructuresimple} 
must fail for projective geometries over general infinite fields, 
even when it is adjusted to allow for infinite matroids. 

\begin{theorem}
    Let $n \ge 1$, and $v$ be a nontrivial non-Archimedean valuation on a field $\bF$.
    Then no line $L$ of $\PG(n-1,\bF)$ intersects a colour class of $c_v$ in exactly one point. 
\end{theorem}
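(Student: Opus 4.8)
The plan is to argue by contradiction. Suppose some line $L$ of $\PG(n-1,\bF)$ meets a colour class $X = c_v^{-1}(k)$ in exactly one point $x^0$; I will produce infinitely many points of $L$ with colour $k$, contradicting $|L\cap X| = 1$. The first step is to set up coordinates adapted to $x^0$: pick a representative vector $u \in \bF^n \setminus \{0\}$ with $[u] = x^0$, and pick any second point $[w] \in L$ distinct from $x^0$ with representative $w$. Since $x^0$ and $[w]$ are distinct points of $\PG(n-1,\bF)$ lying on $L$, the vectors $u,w$ are linearly independent, so $L = \{[u+tw] : t \in \bF\} \cup \{[w]\}$ and distinct values of $t$ give distinct points of $L$. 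Write $m = \min_i v(u_i)$; since $c_v(u) = k$, the index $k$ is the least $s$ with $v(u_s) = m$, and in particular $u_k \ne 0$.

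The technical heart is the claim that there is a real number $D$ with $c_v(u + tw) = k$ for every $t \in \bF$ satisfying $v(t) > D$. To prove this I would choose $D$ large enough that $D + v(w_i) > v(u_j)$ whenever $w_i \ne 0$ and $u_j \ne 0$; this is a finite collection of conditions and is satisfiable because $w \ne 0$ and $u \ne 0$. Then for $v(t) > D$, I check coordinatewise, using the sharp form of the ultrametric inequality (namely $v(a+b) = \min(v(a),v(b))$ when $v(a) \ne v(b)$), that $v(u_i + tw_i) = v(u_i)$ for every $i$ with $u_i \ne 0$, while $v(u_i + tw_i) = v(t) + v(w_i) > m$ (or $= \infty$) for every $i$ with $u_i = 0$. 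It follows that $\min_i v(u_i + tw_i) = m$ and that the least index attaining this minimum must have nonzero $u$-coordinate, hence equals the least $s$ with $v(u_s) = m$, which is $k$. Thus $c_v(u+tw) = k$.

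Finally I would invoke nontriviality of $v$: there is $\tau \ne 0$ with $v(\tau) \ne 0$, and after replacing $\tau$ by $\tau^{-1}$ if necessary we may assume $v(\tau) > 0$. Then the elements $\tau^N$ for large $N$ are pairwise distinct — their valuations $N\,v(\tau)$ are distinct — and satisfy $v(\tau^N) > D$ once $N$ is large enough. Hence $\{[u + \tau^N w] : N \gg 0\}$ is an infinite set of points of $L$, all coloured $k$, contradicting $|L \cap X| = 1$. This also disposes of the degenerate cases $n \le 2$ (for $n = 1$ there are no lines, and for $n=2$ the argument applies verbatim to the unique line).

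The only step that needs genuine care is the coordinatewise valuation computation in the claim — one must keep separate track of the coordinates where $u_i = 0$, where $w_i = 0$, and where both are nonzero, and use the strict ultrametric identity exactly where it applies. Everything else is bookkeeping about the parametrisation of $L$ and the existence of elements of arbitrarily large valuation.
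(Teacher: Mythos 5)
Your proof is correct, and it takes a genuinely different route from the paper's. You argue ``locally'': after fixing representatives $u$ of the colour-$k$ point and $w$ of a second point of $L$, you show that $c_v(u+tw)=k$ whenever $v(t)$ exceeds a threshold $D$ determined by the finitely many finite values $v(u_j)$ and $v(w_i)$, the key step being the coordinatewise use of the strict ultrametric identity $v(a+b)=\min(v(a),v(b))$ when $v(a)\neq v(b)$; nontriviality then supplies elements $\tau^N$ of arbitrarily large and pairwise distinct valuation, giving infinitely many distinct points $[u+\tau^N w]$ of $L$ coloured $k$. This is self-contained and in fact proves a stronger statement: any line meeting a colour class of $c_v$ meets it in infinitely many points (colour classes are ``open along lines'' in the valuation topology). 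The paper instead leans on Theorem~\ref{nonarchrainbow}: since $L$ carries no rainbow triple, $L\setminus\{x\}$ is monochromatic in a single colour $\ell\neq k$; it then builds two hyperplanes on which colour $\ell$ cannot occur (one by setting the $\ell$-th coordinate to zero, the other by coupling the $k$-th and $\ell$-th coordinates via an element of negative valuation --- the printed proof has the roles of $k$ and $\ell$ swapped, evidently a typo), forces both line--hyperplane intersection points to equal $x$, and reads off $x_k=x_\ell=0$, contradicting $c_v(x)=k$. Your computation is more elementary and yields the stronger ``infinitely many'' conclusion, at the cost of a somewhat longer valuation bookkeeping; the paper's argument is shorter given Theorem~\ref{nonarchrainbow} and stays within the hyperplane/decomposer framework used throughout that section.
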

\begin{proof}
    Suppose that $x = [(x_1, \dotsc, x_n)]$ is the unique point of a line $L$ 
    having colour $k \in [n]$. 
    Since $L$ contains no linearly independent triple, it contains no rainbow triple
    by Theorem~\ref{nonarchrainbow}, 
    so there is some colour $\ell \ne k$ so that $c(L-\{x\}) = \{k\}$. 
    
    Since $v$ is nontrivial, there exists $d \in \bF$ with $v(d) < 0$. 
    Define hyperplanes $W_1 = \{a \in \bF^n : a_k = 0\}$ 
    and $W_2 = \{a \in \bF^n : a_\ell = d a_k\}$ of $\bF^n$, 
    and let $H_1, H_2$ be the corresponding hyperplanes of $\PG(n-1,\bF)$. 

    For each $a \in W_1$, we have $a_k = 0$ and so $v(a_k) = \infty$ and hence $c_v(a) \ne k$ by the definition of $c_v$. 
    Similarly, for each $a \in W_2$, we have $v(a_{\ell}) = v(d) + v(a_k) < v(a_k)$, 
    so $c_v(a) \ne k$. But since $L$ is a line, there are points $f_1 \in L \cap H_1$ 
    and $f_2 \in L \cap H_2$. As just observed, we have $c_v(f_1) \ne k$ and $c_v(f_2) \ne k$; 
    since $f_1,f_2 \in L$ we must have $f_1 = f_2 = x$. 

    Therefore $x \in H_1 \cap H_2$, which by the definitions of $H_1$ and $H_2$ 
    implies that $x_k = x_{\ell} = 0$. 
    This contradicts the fact that $c_v(x) = k$.
\end{proof}

On the other hand, algebraic extensions of finite fields do not have nontrivial non-Archimedean valuations, 
and Theorem 1 of [\ref{hs82}] will imply Lemma~\ref{rankthree} for projective planes over all such fields. 
Using this fact, one could routinely extend 
Theorem~\ref{mainstructuresimple} to apply to all finite-dimensional coloured projective geometries 
over algebraic extensions of finite fields. 

\subsection*{Acknowledgements}
This project was initiated at the Workshop on (Mostly) Matroids held at the Institute for Basic Science in Daejeon, South Korea in 2024.  We thank Rutger Campbell and the other
organizers of the workshop for providing an extremely stimulating research environment. We also thank Denys Bulavka for helpful conversations, and Sean McGuinness for alerting 
us to the related work in [\ref{bs}]. Tony Huynh is supported by the Institute for Basic Science (IBS-R029-C1).

\section*{References}
\newcounter{refs}
\begin{list}{[\arabic{refs}]}
{\usecounter{refs}\setlength{\leftmargin}{10mm}\setlength{\itemsep}{0mm}}

\item\label{asw}
M. Axenovich, R. Snyder, L. Weber,
The Erdős-Hajnal conjecture for three colors and triangles,
Discrete Math. 345 (2022), Issue 5. 

\item\label{bs}
K. B\'{e}rczi, T. Schwarcz,
Rainbow and monochromatic circuits and cocircuits in binary matroids,
Discrete Math. 345 (2022), Issue 6, 112830

\item\label{bb}
R. C. Bose, R. C. Burton, 
A characterization of flat spaces in a finite geometry and the uniqueness of the Hamming and the MacDonald codes, 
J. Combin. Theory 1 (1966), 96--104. 

\item\label{bdm}
T. Boothby, M. DeVos, A. Montejano, 
A new proof of Kemperman's theorem,
Integers 15 (2015). 

\item\label{bdkpw}
H, Bruhn, R, Diestel, M. Kriesell, R. Pendavingh, P. Wollan,
Axioms for infinite matroids,
Adv. Math. 239 (2013), 18--46.

\item\label{bkknp}
M. Bonamy, F. Kardo\v{s}, T. Kelly, P. Nelson, L. Postle,
The structure of binary matroids with no induced claw or Fano plane restriction,
Advances in Combinatorics, 2019:1, 17pp.

\item\label{cartervogt}
D. S. Carter, A. Vogt,
Collinearity-preserving functions between Desarguesian planes,
Mem. Am. Math. Soc. 235:98 (1980).

\item\label{chudnovsky}
M. Chudnovsky, 
The Erdős–Hajnal Conjecture—A Survey, 
J. Graph Theory (2014), 75: 178-190.

\item \label{eh77}
P. Erdős, A. Hajnal,
On spanned subgraphs of graphs,
Graphentheorie und Ihre Anwendungen (Oberhof, 1977).

\item \label{eh89}
P. Erdős, A. Hajnal,
Ramsey-type theorems,
Discrete Applied Mathematics 25.1-2 (1989): 37-52.

\item\label{fgp}
J. Fox, A. Grinshpun, J. Pach, The Erdős-Hajnal conjecture for rainbow triangles, 
J. Combin. Theory Ser. B 111 (2015), 75--125.

\item\label{gallai}
T. Gallai, 
Transitiv orientierbare Graphen, 
Acta Math. Acad. Sci. Hungar 18 (1967), 25-66.
English translation in [\ref{gallaitrans}].

\item\label{gr}
R. L. Graham, K. Leeb, B. L. Rothschild,
Ramsey's theorem for a class of categories,
Adv. Math. 8 (1972), 417--433.

\item\label{hs82}
A. W. Hales, E. G. Straus, 
Projective Colorings,
Pacific. J. Math. 99 (1982), 31--43.

\item \label{hp}
Z. Hunter, C. Pohoata,
Some remarks on off-diagonal Ramsey numbers for vector spaces over $\bF_2$,
arXiv:2309.02424v1 [math.CO]

\item\label{km}
Z. Kelley, R. Meka, 
Strong Bounds for 3-Progressions,
2023 IEEE 64th Annual Symposium on Foundations of Computer Science (FOCS), 
Santa Cruz, CA, USA, 2023, pp. 933--973. 

\item\label{kemperman}
J. H. B. Kemperman, 
On small sumsets in an abelian group, 
Acta Math. 103 (1960), 63--88. 

\item\label{gallaitrans}
F. Maffray, M. Preissmann, 
A translation of Gallai’s paper: ‘Transitiv Orientierbare Graphen’,
In: Perfect Graphs (J. L. Ramirez-Alfonsin and B. A. Reed, Eds.), Wiley, New York, 2001, pp.
2566.

\item\label{mm}
C. Merino, J. J. Montellano-Ballesteros, 
Some heterochromatic theorems for matroids,
Discrete Math. 341 (2018), 2694--2699

\item\label{mo}
M. Mizell, J. G. Oxley, 
Matroids Arising From Nested Sequences of Flats In Projective And Affine Geometries,
Electron. J. Comb. 31 (2023), P2.48

\item\label{nn}
P. Nelson, K. Nomoto,
The structure of claw-free binary matroids,
J. Combin. Theory, Ser. B 150 (2021), 76--118.

\item \label{oxley}
J. G. Oxley, 
Matroid Theory,
Oxford University Press, New York (2011).

\end{list}

\end{document}